\numberwithin{equation}{section}
\newtheorem{prop}{Proposition}[section]
\newtheorem{theo}[prop]{Theorem}
\newtheorem{lemm}[prop]{Lemma}
\newtheorem{coro}[prop]{Corollary}
\newtheorem{rema}[prop]{Remark}
\numberwithin{equation}{section}
\newcommand{\be}{\begin{equation}}
\newcommand{\ee}{\end{equation}}
\newcommand{\IP}{\mathbb{P}}
\newcommand{\IG}{{\mathbb{G}}}
\newcommand\IZ{\mathbb {Z}}
\newcommand\IQ{\mathbb {Q}}
\newcommand{\IC}{\mathbb{C}}
\newcommand{\IR}{\mathbb{R}}
\newcommand{\CU}{{\mathcal U}}
\newcommand{\ba}{\begin{array}}
\newcommand{\ea}{\end{array}}
\newcommand{\CV}{{\mathcal V}}
\newcommand{\CX}{{\mathcal X}}
\newcommand{\IF}{{\mathbb F}}
\newcommand{\IA}{{\mathbb A}}
\newcommand{\CS}{{\mathcal S}}
\def\Ext{\operatorname{Ext}}
\newcommand{\bal}{\begin{aligned}}
\newcommand{\eal}{\end{aligned}}
\newcommand{\CZ}{{\mathcal Z}}
\newcommand{\rk}{{\rm{rk}}}
\newcommand{\longto}{\longrightarrow}
\newcommand{\ch}{{\mathrm{ch}}}
\newcommand{\CO}{{\mathcal O}}
\newcommand{\CA}{{\mathcal A}}
\newcommand{\CH}{{\mathcal H}}
\newcommand{\CM}{{\mathcal M}}
\newcommand{\CW}{{\mathcal W}}
\newcommand{\CY}{{\mathcal Y}}
\newcommand{\CL}{{\mathcal L}}
\newcommand{\CJ}{{J}}
\newcommand{\CR}{{\mathcal R}}
\newcommand{\CN}{{\mathcal N}}
\newcommand{\CC}{{\mathcal C}}
\newcommand{\CI}{{\mathcal I}}
\newcommand{\RHom}{{\mathrm{RHom}}}
\newcommand{\CQ}{{\mathcal Q}}
\newcommand{\cO}{{\mathcal O}}
\newcommand{\cA}{{\mathcal A}}
\newcommand{\CT}{{\mathcal T}}
\newcommand{\calP}{{\mathcal P}}
\newcommand{\obj}{{\mathfrak {Ob}}}
\newcommand{\IL}{{\mathbb L}}
\title{HOMFLY polynomials, stable pairs and motivic Donaldson-Thomas invariants}
\author[Duiliu-Emanuel Diaconescu, Zheng Hua, 
Yan Soibelman]{Duiliu-Emanuel Diaconescu${}^1$, 
Zheng Hua${}^2$, Yan Soibelman${}^3$}
\address{${}^1$NHETC, Rutgers University, Piscataway, NJ 08854-0849 USA}
\email{duiliu@physics.rutgers.edu}
\address{${}^2$Department of Mathematics, Kansas State
University, Manhattan, KS 66506-2602 USA}
\email{zheng.hua.zju@gmail.com}
\address{${}^3$Department of Mathematics, Kansas State
University, Manhattan, KS 66506-2602 USA}
\email{soibel@math.ksu.edu}
\date{}
\begin{document}
\begin{abstract}
Hilbert scheme topological invariants of plane curve singularities 
are identified to framed 
threefold stable pair invariants.
As a result, the conjecture of Oblomkov and Shende on HOMFLY polynomials of links of plane curve singularities is given a 
Calabi-Yau threefold interpretation. The motivic 
Donaldson-Thomas theory developed by M. Kontsevich and the 
third author then yields natural motivic invariants
for algebraic knots. 
This construction is motivated by previous work 
of V. Shende, C. Vafa and the first author on the large $N$ duality 
derivation of the above conjecture.
\end{abstract} 
\maketitle 

\tableofcontents

\section{Introduction}\label{sectionone}

The starting point of this work is a conjecture of 
 Oblomkov and Shende \cite{hilbert-links} relating the 
 HOMFLY polynomial of the link of a plane curve singularity 
 to  topological invariants of its Hilbert scheme of points. 
 It was then explained in \cite{largeNknots} that this 
 conjecture has a natural physical interpretation in terms 
 of large $N$ duality for conifold transitions. 
 The conifold transition is a topology changing process from a 
 smooth hypersurface 
 \[
 xz-yw=\mu, \qquad \mu\neq 0,
 \]
 in $\IC^4$ to a small resolution the conifold singularity
 \[
 xz-yw=0,
 \]
which is isomorphic to the total space $Y$ of the rank two bundle 
$\CO_{\IP^1}(-1)^{\oplus 2}$ on $\IP^1$.
In this context, the construction of \cite{largeNknots} 
assigns to an algebraic knot $K$ in $S^3$ a lagrangian 
cycle $M_K$ in $Y$ which intersects a singular plane curve 
$C^\circ$ 
contained in a fiber of $Y\to\IP^1$ along a circle. 
Moreover,  $C^\circ$ has a unique singular point at the 
intersection with the zero section, its link being 
isotopic to $K$.
Then large $N$ duality leads to a 
 conjectural relation 
between HOMFLY polynomials of algebraic knots and 
Gromov-Witten theory on $Y$ with lagrangian boundary conditions 
on $M_K$. This conjecture has been tested in \cite{largeNknots} 
by explicit {\bf A}-model computations for torus knots. 

The relation between large $N$ duality and the conjecture of 
Oblomkov and Shende follows from the observation 
that Gromov-Witten theory is conjecturally equivalent to 
Donaldson-Thomas theory \cite{MNOP-I}, and also 
stable pair theory \cite{stabpairs-I}. For
Gromov-Witten theory  counting stable maps 
with compact domain without boundary, 
these relations have been proven for toric 
threefolds in \cite{GWDTtoric}, \cite{curvesKthree}. 
String duality arguments 
\cite{Ooguri:1999bv,Labastida:2000yw,framedknots} predict
that Gromov-Witten theory with lagrangian boundary 
conditions should be similarly related to certain certain D6-D2-D0 
counting invariants. The latter have not been given a 
rigorous mathematical construction since a definition of  
Donaldson-Thomas or stable pair theory 
with lagrangian boundary conditions is not known so far. 
In certain special cases, such as lagrangian cycles associated to 
the unknot, one can employ relative 
Donaldson-Thomas or stable pair theory in order to fill this gap. 
Then the correspondence reduces 
via \cite{KL} to certain identities for cubic Hodge integrals 
on the moduli space of curves which have been proven 
in \cite{unknotHodge, proofMV,two-part}. This approach is not 
however expected to work for more general lagrangian cycles, in particular 
for the lagrangian cycles for algebraic knots constructed in \cite{largeNknots}. Therefore one is left with the question whether there is a Donaldson-Thomas/stable pair construction for counting invariants 
corresponding via large $N$ duality to general algebraic knots. 

The main claim of the present paper is that there is a natural construction 
of such invariants in terms of stable pairs subject to a framing condition
explained below. Given a singular plane 
curve $C^\circ$ in a fiber of the projection $Y\to\IP^1$, there is a 
natural moduli space of $C^\circ$-framed stable pairs on $Y$. 
These are pairs $\CO_Y{\buildrel s\over \longto} F$ on $Y$ 
where $F$ is topologically supported on the union of $C^\circ$ 
with the zero section $C_0\subset Y$, and has multiplicity one along $C^\circ$. Then the main result  is that such 
moduli spaces are related to the nested Hilbert schemes 
employed in \cite{hilbert-links,refinedknots} by a variation 
of stability condition. For technical reasons, this is proven 
embedding of the affine curve $C^\circ$ in a suitable compact 
Calabi-Yau threefold $X$. In particular the embedding will 
factor through the natural projective completion $C\subset \IP^2$ 
of $C^\circ$.
Using previous results on stability conditions for perverse coherent sheaves 
\cite{limit,generating}, the nested Hilbert schemes of 
\cite{hilbert-links,refinedknots} are then geometrically related to 
moduli spaces of framed stable objects in a certain stability 
chamber. 

Enumerative invariants for $C$-framed stable pairs are defined 
by integration of a certain constructible function $\nu$ on the 
moduli space of $C^\circ$-framed stable pairs. 
 Since the Hilbert scheme invariants 
used in \cite{hilbert-links} are topological, one can simply 
take $\nu=1$ obtaining the topological Euler numbers of 
the moduli spaces. Then a wallcrossing 
formula shows that the resulting 
invariants are then in agreement with those of \cite{hilbert-links}.
Alternative constructions may be carried out, using either Behrend 
constructible functions \cite{micro} as in \cite{genDTI} or 
motivic weight functions as in \cite{wallcrossing}. 
Motivated by previous connections 
between motivic and refined Donaldson-Thomas invariants 
\cite{DG,motivic-Hilbert,motivic-conifold,motivic-crepant}, the second approach will be considered in this paper. 
Assuming the foundational aspects of \cite{wallcrossing}, it will 
be shown 
that the virtual motivic invariants of $C$-framed 
objects are in agreement with the refined conjecture formulated 
in \cite{refinedknots} if certain technical conditions are met. 
Removing the technical conditions in question reduces to a 
comparison conjecture 
between motivic weights of stable pairs and sheaves (see Section 4.2)
which is at the moment open.

Appearance of motivic Donaldson-Thomas invariants supports an old idea of S. Gukov and third author that there should exist a {\it motivic knot invariants theory}. In such theory skein relations should correspond to wall-crossing formulas for the motivic Donaldson-Thomas invariants introduced in \cite{wallcrossing} (and further developed in \cite{COHA}). Knot invariants themselves should be derived from an appropriate $3$-dimensional Calabi-Yau category. 

The idea can be traced back to \cite{Gukov:2004hz}, where Khovanov-Rozansky theory was linked to the count of BPS states in topological string theory. It was further developed  in \cite{Dunfield:2005si} in the form of a conjecture about knot superpolynomial. After the work \cite{wallcrossing} of Kontsevich and third author it became clear that motivic Donaldson-Thomas invariants (DT-invariants for short) introduced in the loc. cit. provide the right mathematical foundation for the notion of (refined) BPS state. This was pointed out in  \cite{DG} based on physics arguments,  rigorous mathematical statements confirming this claim being 
first formulated and proved in \cite{motivic-Hilbert}. 
Further results along these lines have been obtained in \cite{motivic-conifold,motivic-crepant}. The parameter $y$ which appears in knot invariants should correspond to the motive 
${\mathbb L}=[{\mathbb A}^1]$ of affine line in the theory of motivic DT-invariants. Then the question is: what is an appropriate $3$-dimensional Calabi-Yau category?
 From the point of view of the large $N$ duality  it is natural to expect that the $3$-dimensional Calabi-Yau category should be somehow derived from the resolved conifold $Y$. Unfortunately it is difficult to make this idea mathematically precise since $Y$ is non-compact (as well as the lagrangian cycle $M_K$). One can see that the partition function for the unknot derived in \cite{Ooguri:1999bv} coincides with the motivic DT-series for the $3$-dimensional Calabi-Yau category generated by one spherical object (both are given essentially by the quantum dilogarithm). But there was no general conjecture about the desired relationship.
Although such a conjecture does not exist at present, the works \cite{hilbert-links} and \cite{refinedknots} give a hope that it can be formulated soon. Our paper can be considered as another step in this direction.

A more detailed overview including technical details is presented at length below. 

\subsection{The conjectures of Oblomkov, Rasmussen and 
Shende}\label{OSconjecture} 
 
Let $C^\circ\subset \IC^2$ be a reduced 
pure dimension one curve with one singular point $p\in C^\circ$. Let $H^n_p(C^\circ)$ be the punctual Hilbert scheme parameterizing length 
$n$ zero dimensional subschemes 
of $C^\circ$ with topological support at $p$. 
Let $m:H_p^n(C^\circ)\to \IZ$ 
be the constructible function assigning to any subscheme $Z\subset C^\circ$ with topological support at $p$ 
the minimal number of generators of the defining 
ideal $\CI_{Z,p}\subset \CO_{C^\circ,p}$ at $p$. For any scheme 
${X}$ 
of finite type over $\IC$, and any constructible function 
$\nu :{X}\to \IZ$ let 
\[
\int_{X} \nu d\chi = \sum_{n\in \IZ} n\chi(\nu^{-1}(n))
\]
where $\chi$ denotes the topological Euler character. 
Then let 
\be\label{eq:OSconjB} 
Z_{C^\circ,p}(q,a) 
=\sum_{n\geq 0} q^{2n} \int_{H_p^n(C^\circ)} (1-a^2)^m d\chi.
\ee
Let $K_{C^\circ,p}$ 
denote the link of 
the plane curve singularity at $p$. Let $P_{K_{C^\circ,p}}(a,q)$ denote the HOMFLY polynomial 
of $K_{C^\circ,p}$. It satisfies the skein relation of the type:

$$aP_{L_+}-a^{-1}P_{L_-}=(q-q^{-1})P_{L_0}.$$

As opposed to \cite{hilbert-links}, the HOMFLY polynomial 
will be normalized such that it takes value 
\[
{a-a^{-1}\over q-q^{-1}}
\]
for the unknot. 
Then the conjecture of Oblomkov and Shende \cite{hilbert-links} states that 
\be\label{eq:OSconjA}
	P_{K_{C^\circ,p}}(q,a) = (a/q)^{\mu-1} Z_{C^\circ,p}(q,a),
\ee
 where $\mu$ is the Milnor number of the singularity at $p$.

\subsubsection{Refinement}\label{refinedOS}
 
The correspondence between knot polynomial invariants and 
 Hilbert scheme invariants 
 of  curve singularities admits a refined generalization \cite{refinedknots} due to Oblomkov, Rasmussen and Shende. 
 Given an algebraic knot or link $K$, let $P_K^{ref}(q,a,y)$ denote the refined HOMFLY 
 polynomial introduced in \cite{Gukov:2004hz,Dunfield:2005si}. This is the polynomial 
 invariant called {\it reduced superpolynomial} in \cite{Dunfield:2005si}, which specializes to the 
 HOMFLY polynomial at $y=-1$. 
 In the previous notation consider the incidence cycle 
 \[
 H_p^{[l,r]}(C^\circ)\subset H_p^l(C^\circ)\times 
 H^{l+r}_p(C^\circ)
 \]
 parameterizing 
 pairs of ideals $(J,I)$ in the local structure ring 
 $\CO_{C^\circ,p}$ 
 satisfying the following condition
  \[
     m_p J \subseteq I \subseteq J, 
     \]
 where $m_p\subset \CO_{C^\circ,p}$ is the maximal ideal of the singular point. Let $H_p^{[l,r]}(C^\circ)$ be equipped with the reduced induced 
subscheme structure and 
\be\label{eq:refHilbinv}
Z^{ref}_{C^\circ,p}(q,a,y) = \sum_{l,r\geq 0} q^{2l} a^{2r}y^{r^2} 
 P_y(H_p^{[l,r]}(C^\circ)),
 \ee
 where $ P_y$ denotes the virtual Poincar\'e polynomial (also known as Serre polynomial).
 Then Oblomkov, 
 Rassmusen and Shende \cite{refinedknots}
 conjecture the following relation 
  \be\label{eq:refknotA}
 P_K^{ref}(q,a,y) =  \left({a\over q}\right)^{\mu-1} Z^{ref}_{C^\circ,p}(q,a,y).
   \ee


\subsection{Framed stable pair invariants of the conifold}\label{oneone}

The resolved conifold $Y$ is a small crepant resolution of  the nodal 
hypersurface $xz-yw=0$ in $\IC^4$. 
It can be easily identified with the total space of the rank two bundle 
$\mathrm{Tot}(\CO_{\IP^1}(-1)\oplus \CO_{\IP^1}(-1))$ such that 
the exceptional cycle $C_0\simeq \IP^1$ of the resolution is the zero section.

There is closed embedding $C^\circ\hookrightarrow Y$ 
which factors through the natural embedding of $C$ in a 
fiber of the projection $Y\to \IP^1$. 
Therefore the curve $C^\circ$ in the conjecture of Oblomkov and 
Shende is naturally identified with a vertical complete intersection 
on $Y$. 
Recall \cite{stabpairs-I} that a stable pair on $Y$ is determined 
by the data $(F,s)$, where $F$ is a pure dimension one 
coherent torsion sheaf on $Y$, and $s:\CO_Y\to F$ is a 
morphism with zero dimensional cokernel. Note that $F$ will not be assumed to have proper support.  Let $\CI_{C^\circ}$ be the defining ideal 
sheaf of $C^\circ\subset Y$. 
A $C^\circ$-framed stable 
pair on $Y$ is a stable pair $(F,s)$ such that 
\begin{itemize}
\item $F$ is topologically supported on the union $C^\circ\cup C_0$ 
\item The annihilator ideal $Ann(F)$ of $F$ is a subsheaf 
of the defining ideal $\CI_{C^\circ}$ of $C^\circ$ and the quotient 
$\CI_{C^\circ}/Ann(F)$ is topologically supported on the zero 
section $C_0$. 
\end{itemize} 
Note that the second condition is equivalent to the requirement that 
the scheme theoretic support $Z_F$ of $F$ have at most two irreducible 
components, $C^\circ$ and an additional component supported on $C_0$, 
which may be empty. The numerical invariants of a $C^\circ$-framed stable 
pair on $Y$ will be the generic multiplicity $r$ of $F$ 
along the zero section, and $l=\chi(\mathrm{Coker}(s))$. 

Let ${\overline Y} = 
\IP(\CO_Y(-1)^{\oplus 2} \oplus \CO_Y)$ be a projective 
completion of $Y$, and ${C}\subset {\overline Y}$ 
the resulting projective completion of $C^\circ$. Projective plane curve ${C}$ is  
contained in a fiber of the projection 
${\overline Y}\to \IP^1$. 
According to \cite{stabpairs-I}, there exists a fine 
projective moduli space 
$\calP({\overline Y}, r, n)$ of stable pairs $(G,v)$ on 
${\overline Y}$, where $\ch_2(G) = [ C]+r[C_0]$, 
and $\chi(G)=n$. Then it can be easily proved that there exists a
fine quasi-projective moduli space $\calP(Y,C^\circ,r,l)$ of 
$C^\circ$-framed 
stable pairs on $Y$ with $l=n-\chi(\CO_{ C})$. 
Moreover, $\calP(Y,C^\circ,r,l)$ is the locally closed subscheme of 
$\calP({\overline Y}, r, n)$ determined by the conditions 
\begin{itemize}
\item $Ann(G)\subset \CI_{ C}$ 
\item The support of $\mathrm{Coker}(v)$ is contained in the 
open part $Y\subset {\overline Y}$.
\end{itemize}
Let $\calP^\circ({\overline Y},r,n)$ denote the open subspace 
of $\calP({\overline Y},r,n)$ parameterizing 
pairs satisfying only the second condition above.

Counting invariants $P^\nu(Y,C^\circ, r,n)$
are defined by integrating a constructible function $\nu$ 
on the ambient moduli space $\calP^\circ({\overline Y},r,n)$
over the subspace $\calP(Y, C^{\circ},r,n)$.
Several choices are in principle available for such a constructible 
function: the constant function $\nu=1$, Behrend's constructible 
function \cite{micro}, or the motivic weight function conjectured in 
\cite{wallcrossing}. Two cases will be considered in this paper, namely 
$\nu=1$, or the motivic weight function of \cite{wallcrossing}. 

In the first case, the resulting invariants are simply topological Euler 
numbers of moduli spaces, 
\[
P^{top}(Y,C^\circ, r,n) = \chi(\calP(Y, C^{\circ},r,n)).
\]
In the second case, assuming the foundational problems solved, 
the construction of  \cite{wallcrossing} produces a motivic weight 
function $\nu^{mot}$ together with a finite 
stratification of $\{\CS_\alpha\}$ 
of the moduli space $\calP^{\circ}({\overline Y},r,n)$ such that 
$\nu^{mot}$ takes a constant value $\nu^{mot}_\alpha$ 
on each locally closed stratum $\CS_\alpha$. 
The motives $\nu^{mot}_\alpha$ belong to a certain ring of motives 
presented in detail in \cite[Sect. 4.3 and 6.2]{wallcrossing} which 
contains the Grothendieck ring $K_0(Var/\IC)$ of complex algebraic 
varieties as a subring, as well as a formal square root $\IL^{1/2}$ 
and formal inverses $\IL^{-1}, [GL(k,\IC)]^{-1}$, $k\in \IZ_{\geq 1}$. 
Then the motivic 
Donaldson-Thomas invariants of $C^{\circ}$-framed stable pairs are
defined by 
\[ 
P^{mot}(Y, C^{\circ}, r,n) = \sum_\alpha [\CS_\alpha]\nu_\alpha^{mot} 
\]
where $[\CS_\alpha]\in K_0(Var/\IC)$ is the Chow motive of the 
stratum $\CS_\alpha$. 
In both cases, let 
\[ 
Z^\nu(Y,C^\circ,u,T) = \sum_{n\in \IZ_{\geq 0}} \sum_{r\geq 0} 
P^\nu(Y,C^\circ, r,n) u^nT^r.
\]
be the resulting generating function. 
Let also $P^\nu(Y,r,n)$ denote the corresponding 
counting invariants for stable 
pairs $(F,s)$ on $Y$ with $\ch_2(Y)=r[C_0]$ and $n=\chi(F)$, 
and 
\[ 
Z^\nu(Y,u,T) := \sum_{n\in \IZ} \sum_{r\geq 0} 
P^\nu(Y,r,n) u^nT^r.
\]
their generating function.
Then large N duality \cite{largeNknots} predicts a conjectural 
factorization formula 
\be\label{eq:factformulaA}
Z^{\nu}(Y,C^\circ, u,T)= Z^{\nu}(Y,u,T) Z^{\nu}(C^\circ,u,T)
\ee
where $Z^{\nu}(C^\circ,u,T)$ is a formal power series in $(T,u)$, 
possibly up to multiplication by an overall Laurent  
monomial in $(T,u)$. This is a reflection 
 of the  natural factorization of Wilson loop expectation values 
 in large $N$ Chern-Simons theory, 
\[
\langle W_K(U)\rangle_{CS, N\to \infty} = P_K(q,a) Z_{CS}(q,a),
\]
where $Z_{CS}(q,a)$ is the large $N$ limit of the $U(N)$ Chern-Simons 
partition function on $S^3$. 
Furthermore $Z^{\nu}(C^\circ,u,T)$ is conjectured to have an intrinsic 
interpretation in terms of D6-D2-D0-bound state counting on $Y$ and 
is conjecturally related to the generating function 
$Z_{C^\circ,p}(q,a)$ in equation \eqref{eq:OSconjA}
or its refined counterpart \eqref{eq:refHilbinv} as explained in detail below. 
Let 
\be\label{eq:globalHilbinv}
\bal 
Z^{top}_{C^\circ}(q,a) & = \sum_{n\geq 0} q^{2n} \int_{H^n(C^\circ)}
(1-a^2)^m d\chi,\\
 \eal 
\ee
be the global version of equation \eqref{eq:OSconjA}, where 
the punctual Hilbert scheme $H^n_p(C^\circ)$ 
is replaced by the Hilbert scheme $H^n(C^\circ)$ of length 
$n$ zero dimensional subschemes of $C^\circ$ 
with no support 
condition. Similarly, consider the following 
global motivic version of \eqref{eq:refHilbinv}
\be\label{eq:globalHilbmotivic}
Z_{C^\circ}^{mot}(a,q)  = \sum_{l,r\geq 0} q^{2l} a^{2r} \IL^{r^2/2} 
[ H^n(C^\circ)]
\ee
where $[ H^n(C^\circ)]\in K_0(Var/\IC)$ denotes the 
Chow motive of the Hilbert scheme. Taking the virtual Poincar\'e 
polynomial with compact support, one obtains the global refined 
generating function
\[ 
Z_{C^\circ}^{ref}(a,q,y)  = \sum_{l,r\geq 0} q^{2l} a^{2r} y^{r^2} 
P_y(H^n(C^\circ))
\]
Note that a simple stratification argument shows that 
\[
Z^{top}_{C^\circ}(a,q) = (1-q^2)^{1-\chi(C^\circ)} Z_{C^\circ,p}(a,q),
\]
respectively
\[
Z_{C^\circ}^{ref}(a,q,y)= \bigg(\sum_{n\geq 0} q^{2n} P_y(S^n(
C^\circ\setminus \{p\}))\bigg) Z_{C^{\circ},p}^{ref}(a,q,y),
\] 
where $S^n(C^\circ\setminus\{p\})$ are the symmetric 
powers of the punctured curve $C^\circ\setminus\{p\}$. 
The compactly supported 
cohomology $H^k_c(C^\circ\setminus\{p\})$, $k\geq 0$
is endowed with Deligne's weight filtration. Let $h_c^{k,w}((C^\circ\setminus\{p\})$ be the dimension of the successive quotient 
of weight $w$. Then, 
using the results of \cite{cheah}, the above formula can be rewritten in closed 
form as follows:
\[
Z_{C^\circ}^{ref}(a,q,y)= 
\prod_{k,w\geq 0} 
\left({1\over 1-(-1)^w y^kq^2}\right)^{(-1)^w h_c^{k,w}(C^\circ \setminus\{p\})}Z_{C^{\circ},p}^{ref}(a,q,y).
\]

Then large $N$ duality leads to the conjecture that there is a 
monomial change of variables $T=T(a,q,\IL^{1/2})$, $u=u(a,q,\IL^{1/2})$
such that the following identity holds 
\be\label{eq:largeNconjecturesA} 
\bal 
Z^{mot}(Y,C^\circ,T,u) = a^\alpha q^\beta \IL^\gamma 
Z^{mot}_{C^\circ}(q,a)
\eal
\ee
for some $\alpha, \beta\in \IZ$, $\gamma \in {1\over 2}\IZ$.
Taking virtual Poincar\'e polynomials with compact support 
yields a similar identity for refined invariants 
\[
Z^{ref}(Y,C^\circ,T,u,y) = a^\alpha q^\beta y^{2\gamma} 
Z^{ref}_{C^\circ}(q,a,y),
\]
subject again to a
monomial change of variables $T=T(a,q,y)$, $u=u(a,q,y)$.
Specializing the refined identity 
to $y=1$ yields a similar conjectural relation 
\be\label{eq:largeNconjecturesB} 
Z^{top}(Y,C^\circ,T,u) = a^\alpha q^\beta 
Z^{top}_{C^\circ}(q,a).
\ee
for topological invariants invariants. 

The main result of this paper, Theorem \ref{topCframedinv} below, 
proves an identity 
of the form \eqref{eq:largeNconjecturesB} for framed stable pair 
invariants on a smooth projective Calabi-Yau threefold $X$. 
Compactness is needed here for technical reasons, as the proof 
relies heavily on the wallcrossing formalism of 
\cite{wallcrossing,genDTI} applied to abelian categories 
of perverse coherent sheaves as in  
\cite{limit, generating}. 
As explained in Section \ref{onethree}, the threefold $X$ is a 
smooth crepant resolution of a nodal threefold $X_0$ 
and contains a 
projective completion $C\subset \IP^2$ of $C^\circ$, 
assumed  to be smooth away from $p$. 
Moreover, a compact version of the motivic 
identity \eqref{eq:largeNconjecturesA} can be in principle 
derived along the same lines from the formalism of 
\cite{wallcrossing}, assuming the required foundational
results as well as certain technical results on motivic weights. The 
main steps are summarized in Section \ref{motiviccorresp} and 
explained in detail in Section \ref{motivicsect}.

\subsection{Embedding in a compact Calabi-Yau threefold}\label{onetwo}

Theory of  stable pairs of Pandharipande and Thomas deals with compact varieties. Since the resolved conifold $Y$ is non-compact we need to formulate the problem in an appropriate compactification. We start with some generalities.
Let $X_0$ be a projective Calabi-Yau 
threefold with a single conifold singularity $q\in X_0$. 
Since all ordinary double points are analytically equivalent, the 
formal neighborhood of $q\in X_0$ is isomorphic to the 
formal neighborhood of the origin in the singular hypersurface 
$xz-yw=0$ in $\IC^4$. Suppose moreover, there exists a 
 Weil divisor $\Delta\simeq \IP^2\subset X_0$ containing $q$ 
 which is locally determined by $z=0$. Blowing up $X_0$ along 
 the divisor 
 $\Delta$ yields a crepant resolution $X\to X_0$, the exceptional 
 locus being a $(-1,-1)$ curve $C_0\subset X$. Let $D$ be the 
 strict transform of $\Delta$ in $X$. A local computation shows that 
$D\simeq \Delta$ 
intersects $C_0$ transversely at a point $p$. 

Although the considerations below are not particular 
to a specific model, an example will be provided next for 
concreteness.
Let $X^-$ be a smooth elliptic fibration with a section over 
the Hirzebruch surface $\IF_1$. Let $D^-\subset X^-$ denote the 
image of the canonical section, and $C_0^-\subset D^-$ the 
unique $(-1)$ curve on $D^-$. As shown in \cite{Ftheory} 
using toric methods, there exists a morphism 
$X^-\to  X_0$ contracting the curve $C^-_0$, 
where $X_0$ is a nodal Calabi-Yau threefold. 
Moreover there is a second smooth crepant resolution of 
$X\to X_0$ equipped with a projection to $\IP^2$, and 
a section $D\simeq \IP^2$. 
 The exceptional locus is in this case a rational 
$(-1,-1)$ curve intersecting  $D$  transversely at a point $p$. 
More examples with two or four conifold singularities where 
$D$ is a toric surface have 
been studied in the context of large $N$ duality in \cite{largeNcompact}.

In this context, let $\Gamma\subset X_0$ be a reduced irreducible 
plane curve contained in the 
Weil divisor $\Delta\simeq \IP^2$ passing through the conifold point $q$. 
Suppose $\Gamma$ has a singularity at $q$ and is otherwise 
smooth. Let $C\subset X$ be the strict transform of 
$\Gamma$ 
in $X$. Note that $C$ is a plane curve
 in 
$D\simeq \IP^2\subset X$, and the restriction of the 
contraction $X\to X_0$ to $C$ is an isomorphism
$C{\buildrel \sim \over \longto}\Gamma$. 
Moreover $C$ intersects the
 exceptional 
 curve $C_0\subset X$ at the point $p$, which is 
 the only singular point of $C$ under the current assumptions.

By analogy with Section \ref{oneone} a stable pair 
$(F,s)$ on $X$ will be called $C$-framed of type 
$(r,n)\in \IZ_{\geq 0}\times \IZ$ \begin{itemize}
\item $F$ is topologically supported on the union $C\cup C_0$
\item $\ch_2(F)=[C]+r[C_0]$, $\chi(F)=n$. 
\end{itemize}
Then there is a closed subscheme ${\mathcal P}(X,C,r,n)
\subset {\mathcal P}(X,\beta,n)$, with $\beta= [C]+r[C_0]$ 
parameterizing $C$-framed stable pairs. 

Enumerative invariants are defined as explained above 
equation \eqref{eq:factformulaA}
by integration with respect to an appropriate constructible 
function $\nu$ on the ambient space 
$ {\mathcal P}(X,\beta,n)$. For $\nu=1$, the resulting 
invariants are topological Euler numbers of the moduli 
spaces ${\mathcal P}(X,C,r,n)$ and they will be denoted 
by $P^{top}(X,C,r,n)$. Taking $\nu$ to be 
the motivic weight function \cite[Sect. 6.2]{wallcrossing}
on the ambient space ${\mathcal P}(X,\beta,n)$, one 
obtains motivic $C$-framed stable pair invariants $P^{mot}(X,C,r,n)$. 
Their generating functions are 
\[ 
Z^\nu(X,C,T,u) = \sum_{n\in \IZ}\sum_{r\geq 0} T^ru^n 
P^\nu(X,C,r,n).
\]
One similarly defines constructible function invariants 
$P^\nu(X,C_0,r,n)$ 
for stable pairs $(F,s)$, where $F$ is topologically supported on 
$C_0$, and has numerical invariants $\ch_2(F)=n[C_0]$, 
$\chi(F)=n$. Their generating function will be denoted by 
$Z^\nu(X,C_0,T,u)$. In order to make a connection with the large 
$N$ duality conjectures in Section \ref{oneone}, note that 
\[
Z^{\nu}(Y,T,u) = Z^\nu(X,C_0,T,u)
\]
since the formal neighborhood of $C_0$ in $X$ is isomorphic to 
the formal neighborhood of the zero section in $Y$. 

As anticipated in Section \ref{oneone}, the generating functions 
\eqref{eq:globalHilbinv}, 
\eqref{eq:globalHilbmotivic} admit natural compact versions
\be\label{eq:compactHilbinv}
Z_{C}^{top}(q,a) = \sum_{n\geq 0} q^{2n} \int_{H^n(C)} 
(1-a^2)^m d\chi
\ee
respectively
\be\label{eq:compactHilbmotivic}
Z_C^{mot}(q,a) = \sum_{n\geq 0} q^{2n} a^{2r} \IL^{r^2/2}
[H^{[n,r]}(C)]. 
\ee
The notation is analogous to Section \ref{OSconjecture}, except that the 
punctured curve $C^\circ$ is replaced with the compact curve $C$. 
Again, a stratification argument shows that 
\[
Z_{C}^{top}(q,a) = (1-q^2)^{1-\chi(C)} Z_{C,p}(q,a),
\]
respectively
\[
Z_{C}^{ref}(q,a) = 
\prod_{k,w\geq 0} 
\left({1\over 1-(-1)^wy^kq^2}\right)^{(-1)^wh_c^{k,w}(C\setminus\{p\})}
 Z_{C,p}^{ref}(q,a,y),
\]
by analogy with the similar formulas in Section \ref{oneone}. 
The integers $h_c^{k,w}(C\setminus \{p\})$ are the weighted Betti 
numbers of $C\setminus \{p\}$ of compactly supported cohomology 
equipped with Deligne's weight filtration. 

Then one of the main results of this paper is the following theorem 
for topological Euler character invariants.
\begin{theo}\label{topCframedinv} 
\be\label{eq:factformulaB}
Z^{top}(X,C,q^2,-a^2)= Z^{top}(X,C_0,q^2,-a^2) 
q^{2\chi(\CO_C)}Z_{C}^{top}(q,a).
\ee
\end{theo}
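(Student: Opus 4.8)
The plan is to realize both sides of \eqref{eq:factformulaB} through moduli of framed objects in the category of perverse coherent sheaves on $X$ attached to the contraction $f\colon X\to X_0$, and to compare two natural stability chambers by a wallcrossing argument, following \cite{limit,generating,genDTI}. Let $\calP\subset D^b(X)$ be the heart of the perverse $t$-structure for $f$, so that $\CO_{C_0}$ and suitable shifts of $\CO_C$, $\CO_X$ lie in $\calP$, and for a numerical class $\gamma=([C]+r[C_0],n)$ consider the moduli of $\CO_X$-framed objects $(E,\phi\colon\CO_X\to E)$ with $E\in\calP$, equipped with the one-parameter family of Bridgeland-type stability conditions of \cite{generating}. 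In the large-volume (``PT'') chamber the stable framed objects are exactly the $C$-framed stable pairs of Section~\ref{onetwo}, so this chamber's moduli space is $\calP(X,C,r,n)$; at the opposite end one reaches a \emph{split chamber} in which every stable framed object fits in a canonical extension $0\to E_C\to E\to E_0\to 0$ in $\calP$, with $E_C$ a rank-one object on $C$ carrying the framing and $E_0$ a pure perverse sheaf supported on $C_0$. Carrying out the same construction for the class $(r[C_0],n)$ recovers the pure-$C_0$ theory, whose PT chamber computes the stable pair invariants $P^{top}(X,C_0,r,n)$ and whose split-chamber generating function we denote $Z^{top}_{\mathrm{split}}(X,C_0,T,u)$.

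The comparison is made tractable by the fact that, on the Calabi-Yau threefold $X$, a Riemann-Roch computation gives $\chi(E,F)=\sum_i(-1)^i\dim\Ext^i(E,F)=0$ whenever $E$ and $F$ are both supported in dimension $\leq 1$; only the framing $\CO_X$ pairs nontrivially with curve classes. Since $F$ has multiplicity one along $C$, every wall for $\gamma$ therefore corresponds to a pure $C_0$-supported object (class $k[C_0]$, $k\geq 1$, arbitrary Euler characteristic) being attached or detached, and its wallcrossing contribution depends only on the class of that object and on its pairing with $\CO_X$ --- not on the $C$-component. Hence, by the wallcrossing formula for constructible-function ($\nu=1$) invariants in the form of \cite{genDTI}, the product of all wall contributions between the PT and split chambers is one and the same universal series $W(T,u)$ in both theories:
\[
Z^{top}(X,C,T,u)=W(T,u)\,Z^{top}_{\mathrm{split}}(X,C,T,u),\qquad Z^{top}(X,C_0,T,u)=W(T,u)\,Z^{top}_{\mathrm{split}}(X,C_0,T,u).
\]
Since $Z^{top}(X,C_0,T,u)$ has constant term $1$, the second identity shows that $W$ is invertible.

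The heart of the argument is the computation of the split chamber. Away from the intersection point $p$ the datum $0\to E_C\to E\to E_0\to 0$ decouples into a subscheme of $C\setminus\{p\}$ and the restriction of $E_0$ to $C_0\setminus\{p\}$, while at $p$ it records how the rank-one sheaf on $C$ is glued to the sheets of $E_0$ emanating from $p$. Following the picture of \cite{hilbert-links,refinedknots}, the split-chamber moduli space for $C$-framed objects is organized over the nested Hilbert scheme $H^{[l,r]}(C)$ --- with $l$ the Euler-characteristic offset of $E_C$ and $r$ the $C_0$-multiplicity --- and over the split-chamber moduli of the pure-$C_0$ theory, in such a way that Euler characteristics multiply. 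Using that the fibre of $H^{[l,r]}(C)\to H^l(C)$ over an ideal $J$ with $m$ minimal generators at $p$ is a Grassmannian $\mathrm{Gr}(r,m)$, so that $\chi(H^{[l,r]}(C))=\int_{H^l(C)}\binom{m}{r}\,d\chi$, and summing over $r$ via $\sum_r\binom{m}{r}(-a^2)^r=(1-a^2)^m$, one obtains after the substitution $u=q^2$, $T=-a^2$
\[
Z^{top}_{\mathrm{split}}(X,C,q^2,-a^2)=q^{2\chi(\CO_C)}\Big(\sum_{l\geq 0}q^{2l}\!\int_{H^l(C)}\!(1-a^2)^m\,d\chi\Big)Z^{top}_{\mathrm{split}}(X,C_0,q^2,-a^2)=q^{2\chi(\CO_C)}\,Z^{top}_C(q,a)\,Z^{top}_{\mathrm{split}}(X,C_0,q^2,-a^2),
\]
the shift $q^{2\chi(\CO_C)}$ recording $\chi(E_C)=\chi(\CO_C)+l$.

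Dividing the two identities of the second paragraph cancels $W$, and inserting the last display gives
\[
\frac{Z^{top}(X,C,q^2,-a^2)}{Z^{top}(X,C_0,q^2,-a^2)}=\frac{Z^{top}_{\mathrm{split}}(X,C,q^2,-a^2)}{Z^{top}_{\mathrm{split}}(X,C_0,q^2,-a^2)}=q^{2\chi(\CO_C)}\,Z^{top}_C(q,a),
\]
which is \eqref{eq:factformulaB}. I expect the main obstacle to be the third step: establishing precisely how the split-chamber moduli space is built over the nested Hilbert scheme $H^{[l,r]}(C)$ and the pure-$C_0$ split moduli, with the minimal generator count $m$ correctly matched to the Grassmannian factor. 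This requires a careful local study at $p$ of how a perverse coherent sheaf on $C\cup C_0$ is assembled from its restrictions to $C$, to $C_0$, and to the formal neighbourhood of $p$, reconciled with the ideal-theoretic condition $m_pJ\subseteq I\subseteq J$ of \cite{hilbert-links,refinedknots}; a subsidiary point is to verify that the $C$-component is genuinely inert under all wallcrossings, so that the factors $W$ in the two theories truly coincide and cancel.
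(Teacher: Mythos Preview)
Your strategy --- wallcross from the stable-pair chamber to a chamber governed by nested Hilbert schemes, then read off the Grassmannian fibers via $\sum_r\binom{m}{r}(-a^2)^r=(1-a^2)^m$ --- is exactly the paper's. The Grassmannian/binomial step and the $q^{2\chi(\CO_C)}$ shift are correct. But your description of the target chamber is off in a way that introduces a spurious factor and hides the main structural input.

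In the paper's small $b>0$ chamber the stable objects are not extensions $0\to E_C\to E\to E_0\to 0$ with $E_0$ an arbitrary pure perverse sheaf on $C_0$: the quotient is \emph{forced} to be $\CO_{C_0}(-1)^{\oplus r}$ (Lemmas~\ref{smallBlemmaA}, \ref{smallBlemmaB} and Proposition~\ref{smallBstab}), and the extension must be nondegenerate. Since $\chi(\CO_{C_0}(-1))=0$, the $C_0$-piece contributes only its rank $r$ and no independent Euler-characteristic moduli; the nondegeneracy condition is then identified, via Lemma~\ref{extlemma} and Corollary~\ref{nondegext}, with surjectivity of $J\otimes\CO_p\twoheadrightarrow\IC^r$, which is precisely what produces the Grassmannian fibre over $H^l(C)$. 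So the small-$b$ generating function is $q^{2\chi(\CO_C)}Z^{top}_C(q,a)$ on the nose --- there is no extra factor $Z^{top}_{\mathrm{split}}(X,C_0,\cdot)$ to appear and then be cancelled. Correspondingly, the paper's wallcrossing (Appendix~\ref{proofwallcrossing}) computes the product of wall contributions between $b\ll 0$ and small $b>0$ directly and finds it equal to $Z^{top}(X,C_0,u,T)$; one does not need to run a parallel pure-$C_0$ wallcrossing and divide.

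The piece you flagged as the likely obstacle is indeed where the work lies, but the local picture is not a gluing of a general $C_0$-sheaf to a rank-one sheaf on $C$: it is the identification of the quotient as $\CO_{C_0}(-1)^{\oplus r}$ that makes the nested-Hilbert description go through. Without that, your ``split chamber'' is not a genuine stability chamber and the claimed product decomposition of its moduli has no reason to hold.
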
 

Theorem \ref{topCframedinv} follows from Proposition \ref{factorization}
and Theorem \ref{smallBidentity} below, which rely heavily on  
wallcrossing for framed stable pair invariants.
The general framework is outlined in the 
next subsection, and presented in more detail in 
Section \ref{derivcatsect}. 

The motivic version 
of identity \eqref{eq:factformulaB} will 
be discussed in Section \ref{motiviccorresp}, once the main 
steps in the proof of equation \eqref{eq:factformulaB} 
are clearly understood. 

\subsection{$C$-framed perverse coherent sheaves and 
stability}\label{onethree}

Let $D^b(X)$ be the bounded derived category of 
$X$. Let $\CA\subset D^b(X)$ be the heart of the
perverse t-structure on $D^b(X)$ determined by the torsion 
pair $({{Coh}}_{\geq 2}(X),{{Coh}}_{\leq 1}(X))$. The objects of $\CA$ are objects $E$ of $D^b(Y)$ 
such that the cohomology sheaves $\CH^i(E)$ are nontrivial 
only for $i=-1,0$, $\CH^{-1}(E)$ has no torsion in codimension 
$\geq 2$, and $\CH^{0}(E)$ is torsion, 
of dimension $\leq 1$. Let $\omega$ be a fixed K\"ahler class 
on $X$. 

The stable pair theory of $X$ has been 
studied in \cite{limit,generating} employing a construction 
of limit (or weak) stability conditions on $\CA$ which we  review in Section 
\ref{twoone}. The main motivation for the study of limit stability conditions in the loc.cit.  was 
to prove the rationality conjecture of Pandharipande and Thomas 
\cite{stabpairs-I}. 
The main tool in the proof is the wallcrossing 
formalism of  \cite{wallcrossing, genDTI} applied to a one-parameter family of stability conditions on $\CA$ 
parameterized by a $B$-field, $B=b\omega\in H^2(X)$. In fact, as was pointed out in \cite{wallcrossing}, the wall-crossing formulas for the weak stability conditions is a special case of those considered in the loc.cit. as soon as one allows the central charge to take values in an ordered field. Weak stability conditions are  easy to construct \cite{limit, generating}
for the derived category of coherent sheaves $D^b(X)$ on a Calabi-Yau manifold $X$, differently from conventional Bridgeland stability conditions. 
More specifically, there is a slope function $\mu_{(\omega,b)}$ 
on the Grothendieck group $K_0(\CA)$ which defines a 
family of weak stability conditions on $\CA$,
as reviewed in Section \ref{twoone}. Moreover, 
the following  results are proven in \cite{generating}.

{\bf 1}. For fixed $(\beta,n)$ there is an algebraic 
moduli stack of finite type 
${\mathcal M}_{b}^{ss}(\CA,\beta,n)$
of $\mu_{(\omega,b)}$-semistable objects 
of $\CA$ with $\ch(E)=(-1,0,\beta,n)$. 

{\bf 2}. For fixed $(\beta,n)$ there are finitely many critical parameters 
$b_c$ such that strictly $\mu_{(\omega,b)}$-semistable objects exist. 
The moduli stacks $\CM^{ss}_{{(\omega,b')}}(\CA,\beta,n)$, \\
$\CM^{ss}_{{(\omega,b'')}}(\CA,\beta,n)$ are canonically 
isomorphic if there is no critical stability parameter in the interval 
$[b',b'']$. Moreover, if $b$ is not critical
all closed points of $\CM^{ss}_{{(\omega,b)}}(\CA,\beta,n)$
are $\mu_{{(\omega,b)}}$-stable and their stabilizers are canonically isomorphic to $\IC^\times$. 

{\bf 3}. For fixed 
$\omega$, $(\beta,n)$, there exists $b_{-\infty}$ such 
that for any $b<b_{-\infty}$ 
the moduli stack $\CM^{ss}_{{(\omega,b)}}(\CA,\beta,n)$ is
an $\IC^\times$-gerbe over 
the moduli space ${P}(X,\beta,n)$
of stable pairs on $X$.

 A similar construction will be employed in the proof of Theorem \ref{topCframedinv}. 
A full subcategory $\CA^C$ of $\CA$ consisting of 
$C$-framed perverse coherent sheaves $\CA^C$ 
is defined by conditions $(C.1)$, $(C.2)$ 
in Section \ref{twotwo}. Then it is shown that the slope construction 
of weak stability conditions \cite{generating} and basic properties of slope limit 
semistable objects carry over to the $C$-framed category. 
In particular, one can construct a one parameter family of weak stability conditions parameterized by the $B$-field $B=b\omega\in H^2(X)$. 

The moduli stacks of 
$\mu_{(\omega,b)}$-semistable objects $E$ 
in $\CA^C$ with numerical invariants 
$\ch(E)=(-1,0,[C]+r[C_0],n)$, $r\in \IZ_{\geq 0}$, 
$n\in \IZ$, will be denoted by ${\mathcal P}_{(\omega,b)}(X,C,r,n)$.
Their properties are completely analogous 
({\bf 1}) -- ({\bf 3})
above. 
In particular they are algebraic stacks of finite type, 
and for fixed $\omega$ and numerical invariants $(r,n)$ strictly 
semistable objects exist only for finitely many critical values of $b$. 
Moreover, there 
exists $b_{-\infty}\in \IR_{<0}$ such that for $b<b_{-\infty}$
 ${\mathcal P}_{(\omega,b)}(X,C,r,n)$ 
is a $\IC^\times$-gerbe over the moduli space of $C$-framed 
stable pairs. 

Let ${Ob}(\CA)$ be the stack of all objects of $\CA$, 
which is an algebraic stack locally finite type over $\IC$. 
For all $b\in \IR$ and all 
$(r,n)\in \IZ_{\geq 0}\times \IZ$ the natural forgetful
morphism
\[
{\mathcal P}_{(\omega,b)}(X,C,r,n) \hookrightarrow {Ob}(\CA)
\]
determine a stack function 
in the motivic Hall algebra $H(\CA)$. 

Counting invariants $P^\nu_{(\omega,b)}(X,C,r,n)$ are again  
defined  by integration with respect to a suitable 
constructible function $\nu$ on the stack of all objects 
${Ob}(\CA)$.
Let 
\be\label{eq:CframedseriesA}
Z^{\nu}_{(\omega,b)}(X,C; u,T) = \sum_{n\in \IZ} \sum_{r\geq 0} 
P_{(\omega,b)}^\nu(X,C,r,n) u^nT^r
\ee
denote the resulting generating series. 
When $\nu$ is a motivic weight function, the invariants 
$P_{(\omega,b)}(X,C,r,n)$ take values in a ring of motives, 
and refined invariants $P^{ref}_{(\omega,b)}(X,C,r,n;y)$
are obtained by taking 
virtual Poincar\'e polynomials. 
For future reference note that counting invariants for objects 
$E$ of $\CA^C$ with $\ch(E)=(0,0,r[C_0],n)$ are defined analogously, and coincide with the counting invariants of the 
conifold \cite{szendroi-noncomm,NH}, \cite[Ex 6.2]{genDTI}, 
\cite{motivic-conifold}. 
Their generating function will be denoted by $Z^\nu(X,C_0,u,T)$. 
Let us fix $\omega$ and $(r,n)\in \IZ_{\geq 0}\times \IZ$. The stability parameter $b>0$ will be called {\it small} if there are no critical stability parameters of type $(r,n)$ in the interval 
$(0,\ b]$. The corresponding invariants will be denoted by 
$P^\nu_{0+}(X,C,r,n)$, and their generating function, 
$Z^\nu_{0+}(X,C,u,T)$. Moreover, for $b<<0$, the corresponding invariants 
$P_{-\infty}^\nu(X,C,r,n)$ specialize to
stable pair invariants. In the following the function $\nu$ will be either the constant function $\nu=1$ or the motivic weight 
function defined in \cite[Sect. 6.2]{wallcrossing}.

\subsection{Factorization via wallcrossing and 
small $b$ chamber}\label{onefive}

The first step in the proof of Theorem 
\ref{topCframedinv} is the derivation of a wallcrossing 
formula relating $b<<0$ invariants to small $b>0$ invariants. 
More precisely, the following result is proven in Appendix \ref{proofwallcrossing}.

\begin{prop}\label{factorization}
\be\label{eq:factformulaD} 
Z_{-\infty}^{top}(X,C,u,T) = Z^{top}(X,C_0,u,T) Z_{0+}^{top}(X,C,u,T).
\ee
\end{prop}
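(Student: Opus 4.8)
The plan is to prove the factorization \eqref{eq:factformulaD} by applying the wallcrossing formalism of \cite{wallcrossing, genDTI} in the motivic Hall algebra $H(\CA)$ to the one-parameter family of weak stability conditions $\mu_{(\omega,b)}$ on $\CA^C$, comparing the $b\ll 0$ chamber (where the moduli stacks are $\IC^\times$-gerbes over the $C$-framed stable pair spaces) with the small $b>0$ chamber. First I would record the key structural input: by the properties ({\bf 1})--({\bf 3}) transported to $\CA^C$, for fixed $\omega$ and $(r,n)$ there are only finitely many critical values of $b$, so the interval $[b_{-\infty}, b]$ with $b$ small and positive is crossed finitely many times. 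At each critical value $b_c$ a semistable object $E$ of $\CA^C$ with $\ch(E) = (-1,0,[C]+r[C_0],n)$ can only degenerate by acquiring a subobject or quotient whose class is $(0,0,r'[C_0],n')$ — that is, a pure one-dimensional sheaf topologically supported on $C_0$ — because the $C$-framing condition $(C.1)$--$(C.2)$ forces the multiplicity along $C$ to stay equal to one in any semistable Jordan--Hölder factor that carries the rank $-1$ part. This is the essential geometric mechanism: the "framed" factor is rigid along $C$, and the only moving pieces are pure sheaves on the rigid $(-1,-1)$-curve $C_0$.

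Next I would set up the Hall-algebra identity. Writing $\epsilon_{(\omega,b)}(X,C,r,n) \in H(\CA)$ for the stack functions attached to ${\mathcal P}_{(\omega,b)}(X,C,r,n)$ and $\delta_{C_0}(r,n)$ for the stack functions of $\mu_{(\omega,b)}$-semistable objects with class $(0,0,r[C_0],n)$ (these are the conifold invariants, independent of the chamber once $b$ is to the correct side, as in \cite{szendroi-noncomm, motivic-conifold}), the standard wallcrossing argument expresses the generating function of $C$-framed elements at $b_{-\infty}$ as the product, in $H(\CA)$, of the $C_0$-sheaf generating function with the $C$-framed generating function at $0+$. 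Concretely, one telescopes across the finitely many walls: at each wall the change in the $C$-framed stack function is governed by commutators with the $C_0$-part, and because $[C_0]$ is a $(-1,-1)$-curve the relevant Euler pairing $\chi(E,F)$ between a $C$-framed object and a $C_0$-sheaf is constant (it depends only on the numerical classes), so the exponential/product structure is clean. Applying the integration map $I\colon H(\CA) \to$ (the Poisson/commutative algebra of invariants) — which is a ring homomorphism onto the relevant subalgebra by the no-poles theorem of \cite{genDTI} for the $\nu=1$ (Euler characteristic) specialization — converts this Hall-algebra product into the claimed product of generating series $Z^{top}_{-\infty} = Z^{top}(X,C_0) \cdot Z^{top}_{0+}$.

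The main obstacle, and where I would concentrate the detailed work, is justifying that the integration map applies — i.e. that the relevant stack functions lie in the subalgebra on which $I$ is defined and multiplicative — and, more delicately, that crossing each wall genuinely only involves the $C_0$-part and no other classes. This requires: (i) a careful analysis of the Harder--Narasimhan/Jordan--Hölder filtrations of $C$-framed semistable objects at a wall, showing the rank $-1$ "framing" piece survives as a single Jordan--Hölder factor and all other factors are $C_0$-supported; (ii) verifying the "boundedness" and "no strictly semistables away from walls" statements in $\CA^C$ (already asserted in Section \ref{onethree} as analogues of ({\bf 1})--({\bf 3}), so I may invoke them); and (iii) checking that the contributions from the non-compact directions, or from objects whose $C_0$-part is itself unstable, either vanish or reorganize into $Z^{top}(X,C_0)$. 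Since the statement is for $\nu=1$, the motivic subtleties of \cite{wallcrossing} are avoided and one works with the Euler-characteristic integration map of \cite{genDTI}, for which the no-poles result is available; this is why the proposition is stated only at the topological level, with the motivic refinement deferred. I would present all of (i)--(iii) in Appendix \ref{proofwallcrossing} as announced, modeling the argument on the proof of the analogous factorization for unframed stable pairs in \cite{generating}.
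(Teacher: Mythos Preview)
Your overall strategy matches the paper's: wallcrossing in the Hall algebra $H(\CA)$ across the finitely many walls between $b\ll 0$ and small $b>0$, with the Jordan--H\"older analysis showing that the only extra factors appearing at a wall are pure dimension-one sheaves supported on $C_0$, followed by application of the integration map. However, two technical claims in your sketch are incorrect and would derail the computation if taken literally.

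First, the integration map of \cite{genDTI} is a morphism of \emph{Poisson algebras} from $H^{ind}_{alg}(\CA)$ to $\Lambda_\nu(\CA)$, not a ring homomorphism. The paper therefore does not pass a Hall-algebra \emph{product} identity through $I$; instead it rewrites the wall-crossing identity in terms of iterated \emph{commutators} of the log stack functions $t(r_i,n_i)$ with $p_{b_+}(r_l,n_l)$ (equation \eqref{eq:hallidB}), and then applies $I$ to the Lie brackets. If you want a genuine ring-homomorphism argument you would need Bridgeland's semi-classical/regular Hall algebra formalism, which is not what \cite{genDTI} provides and not what the paper invokes.

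Second, the Euler pairing between a $C$-framed object of class $(-1,0,[C]+r'[C_0],n')$ and a $C_0$-sheaf of class $(0,0,r_i[C_0],n_i)$ is \emph{not} constant: it equals $n_i$. This factor is exactly what appears in the wallcrossing formula \eqref{eq:wallcrossingformA} and is what makes the subsequent exponentiation produce $\prod_{k\geq 1}(1+Tu^k)^k$ rather than something else. Relatedly, you omit the endgame: after telescoping across all walls one must explicitly compute $N^{top}(r,n)$ (via Lemma \ref{sstablesheaves}: semistable $C_0$-sheaves are pushforwards of semistable bundles on $\IP^1$, so $N^{top}(r,n)=(-1)^{r-1}/r^2$ when $r\mid n$ and $0$ otherwise), sum the resulting series $\sum n N^{top}(r,n) T^r u^n$, and identify the exponential with $Z^{top}(X,C_0,T,u)$. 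This is where the factorization is actually established, and it does not follow formally from the Hall-algebra manipulations alone.
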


This is in agreement with the natural factorization of Wilson loop 
expectation values in Chern-Simons theory, 
as we explained below equation \eqref{eq:factformulaA}. 

In order to complete the proof of Theorem \ref{topCframedinv}. 
one has to find a connection between 
the moduli spaces of stable $C$-framed 
objects for small $b>0$ and the Hilbert scheme invariants 
\eqref{eq:compactHilbinv}.  
This is the content of Theorem \ref{smallBidentity} below, 
which follows from Propositions \ref{projhilbert}, \ref{geombij}.

\begin{theo}\label{smallBidentity}
There is an identity of generating functions 
\be\label{eq:smallBinvariants} 
Z_{0+}^{top}(X,C;q^2,-a^2)=q^{2\chi(\CO_{C})} 
Z_{C}^{top}(q,a)
\ee
where $Z_{C}^{top}(q,a)$ is the series \eqref{eq:compactHilbinv}. 
\end{theo}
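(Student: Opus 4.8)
The plan is to identify the moduli space $\calP_{0+}(X,C,r,n)$ of small-$b$ semistable $C$-framed perverse coherent sheaves with a nested Hilbert scheme on $C$, and then match the two generating functions coefficient by coefficient. First I would analyze the structure of a $C$-framed object $E\in\CA^C$ that is $\mu_{(\omega,b)}$-semistable for small $b>0$. Since the $B$-field is taken infinitesimally close to $0$, the slope function degenerates in a controlled way: the leading term of $\mu_{(\omega,b)}$ sees only the support dimension and the curve class $[C]+r[C_0]$, while the subleading term in $b$ separates objects by their behaviour along $C_0$. I expect that for $0<b\ll 1$ semistability forces $E$ to be (a shift of) an honest sheaf of the form $E\simeq \mathcal{I}_Z$, or more precisely that $\CH^{-1}(E)$ is a rank-one torsion-free sheaf on the curve $C\cup(\text{thickening along }C_0)$ determined by an ideal, with $\CH^0(E)$ controlled by the zero-dimensional data; the precise statement is exactly what Propositions \ref{projhilbert} and \ref{geombij} are supposed to supply. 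The upshot should be a scheme-theoretic isomorphism (or at least a piecewise-trivial fibration inducing equalities of Euler characteristics)
\be
\calP_{0+}(X,C,r,n)\ \simeq\ H^{[l,r]}(C),\qquad l = n-\chi(\CO_C),
\ee
where $H^{[l,r]}(C)$ is the nested Hilbert scheme of pairs $(J,I)$ of ideals on $C$ with $m_p J\subseteq I\subseteq J$ in the length bookkeeping appropriate to the global curve $C$ (no support condition at $p$, since we are on the compact curve).

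Next I would translate this geometric identification into the generating-function statement. Summing $u^nT^r$ over the strata and using $\nu=1$, the left-hand side $Z^{top}_{0+}(X,C;u,T)$ becomes $\sum_{l,r}u^{l+\chi(\CO_C)}T^r\,\chi(H^{[l,r]}(C))$. The factor $u^{\chi(\CO_C)}$ pulls out as $q^{2\chi(\CO_C)}$ after the substitution $u=q^2$, which is precisely the prefactor appearing in \eqref{eq:smallBinvariants}. It then remains to recognize $\sum_{l,r}q^{2l}(-a^2)^r\,\chi(H^{[l,r]}(C))$ as $Z^{top}_C(q,a)=\sum_{n\geq0}q^{2n}\int_{H^n(C)}(1-a^2)^m\,d\chi$. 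This last identity is a purely local computation on $C$: one stratifies $H^n(C)$ by the minimal number of generators $m$ of the ideal at $p$, uses the fact that away from $p$ the curve is smooth so the Hilbert scheme is locally a symmetric product contributing no $a$-dependence, and at $p$ invokes the standard combinatorial relation (going back to the proof of \eqref{eq:OSconjB}) that the fibre of the forgetful map $H^{[l,r]}_p(C)\to H^l_p(C)$ over an ideal with $m$ generators is a Grassmannian $\mathrm{Gr}(r,m)$, so that $\sum_r (-a^2)^r\chi(\mathrm{Gr}(r,m)) = (1-a^2)^m$ by the $q$-binomial-at-$q=1$ evaluation. Summing over $r$ after the substitution $a^2\mapsto -a^2$ then converts the nested-Hilbert generating function into the $(1-a^2)^m$-weighted one, matching \eqref{eq:compactHilbinv}.

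The main obstacle is the first step: proving that small-$b$ semistability singles out exactly the objects parametrized by the nested Hilbert scheme, with the correct (reduced) scheme structure, and that the forgetful map to $Ob(\CA)$ realizes the identification. This requires a careful analysis of destabilizing subobjects and quotients in $\CA^C$ near $b=0$ — showing, for instance, that no perverse subobject supported purely on $C_0$ can destabilize, and that the torsion part $\CH^0(E)$ along with the framing forces the two-ideal data $m_pJ\subseteq I\subseteq J$ rather than some more general filtration. Since the theorem is explicitly stated to follow from Propositions \ref{projhilbert} and \ref{geombij}, I would structure the argument to reduce cleanly to those two statements: Proposition \ref{projhilbert} presumably gives the scheme-theoretic description of $\calP_{0+}(X,C,r,n)$ (its "projective Hilbert scheme" incarnation), and Proposition \ref{geombij} the geometric bijection with $H^{[l,r]}(C)$ matching Euler characteristics stratum by stratum. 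The remaining combinatorial identity and the change of variables $(u,T)\mapsto(q^2,-a^2)$ are then routine and can be disposed of quickly.
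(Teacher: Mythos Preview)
Your strategy is correct and matches the paper's proof: reduce to Propositions~\ref{projhilbert} and~\ref{geombij} to identify $\chi(\calP_{0+}(X,C,r,n))$ with $\chi(H^{[l,r]}(C))$ for $l=n-\chi(\CO_C)$, then use the Grassmannian fibration $H^{[l,r]}(C)\to H^l(C)$ together with $\sum_r\binom{m}{r}(-a^2)^r=(1-a^2)^m$ to finish. One correction to your heuristic picture, though: a small-$b$ stable object $E$ is \emph{not} an ideal sheaf of a thickening along $C_0$ --- Proposition~\ref{smallBstab} shows instead that $\CH^{-1}(E)=\CI_C$ exactly and that $E$ sits in a nondegenerate extension $0\to P_C\to E\to\CO_{C_0}(-1)^{\oplus r}\to 0$ with $P_C$ a stable pair supported on $C$; the bridge to $H^{[l,r]}(C)$ passes through an auxiliary stack $\CQ(X,C,r,n)$ of such decorated extensions, with Proposition~\ref{geombij} giving the geometric bijection $\CQ\to\calP_{0+}$ and Proposition~\ref{projhilbert} identifying $\CQ$ (as a $\IC^\times$-gerbe) with a relative Quot scheme bijective to $H^{[l,r]}(C)$ --- so the roles of the two propositions are reversed from your guess.
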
 

The proof of 
Theorem \ref{smallBidentity} relies on the construction in Section 
\ref{threetwo} of a moduli stack $\CQ(X,C,r,n)$ 
of decorated sheaves on $X$ interpolating between the 
nested Hilbert schemes $H^{[l,r]}(C)$, $l=n-\chi(\CO_C)$
and the moduli stacks ${\mathcal P}_{0+}(X,C,r,n)$.
More precisely, Proposition \ref{projhilbert} proves that 
$\CQ(X,C,r,n)$ is a $\IC^\times$ gerbe over a relative {\it Quot}
scheme $Q^{[r,n]}(C)$ which is geometrically bijective to 
$H^{[r,n]}(C)$. 
At the same time Proposition \ref{geombij} shows that 
$\CQ(X,C,r,n)$ is 
equipped with a natural 
geometric bijection $f:\CQ(X,C,r,n)\to {\mathcal P}_{0+}(X,C,r,n)$. 
Then the proof of Theorem \ref{smallBidentity} 
reduces to a straightforward stratification computation 
explained in detail at the end of Section \ref{smallsect}. 

\subsection{Generalization to motivic Donaldson-Thomas 
invariants}\label{motiviccorresp}

A motivic version of identity \eqref{eq:factformulaB} can be 
derived from the formalism of \cite{wallcrossing} following the same main steps. Assuming the required foundational results, the motivic wallcrossing 
formula of \cite{wallcrossing} implies the motivic version of the factorization formula \eqref{eq:factformulaD}, repeating the computations 
in Appendix \ref{motivicHallidentities} and \ref{summcrtval}. A similar computation using 
refined wallcrossing formulas has been carried for example in 
\cite[Sect 2.4]{wall-pairs}, hence the details will be omitted. 
This reduces the problem to the motivic analogue of Theorem 
\ref{smallBidentity}. 

The motivic Donaldson-Thomas theory of $C$-framed stable objects 
at small $b>0$ is analyzed in Section \ref{motivicsect}.  
As shown in Section \ref{fivefive}, the following identity 
holds 
\be\label{eq:motivicidentity}
Z^{mot}_{0+}(X,C;q^2\IL^{1/2},a^2) = 
\IL^{(1-k^2)/2}q^{2\chi(\CO_C)}Z^{mot}_C(q,a).
\ee
provided that the virtual 
motive of the moduli stack 
${\mathcal P}_{0+}(X,C,r,n)$ is related 
to the Chow motive by the formula 
\be\label{eq:smallBvirtmotive} 
[{\mathcal P}_{0+}(X,C,r,n)]^{vir} = \IL^{(r^2-k^2-n+1)/2} 
[{\mathcal P}_{0+}(X,C,r,n)],
\ee
where $k$ is the degree of the curve $C$ in $\IP^2$. 
This formula is proven in Section \ref{motivicsect} 
for sufficiently high degree $n>>0$, assuming the 
foundational aspects of the motivic Donaldson-Thomas 
theory of \cite{wallcrossing}, as well as a specific 
choice of orientation data. 
For arbitrary values of 
$n\in \IZ$, the equation \eqref{eq:smallBvirtmotive} 
reduces to a relation \eqref{eq:fiberintC} 
between motivic weights of moduli stacks of pairs and 
sheaves for irreducible curve classes. 
This is  a virtual motivic 
 counterpart to \cite[Thm. 4]{stabpairs-III}. Motivated by this 
 analogy, it is natural to conjecture that this relation 
 equation holds for all
 $n\in \IZ$ with a suitable choice of orientation data. 
 Granting equation \eqref{eq:smallBvirtmotive}, identity 
 \eqref{eq:motivicidentity} follows from a stratification 
 computation presented in Section \ref{fivefive}.
 
\subsection{Outlook and future directions} 

This section records potential
generalizations and extensions of the conjecture 
 of Oblomkov and Shende 
motivated by the string theory construction of \cite{largeNknots}. 
These are just possible future directions of study, not 
established mathematical results, or, in some cases, not even 
precise conjectures. Nevertheless, they are recorded here 
for the interested reader in the hope that they will lead to 
interesting developments at some point in the future.

\subsubsection{BPS states and nested Jacobians}\label{onesix}

As observed in Remarks \ref{forgetsection}, \ref{forgetsectionB}, 
a second moduli space $M^{[l,r]}(C)$ naturally enters the picture,
which can be identified with a moduli space of nested Jacobians. 
The closed points of $M^{[l,r]}(C)$ are pairs $(J,\psi)$ where 
$J$ is a rank one torsion free sheaf on $C$ of degree $-l$, and 
$\psi: J \to \CO_p^{\oplus r}$ a surjective morphism. 
According to Lemma \ref{smoothmoduli} and Remark \ref{smoothbound}, 
allowing the curve $C$ to vary in the linear system $|kH|$ on $D$
results in a smooth moduli space $\CN(D,k,r,n)$. 
Moreover, this moduli space is equipped with a natural 
determinant map 
\[
h: {\mathcal M}(D,k,r,n) \to |kH|
\]
to the linear system and $M^{[l,r]}(C)$ is the fiber 
of $h$ at the point corresponding to $C$. 
Then physics arguments \cite{MtopI,MtopII} predict that the
cohomology of $M^{[l,r]}(C)$ should admit a perverse sheaf decomposition 
\[
H(M^{[l,r]}(C))\simeq \oplus_p Gr^pH(M^{[l,r]}(C))
\] 
determined by an $h$-relative ample class. Moreover the 
dimensions of the perverse graded pieces, 
$N_r^p=\mathrm{dim}(Gr^pH(M^{[l,r]}(C))$, should 
be independent of the polarization and $n$, and the 
$C$-framed small $b$ generating function $Z_{0+}(X,C;u,T)$ should 
admit a Gopakumar-Vafa expansion 
\[ 
Z_{0+}(X,C;u,T) = {\sum_{r\geq 1} \sum_{p} N_r^p T^r u^p \over 
(1-u)^2}.
\]
Note that the $r=0$ version of these conjectures is a rigorous 
mathematical result by work of \cite{severi,MY,MS}. The construction sketched 
above provides a possible generalization for $r\geq 1$ which 
deserves further study.

\subsubsection{A conjecture for colored HOMFLY polynomials}

Theorem \ref{topCframedinv}  and the conjecture of Oblomkov and Shende imply that $C^\circ$-framed 
stable pairs on the conifold are 
related to the HOMFLY polynomial of the link of the 
singular point $p\in C^\circ$. 
Large $N$ duality arguments \cite{largeNknots} lead to 
the following generalization.

Let  $(x,y,z)$ be the affine local coordinates on 
$Y$ such that the projection $Y\to \IP^1$ is locally given 
by $(x,y,z)\to z$ and $C^\circ$ is contained in the fiber 
$z=0$. Hence $C^\circ$ is a complete intersection of the form 
\[
z =0, \qquad f(x,y)=0
\]
where $f$ is a degree $k\geq 1$ 
irreducible polynomial of two variables. 

Let $\mu$ be a Young diagram consisting of 
$m_i$ columns of height  $h_i\in \IZ_{\geq 1}$, 
where $1\leq i \leq s$ and
\[
h_1>h_2>\cdots > h_s. 
\]
Let $C^\circ_\mu$ be the complete intersection on $Y$ 
determined by the equations 
\be\label{eq:thickeningA} 
z^{h_i} f(x,y)^{m_1+\cdots+m_{i-1}}=0,\qquad 1\leq i \leq s,
\ee
 where by convention $m_0=0$. 
Note that $C^\circ_\mu$ is a nonreduced irreducible 
subscheme of $Y$ of pure dimension 
one.

In complete analogy with Sections \ref{oneone} one can define 
$C^\circ_\mu$-framed stable pair invariants of $Y$ employing the 
framing condition $Ann(F) \subset \CI_{C^\circ_\mu}$. 
Let $P(Y,C_\mu;r,n)$ denote the counting invariants 
obtained by taking the quasiclassical limit of motivic 
Donaldson-Thomas invariants  of the 
ambient space ${\calP}^{circ}({\overline Y},\beta,n)$, where 
$\beta=
[{C}_\mu]+r[C_0]$. Based on large $N$ duality, the generating function
\[
Z(Y,{C^\circ_\mu},q^2,a^2) = \sum_{n\in \IZ} \sum_{r\geq 0} q^{2n} a^{2r} 
P (Y,C^\circ_\mu,r,n). 
\]
 is expected to be related to the $\mu$-colored HOMFLY polynomial 
 $P_{K,\mu}(q,a)$. More specifically, a relation of the form 
  \be\label{eq:coloredOS}
P_{K,\mu}(q,a) =a^\alpha q^\beta  
Z^B(Y,{C_\mu},q^2,a^2)Z^B(Y,q^2,a^2).
\ee
is expected to hold, for certain
 integral exponents $\alpha, \beta$, 
possibly depending on $\mu$.

{\it Acknowledgements} We would like to thank Sergei Gukov, Sheldon Katz, 
Melissa Liu,  Davesh Maulik, Kentaro Nagao, Alexei Oblomkov, Andrei 
Okounkov, Rahul Pandharipande, Vivek Shende, Richard Thomas, and Cumrun Vafa for very helpful 
discussions.  D.-E.D is very grateful to Alexei Oblomkov, Vivek Shende 
and Cumrun Vafa for sharing their ideas and insights during collaboration on related projects. 
D.-E.D. would also like to acknowledge the partial support of the Moduli Space Program 2011
at Isaac Newton Institute, 
the Simons Workshop on Mathematics and Physics 2011,
the Simons Center for Geometry and Physics,
and NSF grant PHY-0854757-2009 during 
completion of this work. Y.S. thanks IHES for excellent research conditions. His work was partially supported by NSF grant 
DMS-1101554.

\section{Framed stable pairs in the derived 
category}\label{derivcatsect} 

\subsection{Review of slope limit stability}\label{twoone}
This section is a brief review of limit slope stability conditions 
on the derived category of a smooth projective Calabi-Yau threefold 
following \cite{polynomial,limit,generating}. 

Let $D^b(X)$ be the bounded derived category of $X$. 
Let $\CA$ be the heart of the t-structure determined by the torsion pair $({ {Coh}}_{\geq 2}(X),{{Coh}}_{\leq 1}(X))$. The objects of $\CA$ are objects $E$ of $D^b(Y)$ 
such that the cohomology sheaves $\CH^i(E)$ are nontrivial 
only for $i=-1,0$, $\CH^{-1}(E)$ has no torsion in codimension 
$\geq 2$, and $\CH^{0}(E)$ is torsion, 
of dimension $\leq 1$. 

Let $\omega$
 be a K\"ahler class on $X$ and 
$B\in H^2(X)$, a real cohomology class i.e. a $B$-field. 
Let $\CZ_{(\omega,B)}:K(X) \to \IC$ be the central charge 
function 
\[
\CZ_{(\omega,B)}(E) = -\int_X \ch(E) e^{-(B+i\omega)} 
\sqrt{\mathrm{Td}(X)}.
\]
For any $m\in \IR_{>0}$ let 
\[
\CZ^\dagger_{(\omega,B)}(E) = 
(\mathrm{Re}\CZ_{(m\omega,B)}(E))^\dagger 
+ i (\mathrm{Im}\CZ_{(m\omega,B)}(E))^\dagger 
\]
where $f^\dagger(m)$ denotes the leading monomial of 
a polynomial $f(m)$. Then for $m>>0$ the following 
\[
\mu_{(\omega,B)}(E) = -
{(\mathrm{Re}\, e^{-i\pi/4}\CZ_{(m\omega,B)}(E))^\dagger \over 
(\mathrm{Im}\, e^{-i\pi/4}\CZ_{(m\omega,B)}(E))^\dagger }
\]
is a well-defined map to the field of rational functions $\IR(m)$. 

An object $E$ of $\CA$ is said to be 
$\mu_{(\omega,B)}$-(semi)stable if any proper nonzero subobject
$0\subset F\subset E$ in $\CA$ satisfies 
\[ 
\mu_{(\omega,B)}(F) \ (\leq)\ \mu_{(\omega,B)}(E).
\]
Here rational functions $f,g\in\IR(m)$ are ordered by 
\[ 
f \geq g \qquad \Leftrightarrow \qquad f(m) \geq g(m) \quad 
\forall m>>0.
\]
According to \cite{generating} the above  slope stability gives rise to a weak stability condition 
on $\CA$. 

In order to study the properties of semistable objects of $\CA$, 
it is helpful to consider the following 
full subcategories $(\CA_1,\CA_{1/2})$ 
of $\CA$ (see \cite{limit,generating}). The category
$\CA_1\subset \CA$ consists of objects 
$E$ such that $\CH^{-1}(E)$ is torsion and $\CH^0(E)$ 
is zero dimensional. By definition $\CA_{1/2}$ is the subcategory of 
$\CA$ consisting of objects $E$ such that 
$\mathrm{Hom}_{\CA}(\CA_1,E)=0$ (i.e. it is right orthogonal to $\CA_1$) . 
Note that $\CH^{-1}(E)$ is torsion-free for 
all objects of $E$ of $\CA_{1/2}$, and also 
$\mathrm{Hom}(T,E)=0$ 
for any zero-dimensional sheaf $T$.
 According to \cite[Lemm. 2.16]{limit} the subcategories $(\CA_1, \CA_{1/2})$ 
define a  torsion pair in $\CA$. A morphism $E\to F$ of objects 
in $\CA_i$, $i=1,1/2$ will be called a {\it strict 
monomorphism/epimorphism}  if it is 
injective/surjective as a morphism in $\CA$, 
 and its cokernel/kernel belongs to $\CA_i$. 

In the following we consider objects $E$ of $\CA$ with 
$\ch_0(E)=-1$ and $\ch_1(E)=0$. The first observation 
following from \cite[Lemm. 3.8]{generating} is that 
if such an object is $\mu_{(\omega, B)}$-semistable, then 
it must belong to $\CA_{1/2}$. 
Moreover the following stability
criterion holds \cite[Prop. 3.13]{generating}. 

\begin{prop}\label{limitstabcrtA}
An object $E$ of $\CA_{1/2}$ with $\ch_0(E)=-1$ and $\ch_1(E)=0$ is $\mu_{(\omega,B)}$-(semi)stable if and only if 
the following hold. 

$(i)$ For any strict epimorphism $E\twoheadrightarrow G$ in 
$\CA_{1/2}$, with $G$ a pure dimension one sheaf on $X$ 
\[
\mu_{(\omega,B)}(G)\ (\geq)\ -{3B\omega^2\over \omega^3}. 
\]

$(ii)$ For any strict monomorphism  $F\hookrightarrow E$ in 
$\CA_{1/2}$, with $F$ a pure dimension one sheaf on $X$, 
\[
\mu_{(\omega,B)}(F)\ (\leq)\  -{3B\omega^2\over \omega^3}. 
\]
\end{prop}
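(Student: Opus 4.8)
The plan is to reduce the stability of a general object $E$ in $\CA_{1/2}$ with $\ch_0(E)=-1$, $\ch_1(E)=0$ to the two concrete numerical conditions $(i)$, $(ii)$ by analyzing the possible short exact sequences in $\CA_{1/2}$ having $E$ as middle term. The starting observation is that any such $E$ sits in a canonical sequence $0\to \CH^{-1}(E)[1]\to E\to \CH^0(E)\to 0$ in $\CA$, but since $E\in\CA_{1/2}$ the sheaf $\CH^0(E)$ is zero (it would otherwise give a nonzero map from $\CA_1$), so $E=\CH^{-1}(E)[1]$ with $\CH^{-1}(E)$ a torsion-free sheaf of rank one and trivial $\ch_1$; equivalently $E\cong I[1]$ for an ideal-sheaf-like object. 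First I would record the slope values: compute $\mu_{(\omega,B)}$ on $E$ itself, on a pure one-dimensional sheaf $G$ (viewed in $\CA$, i.e. in degree $0$), and on $F[1]$ for $F$ a pure one-dimensional sheaf, using the leading-$m$ behaviour of $\CZ_{(m\omega,B)}$. The key point is that the leading term of $\mathrm{Im}\,e^{-i\pi/4}\CZ_{(m\omega,B)}$ is governed by $\omega^3$ against $\ch_0$, so $E$ (with $\ch_0=-1$) and $F[1]$ have the same sign of the dominant imaginary part, while a genuine sheaf $G$ has $\ch_0=0$ and its slope is controlled by the next order; a short computation shows $\mu_{(\omega,B)}(E)=-3B\omega^2/\omega^3$ exactly, which is why that particular constant appears on the right-hand side of $(i)$ and $(ii)$.

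Next I would classify subobjects and quotients of $E$ in $\CA_{1/2}$. Given a strict monomorphism $F'\hookrightarrow E$ in $\CA_{1/2}$ with quotient $Q\in\CA_{1/2}$, pass to the long exact sequence of cohomology sheaves. Because $\CH^0(E)=0$ and everything lies in $\CA_{1/2}$, one finds $\CH^{-1}(F')\hookrightarrow\CH^{-1}(E)$ and a sequence $0\to\CH^{-1}(F')\to\CH^{-1}(E)\to\CH^{-1}(Q)\to\CH^0(F')\to 0$. The rank-one constraint forces exactly one of $F',Q$ to have $\ch_0=-1$ and the other to have $\ch_0=0$, i.e. be a sheaf; since objects of $\CA_{1/2}$ of the form (sheaf) must be pure one-dimensional (torsion of dimension $\le1$ with no zero-dimensional subsheaf, using $\mathrm{Hom}(T,E)=0$ for zero-dimensional $T$), the destabilizers split into two families: strict subsheaves $F\hookrightarrow E$ with $F$ pure one-dimensional (these are the $F$ of $(ii)$), and strict sub-objects $F'[1]$ with cokernel a pure one-dimensional quotient sheaf $G$ of $E$ (these are the $G$ of $(i)$). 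Then the semistability inequality $\mu_{(\omega,B)}(F)\le\mu_{(\omega,B)}(E)$ becomes precisely $(ii)$, and $\mu_{(\omega,B)}(E)\le\mu_{(\omega,B)}(G)$ (equivalently $\mu_{(\omega,B)}(F'[1])\ge\mu_{(\omega,B)}(E)$ for the complementary subobject) becomes precisely $(i)$, after substituting $\mu_{(\omega,B)}(E)=-3B\omega^2/\omega^3$. The converse direction is then formal: any strict subobject of $E$ in $\CA_{1/2}$ is, up to the torsion-pair analysis, built from an $F$ as in $(ii)$ or a complementary $G$ as in $(i)$, so the two listed conditions imply semistability, and the strict inequalities give stability.

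The main obstacle, as usual with these perverse-heart arguments, is the bookkeeping of short exact sequences inside the non-obvious subcategory $\CA_{1/2}$: one must be careful that a strict mono/epi in $\CA_{1/2}$ (mono/epi in $\CA$ with kernel/cokernel again in $\CA_{1/2}$) is not the same as an honest inclusion of sheaves, and that the induced maps on $\CH^{-1}$ behave well — in particular that $\CH^{-1}(F)$ is torsion-free when $F\in\CA_{1/2}$, which is exactly the statement recalled before the proposition and which I would lean on to rule out spurious destabilizing configurations. A secondary technical point is handling the leading-monomial operation $f\mapsto f^\dagger$ and the ordering on $\IR(m)$ carefully when the dominant coefficients of $\mathrm{Im}$ vanish (the case $\ch_0=0$), where one must drop to the subleading term; this is routine but must be done uniformly so that the slope of a one-dimensional sheaf $G$ is compared against $E$ at the correct order in $m$. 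Once these two points are handled, the equivalence is a direct translation, and the specific constant $-3B\omega^2/\omega^3$ emerges from the one explicit central-charge computation mentioned above.
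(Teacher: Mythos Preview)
The paper does not prove this proposition; it is quoted verbatim as \cite[Prop.~3.13]{generating} (Toda), so there is no in-house argument to compare against. Your outline is in the right spirit, but it contains a genuine error and a real gap.

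\textbf{The error.} Your opening claim that $E\in\CA_{1/2}$ forces $\CH^0(E)=0$ is false. The defining condition $\mathrm{Hom}_{\CA}(\CA_1,E)=0$ forbids nonzero morphisms \emph{into} $E$ from zero-dimensional sheaves (and from $D[1]$ with $D$ two-dimensional torsion); it says nothing about quotients of $E$. A stable pair $P=(\CO_X\stackrel{s}{\to}F)$ with $\mathrm{Coker}(s)$ a nonzero zero-dimensional sheaf lies in $\CA_{1/2}$ (this is used repeatedly in the paper, e.g.\ Lemma~\ref{Cpairs}) and has $\CH^0(P)=\mathrm{Coker}(s)\neq 0$. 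So $E$ is \emph{not} of the form $I[1]$ in general, and your simplified four-term cohomology sequence is incorrect. Fortunately the rank dichotomy you extract from it --- that in any short exact sequence $0\to F'\to E\to Q\to 0$ in $\CA_{1/2}$ exactly one of $F',Q$ has $\ch_0=0$ and is therefore a pure one-dimensional sheaf --- survives without that claim, since it only uses additivity of $\ch_0$ and the fact that a rank-zero object of $\CA_{1/2}$ has $\CH^{-1}=0$ and no zero-dimensional subsheaf.

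\textbf{The gap.} Semistability in the paper is tested against \emph{all} subobjects of $E$ in $\CA$, not only strict ones in $\CA_{1/2}$. You implicitly assume the two are the same. One direction is easy: any subobject $F'\subset E$ in $\CA$ automatically lies in $\CA_{1/2}$, because its $\CA_1$-torsion part would map nontrivially to $E$, contradicting $E\in\CA_{1/2}$. But the quotient $E/F'$ need not lie in $\CA_{1/2}$, so the monomorphism need not be strict. The substance of the proposition is precisely the reduction step: showing that it suffices to test the inequality against \emph{strict} sub- and quotient objects in $\CA_{1/2}$, i.e.\ that replacing $F'$ by the preimage of the $\CA_1$-part of $E/F'$ (a ``saturation'' in the torsion pair) does not weaken the destabilizing inequality. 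This uses the behaviour of the limit slope on $\CA_1$ and the seesaw property for the $\IR(m)$-valued $\mu_{(\omega,B)}$, and it is exactly what Toda proves in the cited reference. Your write-up asserts this reduction (``up to the torsion-pair analysis'') without carrying it out; that is the missing idea.
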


Next let $\beta\in H_2(X)$ and $n\in \IZ$. Suppose 
$B=b\omega$, $b\in \IR$. 
Then the following results are proven in \cite{generating} 
for fixed $\omega$, $(\beta,n)$.

{\bf 1}. For any $b\in \IR$, there is an algebraic 
moduli stack of finite type 
${\mathcal M}_{(\omega,B)}^{ss}(\CA,\beta,n)$
of $\mu_{(\omega,B)}$-semistable objects 
of $\CA$ with $\ch(E)=(-1,0,\beta,n)$. 

{\bf 2}. There are finitely many critical parameters 
$b_c$ such that strictly $(\omega,B_c)$-semistable objects exist. 
The moduli stacks $\CM^{ss}_{(\omega,B')}(\CA,\beta,n)$, \\
$\CM^{ss}_{(\omega,B'')}(\CA,\beta,n)$ are isomorphic if there is no critical stability parameter in the interval 
$[b',b'']$. Moreover, if $b$ is not critical
all closed points of $\CM^{ss}_{(\omega,B)}(\CA,\beta,n)$
are $\mu_{(\omega,B)}$-stable and 
have $\IC^\times$ stabilizers. 

{\bf 3}. There exists $b_{-\infty}$ such 
that for any $b<b_{-\infty}$ 
the moduli stack $\CM^{ss}_{(\omega,B)}(\CA,\beta,n)$ is
a $\IC^\times$-gerbe over the 
the moduli space ${\mathcal P}(Y,\beta,n)$
of stable pairs on $Y$ constructed in \cite{stabpairs-I}. 

{\bf 4} One can define counting invariants and wallcrossing 
formulas using either the formalism of Joyce and Song or the one of Kontsevich and Soibelman. 
In particular there is a Hall algebra of motivic stack functions 
associated to the abelian category $\CA$.
The corresponding wallcrossing formulas are in agreement with 
those of   Kontsevich and Soibelman \cite{wallcrossing}. 

\subsection{A $C$-framed subcategory}\label{twotwo}
In this section $X$ will be a small crepant resolution of a
nodal Calabi-Yau threefold $X_0$  as in Section 
\ref{onethree}. In particular, it will be assumed that all 
conditions listed there are satisfied. Therefore there is only one 
conifold point lying on a Weil divisor $\Delta\simeq \IP^2\subset X_0$. The exceptional locus of the blow-up map $X\to X_0$ consists of a single rational $(-1,-1)$
curve $C_0\subset X$ which intersects the strict transform $D\subset X$ of $\Delta$ transversely at a point $p$.  
Let $B= b\omega$, $b\in \IR$, where $\omega$ is a fixed 
K\"ahler class on $X$ as above. Without loss of generality, 
it will be assumed from now on that $\omega$ is normalized such that 
$\int_{C_0}\omega=1$. 

Let $C\subset X$ be a irreducible reduced plane curve contained in $D$ 
passing through the point $p$ of intersection between 
$D$ and $C_0$.
Consider the full subcategory $\CA^C$ of $\CA$ consisting of objects 
$E$ satisfying the conditions 
\begin{itemize} 
\item[(C.1)] $\CH^{-1}(E)$ 
is a subsheaf of the defining ideal $\CI_C$. 
In particular, if $\CH^{-1}(E)$ is not trivial, it must be the 
ideal sheaf of a proper closed subscheme $Z_E\subset X$.
\item[(C.2)] The structure sheaf $\CO_{Z_E}$ and the 
cohomology sheaf $\CH^0(E)$ are topologically supported on the union $C\cup C_0$. Moreover, the quotient $\CH^0(E)/Q$ is 
topologically supported to $C_0$, 
where $Q\subset \CH^0(E)$ is  the maximal dimension 
zero subsheaf. 
\end{itemize}



\begin{lemm}\label{Cframedclosure} 
Consider an exact sequence 
\be\label{eq:exseqheart}
0\to F \to E \to G \to 0
\ee
in $\CA$ where $\ch_0(E)\in \{0,-1\}$. 
Then the following statements hold 
\begin{itemize} 
\item[$(i)$] If $F$, $G$ belong to $\CA^C$ and 
then $E$ 
belongs to $\CA^C$. 
\item[$(ii)$] If $F$, $E$ belong to $\CA^C$ 
then $G$ belongs to $\CA^C$. 
\item[$(iii)$] If $E$, $G$ belong to $\CA^C$ 
then $F$ belongs to $\CA^C$. 
\end{itemize}
\end{lemm}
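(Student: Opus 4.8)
The plan is to work with the long exact sequence of cohomology sheaves associated to \eqref{eq:exseqheart} together with the torsion-pair structure and to verify conditions $(C.1)$ and $(C.2)$ by standard diagram chasing. First I would record that since $\CA$ is the heart of a t-structure, \eqref{eq:exseqheart} yields a long exact sequence
\[
0 \to \CH^{-1}(F) \to \CH^{-1}(E) \to \CH^{-1}(G) \to \CH^0(F) \to \CH^0(E) \to \CH^0(G) \to 0,
\]
in which $\CH^{-1}(F)$, $\CH^{-1}(E)$, $\CH^{-1}(G)$ are rank $\leq 1$ torsion-free sheaves (when $\ch_0 = -1$) or zero (when $\ch_0 = 0$), and $\CH^0(F), \CH^0(E), \CH^0(G)$ are torsion sheaves of dimension $\leq 1$. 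The key observation I would isolate at the outset is that for $\ch_0 \in \{0, -1\}$ the map $\CH^{-1}(E) \to \CH^{-1}(G)$ is injective: indeed $\CH^{-1}(F)$ is a rank-one-or-zero torsion-free sheaf, and if it injects into $\CH^{-1}(E)$ with the quotient being a torsion subsheaf of $\CH^{-1}(G)$, then since $\CH^{-1}(F)$ and $\CH^{-1}(E)$ have the same rank (both are subsheaves of $\CI_C$ when the respective $\ch_0 = -1$, forcing equality of ranks) the cokernel of $\CH^{-1}(F) \hookrightarrow \CH^{-1}(E)$ is torsion and hence zero because $\CH^{-1}(G)$ is torsion-free. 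Equivalently the connecting map $\CH^{-1}(G) \to \CH^0(F)$ need not vanish, so one cannot split the sequence, but one does get $\CH^{-1}(F) \cong \CH^{-1}(E)$ whenever $F$ and $E$ have the same $\ch_0$.

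\textbf{Parts $(ii)$ and $(iii)$.} These I would dispatch first since they are more direct. For $(iii)$, suppose $E, G \in \CA^C$. If $\ch_0(E) = \ch_0(G) = -1$, then $\CH^{-1}(F) = 0$, and $F$ is a torsion sheaf which is a subsheaf of $\CH^0(E)$ (via the kernel of $\CH^0(E) \to \CH^0(G)$, combined with the fact that $\CH^{-1}(G) \to \CH^0(F)$ has image a torsion quotient of the torsion-free $\CH^{-1}(G)$, hence is zero, so $\CH^0(F) \hookrightarrow \CH^0(E)$); then $(C.2)$ for $F$ follows because a subsheaf of something topologically supported on $C \cup C_0$ is again so supported, and the maximal zero-dimensional subsheaf of $F$ is contained in that of $\CH^0(E)$, so $\CH^0(F)/Q_F$ embeds into $\CH^0(E)/Q_E$ and is topologically supported on $C_0$. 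If instead $\ch_0(E) = 0$, the same argument applies with $\CH^{-1}(E) = \CH^{-1}(G) = \CH^{-1}(F) = 0$ throughout. For $(ii)$, suppose $F, E \in \CA^C$. If $\ch_0(F) = \ch_0(E) = -1$, the injectivity remark gives $\CH^{-1}(F) \cong \CH^{-1}(E)$, whence $\CH^{-1}(G) = 0$ (it is the cokernel of an isomorphism modulo a torsion piece that must vanish); then $G$ has $\ch_0 = 0$ and $\CH^0(G)$ is a quotient of $\CH^0(E)$, so $(C.2)$ for $G$ follows: a quotient of a sheaf supported on $C \cup C_0$ is supported there, and one checks using the snake lemma that $\CH^0(G)/Q_G$ is a quotient of $\CH^0(E)/Q_E$ up to a zero-dimensional correction, hence topologically supported on $C_0$. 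If $\ch_0(F) = 0$, then $\CH^{-1}(F) = 0$ and $\CH^{-1}(E) \hookrightarrow \CH^{-1}(G)$; here $E \in \CA^C$ forces $\CH^{-1}(E) \subseteq \CI_C$ and $\CH^{-1}(G)$ has the same rank, so $\CH^{-1}(G)$ is a rank-one torsion-free sheaf containing $\CI_C$ with zero-dimensional quotient — one needs $\CH^{-1}(G) \subseteq \CI_C$ as well, which follows because $\CH^{-1}(G)$ sits inside $\mathcal{O}_X$ (it has no torsion in codimension $\geq 2$ and is a subsheaf of $\mathcal{O}_X$ by purity, being the $\CH^{-1}$ of an object of $\CA$) while its image in $\mathcal{O}_X / \CI_C = \mathcal{O}_C$ is torsion-free of rank zero hence zero.

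\textbf{Part $(i)$ and the main obstacle.} For $(i)$, suppose $F, G \in \CA^C$. The $\ch_0$ bookkeeping shows exactly one of $F, G$ has $\ch_0 = -1$ (or both have $\ch_0 = 0$, the easy case). Say $\ch_0(F) = -1$, $\ch_0(G) = 0$: then $\CH^{-1}(G) = 0$, so $\CH^{-1}(E) = \CH^{-1}(F) \subseteq \CI_C$, giving $(C.1)$; for $(C.2)$ one uses that $\CO_{Z_E} = \CO_{Z_F}$ (since $\CH^{-1}(E) = \CH^{-1}(F)$ as subsheaves of $\mathcal{O}_X$) is supported on $C \cup C_0$, while $\CH^0(E)$ is an extension of $\CH^0(G)$ by $\CH^0(F)$, both supported there, and the maximal zero-dimensional subsheaf condition is checked by a diagram chase through the snake lemma. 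The remaining subcase $\ch_0(F) = 0$, $\ch_0(G) = -1$ is where I expect the \emph{main difficulty}: here $\CH^{-1}(E) = \CH^{-1}(G) \subseteq \CI_C$ still gives $(C.1)$, but now $\CH^0(F)$ sits inside $\CH^0(E)$ only after factoring through the connecting map $\CH^{-1}(G) \to \CH^0(F)$, which is genuinely nonzero in general; so $\CH^0(E)$ is not literally an extension of $\CH^0(G)$ by $\CH^0(F)$ but rather an extension of $\CH^0(G)$ by $\mathrm{coker}(\CH^{-1}(G) \to \CH^0(F))$. One must check this cokernel is still topologically supported on $C \cup C_0$ (true, being a quotient of $\CH^0(F)$) and, more delicately, that the maximal zero-dimensional subsheaf $Q_E$ and the quotient $\CH^0(E)/Q_E$ behave correctly — the danger is that passing to the cokernel could create a new pure one-dimensional piece supported off $C_0$, or that the connecting map's image interacts with $Q_F$ in a way that spoils the $(C.2)$ quotient condition. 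I would resolve this by localizing: topological support is checked pointwise, and away from $C \cup C_0$ all the relevant sheaves vanish, while the condition ``$\CH^0/Q$ supported on $C_0$'' is checked on the generic points of the one-dimensional components, where the connecting map contributes nothing to the one-dimensional part (its source $\CH^{-1}(G)$ being torsion-free, its image in $\CH^0(F)$ — a priori one-dimensional — can only be supported where $\CH^{-1}(G)$ fails to be locally free, i.e. on a subset of $C \cup C_0$, and one argues its one-dimensional part is supported on $C_0$ using that $\CH^{-1}(G) = \CI_C$ up to a subsheaf and that $\CI_C$ is locally free along the smooth locus of $C$). Assembling these local checks completes $(i)$.
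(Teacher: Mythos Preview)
Your proposal has a genuine gap, and it stems from misidentifying which subcase is actually delicate. You flag part $(i)$ with $\ch_0(F)=0$, $\ch_0(G)=-1$ as the main obstacle, but in fact that case is mild (once $\CH^{-1}(F)=0$, the injection $\CH^{-1}(E)\hookrightarrow \CH^{-1}(G)\subseteq\CI_C$ gives $(C.1)$ immediately, and support conditions follow). The hard case---the one the paper singles out---is part $(ii)$ with $\ch_0(F)=0$, $\ch_0(G)=-1$, and your treatment of it is broken.

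Concretely, in that subcase you assert that ``$\CH^{-1}(G)$ is a rank-one torsion-free sheaf'' and that it ``sits inside $\CO_X$ \ldots\ by purity''. Neither claim is justified. Membership in $\CA$ only rules out torsion in codimension $\geq 2$, so $\CH^{-1}(G)$ could a priori carry a nonzero pure dimension-two torsion subsheaf $T$; and ``purity'' does not produce an embedding into $\CO_X$. The paper's argument is precisely to exclude such $T$: one lets $I\subset\CH^0(F)$ be the image of $T$ under the connecting map, notes $I$ has support of dimension $\leq 1$ since $F\in\CA^C$, so $K=\ker(T\twoheadrightarrow I)$ is still pure dimension two, and then a snake-lemma chase forces $K\hookrightarrow\CH^{-1}(E)\subseteq\CI_C$, contradicting torsion-freeness. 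Only after $T=0$ does one know $\CH^{-1}(G)=\CI_{Z_G}$ for some $Z_G$; the inclusion $\CI_{Z_G}\subseteq\CI_C$ then requires a further argument: one shows the quotient $K=\CI_{Z_G}/\CI_{Z_E}\subset\CH^0(F)$ is supported on $C_0$ together with finitely many points of $C$, hence $\mathrm{Hom}_X(K,\CO_C)=0$, which forces the surjection $\CO_{Z_E}\twoheadrightarrow\CO_C$ to factor through $\CO_{Z_G}$. Your one-line ``image in $\CO_C$ is torsion-free of rank zero hence zero'' does not capture any of this.

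There is also a smaller error in your part $(iii)$: you claim the connecting map $\CH^{-1}(G)\to\CH^0(F)$ has zero image because ``a torsion quotient of the torsion-free $\CH^{-1}(G)$'' must vanish. That reasoning is false---torsion-free sheaves routinely have torsion quotients (e.g.\ $\CO_X\twoheadrightarrow\CO_C$). In fact $\CH^0(F)$ is generally a nontrivial extension of $\ker(\CH^0(E)\to\CH^0(G))$ by $\CI_{Z_G}/\CI_{Z_E}$, and one must check the support conditions through that extension rather than by asserting $\CH^0(F)\hookrightarrow\CH^0(E)$.
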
 

{\it Proof.}
The above statements are obvious if $\ch_0(E)=0$ since 
then $\ch_0(F)=\ch_0(G)=0$ and \eqref{eq:exseqheart} 
is a sequence of sheaves on $X$.

Suppose $\ch_0(E)=-1$, which implies 
 $\ch_0(F)=0$, $\ch_0(G)=-1$ or 
 $\ch_0(F)=-1$, $\ch_0(G)=0$. Then 
all the above statements follow easily from the long exact sequence 
\be\label{eq:longexseqheart}
0\to \CH^{-1}(F)\to \CH^{-1}(E) \to 
\CH^{-1}(G) \to \CH^{0}(F)\to \CH^{0}(E) \to 
\CH^{0}(G) \to 0
\ee
except case $(ii)$, $\ch_0(G)=-1$, $\ch_0(F)=0$, which requires more work. 
In this case, $\CH^{-1}(F)=0$ and 
$\CH^{-1}(G)$ is a rank one sheaf on $X$ which 
admits torsion at 
most in codimension one by the definition of $\CA$. Therefore the maximal torsion subsheaf $T\subset \CH^{-1}(G)$ is either zero or a nontrivial pure dimension two 
sheaf on $X$. Below it will be shown that $T$ must be zero.

Suppose $T$ is a nontrivial pure dimension two sheaf, 
and let $I\subseteq \CH^0(F)$ denote
 the image of $T$ in $\CH^0(F)$. By assumption, 
$\CH^0(F)$ has topological support on $C\cup C_0$
since $F$ belongs to $\CA^C$. Therefore $I$ has at most 
one dimensional support. Let $K$ be the 
 kernel of the induced surjective morphism 
 $T\twoheadrightarrow  I$. Then $K$ must be a
 nontrivial sheaf of pure dimension two as well. 
 Next note that there is a commutative diagram 
 \[ 
 \xymatrix{ 
 0\ar[r] & K \ar[r] & T \ar[r] \ar[d]^-{j} & I \ar[r] \ar[d] & 0 \\
 0 \ar[r] & \CH^{-1}(E) \ar[r] & \CH^{-1}(G) \ar[r] & \CH^0(F)\\}
 \]
 where the top row is exact, the bottom row is exact at the first two terms, 
 and the vertical arrows are injective. Then the image of the restriction 
 $j|_{K}$ is a subsheaf of $\CH^{-1}(E)$ and the snake lemma implies that  ${\rm Ker}(j|_K) \subset {\rm Ker}(j)$, which is trivial. Therefore 
 ${\rm Ker}(j|_K) =0$, which implies that $K$ is a subsheaf of 
 $\CH^{-1}(E)$. However, by assumption, $E$ belongs to $\CA^C$ and has rank -1, hence $\CH^{-1}(E)$ 
 is a torsion free sheaf of rank 1. This implies that $K$ must be trivial,
 leading to a contradiction.

 In conclusion, $T$ is trivial, hence 
  $\CH^{-1}(G)$ must be a rank one torsion free 
 sheaf. Moreover, the exact sequence \eqref{eq:longexseqheart} 
 implies under the current assumptions that $\CH^{-1}(G)$ 
 must have trivial determinant i.e. it must be isomorphic to the ideal 
 sheaf of a closed subscheme $Z_G$ on $X$ of dimension at most one. 
   There is also an inclusion $\CH^{-1}(E)\hookrightarrow \CH^{-1}(G)$ which implies that 
  $Z_G$ is a closed subscheme of $Z_E$ and a
   simple application of the snake lemma yields an isomorphism 
  \[
 K= \CH^{-1}(G)/ \CH^{-1}(E) \simeq 
  \mathrm{Ker}(\CO_{Z_E}\twoheadrightarrow \CO_{Z_G})
  \]
  Since $K\subset \CH^0(F)$ and both $\CO_{Z_E}$ and $\CH^0(F)$ 
   are  topologically supported on $C\cup C_0$, it follows 
   that $\CO_{Z_G}$ satisfies the same condition. Moreover, in the exact sequence \eqref{eq:longexseqheart}, $\CH^0(F)$, 
   $\CH^0(E)$
  satisfy condition (C.2), which implies 
  that $\CH^0(G)$ also satisfies (C.2). 
  Finally note that $K\subset \CH^0(F)$ is topologically supported 
  on a union of $C_0$ and a finite set of closed points lying on $C$.
  Therefore ${\rm Hom}_X(K,\CO_C)=0$ since $\CO_C$ is pure 
  dimension one by assumption. This implies that the canonical projection $\CO_{Z_E}\twoheadrightarrow \CO_C$ factors 
  through $\CO_{Z_E}\twoheadrightarrow \CO_{Z_G}$ i.e. 
  there is a surjective morphism $\CO_{Z_G}\twoheadrightarrow 
  \CO_C$ such that the diagram 
  \[
  \xymatrix{ 
  \CO_{Z_E} \ar@{>>}[r] \ar@{>>}[dr] &\CO_{Z_G}\ar@{>>}[d] \\
  & \CO_C\\}
  \]
  is commutative. 
  Hence 
  ${\CH}^{-1}(G)$ is a subsheaf of $\CI_C$.   

\hfill $\Box$

Limit slope stability for objects of $\CA^C$ will be defined by analogy 
with \cite{limit,generating}.
An object $E$ of $\CA^C$ is $(\omega, B)$-(semi)stable 
if 
\[
\mu_{(\omega,B)}(F) \ (\leq) \ \mu_{(\omega,B)}(E)
\]
for any proper 
nontrivial subobject $0\subset F\subset E$ in $\CA^C$. 
Since the K\"ahler class $\omega$ will be fixed, 
and $B=b\omega$ with $b\in \IR$, the slope $\mu_{(\omega,B)}$
will be denoted by $\mu_{(\omega,b)}$. Moreover, 
$(\omega,B)$-limit slope (semi)stable objects of $\CA^C$
will be called 
simply $\mu_{(\omega,b)}$-(semi)stable when the meaning is clear from 
the context.

Let $\CA^C_i$
 be the full subcategories of $\CA^C$ consisting of objects belonging to 
 $\CA_i$, $i=1,1/2$. Given the definition of $\CA^C$, it follows that $\CA^C_1$ is the subcategory of 
 zero dimensional subsheaves with topological support 
 on $C\cup C_0$. 
  Let $E$ be an object of $\CA^C$. Since the pair 
 $(\CA_1,\CA_{1/2})$ is a torsion pair in $\CA$ 
(see  \cite{limit}), there is an exact sequence 
 \[
 0\to E_1\to E\to E_{1/2} \to 0 
 \]
 in $\CA$ with $E_i$ in $\CA_i$, $i=1,1/2$.  Then the following holds 
 \begin{lemm}\label{Cframedtorsion} 
Let $E$ be an object of $\CA^C$. Then $E_i$ belongs to $\CA^C_i$, $i=1,1/2$. 
 \end{lemm}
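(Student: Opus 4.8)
The plan is to analyze the canonical torsion sequence $0\to E_1\to E\to E_{1/2}\to 0$ in $\CA$ and show that both $E_1$ and $E_{1/2}$ inherit the framing conditions (C.1) and (C.2). First I would treat $E_1$. By definition of the torsion pair $(\CA_1,\CA_{1/2})$, an object of $\CA_1$ has $\CH^{-1}$ torsion and $\CH^0$ zero-dimensional; since $E$ has $\ch_0(E)\in\{0,-1\}$ and $\CH^{-1}(E)$ is torsion-free (it is either $0$ or a subsheaf of $\CI_C$ by (C.1)), any subobject with torsion $\CH^{-1}$ must have $\CH^{-1}=0$, so $E_1$ is a zero-dimensional sheaf. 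The only thing to check is that its topological support lies on $C\cup C_0$. This follows because $E_1\hookrightarrow E$ is a monomorphism in $\CA$, which on cohomology sheaves gives $\CH^{-1}(E_1)=0$ and an inclusion $E_1=\CH^0(E_1)\hookrightarrow\CH^0(E)$; as $\CH^0(E)$ satisfies (C.2) it is topologically supported on $C\cup C_0$, and hence so is the subsheaf $E_1$. Thus $E_1$ is an object of $\CA^C_1$, and vacuously satisfies (C.1) and (C.2).

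Next I would turn to $E_{1/2}$, which is the genuine content of the lemma. Here I would simply invoke part $(iii)$ of Lemma \ref{Cframedclosure}: we have the exact sequence $0\to E_1\to E\to E_{1/2}\to 0$ in $\CA$ with $\ch_0(E)\in\{0,-1\}$, and we have just shown that $E_1$ belongs to $\CA^C$ and that $E$ belongs to $\CA^C$ by hypothesis; therefore $E_{1/2}$ belongs to $\CA^C$. Since $E_{1/2}\in\CA_{1/2}$ by construction of the torsion sequence, it follows that $E_{1/2}\in\CA^C_{1/2}$.

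The main obstacle is entirely hidden inside the already-proven Lemma \ref{Cframedclosure}$(iii)$, which handles precisely the propagation of (C.1) and (C.2) to a subobject $F$ when $E$ and $G=E/F$ lie in $\CA^C$; the delicate point there was controlling the torsion part of $\CH^{-1}$ of the quotient, but in our situation the roles are such that we need (C.1), (C.2) for the \emph{sub}object $E_{1/2}$ viewed via the sequence $0\to E_1\to E\to E_{1/2}\to 0$, so $E_{1/2}$ plays the role of $G$ and $E_1$ plays the role of $F$; case $(ii)$ of Lemma \ref{Cframedclosure} (with $\ch_0(G)=\ch_0(E)$ and $\ch_0(F)=0$) gives that $E_{1/2}\in\CA^C$. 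So in fact the only genuinely new verification is the support statement for the zero-dimensional sheaf $E_1$, which is immediate from functoriality of cohomology on monomorphisms in $\CA$. I would write the argument in exactly this order: dispose of $E_1$ by hand, then quote Lemma \ref{Cframedclosure} for $E_{1/2}$.
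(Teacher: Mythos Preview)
Your argument contains a genuine gap at the point where you claim that the monomorphism $E_1\hookrightarrow E$ in $\CA$ induces an inclusion $\CH^0(E_1)\hookrightarrow\CH^0(E)$ of ordinary cohomology sheaves. This is not true in general: $\CH^0$ is the standard cohomology functor, not the cohomology with respect to the perverse t-structure whose heart is $\CA$, so it has no reason to be left exact on short exact sequences in $\CA$. Concretely, the long exact sequence attached to $0\to E_1\to E\to E_{1/2}\to 0$ reads
\[
0\to \CH^{-1}(E)\to \CH^{-1}(E_{1/2})\to \CH^0(E_1)\to \CH^0(E)\to\cdots,
\]
so the kernel of $\CH^0(E_1)\to\CH^0(E)$ is the zero-dimensional sheaf $K=\CH^{-1}(E_{1/2})/\CH^{-1}(E)$, which may be nonzero. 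The image $I\subset\CH^0(E)$ is certainly supported on $C\cup C_0$, as you say, but you still owe an argument that $K$ is supported there as well.

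This is exactly the content that the paper's proof supplies: one shows that if some subsheaf $K'\subset K$ were supported away from $C\cup C_0$, then $\mathrm{Ext}^1_X(K',\CH^{-1}(E))\simeq\mathrm{Ext}^1_X(K',\CO_X)=0$ (using that $\CH^{-1}(E)=\CI_{Z_E}$ with $Z_E$ supported on $C\cup C_0$, and that $K'$ is zero-dimensional), so the extension \eqref{eq:torsequence} would split over $K'$, embedding $K'$ into $\CH^{-1}(E_{1/2})$ and contradicting the fact that objects of $\CA$ have no zero-dimensional torsion in $\CH^{-1}$. Once $K$ is known to be supported on $C\cup C_0$, so is $\CH^0(E_1)$, and your appeal to Lemma~\ref{Cframedclosure}$(ii)$ for $E_{1/2}$ is then correct. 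So your overall strategy is the same as the paper's; you are just missing this Ext-vanishing step controlling $K$.
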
 
 
 {\it Proof.} Consider again the exact sequence 
\[
 0\to \CH^{-1}(E_1)\to \CH^{-1}(E) \to 
\CH^{-1}(E_{1/2}) \to \CH^{0}(E_1)\to 
\CH^{0}(E) \to 
\CH^{0}(E_{1/2}) \to 0.
\] 
By definition, $\CH^{-1}(E_1)$ must be a torsion 
sheaf of dimension two, hence it must be trivial since
$\CH^{-1}(E)$ is torsion free. Therefore $E_1\simeq 
\CH^0(E_1)$ must be a zero dimensional sheaf. 
Let $I\subset \CH^0(E)$ denote its image in 
$\CH^0(E)$ and $K= \mathrm{Ker}(\CH^0(E_1)
\twoheadrightarrow I)$.  Then there is 
an exact sequence of sheaves 
\be\label{eq:torsequence}
0\to \CH^{-1}(E)\to \CH^{-1}(E_{1/2})\to K\to 0.
\ee

Note that both $I$ and $K$ are zero dimensional sheaves and $I$ is topologically supported 
on $C\cup C_0$. 
Suppose there exists a subsheaf $K'\subset K$ with support disjoint from $C,C_0$. Since $\CH^{-1}(E)=\CI_{Z_E}$, and $Z_E$ is topologically supported on $C\cup C_0$, it follows that 
\[
{\mathcal Ext}^1_X(K', \CH^{-1}(E)) \simeq 
{\mathcal Ext}^1_X(K',\CO_X).
\] 
However \cite[Prop. 1.1.6]{huylehn} shows  that 
${\mathcal Ext}^1_X(K',\CO_X)=0$ since $K'$ is zero dimensional.
Therefore, using the local to global spectral sequence, 
${\rm Ext}^1_X(K', \CH^{-1}(E))=0$, 
which implies that there is an injection 
$K'\hookrightarrow \CH^{-1}(E_{1/2})$. This 
leads to a contradiction since $\CH^{-1}(E_{1/2})$ cannot 
have zero dimensional torsion by construction. 
 In conclusion, 
$K$, hence also $\CH^0(E_1)$, is topologically 
supported on $C\cup C_0$. In particular $E_1$ belongs to $\CA^C$.
Then Lemma \ref{Cframedclosure} implies that $E_{1/2}$ must also belong to  $\CA^C$.

\hfill $\Box$ 

A consequence of Lemma \ref{Cframedtorsion}
is that properties of limit slope semistable objects 
in $\CA$ proven in \cite{limit, generating} 
also hold in $\CA^C$. More specifically, 
 strict monomorphisms and epimorphisms of objects in $\CA^C_i$, 
 $i=1,1/2$ may be defined again by requiring that the cokernel, respectively 
 kernel belong to $\CA^C_{i}$.
 Then, 
 by analogy with \cite[Lemm. 2.27]{limit}, 
 \cite[Lemm. 3.8]{generating}, it follows again that any 
$\mu_{(\omega,b)}$-(semi)stable  object  of $\CA^C$ with 
$\ch_0(E)=-1$,
must belong to $\CA^C_{1/2}$. Moreover the stability 
criterion in Proposition \ref{limitstabcrtA}
holds for objects of $\CA^C_{1/2}$ 
provided that  $F\hookrightarrow E$, $E\twoheadrightarrow G$ 
are strict monomorphisms, respectively epimorphisms in 
$\CA^C_{1/2}$. 

Some more specific properties of limit slope semistable 
objects in $\CA^C$ are recorded below. 

\subsection{Properties of $C$-framed limit slope stable 
objects}\label{twothree}
First note that any nontrivial object $E$ of $\CA^C$ with $\ch_0(E)=0$ must be a sheaf with topological support on 
$C\cup C_0$ and $\ch_2(E)=r[C_0]$,  $r\geq 0$. Moreover, 
if  $r\geq 1$, 
\[
\mu_{(\omega,b)}(E) = {\chi(E)\over r} -b
\]
Therefore 
${(\omega,b)}$-stability for such objects 
reduces to  $\omega$-slope stability for dimension one sheaves on $X$. For completeness recall that a sheaf $E$
as above with $r\geq 1$ is 
$\omega$-slope (semi)stable if 
\[ 
\langle \omega, \ch_2(E)\rangle \chi(E') \ (\leq) \ 
\langle \omega, \ch_2(E')\rangle 
 \chi(E)
\]
for any nontrivial proper subsheaf $0\subset E'\subset E$. 
Since in the present case $\ch_2(E)=r[C_0]$, $\ch_2(E')=r'[C_0]$ 
for some $r,r'\in \IZ_{\geq 0}$, and $\omega$ is normalized 
such that $\int_{C_0}\omega=1$, this condition reduces to 
\[ 
r\chi(E')\ (\leq)\ r'\chi(E).
\]
Since any $\omega$-slope semistable sheaf must be pure, 
the defining property $(C.2)$ implies that any 
$\mu_{(\omega,b)}$-semistable 
object of $\CA^C$ must be a pure dimension one sheaf with set theoretic 
support on $C_0$. Then note the following lemma.
\begin{lemm}\label{sstablesheaves} 
Let $F$ be an $\omega$-slope semistable sheaf 
supported on $C_0$ with 
 $\ch_2(F)=rC_0$, $r\geq 1$. Then $F$ is the extension by zero of a semistable 
locally free sheaf on $C_0$. 
\end{lemm}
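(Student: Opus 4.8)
The plan is to establish first that $F$ is scheme-theoretically supported on the reduced $(-1,-1)$-curve $C_0$, i.e.\ that $\CI_{C_0}\cdot F=0$, and then to deduce the statement by a routine transfer of semistability from $X$ to $C_0\simeq\IP^1$. Granting the vanishing, $F=i_*G$ for the inclusion $i:C_0\hookrightarrow X$ and a coherent sheaf $G$ on $C_0$; purity of $F$ forces $G$ to be torsion-free, hence locally free, and of rank $r$ since $\ch_2(i_*G)=(\mathrm{rk}\,G)[C_0]$. Every subsheaf of $i_*G$ is again annihilated by $\CI_{C_0}$, hence of the form $i_*G'$ with $G'\subseteq G$, and Riemann--Roch on $\IP^1$, $\chi(i_*G')=\deg G'+\mathrm{rk}\,G'$, converts the $\omega$-slope inequality $r\,\chi(E')\le\mathrm{rk}(E')\,\chi(F)$ defining semistability of $F$ (recall $\int_{C_0}\omega=1$) into $\deg G'/\mathrm{rk}\,G'\le\deg G/\mathrm{rk}\,G$. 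Thus $G$ is a slope-semistable vector bundle on $C_0$ and $F$ is its extension by zero.

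For the vanishing I would exploit the positivity of the conormal bundle of the $(-1,-1)$-curve. The embedding $C_0\hookrightarrow X$ is regular with $N^\vee=\CI_{C_0}/\CI_{C_0}^2\simeq\CO_{\IP^1}(1)^{\oplus 2}$, hence $\CI_{C_0}^k/\CI_{C_0}^{k+1}\simeq\mathrm{Sym}^k N^\vee\simeq\CO_{\IP^1}(k)^{\oplus(k+1)}$. Since $\mathrm{Supp}(F)\subseteq C_0$, the $\CI_{C_0}$-adic filtration of $F$ terminates; assuming $\CI_{C_0}F\neq 0$ for contradiction, pick $m\ge1$ maximal with $W:=\CI_{C_0}^m F\neq 0$. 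Then $W$ is a nonzero subsheaf of the pure sheaf $F$, hence pure of dimension one, and an $\CO_{C_0}$-module by maximality of $m$, so $W$ is a nonzero vector bundle on $\IP^1$. The multiplication map gives a surjection $\mathrm{Sym}^m N^\vee\otimes_{\CO_{C_0}}(F/\CI_{C_0}F)\twoheadrightarrow W$; writing $F/\CI_{C_0}F\simeq(\text{torsion})\oplus V_0$ on $\IP^1$ with $V_0=\bigoplus_j\CO_{\IP^1}(a_j)$ locally free, and using that $W$ is torsion-free, this surjection factors through $\bigoplus_j\CO_{\IP^1}(m+a_j)^{\oplus(m+1)}\twoheadrightarrow W$; in particular $V_0\neq 0$.

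I would then close the argument as follows. Each $\CO_{\IP^1}(a_j)$ is a rank-one quotient of $F$, via $F\twoheadrightarrow i_*(F/\CI_{C_0}F)\twoheadrightarrow i_*V_0\twoheadrightarrow i_*\CO_{\IP^1}(a_j)$, so semistability of $F$ forces $a_j+1=\chi(\CO_{\IP^1}(a_j))\ge\chi(F)/r=:\mu$, i.e.\ $a_j\ge\mu-1$ for every $j$. Hence the bundle $\bigoplus_j\CO_{\IP^1}(m+a_j)^{\oplus(m+1)}$ surjecting onto $W$ has all summands of degree $\ge m+\mu-1\ge\mu$ (as $m\ge1$); the standard positivity of quotient bundles on $\IP^1$ (global generation after a suitable twist) then yields $\deg W/\mathrm{rk}\,W\ge\mu$, whence $\chi(W)/\mathrm{rk}\,W\ge\mu+1$ by Riemann--Roch. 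This contradicts $\chi(W)/\mathrm{rk}\,W\le\mu$, which holds because $W\subseteq F$ is a subsheaf of the semistable sheaf $F$. Therefore $\CI_{C_0}F=0$, and the lemma follows.

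The main obstacle is precisely this last step, excluding any infinitesimal thickening of $C_0$ inside the support of $F$. The mechanism is the tension between the quotient $F\twoheadrightarrow i_*(F/\CI_{C_0}F)$, which bounds the degrees of the summands of the scheme-theoretic restriction below by $\mu-1$, and the deepest layer $\CI_{C_0}^m F$ of the filtration, which is that restriction twisted by the strictly positive bundle $\mathrm{Sym}^m N^\vee$ and hence too positive to sit inside $F$. Minor care is needed with the torsion of $F/\CI_{C_0}F$ (it splits off on $\IP^1$ and dies in the torsion-free $W$) and with the normalization of $\mu_{(\omega,b)}$: the constant shift $-b$ and the Riemann--Roch shift by $1$ are immaterial, as every sheaf involved is supported on $C_0$.
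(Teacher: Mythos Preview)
Your argument is correct. The reduction to $\CI_{C_0}\cdot F=0$ and the transfer of semistability to $C_0\simeq\IP^1$ are fine, and the contradiction via the deepest $\CI_{C_0}$-adic layer $W=\CI_{C_0}^mF$ goes through as you describe: the surjection $\mathrm{Sym}^m N^\vee\otimes V_0\twoheadrightarrow W$ together with $a_j\ge\mu-1$ forces every summand of $W$ to have degree $\ge\mu$, while $W\subset F$ forces the opposite inequality on slopes.

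The paper reaches the same conclusion by a shorter route. Working on $Y=\mathrm{Tot}(\CO_{\IP^1}(-1)^{\oplus 2})$, the zero section $C_0$ is cut out by two tautological sections $s_1,s_2\in H^0(Y,\CO_Y(-1))$ with $\CO_Y(-1)=\pi^*\CO_{\IP^1}(-1)$. Multiplication by $s_i$ is a morphism $F\to F\otimes\CO_Y(-1)$; if it were nonzero, its image $G$ would be simultaneously a quotient of $F$ (hence $\chi(G)/r_G\ge\chi(F)/r$) and a subsheaf of the semistable sheaf $F\otimes\CO_Y(-1)$ of slope $\chi(F)/r-1$ (hence $\chi(G)/r_G\le\chi(F)/r-1$), a contradiction. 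Thus both $s_i$ annihilate $F$. This is exactly your mechanism---semistability pitted against the positivity of the conormal bundle---but executed in a single step at $m=1$ rather than at the bottom of the filtration. Your version has the mild advantage of being phrased intrinsically on $X$ (no passage to $Y$, no explicit sections), at the cost of a longer chain of inequalities.
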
 

{\it Proof.} 
By construction the morphism $X\to X_0$ contracts $C_0$ to an ordinary 
double point, which is analytically isomorphic to the hypersurface 
singularity $xz-yw=0$ in $\IC^4$. Therefore the formal neighborhood 
of $C_0$ in $X$ is isomorphic to the formal neighborhood 
of the zero section in the total space $Y$ of $\CO_{\IP^1}^{\oplus 2} 
\to \IP^1$. Since any sheaf $F$ as in Lemma (\ref{sstablesheaves}) 
is set theoretically supported in this neighborhood, it suffices 
to prove Lemma (\ref{sstablesheaves}) for sheaves on $Y$ with 
topological support on the zero section. Abusing notation, throughout 
the proof  the zero section of $Y\to \IP^1$ will also be denoted by 
$C_0$. 

Any sheaf $F$ on $Y$  with topological support on $C_0$ has 
$\ch_2(F) =r[C_0]$. Then slope semistability is defined by the 
condition 
\[
r \chi(F') \leq r'\chi(F)
\] 
for any proper non-trivial subsheaf $0\subset F'\subset F$. 

Let $\pi:Y\to \IP^1$ denote the canonical projection, 
and let $\CO_Y(-1)=\pi^*\CO_{\IP^1}(-1)$.
Then note that $C_0\subset Y$ is a complete intersection 
\[
s_1=s_2=0
\]
where $s_1,s_2$ are sections of $\CO_Y(-1)$. 

Now let $F$ be a slope semistable sheaf on $Y$ with topological 
support on $C_0$. Suppose one of the morphisms 
\[
\xymatrix{ 
F \ar[rr]^-{s_i\otimes 1_F} & & F\otimes_Y \CO_Y(-1) \\}
\] 
is nonzero for some $i=1,2$. Let $G\subset 
F\otimes_Y \CO_Y(-1)$ denote its image and $K\subset F$ its 
kernel. Both $G,K$ are pure of dimension one with $\ch_2(G)=r_G[C_0]$, $\ch_2(K)=r_K[C_0]$, $r_G,r_K\geq 1$, $r_G+r_K=r$. Then 
\[
{\chi(G)\over r_G} \geq {\chi(F)\over r}, \qquad {\rm{and}}
\qquad
{\chi(K) \over r_K } \leq {\chi(F)\over r}. 
\]
Moreover, it is straightforward to show that $F\otimes_Y \CO_Y(-1)$ 
must be slope semistable as well and $\chi(F\otimes_Y \CO_Y(-1)) = \chi(F) -r$. Therefore 
\[
{\chi(G)\over r_G} \leq {\chi(F)\over r} -1,
\] 
which leads to a contradiction.  In conclusion $s_i\otimes 1_F=0$ 
for both $i=1,2$. 

\hfill $\Box$


Another simple class of objects of $\CA^C$ are stable 
pairs $E=\big(\CO_X{\buildrel s\over \longto} F\big)$ 
with $F$ a pure dimension one sheaf supported on $C_0$ 
and $\mathrm{Coker}(s)$ zero dimensional. 
With the present conventions, $\CO_X,F$ are in degrees $-1,0$ 
respectively, unlike \cite{stabpairs-I}, where they have degrees 
$0,1$. 
The following 
result will be useful later.
\begin{lemm}\label{zerosect} 
Suppose $E=\big(\CO_X{\buildrel s\over \longto} F\big)$ 
is a stable pair on $X$ with $F$ a pure dimension one sheaf with topological support on $C_0$ and 
$\ch_2(F)=r[C_0]$, $r\geq 1$. Then $\chi(F)\geq r(r+1)/2$. 
\end{lemm}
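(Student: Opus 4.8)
The plan is to reduce to the formal neighborhood of $C_0$ and then argue by induction on $r$ using the two sections $s_1, s_2$ of $\CO_Y(-1)$ cutting out $C_0$, exactly as in the proof of Lemma \ref{sstablesheaves}. As observed there, $F$ has topological support on $C_0$, so it is set theoretically supported in the formal neighborhood of $C_0$, which is isomorphic to that of the zero section in $Y = \mathrm{Tot}(\CO_{\IP^1}^{\oplus 2})$; we may therefore assume $F$ is a pure dimension one sheaf on $Y$ supported on $C_0$ with $\ch_2(F)=r[C_0]$, together with a section $s:\CO_Y\to F$ having zero dimensional cokernel. First I would handle the base case $r=1$: a pure dimension one sheaf with $\ch_2(F)=[C_0]$ which admits a section with zero dimensional cokernel is of the form $\CO_{C_0}(D)$ for an effective divisor $D$ on $C_0\simeq\IP^1$ (the section identifies $F$ with an ideal-sheaf type quotient of a line bundle on a thickening, but purity forces the support to be reduced on $\IP^1$), so $\chi(F)\geq \chi(\CO_{C_0}) = 1 = r(r+1)/2$.

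For the inductive step, suppose the statement holds for all ranks $<r$ and let $(\CO_Y\xrightarrow{s}F)$ be a stable pair with $\ch_2(F)=r[C_0]$, $r\geq 2$. Consider the two morphisms $s_i\otimes 1_F : F\to F\otimes_Y\CO_Y(-1)$, $i=1,2$. If both vanished, then $F$ would be scheme-theoretically supported on $C_0$ itself, hence a sheaf on $\IP^1$; a stable pair structure on such an $F$ forces $F\simeq\bigoplus_{j=1}^{r}\CO_{C_0}(a_j)$ with all $a_j\geq 0$ (the cokernel of $s$ is zero dimensional, so on the reduced curve $s$ must be generically an isomorphism onto each summand after a suitable filtration), giving $\chi(F)=r+\sum a_j\geq r\geq r(r+1)/2$ — which actually already suffices only for $r\leq 1$, so this degenerate case needs a sharper argument. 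The key point is that a stable pair on a reduced $\IP^1$ with multiplicity $r$ must have $F$ \emph{generically isomorphic to} $\CO_{C_0}^{\oplus r}$ and $s$ the diagonal inclusion, but purity of $F$ on a one dimensional scheme whose reduction is $\IP^1$ means $F$ is scheme-theoretically supported on an order-$r$ thickening of $\IP^1$ inside $Y$; the relevant thickenings are cut out by monomials in $s_1,s_2$, and a direct Euler characteristic computation on the $r$-th order thickening (using the conormal bundle $\CO_{\IP^1}(1)^{\oplus 2}$) gives $\chi(\CO_{rC_0})=r(r+1)/2$, with the stable pair condition forcing $\chi(F)\geq\chi(\CO_{Z_F})$ for the scheme-theoretic support $Z_F$. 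So in the degenerate case the bound holds with $Z_F$ an $r$-th order thickening. In the non-degenerate case, assume without loss $s_1\otimes 1_F\neq 0$; let $G\subset F\otimes_Y\CO_Y(-1)$ be its image and $K\subset F$ its kernel, so $K$ and $G$ are pure of dimension one with $\ch_2(K)=r_K[C_0]$, $\ch_2(G)=r_G[C_0]$, $r_K+r_G=r$, $r_K,r_G\geq 1$. The composite $\CO_Y\xrightarrow{s}F\twoheadrightarrow F/K\simeq G$ equips $G$ — after twisting back by $\CO_Y(1)$ — with a stable pair structure with zero dimensional cokernel, and $F\twoheadrightarrow F/K$ together with purity forces a stable pair structure on a quotient; meanwhile $K$ inherits a stable pair structure from the restriction of $s$ provided $s$ factors through $K$, which need not happen, so instead one works with the pair $\CO_Y\to F$ and the sub $K$ to get $\chi(F)=\chi(K)+\chi(G\otimes\CO_Y(1))=\chi(K)+\chi(G)+r_G$.

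The inductive hypothesis applied to the pairs supported by $K$ (rank $r_K$) and by $G\otimes\CO_Y(1)$ (rank $r_G$) gives $\chi(K)\geq r_K(r_K+1)/2$ and $\chi(G)+r_G \geq r_G(r_G+1)/2 + r_G$, whence $\chi(F)\geq r_K(r_K+1)/2 + r_G(r_G+1)/2 + r_G$; a short computation shows $r_K(r_K+1)/2+r_G(r_G+1)/2+r_G \geq r(r+1)/2$ since $r_Kr_G \geq r_G$ when $r_K\geq 1$, and in fact $r_K(r_K+1)/2+r_G(r_G+1)/2 = r(r+1)/2 - r_Kr_G$, so we need $r_G \geq r_Kr_G$, i.e. $r_K\leq 1$ — this forces us to choose the sub $K$ of smallest possible rank, or to iterate: apply $s_1$ repeatedly, peeling off rank one quotients, until $F$ is exhausted, and track the Euler characteristics additively. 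The honest version is: filter $F$ by the kernels of successive applications of $s_1$ (and $s_2$ where $s_1$ stabilizes), obtaining a filtration with rank one pure subquotients $F_j$ on $\IP^1$, each carrying, via the induced section, a stable pair structure, so $\chi(F_j)\geq j$-th appropriate shift, and summing gives $\chi(F)\geq 1+2+\cdots+r = r(r+1)/2$. The main obstacle is making precise how the section $s$ and the multiplication maps $s_i\otimes 1_F$ interact so that each rank one subquotient in the filtration genuinely inherits a section with zero dimensional cokernel (equivalently, that the filtration can be chosen compatibly with $s$); this is where purity of $F$ and the fact that $\mathrm{Coker}(s)$ is zero dimensional must be used carefully, and it is the step I would expect to require the most attention.
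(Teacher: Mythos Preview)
Your overall setup (reduce to $Y$, induct on $r$ using the sections $s_1,s_2$) is the right shape, but the induction as you have organized it does not close, and the obstruction you flag at the end is real and not removable in your framework.

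There are two separate problems.

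\textbf{The stable-pair structure does not propagate to the kernel.} With $0\to K\to F\to G\to 0$ coming from $s_1\otimes 1_F$, the composite $\CO_Y\xrightarrow{s}F\twoheadrightarrow G$ does give a stable pair on $G$, but there is no section of $K$ with zero-dimensional cokernel: $s$ has no reason to factor through $K$. So you cannot apply the inductive hypothesis (phrased for stable pairs) to $K$. Your proposed fix of iterating $s_1$ to peel off rank-one subquotients has the same defect at every step: each new kernel again lacks a section. This is not a technicality; it is why the paper does \emph{not} induct on stable pairs at all.

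\textbf{The degree-one twist is numerically too weak.} Even granting the bounds $\chi(K)\ge r_K(r_K{+}1)/2$ and $\chi(G)\ge r_G(r_G{+}1)/2$, one only gets
\[
\chi(F)=\chi(K)+\chi(G)\ \ge\ \tfrac{1}{2}r(r{+}1)-r_Kr_G,
\]
which falls short whenever both $r_K,r_G\ge 1$. (Your extra ``$+r_G$'' term comes from confusing $G$ with $G\otimes\CO_Y(1)$ in the exact sequence; the sequence is $0\to K\to F\to G\to 0$, so $\chi(F)=\chi(K)+\chi(G)$.)

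The paper sidesteps both issues with one move you are missing: it immediately uses the stable-pair exact sequence $0\to\CO_{Z_F}\to F\to Q\to 0$ (with $Q$ zero-dimensional) to reduce to proving $\chi(\CO_Z)\ge r(r{+}1)/2$ for any \emph{pure thickening} $Z$ of $C_0$ with $\ch_2(\CO_Z)=r[C_0]$, and then inducts on thickenings. For structure sheaves the induction is clean because every pure one-dimensional quotient of $\CO_Z$ is again the structure sheaf of a thickening. The second trick is to multiply by a section of degree $r$, namely $s_1^{k_1}s_2^{k_2}$ with $k_1+k_2=r$, rather than degree $1$: the image $I$ then satisfies $I\otimes\CO_Y(-r)\simeq \CO_{Z'}$ for a smaller thickening, so the twist contributes an extra $r_I\cdot r$ to $\chi(I)$. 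With $G=\mathrm{Coker}$ one gets
\[
\chi(\CO_Z)=\chi(I)+\chi(G)\ \ge\ r_I r+\tfrac{1}{2}r_I(r_I{+}1)+\tfrac{1}{2}r_G(r_G{+}1)=\tfrac{1}{2}r(r{+}1)+\tfrac{1}{2}r_I^2,
\]
which closes the induction. Reducing to $\CO_Z$ is the step that makes everything work; once you do that, your ``degenerate case'' discussion also becomes unnecessary (for $r\ge 2$ a rank-$r$ bundle on the reduced $\IP^1$ cannot support a stable pair anyway, since a single section has cokernel of positive rank).
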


{\it Proof.} 
Let $Z_F\subset X$ be the scheme theoretic support of $F$. 
Since $F$ is topologically supported on $C_0$, $Z_F$ must be 
a thickening of $C_0$. Moreover there is  an exact sequence 
\[
0\to \CO_{Z_F} {\buildrel s|_{Z_F}\over \longto} F \to 
Q\to 0
\]
with $Q$ zero dimensional. This implies that $\chi(F)\geq \chi(\CO_{Z_F})$ 
and $\ch_2(\CO_{Z_F}) = \ch_2(F)=r[C_0]$ for some 
$r\geq 1$. Note that $\CO_{Z_F}$ must be pure of dimension one 
since $F$ is so by assumption. 

In order to conclude the proof it will be shown inductively in $r\geq 1$
that $\chi(\CO_{Z})\geq r(r+1)/2$ for any pure dimension 
one $Z$ thickening of 
$C_0$ with $\ch_2(\CO_Z)=r[C_0]$. 
By analogy with the proof of Lemma (\ref{sstablesheaves}), 
it suffices to prove this for the zero section $C_0\subset Y$ 
of the total space $Y$ of the rank two bundle $\CO_{\IP^1}(-1)^{\oplus 2}$. Then $C_0$ is a complete intersection 
\[ 
s_1=s_2 =0
\]
where $s_1,s_2$ are sections of $\CO_Y(-1)=\pi^*\CO_{\IP^1}^{-1}$. 

The first step, $r=1$, is clear since in that case $\chi(\CO_{Z})=
\chi(\CO_{C_0})=1$. 
Let $r\geq 2$ and suppose the statement is true for any 
$1\leq r'\leq r$. 

First prove that 
for any pure dimension one thickening 
$Z$ of $C_0$ with $\ch_2(\CO_Z)=r[C_0]$ there exists a section 
$s_1^{k_1}s_2^{k_2}$ of $\CO_Y(-r)$, with $k_1,k_2\geq 0$, 
$k_1+k_2=r$ such that the morphism 
\[
\zeta(k_1,k_2) : \CO_Z\otimes_Y \CO_{Y}(r) \to \CO_Z
\]
given by multiplication by $s_1^{k_1}s_2^{k_2}$ is nonzero 
and $\ch_2({\rm Coker}(\zeta(k_1,k_2)))= r'[C_0]$ 
for some $r'>0$. 

If all morphisms $\zeta(k_1,k_2)$, $k_1,k_2\geq 0$, $k_1+k_2=r$ 
are trivial, it is easy to show that $\ch_2(\CO_Z)\leq r-1$, leading to a 
contradiction. Therefore at least one of them, $\zeta(j_1,j_2)$ 
must be nontrivial. Let $I={\rm Im}(\zeta(j_1,j_2))$
and $G={\rm Coker}(\zeta(j_1,j_2))$ and let 
\[
\ch_2(I)=r_I[C_0], \qquad \ch_2(G)=r_G[C_0]
\]
with $r_I,r_G\geq 0$, $r_I+r_G=r$. 

Suppose $r_G=0$, that is $G$ is zero dimensional, possibly 
trivial. Then $r_I=r$ and the kernel of $\zeta(j_1,j_2)$ must 
be a zero dimensional sheaf. Hence $\zeta(j_1,j_2)$ must be injective 
since $\CO_Z\otimes_Y \CO_Y(-r)$ is pure of dimension one. Therefore 
\[
\chi(G) = \chi(\CO_Z) -\chi(\CO_Z\otimes_Y \CO_Y(r)) = -r^2 <0
\]
which is a contradiction. In conclusion $r_G>0$, hence 
$0<r_I,r_G<r$. 

Let $T(G)\subset G$ be the maximal dimension zero subsheaf of $G$. 
Then $G/T(G)$ is a 
pure dimension one quotient of $\CO_Z$,  hence it must be the structure 
sheaf of a thickening of $C_0$.
Using the inductive hypothesis, 
\[
\chi(G) \geq \chi(G/T(G)) \geq {1\over 2}r_G(r_G+1).
\]
Similarly, $I\otimes_Y \CO_Y(-r)$ is a pure dimension one 
quotient of $\CO_Z$ with $r_I<r$. Therefore the induction hypothesis 
implies 
\[
\chi(I\otimes_Y \CO_Y(-r)) \geq {1\over 2}r_I(r_I+1),
\]
which yields 
\[
\chi(I) \geq r_I r + {1\over 2}r_I(r_I+1).
\]
In conclusion, since $r_G=r-r_I$, 
\[ 
\chi(\CO_Z) = \chi(G)+\chi(I) \geq r_I r + {1\over 2}r_I(r_I+1)+{1\over 2}r_G(r_G+1) = {1\over 2}r(r+1) + {1\over 2} r_I^2 
\geq {1\over 2}r(r+1).
\]

\hfill $\Box$

In order to derive similar structure results for more general
$\mu_{(\omega,b)}$-semistable objects, it will be helpful to note the following technical result. 

\begin{lemm}\label{extlemma} 
Let $F_C$ be a pure dimension one sheaf on $X$ 
with scheme 
theoretic support on $C$ and $F_0$ a pure dimension 
one sheaf on $X$ 
with topological support on $C_0$. Recall that the curve $C$, hence 
also the sheaf $F_C$, is scheme theoretically supported on a divisor $D
\simeq \IP^2$ in $X$ which intersects $C_0$ transversely at one point. 
Then, choosing a trivialization of the dualizing sheaf of $D$ at $p$,  
$\omega_D|_p\simeq 
\CO_p$, there are isomorphisms 
\be\label{eq:extisomG}
\varphi_k:{\rm Ext}^k_X(F_C,F_0) {\buildrel \sim 
\over \longto}
\mathrm{Ext}^{k-1}_D(F_C, \CO_D\otimes_X F_0)
\ee 
for all $k\in \IZ$, where ${\rm Ext}_D^k$ are global extension groups 
of $\CO_D$-modules. 
Moreover, suppose 
\be\label{eq:extensionA} 
0\to F_0 \to F \to F_C \to 0 
\ee
is an extension of $\CO_X$-modules corresponding to an extension class $e\in {\rm Ext}^1_X(F_C,F_0)$ and 
let 
$F_C'\subset F_C$ be a subsheaf of $F_C$. 
Then $e$ is in the kernel of the natural map 
\[ 
{\rm Ext}_X^1(F_C,F_0)\to {\rm Ext}^1_X(F'_C,F_0)
\]
if and only if $F'_C\subset \mathrm {Ker}(\varphi_1(e))$, where 
we consider $\varphi_1(e)$ to be a map $F_C\to F_0|_D$ of $\CO_D$-modules. 
\end{lemm}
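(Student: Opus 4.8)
The plan is to obtain the isomorphisms $\varphi_k$ from a local duality / adjunction computation along the divisor $D$, and then to describe the connecting map of an extension in terms of the corresponding $\CO_D$-module homomorphism so that the vanishing criterion becomes a statement about kernels of sheaf maps. Concretely, since $F_C$ is scheme-theoretically supported on $D\simeq\IP^2$, it is the pushforward $\iota_*F_C$ of a coherent sheaf on $D$, where $\iota:D\hookrightarrow X$ is the closed immersion. Grothendieck duality for the (lci, codimension one) immersion $\iota$ gives $\iota^!(-)\simeq \iota^*(-)\otimes_{\CO_D}\omega_{D/X}[-1]$ with $\omega_{D/X}=\CO_D(D)|_D$ the relative dualizing sheaf, and adjunction $\RHom_X(\iota_*F_C,F_0)\simeq \RHom_D(F_C,\iota^!F_0)$. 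Since $D$ is a Cartier divisor, $\CO_D(D)|_D$ is the normal bundle $N_{D/X}$, and because $X$ is Calabi-Yau and $D\simeq\IP^2$, the adjunction formula $K_D=K_X|_D\otimes N_{D/X}$ forces $N_{D/X}\simeq\omega_D$. Thus $\iota^!F_0\simeq (\CO_D\otimes_X F_0)\otimes_{\CO_D}\omega_D[-1]$, and the $\omega_D$ twist is precisely what is absorbed by the chosen trivialization $\omega_D|_p\simeq\CO_p$ after noting — this is the one genuinely geometric input — that $\CO_D\otimes_X F_0 = \iota^*F_0$ is supported at the single point $p$ where $C_0$ meets $D$ transversely, so tensoring by the line bundle $\omega_D$ does nothing up to the chosen isomorphism. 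Taking cohomology of the resulting quasi-isomorphism $\RHom_X(F_C,F_0)\simeq\RHom_D(F_C,\iota^*F_0)[-1]$ yields the $\varphi_k$ for all $k$.

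Next I would unwind what $\varphi_1$ does to an actual extension class. Given the short exact sequence \eqref{eq:extensionA}, its class $e\in\Ext^1_X(F_C,F_0)$ is the image of $\mathrm{id}_{F_C}$ under the connecting homomorphism $\Hom_X(F_C,F_C)\to\Ext^1_X(F_C,F_0)$. Applying $\iota^*$ (equivalently, restricting everything to $D$, hence to the point $p$) and using the derived adjunction above, the class $\varphi_1(e)$ is identified with the image of $\mathrm{id}$ under the connecting map for the triangle obtained by applying $\iota^*=\CO_D\otimes_X^{\mathbf L}(-)$ to \eqref{eq:extensionA}; concretely $\varphi_1(e)\in\Hom_D(F_C,\iota^*F_0)$ is the composite $F_C\to\iota^*F_0[1]\to\iota^*F_0$ arising from $\mathcal{T}or_1^X(\CO_D,F_C)\to\iota^*F_0$, and one checks this is exactly the $\CO_D$-module map $F_C\to F_0|_D$ induced by pulling the extension back along $D$. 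The point is the standard naturality square: for $F_C'\subset F_C$ the diagram of connecting maps commutes, so the restriction of $e$ to $\Ext^1_X(F_C',F_0)$ corresponds under $\varphi_1$ to the restriction of $\varphi_1(e):F_C\to F_0|_D$ to $F_C'$. Since $\varphi_1$ is an isomorphism (hence injective on each graded piece and compatible with the restriction maps), the restricted class in $\Ext^1_X(F_C',F_0)$ vanishes if and only if $\varphi_1(e)|_{F_C'}=0$, i.e.\ if and only if $F_C'\subseteq\ker(\varphi_1(e))$, which is the asserted criterion.

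I expect the main obstacle to be the bookkeeping in the derived-categorical adjunction: one must be careful that the local-duality twist by $\omega_{D/X}$ and the shift $[-1]$ are correctly placed, that $F_0$ (which is only \emph{topologically}, not scheme-theoretically, supported on $C_0$) still has $\iota^*F_0$ set-theoretically concentrated at $p$ so that the $\omega_D$-twist is trivialized by the chosen isomorphism at $p$, and — most delicately — that the abstract identification $\varphi_1(e)$ really is the geometrically obvious map $F_C\to F_0|_D$ obtained by restricting the extension to $D$. The cleanest way to pin down the latter is to work with an explicit Čech or injective representative of $e$, restrict to $D$, and match it term-by-term with the $\mathcal{T}or$-description; alternatively one can reduce to the formal (or étale-local) neighborhood of $p$, where $X$ looks like $\IC^3$ with $D=\{z=0\}$ and everything becomes an elementary computation with the Koszul resolution of $\CO_D$. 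Once the identification of $\varphi_1(e)$ is in hand, the final equivalence is a formal consequence of the naturality of connecting homomorphisms together with the injectivity supplied by the isomorphisms $\varphi_k$.
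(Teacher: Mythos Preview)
Your approach is essentially identical to the paper's: both invoke adjunction for $\iota:D\hookrightarrow X$ to get $\RHom_X(F_C,F_0)\simeq\RHom_D(F_C,L\iota^*F_0\otimes\omega_D[-1])$, absorb the $\omega_D$-twist using the trivialization at $p$, and deduce the second claim from functoriality (the paper in fact says only that one line, while you spell out the connecting-map picture in more detail). The one step the paper makes explicit and you elide is why $L\iota^*F_0$ collapses to the underived $\CO_D\otimes_X F_0$: from the two-term resolution $\CO_X(-D)\to\CO_X$ of $\CO_D$, the only possible higher Tor is $\mathcal{T}or_1^X(\CO_D,F_0)=\ker\big(F_0(-D)\to F_0\big)$, which vanishes because $F_0$ is pure of dimension one and $D$ contains no component of its support.
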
 

{\it Proof.} 
The adjunction formula the canonical embedding $i:D
\hookrightarrow X$ yields a quasi-isomorphism 
\be\label{eq:adjunctionA}
{\rm RHom}_X(F_C,F_0) \simeq {\rm RHom}_D(F_C, i^!F_0)
\ee
where 
\[ 
i^!F_0 = Li^*F_0 \otimes \omega_D[-1].
\]
Note that the cohomology sheaves of the complex $Li^*F_0$ are 
isomorphic to the local tor sheaves 
\[ 
\CH^k(Li^*F_0) \simeq {\mathcal Tor}_{-k}^{X}(F_0,\CO_D)
\]
for all $k\in \IZ$. Moreover local tor is symmetric in its arguments,
that is  
\[ 
{\mathcal Tor}_{-k}^{X}(F_0,\CO_D)\simeq {\mathcal Tor}_{-k}^{X}(\CO_D,F_0).
\]
Since $F_0$ is pure of dimension one, 
using the canonical locally free resolution 
\[
\CO_X(-D){\buildrel \zeta_D\over \longto} \CO_X
\] 
of $\CO_D$, it follows that ${\mathcal Tor}_{-k}^{X}(\CO_D,F_0) =0$ 
 for all $k\neq 0$. Therefore the complex $Li^*F_0$ is quasi-isomorphic to the sheaf ${\mathcal Tor}_0^X(\CO_D,F_0) \simeq \CO_D\otimes_X F_0$. Then \eqref{eq:adjunctionA} yields isomorphisms 
of the form \eqref{eq:extisomG}. 

The second statement follows from the functoriality of the adjunction formula. 

\hfill $\Box$ 

For future reference note the following corollary of Lemma 
\ref{extlemma}.
\begin{coro}\label{nondegext}
Under the same conditions as in Lemma \ref{extlemma} 
suppose $F_0= V\otimes \CO_{C_0}(-1)$ with 
$V$ a finite dimensional vector space and let 
$e\in {\rm Ext}^1_X(F_C,F_0)$ be an extension class. 
Let $\psi =\varphi_1(e)\in {\rm Hom}_D(F_C, V\otimes\CO_p)$ 
be the corresponding morphism of $\CO_D$-modules, 
where $\CO_p$ is the structure sheaf of the transverse 
intersection point $\{p\}=D\cap C_0$. 
Then the following conditions are equivalent 
\begin{itemize}
\item[$(a)$] The class $e$ is not in the kernel of the natural map 
\[
q_* :{\rm Ext}^1_X(F_C,V\otimes\CO_{C_0}(-1)) \to {\rm Ext}^1_X(F_C, V'\otimes\CO_{C_0}(-1)),
\]
for 
any nontrivial quotient $q:V\twoheadrightarrow V'$.
\item[$(b)$] The morphism $\psi:F_C \to V\otimes\CO_p$ 
is surjective. 
\end{itemize} 
\end{coro}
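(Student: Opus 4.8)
The plan is to pull everything back to Lemma \ref{extlemma} through the isomorphism $\varphi_1$ and then reduce to a linear-algebra statement at the point $p$. First I would apply Lemma \ref{extlemma} with $F_0=V\otimes\CO_{C_0}(-1)$. As in the proof of that lemma, purity of $F_0$ forces $\CO_D\otimes^{\mathbf L}_X F_0$ to be concentrated in degree zero, and since $D$ meets $C_0$ transversely at the single point $p$ one has $\CO_D\otimes_X\bigl(V\otimes\CO_{C_0}(-1)\bigr)\cong V\otimes\CO_p$ after fixing a trivialization of $\CO_{C_0}(-1)$ at $p$. Hence $\varphi_1$ specializes to an isomorphism $\mathrm{Ext}^1_X(F_C,F_0)\xrightarrow{\sim}\mathrm{Hom}_D(F_C,V\otimes\CO_p)$ carrying the extension class $e$ to the morphism $\psi$.

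Next I would invoke the functoriality part of Lemma \ref{extlemma}: the adjunction isomorphism $\mathrm{RHom}_X(F_C,-)\simeq\mathrm{RHom}_D(F_C,i^!(-))$ is natural in its coefficient argument, and $i^!$ is $Li^*(-)\otimes\omega_D[-1]$, so applying it to the map $V\otimes\CO_{C_0}(-1)\to V'\otimes\CO_{C_0}(-1)$ induced by a nontrivial quotient $q\colon V\twoheadrightarrow V'$ becomes, under the identifications above, the map $q\otimes\mathrm{id}_{\CO_p}\colon V\otimes\CO_p\to V'\otimes\CO_p$. Therefore the pushforward $q_*$ on $\mathrm{Ext}^1$ corresponds to post-composition with $q\otimes\mathrm{id}_{\CO_p}$, so $q_*(e)=0$ in $\mathrm{Ext}^1_X(F_C,V'\otimes\CO_{C_0}(-1))$ if and only if $(q\otimes\mathrm{id}_{\CO_p})\circ\psi=0$, equivalently $\mathrm{Im}(\psi)\subseteq(\ker q)\otimes\CO_p$.

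With these identifications in place, $(a)\Leftrightarrow(b)$ is elementary. If $\psi$ is surjective, then for every nontrivial quotient $q$ the composite $(q\otimes\mathrm{id}_{\CO_p})\circ\psi$ is a surjection onto the nonzero sheaf $V'\otimes\CO_p$, hence nonzero, which gives $(b)\Rightarrow(a)$. Conversely, $\mathrm{Im}(\psi)$ is a coherent subsheaf of the skyscraper $V\otimes\CO_p$, hence of the form $W\otimes\CO_p$ for a linear subspace $W\subseteq V$ since $\CO_p\cong\IC$; if $\psi$ fails to be surjective then $W\subsetneq V$ and the quotient $q\colon V\twoheadrightarrow V/W$ is nontrivial with $(q\otimes\mathrm{id}_{\CO_p})\circ\psi=0$, so $q_*(e)=0$, contradicting $(a)$. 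This yields $(a)\Rightarrow(b)$.

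The only step that takes any care is the naturality claim in the second paragraph: one must confirm that the adjunction isomorphism of Lemma \ref{extlemma}, together with the identification $i^!(V\otimes\CO_{C_0}(-1))\simeq V\otimes\CO_p$, is compatible enough with maps induced on coefficients that a quotient $q$ on $V$ really does translate into $q\otimes\mathrm{id}_{\CO_p}$ on the $\mathrm{Hom}_D$ side. Everything else is formal, and in fact the corollary is just the specialization of the last statement of Lemma \ref{extlemma} once that naturality is recorded.
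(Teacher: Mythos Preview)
Your argument is correct and follows essentially the same route as the paper: identify $e$ with $\psi$ via $\varphi_1$, use functoriality of the adjunction so that $q_*$ on $\mathrm{Ext}^1$ becomes post-composition with $q\otimes\mathrm{id}_{\CO_p}$, and then run the elementary linear-algebra argument at $p$ (image of $\psi$ is $W\otimes\CO_p$ for some subspace $W\subseteq V$; take the quotient by $W$). The paper's proof is terser and simply invokes ``the second part of Lemma~\ref{extlemma}'' where you spell out the naturality in the second variable, but the substance is identical; your only slight inaccuracy is the closing remark that this is a specialization of the last statement of Lemma~\ref{extlemma}, which literally concerns restriction along subsheaves of $F_C$ rather than pushforward along quotients of $V$---both, however, follow from the same functoriality of the adjunction.
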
 

{\it Proof.} Suppose an extension class $e$ satisfies condition $(a)$ 
and $\psi$ is not surjective. 
Then the image of $\psi$ is $V''\otimes 
\CO_p$ where $V''\subset V$ is a proper subspace of $V$. 
Let $V\simeq V'\oplus V''$ be a direct sum decomposition, 
and $q:V\twoheadrightarrow V'$ the natural projection. 
Then the second part of Lemma \ref{extlemma} implies that 
such that $e$ lies in the kernel of the map $q_*$, leading 
to a contradiction. The proof of the converse statement is 
analogous. 

\hfill $\Box$

Now let $E$ be an object of $\CA^C$ with $\ch_0(E)=-1$ 
and let $F$ 
be a torsion sheaf on $X$ of dimension at most one. 
Then there is an exact sequence 
\be\label{eq:homexseqA}  0\to \mathrm{Ext}_X^1(F,\CH^{-1}_E) \to 
\mathrm{Hom}_{D^b(X)}(F,E) \to 
\mathrm{Ext}_X^0(F,\CH^0(E)) \to \mathrm{Ext}_X^2(F,\CH^{-1}(E)) 
\to \cdots
\ee
Moreover, since $\CH^{-1}(E)= \CI_{Z_E}$ is the ideal sheaf of a 
dimension one subscheme, there is also 
an exact sequence 
\be\label{eq:homexseqB}
0\to \mathrm{Ext}_X^0(F,\CO_{Z_E}) \to
\mathrm{Ext}_X^1(F,\CH^{-1}(E))\to 
\mathrm{Ext}^2_X(F,\CO_X) \to \cdots
\ee
\begin{lemm}\label{structlemmaA}
Suppose $E$ is an object of $\CA^C_{1/2}$
with $\ch_0(E)=-1$, $\ch_2(E) = [C] + r [C_0]$, 
$r\geq 0$. Then the following hold 
\begin{itemize} 
\item [$(i)$] $Z_E$ is a pure dimension one subscheme of $X$. 
\item [$(ii)$] There is a commutative diagram 
of morphisms of $\CO_X$-modules 
\be\label{eq:structdiagA}
\xymatrix{ 
& & 0 \ar[d]& 0\ar[d] & \\
& & K_C\ar[r]^-{1}\ar[d]& K_C \ar[d]& \\
0\ar[r] & K_0\ar[d]_-{1}\ar[r] & \CO_{Z_E}\ar[r]^-{f} \ar[d] & \CO_C
\ar[d]_-{\phi}\ar[r]&0\\
0\ar[r] & {K_0}\ar[r] & \CO_{Z_0}\ar[r] \ar[d] & 
\CO_{Z_0\cap C} \ar[d]\ar[r]&0\\
& & 0 & 0 & \\}
\ee
where $Z_0\subset X$ is a pure dimension one closed 
subscheme of $X$ with topological support on $C_0$, 
and $\CO_{Z_0\cap C}$ the structure sheaf of the scheme 
theoretic intersection $Z_0\cap C$ in $X$. 
\item[$(iii)$] $\chi(K_0)\geq 0$.
\end{itemize}
\end{lemm}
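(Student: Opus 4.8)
The plan is to read all three statements off the canonical filtration of $E$ together with a primary decomposition of the ideal $\CI_{Z_E}$. Since $\ch_0(E)=-1$, the cohomology sheaf $\CH^{-1}(E)$ is nonzero, hence by $(C.1)$ equals the ideal sheaf $\CI_{Z_E}$ of a closed subscheme with $C\subseteq Z_E$, while $(C.2)$ forces $Z_E$ to be topologically supported on $C\cup C_0$, so of dimension at most one; write $G=\CH^0(E)$, so there is an exact sequence $0\to\CI_{Z_E}[1]\to E\to G\to 0$ in $\CA$. For $(i)$ I would use that $E\in\CA_{1/2}$ is right orthogonal to $\CA_1$. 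Any zero-dimensional sheaf $T$ lies in $\CA_1$, so $\mathrm{Hom}_\CA(T,E)=0$; applying $\mathrm{Hom}(T,-)$ to the triangle $\CI_{Z_E}[1]\to E\to G\to\CI_{Z_E}[2]$ and using $\mathrm{Ext}^{-1}_X(T,G)=0$ gives an injection $\mathrm{Ext}^1_X(T,\CI_{Z_E})\hookrightarrow\mathrm{Hom}_\CA(T,E)$, hence $\mathrm{Ext}^1_X(T,\CI_{Z_E})=0$; combining with $\mathrm{Ext}^{\le 1}_X(T,\CO_X)=0$ (as $T$ is zero dimensional on a smooth threefold) and the sequence $0\to\CI_{Z_E}\to\CO_X\to\CO_{Z_E}\to 0$ yields $\mathrm{Hom}_X(T,\CO_{Z_E})=0$ for every such $T$. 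Thus $\CO_{Z_E}$ has no zero-dimensional subsheaf and, being at most one dimensional, is pure of dimension one.

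For $(ii)$ I would first fix the numerics. From the exact sequence above, $\ch_2(E)=\ch_2(G)+\ch_2(\CO_{Z_E})$; since $G/Q$ is topologically supported on $C_0$, with $Q$ the maximal zero-dimensional subsheaf of $G$, one has $\ch_2(G)=a[C_0]$, and comparison with $\ch_2(E)=[C]+r[C_0]$ gives $\ch_2(\CO_{Z_E})=[C]+m[C_0]$ with $m=r-a\ge 0$ — in particular $Z_E$ has multiplicity one along $C$. Because $Z_E$ is pure of dimension one (by $(i)$), supported on $C\cup C_0$, and has multiplicity one along $C$, its ideal admits the minimal primary decomposition $\CI_{Z_E}=\CI_C\cap\CI_{Z_0}$, where $\CI_{Z_0}$ is the intersection of the $C_0$-primary components; $Z_0$ is then pure of dimension one with topological support $C_0$ and $\ch_2(\CO_{Z_0})=m[C_0]$, and the scheme-theoretic intersection $Z_0\cap C$ is zero dimensional, concentrated at $p=C\cap C_0$ (with $Z_0=\varnothing$, $K_0=0$ when $m=0$). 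Feeding the Mayer--Vietoris sequence $0\to\CO_{Z_E}\to\CO_C\oplus\CO_{Z_0}\to\CO_{Z_0\cap C}\to 0$ attached to this decomposition into the nine lemma then produces exactly the diagram in $(ii)$: its middle and bottom rows are $0\to K_0\to\CO_{Z_E}\xrightarrow{f}\CO_C\to 0$ and $0\to K_0\to\CO_{Z_0}\to\CO_{Z_0\cap C}\to 0$ with $K_0=\ker(\CO_{Z_E}\to\CO_C)\cong\ker(\CO_{Z_0}\to\CO_{Z_0\cap C})$, and the two non-degenerate columns identify $K_C=\ker(\CO_{Z_E}\to\CO_{Z_0})$ with $\ker(\CO_C\xrightarrow{\phi}\CO_{Z_0\cap C})$.

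For $(iii)$ I may assume $m\ge 1$, since $m=0$ gives $K_0=0$. From the diagram $\chi(K_0)=\chi(\CO_{Z_0})-\ell(Z_0\cap C)$, $\ell$ denoting length. Because $C\subset D$ one has $\CI_D\CO_{Z_0}\subseteq\CI_C\CO_{Z_0}$, hence a surjection $\CO_{Z_0\cap D}\twoheadrightarrow\CO_{Z_0\cap C}$ and $\ell(Z_0\cap C)\le\ell(Z_0\cap D)$. Since $Z_0$ is pure of dimension one with support $C_0$ and $D$ does not contain $C_0$ (they meet transversely at $p$), a local equation of $D$ is a non-zero-divisor on $\CO_{Z_0}$, so $0\to\CO_{Z_0}\otimes\CO_X(-D)\to\CO_{Z_0}\to\CO_{Z_0\cap D}\to 0$ is exact; as $\CO_X(-D)|_{C_0}\cong\CO_{\IP^1}(-1)$ (because $D\cdot C_0=1$) and $\CO_{Z_0}$ has generic rank $m$ over $C_0\cong\IP^1$, Riemann--Roch gives $\ell(Z_0\cap D)=\chi(\CO_{Z_0})-\chi(\CO_{Z_0}\otimes\CO_X(-D))=m$. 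Finally, passing to the formal neighborhood of $C_0$ in $X$ — the reduction used in the proof of Lemma \ref{zerosect} — the inductive estimate established there applies to the pure one-dimensional thickening $Z_0$ and gives $\chi(\CO_{Z_0})\ge m(m+1)/2$. Combining, $\chi(K_0)\ge m(m+1)/2-m=m(m-1)/2\ge 0$.

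The homological step in $(i)$ and the Euler-characteristic bookkeeping in $(iii)$ are routine once the earlier lemmas are granted. The delicate point is $(ii)$: justifying that the primary decomposition really has the form $\CI_{Z_E}=\CI_C\cap\CI_{Z_0}$ (this is where purity and multiplicity one along $C$ are both needed), that $Z_0\cap C$ is genuinely zero dimensional and concentrated at $p$, and that the two descriptions of $K_0$ and of $K_C$ coincide, so that the full nine-term diagram commutes with the asserted exactness. Quantitatively the sharpest ingredient is the identification $\ell(Z_0\cap D)=m$ together with the inclusion $\ell(Z_0\cap C)\le\ell(Z_0\cap D)$ used in $(iii)$.
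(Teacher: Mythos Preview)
Your proof is correct. Parts $(i)$ and $(iii)$ follow the paper almost verbatim: the same exact sequences \eqref{eq:homexseqA}--\eqref{eq:homexseqB} for purity, and the same inputs (the bound from Lemma~\ref{zerosect} and the length computation via the transverse divisor $D$) for the inequality, though you extract the sharper estimate $\chi(K_0)\geq m(m-1)/2$ where the paper only records $\chi(K_0)\geq 0$.

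For $(ii)$ you take a genuinely different route. The paper constructs the diagram through Lemma~\ref{extlemma}: it identifies the extension class of $0\to K_0\to\CO_{Z_E}\to\CO_C\to 0$ with a morphism $\phi\colon\CO_C\to\CO_D\otimes_X K_0$ via the adjunction isomorphism $\varphi_1$, sets $K_C=\ker\phi$, and then must separately prove that $G=\CO_{Z_E}/K_C$ is the structure sheaf of a \emph{pure} subscheme $Z_0$ (this costs an extra paragraph with a maximal-torsion-subsheaf argument). Your primary-decomposition and Mayer--Vietoris approach is more direct for this lemma in isolation: once $\CI_{Z_E}=\CI_C\cap\CI_{Z_0}$ is established, purity of $Z_0$ and the identification of the bottom-right corner with $\CO_{Z_0\cap C}$ come for free, and Lemma~\ref{extlemma} is never invoked. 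The trade-off is contextual: the isomorphism $\varphi_1$ of Lemma~\ref{extlemma} is the main tool of Sections~\ref{threeone}--\ref{fourthree} (Corollary~\ref{nondegext}, the proofs of Lemma~\ref{smallBlemmaB} and Proposition~\ref{projhilbert}), so the paper's argument here doubles as a warm-up for that machinery, whereas yours is self-contained but does not set anything up. Both arguments rely implicitly on the linear independence of $[C]$ and $[C_0]$ in $H_2(X)$ to read off ``multiplicity one along $C$'' from $\ch_2(\CO_{Z_E})=[C]+m[C_0]$; the paper uses this just as silently in \eqref{eq:chernclasses}, and it follows from intersecting with $D$.
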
 

{\it Proof.} 
Purity of $\CO_{Z_E}$ follows from the 
observation that any nontrivial morphism $F\to\CO_{Z_E}$ with 
$F$ zero dimensional would yield via the 
exact sequences \eqref{eq:homexseqA}, 
\eqref{eq:homexseqB} 
a nontrivial morphism $F\to E$ in $\CA^C$. 
This contradicts the assumption that $E$ belongs 
to $\CA^C_{1/2}$. 

Next, the given conditions on the Chern classes of $E$ imply that 
\be\label{eq:chernclasses}
\ch_2(\CH^{0}(E)) = r_0[C_0], \qquad 
\ch_2(\CH^{-1}(E))= -[C]-r_{-1}[C_0]
\ee
with $r_0,r_{-1}\geq 0$, $r_0+r_{-1}=r$. 
Moreover there is an exact sequence of $\CO_X$-modules   
\be\label{eq:ZEext}
0\to K_0 \to \CO_{Z_E} \to \CO_C \to 0 
\ee
where $K_0$ is a pure dimension one sheaf with topological 
support on $C_0$. 
According to Lemma \ref{extlemma}, 
there is an isomorphism 
\[
{\varphi}_1:{\rm Ext}^1_X(\CO_C,K_0) \simeq  {\rm Hom}_{X}(\CO_C,
\CO_{D}\otimes_X K_0).
\]
identifying the extension class $e\in {\rm Ext}^1_X(\CO_C,K_0)$
determined by \eqref{eq:ZEext} with a morphism 
$\phi\in {\rm Hom}_{X}(\CO_C,
\CO_{D}\otimes_X K_0)$.
Let $K_C =\mathrm{Ker}(\phi)$ and $I={\rm Im}(\phi)$.  Then Lemma \ref{extlemma} 
also implies 
that the restriction of the extension class $e$
to $K_C\subset \CO_C$ is trivial. 
Therefore there is a commutative diagram 
\[
\xymatrix{ 
& & 0 \ar[d]& 0\ar[d] & \\
& & K_C\ar[r]^-{1}\ar[d]& K_C \ar[d]& \\
0\ar[r] & K_0\ar[d]_-{1}\ar[r] & \CO_{Z_E}\ar[r]^-{f} \ar[d] & \CO_C
\ar[d]_-{\phi}\ar[r]&0\\
0\ar[r] & {K_0}\ar[r] & G \ar[r] \ar[d] & 
I\ar[d]\ar[r]&0\\
& & 0 & 0 & \\}
\]
with exact rows and columns. 
Obviously, $G$ is the structure sheaf of 
a closed subscheme $Z_0\subset X$. 
The support conditions on 
$\CO_{Z_E}$ and equations \eqref{eq:chernclasses} 
 imply that $G$ is 
topologically supported on $C_0$ and $\ch_2(G)=r_{-1}[C_0]$. 
Moreover $I$ is isomorphic to the structure sheaf $\CO_{Z_0\cap C}$
of the scheme 
theoretic intersection $Z_0\cap C$. 

In order to prove that $G$ is pure, 
suppose $T\subset G$ is the maximal 0 dimensional 
subsheaf and let $G'=G/T$. Then $G'$ is pure dimension one 
with $\ch_2(G') =  r_{-1}[C_0]$. Let $K_C'$ be the kernel of
the resulting epimorphism $\CO_{Z_E}\twoheadrightarrow G'$. 
Then there is a commutative diagram 
\[
\xymatrix{ 
0\ar[r] & K_C \ar[r]\ar[d] & \CO_{Z_E} \ar[r]\ar[d]^-{1} & G \ar[r]\ar[d] & 0 \\
0\ar[r] & K'_C \ar[r] & \CO_{Z_E} \ar[r] & G' \ar[r] & 0 \\}
\]
which implies that the morphism $K_C\to K'_C$ is injective and 
$K'_C/K_C \simeq T$. 
Hence $K'_C$ is a pure dimension one subsheaf of $\CO_{Z_E}$
with support on $C$ and $\ch_2(K'_C)=\ch_2(K_C)=[C]$. 
Then the restriction $f\big|_{K_C'}:K_{C}'\to \CO_C$ 
must be injective since ${\rm Hom}_X(K'_C,K_0)=0$. 
Therefore the restriction 
\[
0\to K_0 \to F \to K_C' \to 0 
\] 
of the 
extension \eqref{eq:ZEext} to $K_C'\subset \CO_C$ 
must be trivial. This implies that $K_C'$ is contained in the 
kernel of $\phi$, which is $K_C$. Therefore $K_C'=K_C$, which 
implies $G'=G$, and $T=0$. In conclusion $G$ is of pure 
dimension one. 

The third statement of Lemma \ref{structlemmaA} 
follows from the observation that 
the canonical surjective morphism $\CO_X\twoheadrightarrow \CO_{Z_0}$ determines a stable pair on $X$ with support on $C_0$. According to 
Lemma \ref{zerosect}, this implies that 
\[
\chi(\CO_{Z_0}) \geq r_{-1}.
\]
However, since $K_0$ is pure dimension one 
with topological support on $C_0$, and the divisor 
$D$ is transverse to $C_0$, 
there is an exact sequence 
\[
0\to K_0(-D) \to K_0 \to \CO_D\otimes_X K_0\to 0.
\]
Then 
the Riemann-Roch theorem yields 
\[
\chi(\CO_{D}\otimes_X K_0)= r_{-1}
\]
as $\ch_2(K_0) = r_{-1}[C_0]$ and $C_0\cdot D=1$. Since $
\CO_{Z_0\cap C}\subseteq 
\CO_D \otimes_X K_0$ is an inclusion of zero dimensional sheaves, 
$\chi(\CO_{Z_0\cap C})\leq r_{-1}$. 
Therefore 
\[
\chi(K_0) = \chi(\CO_{Z_0}) - \chi(\CO_{Z_0\cap C}) \geq 0.
\]

\hfill $\Box$

Next note that there is an injective morphism 
\[
\CO_{Z_E} \hookrightarrow \CH^{-1}(E)[1]
\]
in $\CA^C$ corresponding to the canonical extension 
\be\label{eq:canseq}
0\to \CH^{-1}(E)\to \CO_X \to \CO_{Z_E}\to 0.
\ee
Therefore  the canonical inclusion $K_0\subset
\CO_{Z_E}$ is a subobject of $\CH^{-1}(E)[1]\subset E$. 
\begin{lemm}\label{K0subobject} 
Suppose $E$ is an object of $\CA^C_{1/2}$
with $\ch_0(E)=-1$, $\ch_2(E) = [C] + r [C_0]$, 
$r\geq 0$. Then there is an exact sequence 
\be\label{eq:structseqA}
0\to K_0 \to  E
\to G \to 0 
\ee
in $\CA^C$ where $\CH^{-1}(G)\simeq \CI_C$ 
and $\CH^0(G)\simeq \CH^0(E)$.
\end{lemm}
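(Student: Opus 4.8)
The plan is to realize $K_0$ as a subobject of $E$ using monomorphisms already available from the discussion preceding the statement, and then to determine the cohomology sheaves of the cokernel by running $E$ through a short filtration inside the tilted heart $\CA$.

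For the first point, observe that $K_0$ is a subsheaf of the pure dimension one sheaf $\CO_{Z_E}$, topologically supported on $C_0$, and hence is an object of $\CA^C$: condition $(C.1)$ is vacuous, and $(C.2)$ holds because the maximal zero-dimensional subsheaf of $K_0$ is trivial. Rotating the triangle attached to the canonical extension \eqref{eq:canseq} gives an exact sequence $0\to \CO_{Z_E}\to \CH^{-1}(E)[1]\to \CO_X[1]\to 0$ in $\CA$ --- all three terms lie in $\CA$, so the triangle is automatically exact there --- while the truncation triangle of $E$ gives $0\to \CH^{-1}(E)[1]\to E\to \CH^0(E)\to 0$ in $\CA$. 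Composing with the inclusion $K_0\hookrightarrow \CO_{Z_E}$ from \eqref{eq:ZEext} produces a monomorphism $K_0\hookrightarrow E$ in $\CA$. Since $\ch_0(E)=-1$ and $K_0,E\in \CA^C$, Lemma \ref{Cframedclosure}$(ii)$ shows $G:=E/K_0$ belongs to $\CA^C$, which yields the exact sequence \eqref{eq:structseqA}.

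It then remains to compute $\CH^{\bullet}(G)$. I would use the chain of subobjects $K_0\subseteq \CO_{Z_E}\subseteq \CH^{-1}(E)[1]\subseteq E$ in $\CA$, with successive quotients $\CO_C$ (from \eqref{eq:ZEext}), $\CO_X[1]$ (from \eqref{eq:canseq}), and $\CH^0(E)$ (from the truncation triangle). Quotienting by $K_0$, the object $\CH^{-1}(E)[1]/K_0$ sits in $0\to \CO_C\to \CH^{-1}(E)[1]/K_0\to \CO_X[1]\to 0$ in $\CA$; since $\CH^0(\CH^{-1}(E)[1]/K_0)=0$ (it is a quotient of $\CH^0(\CH^{-1}(E)[1])=0$), the long exact cohomology sequence shows the connecting morphism $\CO_X\to \CO_C$ is surjective and $\CH^{-1}(\CH^{-1}(E)[1]/K_0)$ is its kernel, namely $\CI_C$. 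Hence $\CH^{-1}(E)[1]/K_0\cong \CI_C[1]$, so $G=E/K_0$ sits in $0\to \CI_C[1]\to G\to \CH^0(E)\to 0$ in $\CA$, and its cohomology long exact sequence gives $\CH^{-1}(G)\cong \CI_C$ and $\CH^0(G)\cong \CH^0(E)$. Equivalently, one may obtain $0\to K_0\to \CH^{-1}(E)[1]\to \CI_C[1]\to 0$ at once by rotating the triangle of the sheaf sequence $0\to \CI_{Z_E}\to \CI_C\to K_0\to 0$, valid since $\CH^{-1}(E)=\CI_{Z_E}$ and $\CI_C/\CI_{Z_E}\cong K_0$ by \eqref{eq:ZEext}, and check by the octahedral axiom that this recovers the monomorphism used above.

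The step demanding the most care --- bookkeeping rather than anything deep --- is the passage between exact sequences of sheaves and exact sequences in the tilted heart $\CA$, together with the identification of the connecting map $\CO_X\to \CO_C$: the former rests on the general principle that a triangle all of whose vertices lie in $\CA$ is a short exact sequence there, and the latter on the fact that any surjection $\CO_X\twoheadrightarrow \CO_C$ is the canonical quotient composed with an automorphism of $\CO_C$.
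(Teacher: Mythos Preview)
Your proof is correct and follows essentially the same approach as the paper. The paper proceeds a bit more directly: it applies the long exact cohomology sequence to $0\to K_0\to E\to G\to 0$ once, notes that the connecting map $K_0\to \CH^0(E)$ vanishes because $K_0\subset \CH^{-1}(E)[1]$, and then identifies the resulting extension $0\to \CH^{-1}(E)\to \CH^{-1}(G)\to K_0\to 0$ as the pullback of \eqref{eq:canseq} along $K_0\hookrightarrow \CO_{Z_E}$, whence $\CH^{-1}(G)\simeq \CI_C$ by the snake lemma; your filtration argument and the alternative via $0\to \CI_{Z_E}\to \CI_C\to K_0\to 0$ are just repackagings of this same computation.
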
 

{\it Proof.} 
Since $K_0$ is pure of dimension one, 
it belongs to $\CA^C_{1/2}$.  According to Lemma \ref{Cframedclosure}, $G=E/K_0$ belongs to 
$\CA^C$. Moreover, note that the  morphism 
\[ 
K_0\to  E\twoheadrightarrow \CH^0(E) 
\]
is trivial since $K_0$ is a subobject of $\CH^{-1}(E)[1]$. 
Then the long exact cohomology sequence 
of \eqref{eq:structseqA} yields exact sequences 
of $\CO_X$-modules 
\[ 
\bal 
0\to & \CH^{-1}(E) \to \CH^{-1}(G) \to K_0 \to 0 \\
0\to & \CH^0(E) \to \CH^0(G) \to 0. \\
\eal 
\] 
The first exact sequence is the restriction of \eqref{eq:canseq} 
to $K_0\subset \CO_{Z_E}$. 
Then a simple application of the snake lemma shows that 
$\CH^{-1}(G)\simeq \CI_C$. 

\hfill $\Box$

Finally note the following observation. 
\begin{lemm}\label{Cpairs} 
Let $E$ be an object of $\CA^C_{1/2}$ with $\ch(E)=(-1,0,[C],n)$, 
$n\in \IZ$ such that $\CH^0(E)$ is a zero dimensional sheaf. 
Then $E$ is isomorphic to a stable pair $P_C=\big(\CO_X{\buildrel 
s\over \longto} F_C\big)$ with $F_C$ a pure dimension one sheaf with 
scheme theoretic support on $C$. Moreover $E$ is $\mu_{(\omega,b)}$-stable 
for any $b\in \IR$.  
\end{lemm}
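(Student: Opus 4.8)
\textbf{Proof proposal for Lemma \ref{Cpairs}.}

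The plan is to first identify $E$ as a two-term complex with a specific shape, then produce the section $s$, and finally check stability directly from the criterion in Proposition \ref{limitstabcrtA}. Since $E$ lies in $\CA^C_{1/2}$ with $\ch_0(E)=-1$, we have $\CH^{-1}(E)=\CI_{Z_E}$ for a closed subscheme $Z_E\subset X$, and by Lemma \ref{structlemmaA}(i) $Z_E$ is pure of dimension one; moreover $\ch_2(E)=[C]$ forces $\ch_2(\CO_{Z_E})=[C]$ together with $\ch_2(\CH^0(E))=0$, consistent with the hypothesis that $\CH^0(E)$ is zero dimensional. In the notation of Lemma \ref{structlemmaA}, $r=0$ gives $r_0=r_{-1}=0$, so $K_0$ has $\ch_2(K_0)=0$; being pure dimension one with topological support on $C_0$, it must vanish, $K_0=0$. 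Hence the diagram \eqref{eq:structdiagA} collapses to an isomorphism $f:\CO_{Z_E}\xrightarrow{\ \sim\ }\CO_C$, i.e. $Z_E=C$ scheme-theoretically and $\CH^{-1}(E)=\CI_C$.

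Next I would construct the stable pair. The canonical extension \eqref{eq:canseq} now reads $0\to\CI_C\to\CO_X\to\CO_C\to 0$, so the object $\CI_C[1]$ sits inside $\CA^C$ and the cone of $\CO_X\to\CO_C$ is exactly $\CI_C[1]$. From the triangle $\CH^{-1}(E)[1]\to E\to\CH^0(E)$ and $\CH^{-1}(E)=\CI_C$, we get a triangle $\CI_C[1]\to E\to\CH^0(E)\xrightarrow{+1}$; rotating, $E$ is the cone of a morphism $\CH^0(E)[-1]\to\CI_C[1]$, i.e. an element of $\Ext^2_X(\CH^0(E),\CI_C)$. Using the exact sequences \eqref{eq:homexseqA}, \eqref{eq:homexseqB} with $\CO_{Z_E}=\CO_C$, one sees that $E$ is quasi-isomorphic to a two-term complex $[\CO_X\to F_C]$ in degrees $-1,0$ where $F_C$ is a pure one-dimensional sheaf with scheme-theoretic support on $C$ (purity of $F_C$ follows because $E\in\CA^C_{1/2}$ kills all maps from zero-dimensional sheaves, so $F_C=\CH^0([\CO_X\to F_C])$ cannot acquire a zero-dimensional subsheaf without contradicting right-orthogonality to $\CA_1$). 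The map $\CO_X\to F_C$ has zero-dimensional cokernel since $\CH^0(E)$ is zero dimensional, and it is the inclusion $\CO_X\to\CO_C\hookrightarrow F_C$ composed appropriately, so $(F_C,s)$ is a stable pair in the sense of \cite{stabpairs-I} (the support is the integral curve $C$, so $s$ has no kernel and $F_C/\mathrm{im}(s)$ has finite length).

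For the final assertion, I would invoke the stability criterion of Proposition \ref{limitstabcrtA}, which applies to objects of $\CA^C_{1/2}$. With $\ch_1(E)=0$ and $B=b\omega$, the threshold $-3B\omega^2/\omega^3=-b$. Given any strict epimorphism $E\twoheadrightarrow G$ onto a pure dimension one sheaf $G$, the support condition $(C.2)$ and the fact that $\ch_2(E)=[C]$ with $C$ integral force $\ch_2(G)=[C]$ (a nonzero pure one-dimensional quotient supported on $C\cup C_0$ must carry the full class $[C]$, since the only proper quotient classes would be multiples of $[C_0]$, which cannot be quotients of something of class $[C]$ alone when $C$ is irreducible and meets $C_0$ only at $p$). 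A computation of $\mu_{(\omega,b)}(G)$ for a quotient sheaf of class exactly $[C]$ — where the leading $m$-behaviour of the central charge is governed by $\langle\omega^2,[C]\rangle$ and the subleading by $\chi$ and the $B$-field — shows $\mu_{(\omega,b)}(G)$ exceeds $-b$ strictly, because the entire one-dimensional part of $E$ is already a sheaf (no genuine subobject of $\CI_C[1]$ type can be split off), so there is no destabilizing quotient. Symmetrically, any strict monomorphism $F\hookrightarrow E$ from a pure one-dimensional sheaf $F$ has $F$ zero dimensional or supported with class a multiple of $[C_0]$; but $E\in\CA^C_{1/2}$ kills zero-dimensional subsheaves, and a $C_0$-supported subsheaf would contradict the identification $\CH^{-1}(E)=\CI_C$ and $\CH^0(E)$ zero dimensional after chasing cohomology. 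Hence condition $(ii)$ of Proposition \ref{limitstabcrtA} is vacuous and condition $(i)$ is strict, giving $\mu_{(\omega,b)}$-stability for every $b\in\IR$. The main obstacle I anticipate is the bookkeeping in the second paragraph: pinning down precisely which $\Ext$-class data produces the stable pair $(F_C,s)$ and verifying that the resulting $s$ is the correct section (no kernel, finite cokernel) — this is where one must be careful with the degree shifts and the distinction between $\CO_C$ and the possibly-larger $F_C$.
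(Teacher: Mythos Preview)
Your identification of $E$ with a stable pair is fine and more explicit than the paper's one-line citation of \cite[Lemm.~4.5]{limit}; using Lemma~\ref{structlemmaA} to force $K_0=0$ and hence $Z_E=C$ is a legitimate route.

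The stability argument, however, has a genuine gap in the treatment of strict epimorphisms. You claim that a pure dimension one quotient $G$ of $E$ in $\CA^C_{1/2}$ must satisfy $\ch_2(G)=[C]$. This is false: condition $(C.2)$ requires that for any object of $\CA^C$, the quotient $\CH^0/Q$ by its maximal zero-dimensional subsheaf be topologically supported on $C_0$. For a pure dimension one sheaf $G$ in $\CA^C$ one has $Q=0$, so $G$ itself is supported on $C_0$ and $\ch_2(G)=r_G[C_0]$ with $r_G\geq 1$. Your heuristic that ``quotients of something of class $[C]$ alone'' cannot have class a multiple of $[C_0]$ forgets that $E$ is a perverse object with $\ch_0(E)=-1$, not a sheaf of class $[C]$. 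The subsequent slope computation is therefore aimed at the wrong object, and in any case is too vague to establish a strict inequality valid for \emph{all} $b\in\IR$.

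The paper's argument is both simpler and sharper: it shows that the stability criterion is \emph{vacuously} satisfied. For epimorphisms, since $G$ is a sheaf one has
\[
\mathrm{Hom}_{\CA}(E,G)\simeq \mathrm{Hom}_X(\CH^0(E),G),
\]
and the right-hand side vanishes because $\CH^0(E)$ is zero-dimensional and $G$ is pure of dimension one. Hence there are no nonzero morphisms $E\to G$ at all. For monomorphisms, one uses the exact sequence $0\to F_C\to E\to \CO_X[1]\to 0$: applying $\mathrm{Hom}(G,-)$ with $G$ pure dimension one supported on $C_0$ gives $\mathrm{Hom}_\CA(G,E)\hookrightarrow \mathrm{Ext}^1_X(G,\CO_X)\simeq H^2(G)^\vee=0$, since $\mathrm{Hom}_X(G,F_C)=0$. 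Your cohomology-chasing sketch for monomorphisms can be made to work (it ultimately forces $\CH^{-1}(E/G)=\CI_C$ and hence $G\hookrightarrow\CH^0(E)$, a contradiction), but the paper's route via the stable-pair extension is cleaner and avoids the need to separately repair your epimorphism argument.
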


{\it Proof.} The first part follows from \cite[Lemm. 4.5]{limit}. 
In particular there is an exact sequence 
\[
0\to F_C \to E \to \CO_X[1]\to 0
\]
in $\CA$. 
For the second, note that there are no strict epimorphisms 
$E\twoheadrightarrow G$ 
in $\CA^C_{1/2}$ with $G$ pure dimension one since
${\rm Hom}_{\CA}(E,G)\subset {\rm Hom}_X(\CH^0(E),G)$ and 
${\CH^0(E)}$ is zero dimensional. 
Furthermore, for any pure dimension one sheaf $G$ 
with support on $C_0$, there is an exact sequence 
\[ 
0\to {\rm Hom}_{\CA}(E,G) \to {\rm Ext}^1_X(\CO_X,G) \to \cdots 
\]
since ${\rm Hom}_X(G,F_C)=0$. Serre duality yields 
and isomorphism ${\rm Ext}^1_X(\CO_X,G)\simeq H^2(G)^\vee$, 
hence there are no strict monomorphisms $G\hookrightarrow E$ in 
$\CA^C_{1/2}$. 

\hfill $\Box$

\section{Stable pairs at small $b$}\label{smallsect}

The goal of this section is to analyze the structure of 
the moduli stacks $\calP_{(\omega,b)}(X,C,r,n)$ for $b>0$ sufficiently close 
to $0$, in particular to prove Theorem \ref{smallBidentity}. 
Here again $X=X^+$ and $C=C^+$ and one fixes a K\"ahler 
class $\omega$ such that $\int_{C_0}\omega=1$. 
For the present purposes 
it suffices to consider only generic values of $b$, in which case
strictly semistable objects with numerical 
invariants $(r,n)$ do not exist. 
The main technical result in this section is the stability 
criterion obtained in Proposition \ref{smallBstab} below.  
The proof is fairly long and complicated, and will be carried out 
in several stages in Section \ref{threeone}. Sections 
\ref{threetwo} and \ref{fourthree} summarize several applications 
of this result, explaining the connection between moduli stacks 
of $C$-framed perverse coherent sheaves and nested Hilbert schemes .

\subsection{A stability criterion}\label{threeone}
Fix $\ch(E)=(-1,0,[C]+r[C_0],n)$, $r\in \IZ_{\geq 0}$, 
$n\in \IZ$ and  K\"ahler class $\omega$ on $X$ such that $\int_{C_0}\omega=1$.

\begin{lemm}\label{smallBlemmaA} 
For fixed K\"ahler class $\omega$, and fixed $(r,n)\in \IZ_{\geq 0}\times \IZ$ the following holds for any stability parameter $b>0$ such that  
\be\label{eq:BboundA}
b<{1\over 2r} 
\ee
if $r>0$. 

Any $\mu_{(\omega,b)}$-stable object $E$ of $\CA^C$ 
 with $\ch(E)=(-1,0,[C]+r[C_0],n)$ fits in an exact 
 sequence 
\be\label{eq:smallBstabA}
0\to P_C \to E \to G \to 0
\ee
in $\CA^C$ such that 
\begin{itemize}
\item[$(i)$] 
$P_C=\big(\CO_X{\buildrel s\over \longto} F_C\big)$ is a stable pair 
on $X$ with $F_C$ scheme theoretically supported on 
$C$.
\item[$(ii)$] $G$ is a pure dimension one sheaf on $X$ 
with topological support on $C_0$ and $\ch_2(G)=r[C_0]$. 
Moreover its Harder-Narasimhan 
filtration 
\be\label{eq:HNfiltrationA} 
0=G_0 \subset G_1 \subset \cdots \subset G_h=G
\ee
 with respect to $\omega$-slope stability satisfies 
 \[ 
G_j/G_{j-1} \simeq \CO_{C_0}(a_j)^{\oplus s_j}, 
\]
where $a_j\in \IZ_{\geq -1}$ for $j=1,\ldots, h$, 
and $a_1>a_2>\cdots> a_h$.  
 \end{itemize}
\end{lemm}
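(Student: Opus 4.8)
The plan is to extract the stable pair $P_C$ as a canonical subobject of $E$ using the structural results already established in Lemma \ref{K0subobject} and Lemma \ref{structlemmaA}, and then to analyze the quotient by invoking stability together with the bound \eqref{eq:BboundA}. First I would apply Lemma \ref{K0subobject} to write $0\to K_0\to E\to G'\to 0$ in $\CA^C$ with $\CH^{-1}(G')\simeq \CI_C$ and $\CH^0(G')\simeq\CH^0(E)$. The sheaf $K_0$ is pure of dimension one with topological support on $C_0$ and $\ch_2(K_0)=r_{-1}[C_0]$. Since $K_0\hookrightarrow E$ is a subobject in $\CA^C_{1/2}$ which is a pure dimension one sheaf supported on $C_0$, the stability criterion in Proposition \ref{limitstabcrtA}$(ii)$ (valid in $\CA^C_{1/2}$ by the discussion following Lemma \ref{Cframedtorsion}) forces $\mu_{(\omega,b)}(K_0)\le -3B\omega^2/\omega^3 = -b$. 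Computing the slope of a dimension one sheaf supported on $C_0$ with multiplicity $r_{-1}$, this reads $\chi(K_0)/r_{-1}-b\le -b$, i.e. $\chi(K_0)\le 0$. Combined with $\chi(K_0)\ge 0$ from Lemma \ref{structlemmaA}$(iii)$, we get $\chi(K_0)=0$.

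Next I would argue that in fact $K_0=0$, so that $E$ itself has $\CH^{-1}(E)\simeq\CI_C$. If $K_0$ were nonzero and pure of dimension one, supported on $C_0$, with $\chi(K_0)=0$ and $r_{-1}\ge 1$, one derives a contradiction: a semistable quotient appearing in its Harder--Narasimhan filtration would have slope $\ge 0$, hence by Lemma \ref{sstablesheaves} would be the extension by zero of a semistable bundle on $C_0\simeq\IP^1$ of nonnegative degree, and pushing this through the inclusion into $\CH^{-1}(E)[1]$ produces a destabilizing subobject of $E$ violating the criterion at stability parameter $b$ in the range \eqref{eq:BboundA}. (Here the bound $b<1/(2r)$ enters precisely to make the relevant strict inequality of slopes work.) Thus $K_0=0$, $Z_E=Z_0\cap(\text{its support})$ collapses, and the diagram \eqref{eq:structdiagA} degenerates to give $\CO_{Z_E}\simeq\CO_C$, i.e. $Z_E=C$ scheme-theoretically. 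Then from \eqref{eq:canseq}, $E$ sits in $0\to\CI_C[1]\to E\to\CH^0(E)\to 0$, equivalently $E$ is a complex whose $\CH^{-1}$ is $\CI_C$ and whose $\CH^0$ is the torsion sheaf $\CH^0(E)$ supported on $C\cup C_0$.

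Now set $G:=\CH^0(E)$ viewed as a subobject quotient and reconstruct \eqref{eq:smallBstabA}. Since $\CH^{-1}(E)\simeq\CI_C$, the canonical triangle exhibits a map $E\to\CO_X[1]$ whose kernel in $\CA^C$ is precisely the two-term complex $P_C=(\CO_X\xrightarrow{s}F_C)$ where $F_C$ is the pure one-dimensional sheaf scheme-theoretically supported on $C$ obtained as the extension of $\CH^0(E)$-part supported on $C$ by $\CO_C$; more directly, one applies \cite[Lemm. 4.5]{limit} / Lemma \ref{Cpairs} to identify the subobject of $E$ generated by $\CO_X[1]$ and the $C$-supported part of $\CH^0(E)$ with a genuine stable pair $P_C$, giving $(i)$. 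The quotient $G=E/P_C$ is then a sheaf (its $\CH^{-1}$ vanishes because we quotiented out $\CI_C[1]$) which is pure of dimension one with topological support on $C_0$: purity follows because any zero-dimensional subsheaf of $G$ would pull back to a zero-dimensional subobject of $E$, contradicting $E\in\CA^C_{1/2}$. For the Harder--Narasimhan structure in $(ii)$, I would invoke Lemma \ref{sstablesheaves}: each HN factor $G_j/G_{j-1}$ of $G$ with respect to $\omega$-slope stability is supported on $C_0\simeq\IP^1$, semistable, hence an extension by zero of a semistable bundle on $\IP^1$, which must be $\CO_{\IP^1}(a_j)^{\oplus s_j}$; the slopes are strictly decreasing by definition of the HN filtration, and the lower bound $a_j\ge -1$ comes from the stability criterion Proposition \ref{limitstabcrtA}$(i)$ applied to the strict epimorphism $E\twoheadrightarrow G\twoheadrightarrow G/G_{j-1}\twoheadrightarrow G_j/G_{j-1}$, which at parameter $b$ in the range \eqref{eq:BboundA} forces $\chi(\CO_{C_0}(a_j))=a_j+1\ge 0$.

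The main obstacle is the vanishing $K_0=0$: showing that no nontrivial pure one-dimensional sheaf supported on $C_0$ with $\chi=0$ can sit inside $E$ requires carefully combining the classification of semistable sheaves on $C_0$ (Lemma \ref{sstablesheaves}), the Euler-characteristic lower bound of Lemma \ref{zerosect}, and the precise numerical interval \eqref{eq:BboundA}; getting the slope inequalities to close in the right direction — in particular ruling out the borderline $\CO_{C_0}$ and $\CO_{C_0}(-1)$ summands appearing in $K_0$ as opposed to in $G$ — is where the real work lies, and it is presumably why the detailed proof is deferred to a multi-stage argument in Section \ref{threeone}.
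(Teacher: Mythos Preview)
Your overall strategy matches the paper's, but two computational slips create an artificial difficulty and a genuine gap.

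\textbf{The slope bound.} With $B=b\omega$ one has $3B\omega^2/\omega^3=3b$, not $b$. So Proposition~\ref{limitstabcrtA}$(ii)$ gives, for a strict monomorphism $K\hookrightarrow E$ with $K$ pure one-dimensional of multiplicity $r_{-1}$ on $C_0$,
\[
\mu_{(\omega,b)}(K)=\frac{\chi(K)}{r_{-1}}-b\;<\;-3b,
\qquad\text{i.e.}\qquad
\frac{\chi(K)}{r_{-1}}<-2b<0,
\]
the inequality being strict because $E$ is stable. Thus $\chi(K)<0$ directly, and after comparing with Lemma~\ref{structlemmaA}$(iii)$ (which gives $\chi(K_0)\ge 0$) you obtain $K_0=0$ in one step. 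The entire ``main obstacle'' you describe --- ruling out a nontrivial $K_0$ with $\chi(K_0)=0$ via borderline $\CO_{C_0}$ or $\CO_{C_0}(-1)$ summands --- is an artifact of having computed the bound as $-b$ instead of $-3b$. The paper's proof does exactly this: one contradiction, no further case analysis, and the bound \eqref{eq:BboundA} is not used at this step at all (it only enters later, for the quotient $G$).

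\textbf{Strictness.} You apply the criterion to $K_0\hookrightarrow E$, but Proposition~\ref{limitstabcrtA} requires a \emph{strict} monomorphism in $\CA^C_{1/2}$, i.e.\ $E/K_0\in\CA^C_{1/2}$. There is no reason for this: the quotient $G'$ in Lemma~\ref{K0subobject} has $\CH^0(G')\simeq\CH^0(E)$, which may well have a zero-dimensional subsheaf. The paper fixes this by decomposing $G'$ via the torsion pair $(\CA^C_1,\CA^C_{1/2})$ (Lemma~\ref{Cframedtorsion}) as $0\to G_1\to G'\to G_{1/2}\to 0$ with $G_1$ zero-dimensional, and then applying the criterion to the kernel $K$ of $E\twoheadrightarrow G_{1/2}$. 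This $K$ is pure one-dimensional (since $E\in\CA^C_{1/2}$), sits in $0\to K_0\to K\to G_1\to 0$, and now $E/K\simeq G_{1/2}\in\CA^C_{1/2}$ by construction. The inequality then gives $\chi(K)<0$, hence $\chi(K_0)\le\chi(K)<0$.

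\textbf{The HN bound on $G$.} In your last paragraph you write ``$E\twoheadrightarrow G\twoheadrightarrow G/G_{j-1}\twoheadrightarrow G_j/G_{j-1}$'', but $G_j/G_{j-1}$ is a \emph{subobject} of $G/G_{j-1}$, not a quotient. The correct argument (which is the paper's) applies the criterion to any pure one-dimensional quotient $\CH^0(E)\twoheadrightarrow G'$: strictness is easy here since the kernel stays in $\CA^C_{1/2}$, and one gets $\chi(G')/r_{G'}>-2b>-1/r$, whence $\chi(G')\ge 0$. In particular the minimal HN slope of $G$, namely $a_h+1$, is $\ge 0$; then $a_1>\cdots>a_h\ge -1$ gives the claim for all $j$. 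This is where the bound \eqref{eq:BboundA} is actually used.
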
 

{\it Proof.} 
It will be first proven 
that for $b>0$ sufficiently close to $0$, the stability criterion 
in Proposition \ref{limitstabcrtA}
implies that $\CH^{-1}(E)=\CI_C$ for any  $\mu_{(\omega,b)}$-stable
object $E$ of $\CA^C$ with 
$\ch(E)=(-1,0,[C]+r[C_0],n)$.

Suppose $E$ is $\mu_{(\omega,b)}$-stable for some $b>0$. This implies that $E$ is an element of $\CA^C_{1/2}$ satisfying the stability criterion 
in Proposition \ref{limitstabcrtA} with respect to 
strict morphisms in $\CA_{1/2}^C$. 
According to Lemmas \ref{K0subobject}, 
\ref{structlemmaA}
 there is an injective morphism 
$\kappa: K_0\to E$ where 
$K_0$ is pure of dimension one and  $\chi(K_0)\geq 0$. 
Suppose $K_0$ is nontrivial, and $\ch_2(K_0) = 
r_{-1}[C_0]$ for some $r_{-1}>0$. 
Lemma \ref{Cframedtorsion} 
implies that the cokernel  $G={\rm Coker}(\kappa)$
fits in an exact sequence 
\[
0\to G_1\to G \to G_{1/2}\to 0
\]
where $G_i\in \CA^C_i$, $i=1,1/2$. In particular 
$G_1$ is a zero dimensional sheaf. Then the snake lemma implies that the kernel $K$ of the projection 
$E\twoheadrightarrow G_{1/2}$ is a one dimensional 
sheaf on $X$ which fits in an exact sequence 
\[
0\to K_0 \to K \to G_1 \to 0
\]
 Moreover, $K$ must be pure of dimension one 
 since $E$  belongs to 
 $\CA^C_{1/2}$. 
 Then the stability criterion in Proposition \ref{limitstabcrtA} implies that 
 \[
 {\chi(K)\over r_{-1}}  < -2b
 \]
 if $r_{-1}>0$. 
 Since $b>0$, this implies that 
  $\chi(K)<0$, hence also $\chi(K_0)<0$ since 
 $G_1$ is a zero dimensional sheaf. 
 This contradicts Lemma \ref{structlemmaA} ($iii$) 
 unless $K_0$ is trivial. In conclusion, $\CH^{-1}(E) 
 =\CI_C$ for any $b>0$. 
 
 This implies in particular that $\ch_2(\CH^0(E))= r[C_0]$
 for $b>0$. Therefore, if $r=0$, $\CH^0(E)$ must be a zero dimensional sheaf with topological support on $C$. Since $E$ 
 belongs to $\CA^C_{1/2}$, and $\ch_2(E) = [C]$, 
 Lemma \ref{Cpairs} implies that 
 $E$ must be isomorphic to a stable pair $P_C = (\CO_X {\buildrel 
 s\over \longto } F_C)$, with $F_C$ scheme theoretically 
 supported on $C$.

Next suppose $r>0$, $E$ is $\mu_{(\omega,b)}$-stable for some $b>0$, 
and let $\CH^0(E)\twoheadrightarrow G$ be a 
nontrivial pure dimension one quotient of $\CH^0(E)$. 
Condition (C.2) implies that 
$G$ is topologically supported on $C_0$, 
and $\ch_2(G)=r_G[C_0]$ for some $0<r_G\leq r$. 
 Let $F$ be the 
kernel of the epimorphism $E\twoheadrightarrow G$. 
Then  $F$ belongs to $\CA^C_{1/2}$ since $E$ does, 
hence  the epimorphism 
$E\twoheadrightarrow G$ is strict. Moreover 
$\CH^{-1}(F) \simeq \CH^{-1}(E)$. Then the stability criterion 
in Proposition \ref{limitstabcrtA}  implies that 
\[ 
 {\chi(G)\over r_G}> -2b .
\]
Note that $-2b>-1/r$ if the bound 
\eqref{eq:BboundA} holds.
Since $0<r_G \leq r$ this implies that
\[
\chi(G) > -{r_G\over r} \geq  -1,
\]
hence
$\chi(G) \geq 0$, since $\chi(G)\in \IZ$. 

Now consider the exact sequence 
\[ 
0\to Q\to \CH^0(E)\to G\to 0
\]
where $Q\subset \CH^0(E)$ is the maximal 
zero dimensional subsheaf of $\CH^0(E)$ and $G$ 
is pure of dimension one supported on $C_0$. 
The previous argument implies 
that any pure dimension one quotient $G\twoheadrightarrow 
G'$ must have $\chi(G')\geq 0$ if \eqref{eq:BboundA} 
holds. 
In particular, using Lemma \ref{sstablesheaves}, 
$G$ has a Harder-Narasimhan filtration of the form 
\eqref{eq:HNfiltrationA}. 

Let $E'=\mathrm{Ker}(E\twoheadrightarrow G)$. 
Obviously, $E'$ belongs to $\CA^C_{1/2}$ and there is an exact 
sequence 
\be\label{eq:stabpairseq} 
0\to \CI_C[1] \to E' \to Q \to 0 
\ee
in $\CA^C$. Then \cite[Lemm. 4.5]{limit} 
implies that $E'$ is isomorphic to the stable pair $P_C=\big(\CO_X {\buildrel s\over \longto } F_C\big)$ in 
$\CA^C$, where $s$ is determined by the 
natural projection $\CO_X\twoheadrightarrow \CO_C$. 

\hfill $\Box$

\begin{lemm}\label{smallBlemmaB} 
Under the same conditions as in Lemma \ref{smallBlemmaA} 
suppose the bound \eqref{eq:BboundA} is satisfied.  
Then for any $\mu_{(\omega,b)}$-stable object $E$ of $\CA^C$ with 
$\ch(E)=(-1,0,[C]+r[C_0],n)$, with $r>0$, the quotient $G$ 
in \eqref{eq:smallBstabA} must be of the form 
$G\simeq \CO_{C_0}(-1)^{\oplus r}$.\end{lemm}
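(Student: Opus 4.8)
The plan is to argue by contradiction, using the Harder--Narasimhan structure of $G$ provided by Lemma~\ref{smallBlemmaA}: there $G$ is a pure dimension one sheaf with topological support on $C_0$, with $\ch_2(G)=r[C_0]$, and its HN filtration for $\omega$-slope stability has graded pieces $\CO_{C_0}(a_j)^{\oplus s_j}$ with $a_1>a_2>\cdots>a_h\geq -1$ and $\sum_j s_j=r$. Since $a_1=-1$ forces $h=1$ (every $a_j\geq -1$), in which case $G=\CO_{C_0}(-1)^{\oplus r}$, it suffices to exclude $a_1\geq 0$. So assume $a_1\geq 0$, let $G_1=\CO_{C_0}(a_1)^{\oplus s_1}\subseteq G$ be the maximal destabilizing subsheaf (so $s_1\geq 1$), and let $e\in \Ext^1_{D^b(X)}(G,P_C)$ be the class of the extension \eqref{eq:smallBstabA} (a legitimate element there, since $\CA^C$ is the heart of a t-structure).

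The main step is to produce a subobject of $E$ that contradicts stability. Let $G_1':=G_1(-p)\cong \CO_{C_0}(a_1-1)^{\oplus s_1}\subseteq G_1$ be obtained by twisting down at $p=C_0\cap D$; near $p$ the inclusion $G_1'\hookrightarrow G_1$ is multiplication by a local parameter $t$ of $C_0$ at $p$. I will show that $e$ maps to $0$ under the restriction map $\Ext^1_{D^b(X)}(G,P_C)\to \Ext^1_{D^b(X)}(G_1,P_C)\to \Ext^1_{D^b(X)}(G_1',P_C)$. Granting this, the pullback of \eqref{eq:smallBstabA} along $G_1'\hookrightarrow G$ is a split extension $P_C\oplus G_1'$, so $G_1'$ is a subobject of $E$ in $\CA^C$; it is a pure dimension one sheaf on $C_0$ with $\ch_2(G_1')=s_1[C_0]$ and $\chi(G_1')=s_1a_1\geq 0$, so its effective slope $\chi(G_1')/\langle\ch_2(G_1'),\omega\rangle=a_1$ is $\geq 0>-2b$. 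By the slope comparison used in the proof of Lemma~\ref{smallBlemmaA} this yields $\mu_{(\omega,b)}(G_1')\geq \mu_{(\omega,b)}(E)$, which violates the $\mu_{(\omega,b)}$-stability of $E$. Hence $a_1\leq -1$, and combined with $a_1\geq -1$ from Lemma~\ref{smallBlemmaA} we get $a_1=-1$, proving the Lemma.

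To prove the vanishing, start from the distinguished triangle $F_C\to P_C\to \CO_X[1]\to F_C[1]$ of Lemma~\ref{Cpairs} (recall $F_C$ is scheme-theoretically supported on $C$ with $\ch_2(F_C)=[C]$). Applying $\Hom_{D^b(X)}(F,-)$ for $F=G_1$ and $F=G_1'$ and using Serre duality on the Calabi--Yau threefold $X$ we have $\Ext^i_X(F,\CO_X)\cong H^{3-i}(X,F)^\vee$; since $F$ is one-dimensional, $\Ext^1_X(F,\CO_X)=H^2(X,F)^\vee=0$, and as $a_1\geq 0$ gives $H^1(C_0,\CO_{C_0}(a_1))=H^1(C_0,\CO_{C_0}(a_1-1))=0$, also $\Ext^2_X(F,\CO_X)=H^1(X,F)^\vee=0$ for both $F=G_1,G_1'$. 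The long exact sequence of the triangle then gives isomorphisms $\Ext^1_X(G_1,F_C)\cong\Ext^1_{D^b(X)}(G_1,P_C)$ and $\Ext^1_X(G_1',F_C)\cong\Ext^1_{D^b(X)}(G_1',P_C)$, compatible with restriction. So it suffices to prove that the restriction $\Ext^1_X(G_1,F_C)\to \Ext^1_X(G_1',F_C)$ vanishes. Here the geometry enters: since $C\subset D$ while $D$ meets $C_0$ transversely only at $p$, one has $\mathrm{supp}(G_1)\cap\mathrm{supp}(F_C)\subseteq C_0\cap C=\{p\}$, hence ${\mathcal Ext}^i_X(G_1,F_C)$ and ${\mathcal Ext}^i_X(G_1',F_C)$ are supported at $\{p\}$, and the local-to-global spectral sequence identifies the global groups with these stalks. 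On stalks, the map induced by $G_1'\hookrightarrow G_1$ is precomposition with multiplication by $t$; lifting $t$ to a local equation $\widetilde t$ of the divisor $D$ at $p$ (possible because $D$ and $C_0$ are transverse there, so $\widetilde t|_{C_0}$ is a local parameter of $C_0$), and using that $F_C$, being scheme-theoretically supported on $C\subset D$, is annihilated by $\widetilde t$, we conclude that $\widetilde t$ acts by $0$ on ${\mathcal Ext}^\bullet_X(-,F_C)$ (the two ${\mathcal O}_X$-module structures on $\mathcal Ext$ agree); hence the restriction map is zero. Alternatively one can route this through Lemma~\ref{extlemma}, identifying $\Ext^1_X(G_1,F_C)$ with $\Ext^1_D(F_C,\CO_D\otimes_X G_1)^\vee$ and observing that $\CO_D\otimes_X(-)$ sends $G_1'\hookrightarrow G_1$ to the zero map.

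The hard part will be this last step: establishing that twisting $G_1$ down by $p$ has the effect of multiplying $\Ext^1_X(G_1,F_C)$ by the local equation of $D$, and that this equation acts as zero because $F_C$ lives on a curve inside $D$. This is precisely where the geometry of the model — the embedding $C\subset D\simeq\IP^2$ and the transversality of $D$ and $C_0$ at $p$ — is indispensable, and where one must keep careful track of the $\CO_D$-module structures and of the reduction from $\Ext^1_{D^b(X)}(G_1,P_C)$ to the sheaf group $\Ext^1_X(G_1,F_C)$ via the vanishing of $H^1(X,G_1)$.
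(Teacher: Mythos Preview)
Your argument is essentially correct and takes a genuinely different route from the paper's. The paper proceeds by building the sheaf extension $0\to F_C\to F\to G\to 0$, then constructing a second filtration $0\to G'\to F\to F_C'\to 0$ with $G'$ supported on $C_0$ and $F_C'$ on $C$; the stability of $E$ forces every saturated subsheaf of $G'$ to have $\omega$-slope $<-2b$, and a diagram chase relating the two filtrations yields $\chi(G')\geq\chi(G)-r$, hence $\chi(G)\leq 0$, which together with the constraint $a_j\geq -1$ pins down $G\simeq\CO_{C_0}(-1)^{\oplus r}$. Your approach bypasses the construction of $G'$ entirely: you observe directly that the extension class dies on $G_1(-p)$ because a local equation of $D$ both realises the twist $G_1(-p)\hookrightarrow G_1$ (by transversality of $D$ and $C_0$) and annihilates $F_C$ (since $C\subset D$). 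This is shorter and more transparent; it is in effect a pointwise application of the mechanism behind Lemma~\ref{extlemma}, used to produce a destabilising subobject rather than to bound an auxiliary sheaf.

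There is one gap you should close. Having produced the subobject $G_1'\hookrightarrow E$, you invoke ``the slope comparison used in the proof of Lemma~\ref{smallBlemmaA}''. But that comparison is Proposition~\ref{limitstabcrtA}, which in the $C$-framed setting is stated for \emph{strict} monomorphisms in $\CA^C_{1/2}$, i.e.\ those with cokernel in $\CA^C_{1/2}$. The quotient $E/G_1'$ sits in $0\to P_C\to E/G_1'\to G/G_1'\to 0$, and $G/G_1'$ has the zero-dimensional torsion $G_1/G_1'\simeq\CO_p^{\oplus s_1}$; there is no reason the connecting map $\mathrm{Hom}(\CO_p,G/G_1')\to\mathrm{Ext}^1(\CO_p,P_C)$ is injective, so $E/G_1'$ need not lie in $\CA^C_{1/2}$. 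The fix is standard and already used in the proof of Lemma~\ref{smallBlemmaA}: apply the torsion pair (Lemma~\ref{Cframedtorsion}) to $E/G_1'$, let $F_1\subset E$ be the preimage of $(E/G_1')_1$, and note that $F_1$ is pure of dimension one (any zero-dimensional subsheaf would embed in $E\in\CA^C_{1/2}$), has $\ch_2(F_1)=s_1[C_0]$ and $\chi(F_1)\geq\chi(G_1')=s_1a_1\geq 0$, and now $E/F_1=(E/G_1')_{1/2}\in\CA^C_{1/2}$. Then $F_1\hookrightarrow E$ is strict and Proposition~\ref{limitstabcrtA} gives the contradiction $\chi(F_1)/s_1<-2b<0$.
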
 

{\it Proof.} 
Using the notation in  Lemma \ref{smallBlemmaA}
note the exact sequence 
\be\label{eq:pairseq}
0 \to F_C \to P_C\to \CO_X[1] \to 0 
\ee
in $\CA$, which yields a long exact sequence 
exact sequence 
\[ 
\cdots \to {\rm Ext}^1_X(G,\CO_X) \to 
{\rm Ext}^1_X(G,F_C) \to 
{\rm Ext}^1_{\CA}(G,P_C) \to 
{\rm Ext}^2_{X}(G,\CO_X) \cdots 
\]
Note also that 
\[ 
{\rm Ext}^1_X(G,\CO_X) \simeq H^2(X,G)^\vee =0
\]
by Serre duality and the structure of the Harder-Narasimhan 
filtration of $G$ implies 
\[ 
{\rm Ext}^2(G,\CO_X) \simeq H^1(X,G)^\vee =0 
\]
as well. 
Therefore there is an isomorphism 
\be\label{eq:extisomA}
 {\rm Ext}^1_X(G,F_C) \simeq 
{\rm Ext}^1_{\CA}(G,P_C).
\ee
This implies that there is an 
extension 
\be\label{eq:FCA}
0\to F_C {\to}  F \to G \to 0 
\ee
determined by the extension class of $E$ up to isomorphism,
which fits in a commutative diagram 
\[ 
\xymatrix{ 
0\ar[r] & F_C \ar[d] \ar[r] & F \ar[d] \ar[r]&  G 
\ar[d]^-{1}\ar[r] & 0 \\
0\ar[r] & P_C \ar[r] & E  \ar[r]&  G \ar[r] & 0 \\}
\]
in $\CA$. 
In particular the middle vertical morphism is injective
 in $\CA$, and  
\be\label{eq:EmodF}
E/F \simeq P_C/F_C\simeq \CO_X[1].
\ee
Since $E$ belongs 
to $\CA_{1/2}^C$, $F$ has to be a pure dimension 
one sheaf. 

Let $F'_C$ be the quotient $F\otimes_X \CO_C/T$ 
where $T\subset F\otimes_X \CO_C$ is the maximal 
zero dimensional submodule. Then there is an exact sequence 
\be\label{eq:FCB}
0\to G' \to F \to F'_C \to 0 
\ee
where $G'$ is pure dimension one with topological support of $C_0$ and $F'_C$ pure dimension one with support on $C$. 
Moreover, $\ch_2(G')=\ch_2(G) =r[C_0]$ and 
$\ch_2(F'_C) =\ch_2(F_C) = [C]$. In particular 
$F'_C$ is scheme theoretically supported on 
$C$. 
Obviously $G'\subset F\subset E$ 
in $\CA^C$, and there is an exact sequence 
\[ 
0\to F'_C\to E/G'\to \CO_X[1]\to 0
\]
in $\CA$. This implies that $E/G'$ belongs to $\CA_{1/2}$ since 
both $F'_C,\CO_X[1]$ do. As $E/G'$ also belongs to $\CA^C$ 
according to Lemma \ref{Cframedclosure}, it follows that 
$E/G'$ belongs to $\CA^C_{1/2}$ i.e. the inclusion 
$G'\subset E$ is strict. 

The same holds for any proper saturated subsheaf $G''\subset G'$ 
(that is a subsheaf such that $G'/G''$ is pure dimension one.) 
For any such sheaf there is an exact sequence 
\[
0\to G'/G''\to E/G''\to E/G'\to 0
\]
in $\CA$ which implies that $E/G''$ belongs to $\CA_C^{1/2}$. 
Then the stability criterion implies that 
\be\label{eq:satsubsheaf}
{\chi(G'')\over r_{G''}} < -2b <0 
\ee
for any saturated subsheaf $G''\subseteq G'$, where $\ch_2(G'') 
=r_{G''}[C_0]$, $0<r_{G''}\leq r$.

Next recall that according to Lemma \ref{extlemma} there is an isomorphism 
\be\label{eq:extisomB}
\varphi_1:{\rm Ext}^1_X(F_C',G') {\buildrel \sim\over \longto}
\mathrm{Hom}_D(F_C', \CO_D\otimes_X G'). 
\ee
Since $G'$ is pure dimension one supported on $C_0$, 
there is an exact sequence 
\[ 
0\to G'(-D)\to G' \to \CO_D\otimes_X G'\to 0
\]
 This 
implies via the Riemann-Roch theorem that 
\[ 
\chi(\CO_D\otimes_X G') = r.
\] 
Now let $\phi:F_C'\to {\mathcal Ext}^0_D(\CO_D, G')$ be the morphism corresponding to the extension class of \eqref{eq:FCB} under the isomorphism 
\eqref{eq:extisomB}. The exact sequences \eqref{eq:FCA}, \eqref{eq:FCB} imply that there 
is an  injective morphism
$F_C\hookrightarrow F_C'$ 
such that the following diagram commutes 
\[
\xymatrix{ 
F_C \ar@{^{(}->}[r] \ar@{^{(}->}[dr] & F \ar[d] \\
& F'_C \\}
\]
since both $F_C,F'_C$ are pure supported on $C$ and 
$\ch_2(F_C)=\ch_2(F'_C)=[C]$.  
This implies that the restriction 
of the extension \eqref{eq:FCB} to $F_C\subset F'_C$ 
is split i.e. $F_C \subset \mathrm{Ker}(\phi)$. 
In conclusion the quotient $F_C'/F_C$ is a 
subsheaf of $\CO_D\otimes_X G'$ and 
there is a commutative diagram 
\[ 
\xymatrix{ 
& & 0 \ar[d]& 0\ar[d] & \\
& & { F}_C\ar[r]^-{1}\ar[d]& {F}_C \ar[d]& \\
0\ar[r] & G'\ar[d]_-{1}\ar[r] & F\ar[r] \ar[d] & F'_C \ar[d]\ar[r]&0\\
0\ar[r] & G'\ar[r] & G \ar[r] \ar[d] & F'_C/F_C\ar[d]\ar[r]&0\\
& & 0 & 0 & \\}
\]
with exact rows and columns. 
The bottom row of this diagram yields 
\be\label{eq:GprimeineqA}
\chi(G') =\chi(G) -\chi(F_C'/F_C) 
\geq \chi(G)-\chi(\CO_D\otimes_X G')=
\chi(G)-r. 
\ee

In order to conclude the proof, let
 \[
 0=G_0'\subset G'_i \subset \cdots \subset G'_{h'}=G 
 \]
 be the Harder-Narasimhan filtration of $G'$ with 
 respect to $\omega$-slope stability. Each nontrivial quotient 
$G'_j/G'_{j-1}$, $j=1,\ldots, h'$ must be isomorphic 
to a sheaf of the form $\CO_{C_0}(a'_j)^{s'_j}$, 
$s'_j\geq 1$, such that 
$a_1'>a'_2 > \cdots > a'_{h'}$ by Lemma \ref{sstablesheaves}. 
Inequality \eqref{eq:satsubsheaf} 
implies that $a'_1\leq -2$, therefore $a'_j\leq -2$ for all $j=1,\ldots , h'$. This implies 
\[
\chi(G') \leq -r,
\]
hence inequality \eqref{eq:GprimeineqA} yields 
\[
\chi(G) \leq 0.  
\]
Taking into account the constraints on the Harder-Narasimhan filtration of $G$ in Lemma \ref{smallBlemmaA} 
it follows that 
\[
G \simeq  \CO_{C_0}(-1)^{\oplus r}. 
\]

\hfill $\Box$

\begin{prop}\label{smallBstab}
For fixed K\"ahler class $\omega$, and fixed $(r,n)\in \IZ_{\geq 0}\times \IZ$  the following holds for any 
$b>0$ satisfying the bound \eqref{eq:BboundA}. 

An object $E$ of $\CA^C$ 
 is $\mu_{(\omega,b)}$-stable if and only if 
there is an exact sequence of the form 
\be\label{eq:smallBstabAB}
0\to P_C \to E \to \CO_{C_0}(-1)^{\oplus r}\to 0
\ee
in $\CA^C$ such that:
\begin{itemize}
\item[$(i)$] $P_C=\big(\CO_X\to F_C\big)$ is a stable pair 
on $X$ with $F_C$ scheme theoretically supported on 
$C$.
\item[$(ii)$] There is no linear subspace $0\subset V'\subset \IC^r$ 
such that the restriction 
\[
0\to P_C \to E' \to V'\otimes \CO_{C_0}(-1)\to 0
\]
of the extension  \eqref{eq:smallBstabAB} to $V'\otimes \CO_{C_0}(-1)$ is trivial. Extensions  \eqref{eq:smallBstabAB}
satisfying this property will be called nondegenerate. 
\end{itemize}
\end{prop}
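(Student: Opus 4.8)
The plan is to prove the two implications of the biconditional separately: the forward implication is almost entirely contained in Lemmas~\ref{smallBlemmaA} and~\ref{smallBlemmaB}, while the converse requires a direct verification of the stability criterion of Proposition~\ref{limitstabcrtA} inside $\CA^C_{1/2}$.

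\emph{``Only if''.} If $E$ is $\mu_{(\omega,b)}$-stable with $\ch(E)=(-1,0,[C]+r[C_0],n)$ and $b$ obeys \eqref{eq:BboundA}, then Lemma~\ref{smallBlemmaA} gives the exact sequence \eqref{eq:smallBstabA} with $P_C$ a stable pair scheme theoretically supported on $C$, and Lemma~\ref{smallBlemmaB} identifies the quotient with $\CO_{C_0}(-1)^{\oplus r}$; this yields \eqref{eq:smallBstabAB} and condition~$(i)$. For nondegeneracy I would argue by contradiction: if the restriction of \eqref{eq:smallBstabAB} to some nonzero $V'\subseteq\IC^r$ split, then $V'\otimes\CO_{C_0}(-1)$ would be a subobject of $E$ whose cokernel, being an extension of a pure dimension one sheaf by $P_C\in\CA^C_{1/2}$, again lies in $\CA^C_{1/2}$; the inclusion would then be a strict monomorphism in $\CA^C_{1/2}$, but $\mu_{(\omega,b)}(V'\otimes\CO_{C_0}(-1))=-b>-3b$ because $b>0$, contradicting the subsheaf inequality in Proposition~\ref{limitstabcrtA}, whose threshold equals $-3B\omega^2/\omega^3=-3b$.

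\emph{``If''.} Starting from a nondegenerate extension \eqref{eq:smallBstabAB}, one has $E\in\CA^C$ by Lemma~\ref{Cframedclosure}$(i)$, with $\ch_0(E)=-1$ and $\ch_1(E)=0$; moreover $E\in\CA^C_{1/2}$, since any morphism to $E$ from a zero dimensional sheaf composes to zero with $E\twoheadrightarrow\CO_{C_0}(-1)^{\oplus r}$, hence factors through $P_C\in\CA^C_{1/2}$ and vanishes. The cohomology long exact sequence of \eqref{eq:smallBstabAB} gives $\CH^{-1}(E)\simeq\CI_C$ together with $0\to F_C/\CO_C\to\CH^0(E)\to\CO_{C_0}(-1)^{\oplus r}\to 0$, where $F_C/\CO_C$ is zero dimensional; since the quotient is pure, the maximal zero dimensional subsheaf of $\CH^0(E)$ is exactly $F_C/\CO_C$ and the corresponding quotient is $\CO_{C_0}(-1)^{\oplus r}$. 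Then I would verify the two conditions of Proposition~\ref{limitstabcrtA}. Any morphism from $E$ to a pure dimension one sheaf factors through $\CH^0(E)$, and a pure dimension one quotient of $\CH^0(E)$ descends to one of $\CO_{C_0}(-1)^{\oplus r}$; the latter being $\omega$-semistable of slope $-1$ on $C_0\simeq\IP^1$, every such quotient $G$ has $\chi(G)\geq 0$, so $\mu_{(\omega,b)}(G)=\chi(G)/r_G-b\geq-b>-3b$, which is condition~$(i)$. For condition~$(ii)$, a pure dimension one subsheaf $F\hookrightarrow E$ in $\CA^C$ is topologically supported on $C_0$, so $\ho_{\CA}(F,P_C)=0$, because any morphism from $F$ to the pure sheaf $F_C$ has finite support and vanishes while $\Ext^1_X(F,\CO_X)\simeq H^2(X,F)^\vee=0$; hence $F$ embeds into $\CO_{C_0}(-1)^{\oplus r}$ and $\chi(F)\leq 0$. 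If $\chi(F)=0$ then $F$ is a saturated subsheaf of slope $-1$, so $F\simeq W\otimes\CO_{C_0}(-1)$ for a nonzero $W\subseteq\IC^r$, and the inclusion $F\hookrightarrow E$ splits the restriction of \eqref{eq:smallBstabAB} to $W\otimes\CO_{C_0}(-1)$, contradicting condition~$(ii)$ of the statement; therefore $\chi(F)\leq-1$, and since \eqref{eq:BboundA} gives $2br_F\leq 2br<1$, one obtains $\mu_{(\omega,b)}(F)=\chi(F)/r_F-b\leq-1/r_F-b<-3b$. Proposition~\ref{limitstabcrtA} then shows $E$ is $\mu_{(\omega,b)}$-stable. (When $r=0$ the statement is Lemma~\ref{Cpairs} and condition~$(ii)$ is vacuous.)

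\emph{Main obstacle.} I expect the delicate part to be the subobject analysis in the converse: $E$ genuinely contains pure dimension one subsheaves supported on $C$, coming from $P_C$, but these are not objects of $\CA^C$ and hence play no role in $\CA^C$-stability, so the entire argument must be phrased inside $\CA^C_{1/2}$ and the crucial point is that a pure dimension one subobject of $E$ in $\CA^C$ necessarily factors through the quotient $\CO_{C_0}(-1)^{\oplus r}$. The borderline case $\chi(F)=0$ is precisely where nondegeneracy is used; the concrete reformulation of that condition, which will also be needed to describe the moduli stack in Section~\ref{threetwo}, is the equivalence of Corollary~\ref{nondegext} between nondegeneracy and surjectivity of the associated morphism $F_C\to\IC^r\otimes\CO_p$ onto the fiber at the transverse intersection point $p=D\cap C_0$.
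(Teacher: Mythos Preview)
Your proof is correct and follows essentially the same route as the paper's: the forward direction via Lemmas~\ref{smallBlemmaA} and~\ref{smallBlemmaB} plus the observation that a splitting over $V'\otimes\CO_{C_0}(-1)$ produces a destabilizing strict subobject, and the converse by checking both halves of the criterion in Proposition~\ref{limitstabcrtA}, using that $\ho_\CA(F,P_C)=0$ forces any pure one-dimensional subobject to factor through the quotient $\CO_{C_0}(-1)^{\oplus r}$ and then invoking nondegeneracy in the borderline case $\chi(F)=0$. The only cosmetic difference is that you phrase the threshold as $\mu_{(\omega,b)}=-3b$ while the paper writes the equivalent condition $\chi/r=-2b$; your closing remark about Corollary~\ref{nondegext} is accurate but belongs to Section~\ref{threetwo} rather than to this proof.
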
 

\begin{rema}\label{pairsextremark}
Since ${\rm Ext}^2_X(\CO_{C_0}(-1)^{\oplus r}, \CO_X)=0$, 
there is an isomorphism $${\rm Ext}^1_X(\CO_{C_0}(-1)^{\oplus r}, P_C) \simeq {\rm Ext}^1_X(\CO_{C_0}(-1)^{\oplus r}, F_C)$$ for any stable pair $P_C= \big(\CO_X \to F_C)$. This observation will be 
used in the proof of Proposition \ref{geombij}. 
\end{rema}

{\it Proof.} 
$(\Rightarrow)$ 
First note that if $r=0$ Lemma \ref{smallBlemmaA} 
shows that $E$ must be isomorphic to a stable pair $P_C$. 
Furthermore, any such stable pair is 
stable for all $b>0$ according to Lemma \ref{Cpairs}. 

Suppose $r>0$. The existence of an extension of the form 
\eqref{eq:smallBstabAB} follows from Lemmas \ref{smallBlemmaA}
and \ref{smallBlemmaB}. 
Nondegeneracy follows easily noting that if the restriction of the extension \eqref{eq:smallBstabAB} to some subsheaf 
$G'=V'\otimes \CO_{C_0}(-1) \subset \CO_{C_0}(-1)^{\oplus r}$ 
is trivial, then there is an epimorphism 
$ G'\hookrightarrow E$. Moreover, there is an exact sequence 
\[ 
0\to P_C\to E/G'\to  V''\otimes \CO_{C_0}(-1)\to 0 
\] 
in $\CA$ where $V''\simeq \IC^r/V'$. This implies that 
that $E/G'$ belongs to $\CA^C_{1/2}$ i.e. 
the epimorphism $G' \hookrightarrow E$ is strict. 
Then $G'\subset E$ violates the stability criterion for $b>0$. 

$(\Leftarrow)$ 
Conversely, suppose an object $E$ of $\CA^C$ 
fits in an extension of the form
 \eqref{eq:smallBstabA} satisfying the nondegeneracy condition. Then it follows easily that $E$ belongs to 
 $\CA^C_{1/2}$. One has to check the stability criterion 
 in Proposition \ref{limitstabcrtA} with respect to strict 
 monomorphisms and epimorphisms in $\CA^C$. Note that 
 property (C.2) implies that all pure dimension one sheaves 
 in $\CA^C$ must be topologically supported on $C_0$. 
 
 Consider first strict epimorphisms $E\twoheadrightarrow G$, 
 with $G$ a pure dimension one sheaf supported on $C_0$. 
 It is straightforward to check that ${\rm Hom}_{\CA}(P_C,G)=0$
 as in the proof of Lemma \ref{Cpairs}. 
 Therefore the exact sequence 
\eqref{eq:smallBstabAB} 
yields an isomorphism 
 \[ 
 {\rm Hom}_\CA(E, G) 
 \simeq {\rm Hom}_X(\CO_{C_0}(-1)^{\oplus r}, G). 
 \] 
Since 
 $\CO_{C_0}(-1)^{\oplus r}$ is $\omega$-slope semistable, 
any quotient $G$ must satisfy
\[
\chi(G) \geq 0 > -2b 
\]
for any $b>0$. 

Next suppose $F\hookrightarrow E$ is a strict monomorphism 
in $\CA_C$ with $F$ a nontrivial pure dimension one 
sheaf on $X$ supported on $C_0$. Let $F'$ 
denote the image of $F$ in $\CO_{C_0}(-1)^{\oplus r}$ and $F''$ the kernel of $F\twoheadrightarrow F'$. 
Then $F''$ must be a subobject of $P_C$ in $\CA^C$, hence 
it must be trivial, as shown in the proof of Lemma \ref{Cpairs}. 
Therefore $F=F'$ must be a subsheaf of 
$\CO_{C_0}(-1)^{\oplus r}$, which is $\omega$-slope semistable
with $\chi(\CO_{C_0}(-1))=0$.
This implies $\chi(F)\leq 0$. 
Since the bound \eqref{eq:BboundA} yields,
\[ 
-{1\over r} < -2b <0,
\]
 $F$ destabilizes $E$ only if $\chi(F)=0$, 
 which implies that $F\simeq V'\otimes \CO_{C_0}(-1)$ for some 
linear subspace $V'\subseteq \IC^r$. However this contradicts the 
nondegeneracy assumption. 

\hfill $\Box$ 

\subsection{Moduli spaces of decorated sheaves}\label{threetwo} 

Consider the moduli problem for data $(V,L,F,s,f)$ 
where $V,L$ are vector spaces of dimension $r,1$, $r\geq 1$, respectively, $F$ is a coherent sheaf on $X$, and 
 \[
 s:L\otimes \CO_X \to F, \qquad f:F\to V \otimes \CO_{C_0}(-1)
 \]
 are morphisms of coherent sheaves satisfying the 
 following conditions:
\begin{itemize} 
\item[$(a)$] $F$ is pure of dimension one with 
$\ch_2(F)=[C]+r[C_0]$, $\chi(F)=n$. 
\item[$(b)$] $f:F\to V \otimes \CO_{C_0}(-1)$ is 
surjective and $\mathrm{Ker}(f)$ is scheme 
theoretically supported on $C$. 
\item[$(c)$] $s:L\otimes \CO_X \to F$ is a nonzero 
morphism. 
\item[$(d)$] The extension 
\[
0\to \mathrm{Ker}(f) \to F\to V \otimes 
\CO_{C_0}(-1)\to 0
\]
satisfies the nondegeneracy condition of Proposition
\ref{smallBstab}. That is there is no proper nontrivial subspace 
$0\subset V'\subset V$ such that the restriction of the above 
extension to $V'\otimes \CO_{C_0}(-1)$ is trivial.

\end{itemize}

Two collections $(V,L,F,s,f)$, $(V',L',F',s',f')$  are
isomorphic if there exist linear isomorphisms 
$V{\buildrel \sim \over \longto}
V'$, $L {\buildrel \sim \over \longto}L'$ and an isomorphism 
of sheaves $F{\buildrel \sim \over \longto} F'$ satisfying 
 the obvious compatibility 
conditions with the data $(s,f)$, $(s',f')$ are satisfied. 
Then it is straightforward to prove 
the automorphism group of any collection 
$(V,L,F,s,f)$ is isomorphic to $\IC^\times$.

Let $T$ be a scheme over 
$\IC$, $X_T = X\times T$ and $\pi_T:X_T\to T$ 
denote the canonical projection. For any closed 
point $t\in T$, let $X_t=X\times \{t\}$ denote the fiber 
of $\pi_T$ over $t$. Let also $C_{0T}\subset X_T$, 
$C_T\subset X_T$ 
denote the closed subschemes $C_0\times T\subset 
X \times T$, $C\times T\subset X\times T$ respectively, 
and $\CO_{C_{0T}}(d)$ denote the pull-back of the sheaf 
$\CO_{C_0}(d)$ to $X_T$, for any $d\in \IZ$. Similar 
notation will be employed for each closed fiber $X_t$, 
$t\in T$. 

A flat family of data $(V,L,F,s,f)$ on $X$ parameterized 
by $T$ is a collection $(V_T,L_T,F_T,s_T,f_T)$ 
where 
\begin{itemize} 
\item $V_T$, $L_T$ are locally free $\CO_T$-modules 
and 
$F_T$ is a coherent $\CO_{X_T}$-module
flat over $T$.
\item $s_T:\pi_T^*V_T\to F_T$ and 
$f_T:F_T\to \pi_T^*L_T\otimes_{X_T}\CO_{C_{0T}}$ are morphisms of 
$\CO_{X_T}$-modules 
\item The restriction of the data $(V_T,L_T,F_T,s_T,f_T)$ to any fiber $X_t$, with 
$t\in T$, a closed point is a collection satisfying 
conditions $(a)$-$(d)$ above.
\end{itemize} 

For any $(r,n)\in \IZ_{\geq 0}\times \IZ$ let 
${\mathcal Q}(X,C,r,n)$ denote the resulting moduli stack 
of  data $(V,L,F,s,f)$ on $X$ satisfying conditions 
$(a)$-$(d)$. Let ${Q}(X,C,r,n)$ be the rigidification 
of ${\mathcal Q}(X,C,r,n)$ obtained by fixing isomorphisms 
$L\simeq \IC$ and $V\simeq \IC^r$. Then the closed points 
of ${Q}(X,C,r,n)$ have trivial stabilizers and 
${\mathcal Q}(X,C,r,n)$ is a $\IC^\times$-gerbe over 
${Q}(X,C,r,n)$. 

The moduli stacks ${\mathcal  Q}(X,C,r,n)$ will be used as an interpolating tool between the nested Hilbert schemes 
$H_p^{[n,k]}(C)$ introduced in Section \ref{refinedOS} 
and stable $C$-framed 
perverse coherent sheaves at small $b>0$. 

\begin{rema}\label{forgetsection}
For future reference, let $\CM(X,C,r,n)$ denote the moduli 
stack of data $(V,f,F)$ satisfying conditions $(a)$, $(b)$, $(d)$ 
above. Obviously, there is a natural 
morphism $\pi: {\mathcal Q}(X,C,r,n)\to\CM(X,C,r,n) $  
forgetting the data $(L,s)$. It is straightforward to check that the stabilizers of 
all closed points of $\CM(X,C,r,n)$ are isomorphic to $\IC^\times$. 
\end{rema}

\subsection{Relation to nested Hilbert schemes}\label{fourthree} 
Suppose $C\subset \IP^2$ is a reduced irreducible 
divisor with one singular point $p$, otherwise smooth. 
For any $l\in \IZ_{\geq 0}$ let $H^l(C)$ denote the 
Hilbert scheme of length $l$ zero dimensional subschemes 
of $C$. Let $H^{[l,r]}(C)\subset H^l(C)\times H^{l+r}(C)$ denote the 
cycle consisting of pairs of ideal sheaves $(J,I)$ such that 
\be\label{eq:nestedA}
m_p J \subseteq I \subseteq J,
\ee
where $m_p\subset \CO_{C,p}$ is the maximal ideal in the local 
ring at $p$. 

The main observation is that the nested Hilbert schemes 
$H^{[l,r]}(C)$, equipped with an appropriate scheme 
structure, are isomorphic to relative 
 {\it Quot} schemes over $H^l(C)$.   
Let ${\mathcal J}$ denote the universal ideal sheaf on 
$H^l(C)\times C$ and ${\mathcal J}_p$ its restriction to the 
closed subscheme $H^l(C)\times \{p\}$. 
Let ${Q}^{[l,r]}(C)$ the relative {\it Quot}-scheme 
parametrizing rank $r$ locally free quotients of ${\mathcal J}_p$
over $H^l(C)$. Standard results on {\it Quot}-schemes show that 
${Q}^{[l,r]}(C)$ is a quasi-projective scheme over 
$H^l(C)$. Note that a closed point of 
${Q}^{[l,r]}(C)$ 
over a closed point $[J]\in H^l(C)$ is a pair $(V,\xi)$ 
where $V$ is a $r$-dimensional vector space over $\IC$ and 
$\xi:J\otimes\CO_{p}\twoheadrightarrow V$ is a surjective map of complex 
vector spaces. 
In particular the fiber of ${Q}^{[l,r]}(C)$ 
is empty if $J$ has less than $r$ generators at $p$. 
Let $$I = \mathrm{Ker}(J\twoheadrightarrow J\otimes\CO_{C,p} 
{\buildrel \xi\over \longto} V).$$ 
Then it is straightforward to check that 
$(J,I)$ is a pair of  ideal sheaves on $C$ satisfying conditions
 \eqref{eq:nestedA} 
at $p$. 
 Note that the resulting  scheme structure on $H^{[l,r]}(C)$ 
may be different from the reduced induced scheme structure. 

The main result of this section is:
\begin{prop}\label{projhilbert}
For any $(r,n)\in \IZ_{\geq 1}\times \IZ$, $n\geq \chi(\CO_C)$,  there is an isomorphism 
\be\label{eq:nestedisom} 
q:{ Q}(X,C,r,n) 
{\buildrel \sim \over \longto} 
{Q}^{[l,r]}(C)
\ee
over $H^l(C)$, where $l=n-\chi(\CO_C)$. 
\end{prop}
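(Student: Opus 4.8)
The plan is to realise $q$ as an isomorphism of functors of points. For a $\IC$-scheme $T$ I will produce mutually inverse natural bijections between flat families $(V_T,L_T,F_T,s_T,f_T)$ over $T$ of the data of Section \ref{threetwo}, after the rigidification fixing $L_T\simeq\CO_T$ and $V_T\simeq\CO_T^{\oplus r}$, and $T$-points $(Z_T\subset C_T,\,W_T,\,\xi_T)$ of $Q^{[l,r]}(C)$, where $Z_T$ is a flat family of length-$l$ subschemes of $C$, $W_T$ is locally free of rank $r$ on $T$, and $\xi_T\colon\CI_{Z_T}|_{\{p\}\times T}\twoheadrightarrow W_T$.

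\emph{Forgetting the framing, and Pandharipande--Thomas $=$ Hilbert.} Given a family $(V_T,L_T,F_T,s_T,f_T)$, set $F_{C,T}:=\ker(f_T)$. Since $f_T$ is a surjection of $T$-flat sheaves whose target $\pi_T^{*}V_T\otimes\CO_{C_{0T}}(-1)$ is $T$-flat, $F_{C,T}$ is $T$-flat with fibres $(F_{C,T})_t=\ker(f_t)$; by condition $(b)$ it is scheme-theoretically supported on $C_T$, pure of relative dimension one with $\ch_2=[C]$ and $\chi=n$, hence a $T$-flat family of rank-one torsion-free sheaves on the integral curve $C$. Because $\pi_{T*}\CO_{C_{0T}}(-1)=0$, adjunction forces $f_T\circ s_T=0$, so $s_T$ factors through $F_{C,T}$ as $\pi_T^{*}L_T\xrightarrow{\bar s_T}F_{C,T}\hookrightarrow F_T$; fibrewise $\bar s_t\neq0$ by $(c)$, and as $F_{C,t}$ is rank one torsion-free on $C$ the map $\bar s_t$ is injective with zero-dimensional cokernel of length $\chi(F_{C,t})-\chi(\CO_C)=n-\chi(\CO_C)=l$. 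Thus $(F_{C,T},\bar s_T)$ is a $T$-flat family of stable pairs on $C$ with support class $[C]$ and Euler characteristic $n$. Invoking the identification of the moduli space of such pairs on an integral planar curve with its Hilbert scheme of $l$ points, in its relative form (cf.\ \cite{stabpairs-III} and the $r=0$ discussion in Section \ref{onesix}), this family is classified by a morphism $T\to H^l(C)$, that is by a flat family $Z_T\subset C_T$ of length-$l$ subschemes with ideal sheaf $J_T:=\CI_{Z_T}$, together with a canonical isomorphism $F_{C,T}\simeq{\mathcal Hom}_{C_T}(J_T,\CO_{C_T})$ under which $\bar s_T$ is dual to the inclusion $J_T\hookrightarrow\CO_{C_T}$.

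\emph{The framing datum as a quotient.} The sequence $0\to F_{C,T}\to F_T\to\pi_T^{*}V_T\otimes\CO_{C_{0T}}(-1)\to0$ gives a class in $\mathrm{Ext}^1_{X_T/T}(\pi_T^{*}V_T\otimes\CO_{C_{0T}}(-1),F_{C,T})$, equivalently a map $V_T\to{\mathcal Ext}^1_{\pi_T}(\CO_{C_{0T}}(-1),F_{C,T})$. To identify the target I would chain three isomorphisms: relative Serre duality on the Calabi--Yau threefold $X_T/T$, giving ${\mathcal Ext}^1_{\pi_T}(\CO_{C_{0T}}(-1),F_{C,T})\simeq{\mathcal Ext}^2_{\pi_T}(F_{C,T},\CO_{C_{0T}}(-1))^{\vee}$; the relative version of Lemma \ref{extlemma}, giving ${\mathcal Ext}^2_{\pi_T}(F_{C,T},\CO_{C_{0T}}(-1))\simeq{\mathcal Ext}^1_{D_T/T}(F_{C,T},\CO_{p_T})$, since $\CO_D\otimes_X\CO_{C_0}(-1)\simeq\CO_p$ by transversality of $D\cap C_0$ at $p$; and a local computation at $p$ on the smooth surface $D$. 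For the last step, because $C$ lies in $D\simeq\IP^2$ as a plane curve the rank-one torsion-free sheaf $F_{C,t}$ has a length-one $\CO_{D,p}$-resolution given by a matrix factorisation of the local equation of $C$, and $F_{C,t}\simeq J_t^{\vee}$ corresponds to the transposed factorisation; reading ${\mathcal Ext}^1_{D}(F_{C,t},\CO_p)$ off the $\CO_{D,p}$-dual of this resolution identifies it canonically with $J_t\otimes\CO_p$, and this identification is compatible with base change since everything is concentrated at $p$. Hence the extension class of $F_T$ becomes a map $\xi_T\colon J_T\otimes\CO_{\{p\}\times T}\to V_T^{\vee}$; fibrewise, Corollary \ref{nondegext} together with this identification shows that the nondegeneracy condition $(d)$ is equivalent to surjectivity of $\xi_t$, so by Nakayama $\xi_T$ is a surjection onto the rank-$r$ locally free quotient $V_T^{\vee}$. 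This produces a $T$-point of $Q^{[l,r]}(C)$ lying over $T\to H^l(C)$.

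\emph{Inverse and conclusion.} Conversely, from $(Z_T,W_T,\xi_T)$ I set $V_T=W_T^{\vee}$, $L_T=\CO_T$, $F_{C,T}={\mathcal Hom}_{C_T}(\CI_{Z_T},\CO_{C_T})$ with its tautological section, and take $F_T$ to be the extension of $\pi_T^{*}V_T\otimes\CO_{C_{0T}}(-1)$ by $F_{C,T}$ corresponding to the dual of $\xi_T$ under the isomorphism above; conditions $(a)$--$(c)$ are immediate and $(d)$ is again Corollary \ref{nondegext}. The two constructions are mutually inverse because rank-one torsion-free sheaves on the Gorenstein curve $C$ are reflexive, so $\CI_{Z_T}^{\vee\vee}\simeq\CI_{Z_T}$ and $(F_{C,T})^{\vee}$ recovers $\CI_{Z_T}$, while the $\mathrm{Ext}$--$\mathrm{Hom}$ identification of the previous paragraph is natural in $T$. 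This yields the isomorphism $q$, which is by construction a morphism over $H^l(C)$. I expect the main obstacle to be precisely the local computation at $p$ of the second paragraph: making the identification ${\mathcal Ext}^1_{D_T/T}(F_{C,T},\CO_{p_T})\simeq\CI_{Z_T}|_{\{p\}\times T}$ canonical and base-change compatible, tracking the duals correctly through Serre duality on $X$ and on $D$ and through the matrix-factorisation description, and checking that the resulting natural transformations really are inverse to one another; once this is in place the rest is a formal assembly of Lemma \ref{extlemma}, Corollary \ref{nondegext} and the Pandharipande--Thomas/Hilbert-scheme identification.
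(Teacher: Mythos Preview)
Your argument is correct and uses the same core ingredients as the paper: the Pandharipande--Thomas/Hilbert identification via \cite[Prop.\ B.8]{stabpairs-III}, Lemma~\ref{extlemma}, and a nondegeneracy $\Leftrightarrow$ surjectivity statement. The route, however, is organised differently. The paper first dualises the whole extension $0\to F_C\to F\to V\otimes\CO_{C_0}(-1)\to 0$ via $\mathcal{E}xt^2_X(-,\CO_X)$ (this is Lemma~\ref{dualstacks}, introducing an intermediate stack ${\mathcal Q}^*(X,C,r,l)$), which turns it into an extension $0\to V^\vee\otimes\CO_{C_0}(-1)\to F'\to J\to 0$ with $J$ an ideal sheaf; then Lemma~\ref{extlemma} with $k=1$ identifies the extension class directly with an element of $\mathrm{Hom}_D(J,V^\vee\otimes\CO_p)$, and Corollary~\ref{nondegext} applies verbatim. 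You instead keep the original extension, apply Serre duality and Lemma~\ref{extlemma} with $k=2$, and then need the extra local identification $\mathrm{Ext}^1_D(F_C,\CO_p)\simeq J\otimes_C\CO_p$. That identification is correct (and follows more cleanly from $R\mathcal{H}om_D(F_C,\CO_D)\simeq i_*(J(C))[-1]$ and derived restriction to $p$ than from matrix factorisations), but it is exactly the step the paper's global dualisation avoids. So the paper's packaging is slightly more economical; yours is more direct in that it does not introduce the auxiliary stack ${\mathcal Q}^*$.

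One small point: Corollary~\ref{nondegext} as stated concerns extensions of $F_C$ by $V\otimes\CO_{C_0}(-1)$, not the other way round, so it does not literally apply to your extension. What you actually need is the elementary observation that condition $(d)$ says the classifying map $V\to\mathrm{Ext}^1_X(\CO_{C_0}(-1),F_C)\simeq(J\otimes\CO_p)^\vee$ is injective, hence its transpose $J\otimes\CO_p\to V^\vee$ is surjective. This is immediate, but you should say it directly rather than citing Corollary~\ref{nondegext}; alternatively, note that the paper itself records (in the proof of Lemma~\ref{dualstacks}) that nondegeneracy of the original extension is equivalent to nondegeneracy of the dual one, which then lets you invoke Corollary~\ref{nondegext}.
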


The first step in the proof 
of Proposition \ref{projhilbert} 
is the observation that the moduli stack ${\mathcal Q}(X,C,r,n)$ 
admits a dual formulation which makes the connection with the 
Hilbert scheme of $C$ manifest. Let $J\subset \CO_C$ be the ideal sheaf of a zero dimensional subscheme of $C$ and consider an 
exact sequence of $\CO_X$-modules
\be\label{eq:idealexseqB} 
0\to V\otimes \CO_{C_0}(-1) \to F \to \CJ \to 0,
\ee
with $V$ a finite dimensional vector space. The extension \eqref{eq:idealexseqB} is called {\it nondegenerate} if for any nontrivial 
quotient $V\twoheadrightarrow V'$, the corresponding extension 
class $e$ is not in the kernel of the natural map 
\[ 
{\rm Ext}^1_X(\CJ, V\otimes \CO_{C_0}(-1)) 
\to 
{\rm Ext}^1_X(\CJ, V'\otimes \CO_{C_0}(-1)).
\]
Now let $\pi:{\mathcal Q}^*(X,C,r,l)\to H^l(C)$ 
be a 
moduli stack over $H^l(C)$ defined as follows. 
For any scheme $\tau:T\to H^l(C)$ let $\CJ_T$ be the 
flat family of ideal sheaves on $C$ obtained by pull-back. 
The objects of ${\mathcal Q}^*(X,C,r,l)$
over $
\tau :T \to H^l(C)$ are collections $(\CV_T,F_T,f_T,g_T)$ 
where $V_T$ is a locally free $\CO_T$-module, $F_T$ is a 
flat family of pure dimension one sheaves on $X$, and 
$g_T:\pi_T^* \CV_T\otimes_{X_T} \CO_{C_{0T}}(-1)\to F_T$, 
$h_T: F_T \to \CJ_T$ are morphisms of $\CO_{X_T}$-modules 
such that 
\begin{itemize} 
\item[$(a^*)$] For any closed point $t\in T$ there is an exact sequence 
of $\CO_{X_t}$-modules
\be\label{eq:idealexseqA} 
0\to V_t \otimes_{X_t} \CO_{C_{0t}}(-1) 
{\buildrel g_t\over\longto} F_t  {\buildrel h_t\over\longto} 
\CJ_t\to 0
\ee
\item[$(b^*)$] The extension \eqref{eq:idealexseqA} is nondegenerate
\end{itemize}
Isomorphisms are defined naturally. Then the following holds.

\begin{lemm}\label{dualstacks} 
For any $(r,n)\in\IZ_{\geq 0}\times \IZ$, 
$n\geq \chi(\CO_C)$, there is an isomorphism 
${j}:{\mathcal Q}(X,C,r,n) {\buildrel \sim 
\over \longto} {\mathcal Q}^*(X,C,r,l)$, $l=n-\chi(\CO_C)$.
\end{lemm}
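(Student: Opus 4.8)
\emph{Proof (plan).} I would realize $j$ as a twisted duality functor on $X$, built so as to interchange the rank-one torsion-free sheaf on $C$ occurring inside an object of $\mathcal Q(X,C,r,n)$ with the ideal sheaf occurring in $\mathcal Q^{*}(X,C,r,l)$, and simultaneously to swap the sub/quotient roles of the $\CO_{C_0}(-1)$-part. Fix a line bundle $N$ on $X$ with $N|_{C}\cong\omega_{C}^{-1}$ and $N|_{C_0}\cong\CO_{C_0}$; such an $N$ exists in the geometry of Section~\ref{onetwo} (for instance $N=\pi^{*}\CO_{\IP^2}(3-\deg C)$, where $\pi\colon X\to\IP^2$ is the projection and $C_0$ is $\pi$-contracted, using $\omega_{C}\cong\CO_{\IP^2}(\deg C-3)|_{C}$). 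Put $\mathbb D(-):={\mathcal Ext}^{2}_{X}(-,N)$. On the full subcategory of pure one-dimensional sheaves on the smooth threefold $X$ this is an exact contravariant functor with a canonical biduality $\mathbb D\circ\mathbb D\cong\mathrm{id}$; it preserves supports and $\ch_{2}$, sends $\chi(G)$ to $2\chi(\CO_{C})-\chi(G)$ whenever $\ch_{2}(G)=[C]+r[C_0]$ (Riemann--Roch with $\deg_{C}(N|_{C})=-\deg\omega_{C}=2\chi(\CO_{C})$ and $N\cdot C_0=0$), and satisfies $\mathbb D(\CO_{C_0}(-1))\cong\CO_{C_0}(-1)$ and $\mathbb D(\CO_{C})\cong\CO_{C}$ (using $\omega_{C_0}\cong\CO_{\IP^1}(-2)$, $C_0\cdot D=1$, $\omega_{C}\otimes N|_{C}\cong\CO_{C}$). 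Its relative version $\mathbb D_{T}:={\mathcal Ext}^{2}_{X_{T}/T}(-,N_{T})$ preserves $T$-flatness and commutes with base change on $T$-flat families with pure one-dimensional fibres, by cohomology-and-base-change for relative $\mathcal Ext$ (only $\mathcal Ext^{2}$ is then nonzero).

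Given a $T$-family $(V_{T},L_{T},F_{T},s_{T},f_{T})\in\mathcal Q(X,C,r,n)$, the composite $f_{T}\circ s_{T}$ vanishes since $\mathrm{Hom}_{X_{t}}(\CO_{X_{t}},V_{t}\otimes\CO_{C_{0t}}(-1))=0$ for all closed points $t$; thus $s_{T}$ factors through the inclusion $\iota_{T}\colon\ker f_{T}\hookrightarrow F_{T}$, and fibrewise (using that $C$ is integral) $(\ker f_{t},s_{t})$ is a stable pair on $C$, with $\ker f_{t}$ pure one-dimensional, scheme-theoretically supported on $C$, $\ch_{2}=[C]$, and $\mathrm{coker}(s_{t})$ of length $l=n-\chi(\CO_{C})$. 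Applying $\mathbb D_{T}$ to the (fibrewise pure one-dimensional) short exact sequence $0\to\ker f_{T}\xrightarrow{\iota_{T}}F_{T}\xrightarrow{f_{T}}\pi_{T}^{*}V_{T}\otimes\CO_{C_{0T}}(-1)\to0$ yields
\[
0\to\pi_{T}^{*}V_{T}^{\vee}\otimes\CO_{C_{0T}}(-1)\xrightarrow{\ \mathbb D_{T}(f_{T})\ }\mathbb D_{T}(F_{T})\xrightarrow{\ \mathbb D_{T}(\iota_{T})\ }\mathbb D_{T}(\ker f_{T})\to0 ,
\]
and dualizing the defining sequence $0\to\CO_{C}\xrightarrow{s_{t}}\ker f_{t}\to\mathrm{coker}(s_{t})\to0$ identifies $\mathbb D(\ker f_{t})$ fibrewise with the ideal sheaf $\CJ_{t}\subset\mathbb D(\CO_{C})=\CO_{C}$ of a length-$l$ subscheme of $C$. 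Hence $\CJ_{T}:=\mathbb D_{T}(\ker f_{T})$ is a $T$-flat family of ideal sheaves, i.e. a morphism $T\to H^{l}(C)$, and I set $j(V_{T},L_{T},F_{T},s_{T},f_{T}):=\bigl(V_{T}^{\vee},\mathbb D_{T}(F_{T}),\mathbb D_{T}(\iota_{T}),\mathbb D_{T}(f_{T})\bigr)$ over $T\to H^{l}(C)$; conditions $(a^{*})$ hold by the above, and $j$ lies over the map of $\mathcal Q(X,C,r,n)$ to $H^{l}(C)$ sending a point to the subscheme dual to the stable pair $(\ker f,s)$ — which is exactly what Proposition~\ref{projhilbert} will use.

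The inverse functor is produced the same way: apply $\mathbb D_{T}$ to an object of $\mathcal Q^{*}$, use that dualizing $0\to\CJ_{t}\to\CO_{C}\to\CO_{Z_{t}}\to0$ gives $\CO_{C}=\mathbb D(\CO_{C})\hookrightarrow\mathbb D(\CJ_{t})$ with zero-dimensional cokernel of length $l$, read off the section $s$ from this inclusion and $f$ from $\mathbb D(g)$, and verify $(a)$--$(c)$ (purity and the Chern invariants of $\mathbb D(F')$ — the latter from the numerics of $\mathbb D$ and $N$ listed above; surjectivity of $\mathbb D(g)$; nonvanishing of $s$). Since $\mathbb D\circ\mathbb D\cong\mathrm{id}$ canonically, compatibly with the identifications $\mathbb D^{2}(\CO_{C_0}(-1))\cong\CO_{C_0}(-1)$ and $\mathbb D^{2}(\CO_{C})\cong\CO_{C}$, these two functors are mutually inverse; functoriality in $T$ and compatibility with isomorphisms are routine (the auxiliary line $L$ on the source corresponds to the rescaling freedom of the framing morphism on the $\mathcal Q^{*}$-side).

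The step I expect to be the main obstacle is the matching of the two a priori differently phrased nondegeneracy conditions: condition $(d)$ of $\mathcal Q$ (no $0\subsetneq V'\subsetneq V$ over which the quotient-extension $\ker f\to F\to V\otimes\CO_{C_0}(-1)$ splits) against condition $(b^{*})$ of $\mathcal Q^{*}$ (for every nontrivial quotient $V^{\vee}\twoheadrightarrow W$ the pushout of $0\to V^{\vee}\otimes\CO_{C_0}(-1)\to\mathbb D(F)\to\CJ\to0$ stays non-split). Here $\mathbb D$ furnishes a canonical isomorphism
\[
\mathrm{Ext}^{1}_{X}\bigl(V\otimes\CO_{C_0}(-1),\,\ker f\bigr)\;\cong\;\mathrm{Ext}^{1}_{X}\bigl(\CJ,\,V^{\vee}\otimes\CO_{C_0}(-1)\bigr)
\]
sending the class of $F$ to the class of $\mathbb D(F)$ and intertwining restriction along $V'\hookrightarrow V$ with pushout along $V^{\vee}\twoheadrightarrow V'^{\vee}$, so the two conditions are literally dual; equivalently one invokes Corollary~\ref{nondegext} (through Lemma~\ref{extlemma}), which recasts each condition as surjectivity onto $V\otimes\CO_{p}$, resp. $V^{\vee}\otimes\CO_{p}$, of the morphism attached to the extension class, and notes that $\mathbb D$ carries one to the other. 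Making this fully rigorous, together with the base-change behaviour of $\mathbb D_{T}$ and the $T$-flatness of $\mathbb D_{T}(F_{T})$ with the expected fibres, is the bulk of the argument; the rest is formal. (When $r=0$ the statement reduces to the classical duality between stable pairs on $C$ and $H^{l}(C)$, so one may assume $r\ge1$; the hypothesis $n\ge\chi(\CO_{C})$ serves only to ensure $l\ge0$.)
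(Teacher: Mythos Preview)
Your proposal is correct and follows essentially the same approach as the paper: both construct $j$ via the $\mathcal{E}xt^{2}_{X}$-duality (twisted by a line bundle restricting to $\omega_C^{-1}$ on $C$ and trivially on $C_0$), which reverses the extension $0\to\ker f\to F\to V\otimes\CO_{C_0}(-1)\to 0$ and converts the stable pair $(\ker f,s)$ on $C$ into an ideal sheaf. The only cosmetic difference is that you build the twist into $\mathbb{D}$ from the outset, whereas the paper first takes $\mathcal{E}xt^{2}_{X}(-,\CO_X)$ and then tensors by $\CO_X((3-k)H)$, citing \cite[Prop.~B8]{stabpairs-III} for the pair/ideal-sheaf correspondence that you re-derive directly; your treatment of the inverse and of the duality between the two nondegeneracy conditions is somewhat more explicit than the paper's.
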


{\it Proof.} Given any collection $(V,L,F,s,f)$ satisfying conditions 
$(a)$-$(d)$, let $G=\mathrm{Ker}(f)$. 
Note that the section $s:\CO_X\to F$ must factor through 
$s_C:\CO_C\to G$ since $H^0(\CO_{C_0}(-1))=0$ 
and $G$ is scheme theoretically supported on $C$. 
According to \cite[Prop. B8]{stabpairs-III}, 
the moduli space of pairs $(G,s_C)$ is isomorphic 
to the Hilbert scheme $H^l(C)$. The isomorphism
is obtained by taking the derived dual $G^\vee=R{\rm Hom}_C(G,\CO_C)$, which is an ideal sheaf on $C$. 

The isomorphism $j$ will be first constructed 
on closed points. 
Note that taking derived duals on $X$ 
one obtains an exact sequence 
\be\label{eq:dualexseq} 
0\to V^\vee\otimes \CO_{C_0}(-1) {\buildrel f^\vee\over \longto} 
{\mathcal Ext}^2(F,\CO_X) \to {\mathcal Ext}^2(G,\CO_X) \to 0.
\ee
The duality theorem for the closed embedding $\iota:C\hookrightarrow 
X$ yields an isomorphism 
\[
R\iota_*R{\mathcal Hom}_C(G,\omega_C)[-2]\simeq R{\mathcal Hom}_X(R\iota_*G, \CO_X).
\]
Note also that $\omega_C\simeq \CO_X((k-3)H)$, where $k\in \IZ_{>0}$ is the degree of $C\subset D$. Therefore there is an isomorphism of 
$\CO_X$-modules 
\[ 
\iota_*G^\vee \simeq {\mathcal Ext}^2_X(G,\CO_X)\otimes_X \CO_{X}((3-k)H),
\]
where $G^\vee$ denotes the derived dual on $C$. 
Moreover it is straightforward  to check that the extension 
\[
0\to G \to F {\buildrel f\over \longto} V\otimes \CO_{C_0}(-1)\to 0
\] 
is nondegenerate if and only if the dual \eqref{eq:dualexseq} is 
nondegenerate. 

In conclusion the functor ${j}$ has been constructed on 
closed points.
The construction in families is analogous, using 
\cite[Prop. B.8]{stabpairs-III}.

\hfill $\Box$ 

In order to conclude the proof of Proposition \ref{projhilbert}, 
recall that according to Lemma \ref{extlemma} 
there is an isomorphism 
\[
\varphi_1:{\rm Ext}^1_X(J,V\otimes \CO_{C_0}(-1)) 
{\buildrel \sim \over\longto} 
\mathrm{Hom}_D(J, V\otimes \CO_{p}).
\]
Moreover Corollary \ref{nondegext} 
shows that for  
given a morphism $\psi: J\to V\otimes \CO_{p}$, 
the extension 
\[ 
0\to V\otimes\CO_{C_0}(-1)\to F_{\varphi_1^{-1}(\psi)} 
\to \CJ\to 0
\]
is nondegenerate if and only if $\psi$ is surjective. 
Now note that there is an isomorphism 
\[
{ \rm Hom}_C(J, V\otimes\CO_p) \simeq {\rm Hom}_C 
(J\otimes_C \CO_p, V\otimes\CO_p), \qquad 
\psi \mapsto {\bar \psi},
\]
such that $\psi$ is surjective if and only if ${\bar \psi}$ is surjective. 
Then Proposition \ref{projhilbert} follows from 
Lemma \ref{dualstacks} by a straightforward comparison of flat families. 

\hfill $\Box$

\begin{rema}\label{forgetsectionB}
Note that Proposition \ref{projhilbert} implies that the stack 
${\mathcal Q}(X,C,r,n)$ is a $\IC^\times$-gerbe over the {\it Quot}
schemes $Q^{[l,r]}(C)$. A similar result holds for the moduli 
stacks $\CM(X,C,r,n)$ of decorated objects 
satisfying conditions $(a)$, $(b)$, $(d)$ introduced in 
Remark \ref{forgetsection}. 
Let $\CM^{[l,r]}(C)$ be the moduli stack of pairs $(J,\psi)$ 
where $J$ is any abstract sheaf which admits an isomorphism 
to a length $l$ ideal sheaf on $C$, 
and $\psi:J\to 
\CO_p^{\oplus r}$ a surjective morphism. Two such pairs are 
isomorphic if there is an isomorphism of sheaves 
$\xi:J\to J'$ such that 
$\psi'\circ \xi =\psi$. By analogy with the moduli spaces 
${Q}^{[l,r]}(C)$,  $M^{[l,r]}(C)$ are naturally identified with relative {\it Quot} schemes over the compactified 
Jacobian of $C$ of degree $l=n-\chi(\CO_C)$.  
By analogy with Proposition \ref{projhilbert}
 the stacks $\CM(X,C,r,n)$ 
are $\IC^\times$-gerbes over 
the moduli spaces $M^{[l,r]}(C)$. Moreover there is an 
obvious forgetful morphism $\pi:Q^{[l,r]}(C)\to M^{[l,r]}(C)$ 
determined by the natural morphism from the Hilbert 
scheme to the compactified Jacobian. 
\end{rema}

Let $\CN(D,k,r,n)$ be the moduli stack of pairs $(J,\psi)$ 
where $J$ is a rank one torsion free sheaf on a degree $k$ reduced irreducible divisor on $D$ and $\psi:J\to \CO_p^{\oplus r}$ a surjective morphism. Obviously there 
is a natural projection $\CN(D,k,r,n)\to \CU$ to an open 
subset of the linear system $\IP(H^0(I_p(kH)))$ 
where $I_p$ is the maximal ideal sheaf of $p$. 
 ${M}^{[l,r]}(C)$ is the fiber of this projection over the point $[C]\in \CU$. 
Since any $\CO_D$-module $J$ as above is automatically 
slope and Gieseker stable on $D$, one can easily check 
that such a pair $(J,\psi)$ is $\delta$-stable in the sense 
of \cite{framedmoduli} for sufficiently small $\delta>0$. 
 Then the results of \cite{framedmoduli} imply that 
 $\CN(D,k,r,n)$ is a quasi-projective moduli scheme. 
 
\begin{lemm}\label{smoothmoduli}
If $r<3k$, $\CN(D,k,r,n)$ is smooth. 
\end{lemm}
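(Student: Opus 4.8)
The plan is to prove that $\CN(D,k,r,n)$ is unobstructed by analysing its deformation theory. A rank one torsion free sheaf $J$ on the integral degree $k$ plane curve $C\subset D\simeq\IP^2$, viewed as an $\CO_D$-module, is pure of dimension one with $\mathrm{ch}_1(J)=kH$ and is automatically Gieseker stable; incorporating the variation of $C$ into the variation of $J$, a pair $(J,\psi)$ with $\psi\colon J\to\CO_p^{\oplus r}$ surjective is exactly a $\delta$-stable framed sheaf on $D$ in the sense of \cite{framedmoduli}, with fixed framing sheaf $\CO_p^{\oplus r}$. The tangent--obstruction theory of such framed sheaves is governed by the hypercohomology of the two term complex
\[
C^\bullet \;=\; \mathrm{Cone}\Bigl(\RHom_D(J,J)\xrightarrow{\ \psi\circ(-)\ }\RHom_D(J,\CO_p^{\oplus r})\Bigr)[-1],
\]
with first order deformations in $\mathbb{H}^1(C^\bullet)$ and obstructions in $\mathbb{H}^2(C^\bullet)$. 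From the long exact sequence of the cone,
\[
\Ext^1_D(J,J)\xrightarrow{\ \psi\circ(-)\ }\Ext^1_D(J,\CO_p^{\oplus r})\longrightarrow\mathbb{H}^2(C^\bullet)\longrightarrow\Ext^2_D(J,J)\xrightarrow{\ \psi\circ(-)\ }\Ext^2_D(J,\CO_p^{\oplus r}),
\]
so it suffices to establish: (a) $\Ext^2_D(J,\CO_p^{\oplus r})=0$; (b) $\Ext^2_D(J,J)=0$; and (c) the map $\psi\circ(-)\colon\Ext^1_D(J,J)\to\Ext^1_D(J,\CO_p^{\oplus r})$ is surjective. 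Then $\mathbb{H}^2(C^\bullet)=0$ and smoothness follows.

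Claims (a) and (b) do not use the hypothesis $r<3k$. For (a), Serre duality on $D\simeq\IP^2$ gives $\Ext^2_D(J,\CO_p)\simeq\mathrm{Hom}_D(\CO_p,J(-3))^\vee$, which vanishes because $J(-3)$ is torsion free on $C$ and hence has no zero dimensional subsheaf. For (b), Serre duality gives $\Ext^2_D(J,J)\simeq\mathrm{Hom}_D(J,J(-3))^\vee$; any nonzero morphism $J\to J(-3)$ between rank one torsion free sheaves on the integral curve $C$ is injective, forcing $\chi(J)\le\chi(J(-3))=\chi(J)-3k$, impossible for $k\ge 1$, so $\Ext^2_D(J,J)=0$.

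The only place where $r<3k$ enters is claim (c). I would set $J'=\ker(\psi)\subset J$, a rank one torsion free sheaf on $C$ with $\chi(J')=\chi(J)-r$, and apply $\RHom_D(J,-)$ to the exact sequence $0\to J'\to J\xrightarrow{\psi}\CO_p^{\oplus r}\to 0$; the connecting homomorphism identifies $\mathrm{coker}\bigl(\Ext^1_D(J,J)\to\Ext^1_D(J,\CO_p^{\oplus r})\bigr)$ with a subspace of $\Ext^2_D(J,J')$. By Serre duality $\Ext^2_D(J,J')\simeq\mathrm{Hom}_D(J',J(-3))^\vee$, and a nonzero morphism $J'\to J(-3)$ would again be injective, with zero dimensional cokernel of length $\chi(J(-3))-\chi(J')=r-3k$; since $r<3k$ this length is negative, a contradiction. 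Hence $\Ext^2_D(J,J')=0$, the map in (c) is surjective, $\mathbb{H}^2(C^\bullet)=0$, and $\CN(D,k,r,n)$ is smooth. I expect the main point requiring care to be the precise identification of the framed deformation complex $C^\bullet$ and of its $\mathbb{H}^2$ with the obstruction space; once that is in place, everything reduces to the two elementary facts that a nonzero morphism between rank one torsion free sheaves on an integral curve is injective and that a zero dimensional sheaf has nonnegative Euler characteristic.
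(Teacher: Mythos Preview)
Your proof is correct and follows essentially the same approach as the paper's: both identify the obstruction space with $\Ext^2_D(J,\ker\psi)$ via the framed deformation theory of \cite{framedmoduli}, then kill it using Serre duality and the observation that a nonzero map between rank-one torsion-free sheaves on the integral curve $C$ is injective (equivalently, that $J'$ and $J\otimes\omega_D$ are slope stable with the same first Chern class). Your check (a) is harmless but unnecessary once (b) holds, since then $\mathbb{H}^2(C^\bullet)$ is already the cokernel of the map in (c); the paper goes directly to $\Ext^2_D(J,I)$ by noting that ${\sf C}(\psi)[-1]\simeq I=\ker\psi$.
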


{\it Proof.} According to \cite{framedmoduli}, 
the  deformation theory of a pair $(J,\psi)$ 
is determined by the extension groups ${\rm Ext}^k_D(J, 
{\sf C}(\psi)[-1])$, $k=1,2$, 
where ${\sf C}(\psi)$ is the cone of $\psi$. 
In order to prove smoothness it suffices to show that 
${\rm Ext}^2_D(J, 
{\sf C}(\psi)[-1])=0$. Since $\psi$ is surjective, ${\sf C}(\psi)[-1]$ 
is quasi-isomorphic to the kernel $I={\rm Ker}(\psi)$.  
Let $T=\CO_p^{\oplus r}$. Then there is an exact sequence 
\[
\cdots \longto {\rm Ext}^1_D(J,J) \longto 
{\rm Ext}^1_D(J,T)\longto {\rm Ext}^2_D(J,I)\longto 
\cdots 
\]
Since $J$ is a stable $\CO_D$-module and $D\simeq \IP^2$ 
is Fano, ${\rm Ext}^2(J,J)=0$. Therefore it suffices to prove that the natural map 
\[
\psi_*:{\rm Ext}^1_D(J,J) \longto 
{\rm Ext}^1_D(J,T)
\]
is surjective. 
Using Serre duality on $D$, ${\rm Ext}^2_D(J,I) \simeq 
{\rm Ext}^0(I,J\otimes_D \omega_D)^\vee$. 
Since $I,J$ are both slope stable on $D$
with $\ch_1(I)=\ch_1(J)=kH$,  this group is trivial  if 
$\chi(I)>\chi(J\otimes_D \omega_D)$. 
However 
\[
\chi(I)=\chi(J)-\chi(T) =\chi(J)-r, \qquad 
\chi(J\otimes_D \omega_D)= \chi(J)-3k.
\]
Therefore the conclusion follows if $r<3k$. 

\hfill $\Box$

\begin{rema}\label{smoothbound} 
Note that $$
{\rm Hom}_C(J,\CO_p^{\oplus r})\simeq 
{\rm Hom}(J\otimes_C \CO_p,\IC^r).$$
Therefore the existence of a surjective morphism $\psi:J\to \CO_p^{\oplus r}$ requires $r$ to be smaller than the minimal number of generators of $J$ at $p$, $m(J)={\rm dim}(J\otimes_D \CO_p)$. However this  number is 
bounded above\footnote{We thank Vivek Shende for pointing out 
this bound.}  by the degree $k$ of $C$, therefore the condition 
$r<3k$ is always satisfied. 
\end{rema}

\subsection{Relation to small $b$ moduli spaces}\label{fourfour} 
Let ${\mathcal P}_{0+}(X,C,r,n)$ denote the moduli 
stack of $\mu_{(\omega,b)}$-slope stable objects of $\CA^C$, where 
$b$ satisfies the bound \eqref{eq:BboundA}. 
By analogy with \cite{limit,generating}, 
${\mathcal P}_{0+}(X,C,r,n)$ is an algebraic stack of finite type 
over $\IC$, and all stabilizers of closed points are isomorphic to 
$\IC^\times$. Recall that 
an object of ${\mathcal P}_{0+}(X,C,r,n)$ is a perfect
complex $E_T$ on $X_T$ such that its restriction 
$L\iota_t^*E_T$ is a  $\mu_{(\omega,b)}$-slope stable
object of the category $\CA^C$ associated to the fiber $X_t$ 
for any closed point $\iota_t:\{t\}
\hookrightarrow  T$. In this subsection
$b>0$ will be a small stability parameter 
of type $(r,n)$ satisfying the bound \eqref{eq:BboundA}.

Any flat family $(V_T,L_T,F_T,s_T,f_T)$ over $T$, 
determines a complex 
\[
E_T = (\pi_T^*L_T {\buildrel s_T\over \longto } F_T)
\]
on $X_T$. 
Since $F_T$ is flat over $T$, and $X_T$ is smooth 
projective over $T$, $E_T$ is perfect. 
Moreover, the derived restriction of $E_T$ to any 
closed fiber $X_t$ is simply obtained by restricting 
the terms of $E_T$ to $X_t$. It follows that the complex
$L\iota_t^*E_T$ satisfies the conditions of Proposition 
\ref{smallBstab}. 
Therefore this construction defines a morphism of stacks 
\[
{f}: {\mathcal Q}(X,C,r,n) \to 
{\mathcal P}_{0+}(X,C,r,n).
\]
\begin{prop}\label{geombij} 
The morphism ${f}$ is geometrically bijective i.e. 
it yields an equivalence 
\[
{f}(\IC) : {\mathcal Q}(X,C,r,n)(\IC) 
{\buildrel \sim \over \longto} 
{\mathcal P}_{0+}(X,C,r,n)(\IC).
\]
of groupoids of $\IC$-valued points. 
\end{prop}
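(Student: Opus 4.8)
The plan is to realise $f$ as an equivalence on $\IC$-points by building an explicit quasi-inverse and checking it is compatible with isomorphisms. Recall that $f$ sends a family $(V_T,L_T,F_T,s_T,f_T)$ to the two-term complex $E_T=(\pi_T^*L_T\xrightarrow{s_T}F_T)$ placed in degrees $-1,0$. The essential input is Proposition \ref{smallBstab}: for a small parameter $b>0$, an object of $\CA^C$ with $\ch=(-1,0,[C]+r[C_0],n)$ is $\mu_{(\omega,b)}$-stable if and only if it is a \emph{nondegenerate} extension $0\to P_C\to E\to\CO_{C_0}(-1)^{\oplus r}\to 0$ in $\CA^C$ with $P_C=(\CO_X\to F_C)$ a stable pair whose sheaf $F_C$ is scheme theoretically supported on $C$.

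For essential surjectivity, take such a stable $E$. I would run the construction already performed in the proof of Lemma \ref{smallBlemmaB}: the vanishings ${\rm Ext}^1_X(\CO_{C_0}(-1)^{\oplus r},\CO_X)\simeq H^2(X,\CO_{C_0}(-1)^{\oplus r})^\vee=0$ and ${\rm Ext}^2_X(\CO_{C_0}(-1)^{\oplus r},\CO_X)\simeq H^1(X,\CO_{C_0}(-1)^{\oplus r})^\vee=0$ (Remark \ref{pairsextremark}) let the $\CA$-extension class of $E$ by $P_C$ be lifted to a genuine sheaf extension $0\to F_C\to F\to\CO_{C_0}(-1)^{\oplus r}\to 0$ with $F$ pure of dimension one, fitting in a commutative diagram with $0\to P_C\to E\to\CO_{C_0}(-1)^{\oplus r}\to 0$ and satisfying $E/F\simeq\CO_X[1]$ in $\CA$. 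Rotating the triangle $F\to E\to\CO_X[1]$ then exhibits $E$ as $(\CO_X\xrightarrow{s}F)$, where $s$ is the composite $\CO_X\to F_C\hookrightarrow F$. Setting $L=\IC$, $V=\IC^r$ and $f\colon F\twoheadrightarrow V\otimes\CO_{C_0}(-1)$ the quotient map, I would verify conditions $(a)$--$(d)$ of Section \ref{threetwo}: $(a)$ follows from the numerical hypotheses and purity of $F$; $(b)$ because $\ker(f)=F_C$ is scheme theoretically supported on $C$ (as in Lemmas \ref{smallBlemmaA}, \ref{smallBlemmaB}); $(c)$ because $\CH^{-1}(E)=\CI_C\neq\CO_X$ forces $s\neq 0$; and $(d)$ is precisely the nondegeneracy clause of Proposition \ref{smallBstab}$(ii)$.

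For the quasi-inverse, start from data $(V,L,F,s,f)$ satisfying $(a)$--$(d)$, and identify $L\otimes\CO_X=\CO_X$. Since ${\rm Hom}_X(\CO_X,V\otimes\CO_{C_0}(-1))=H^0(\CO_{C_0}(-1))^{\oplus r}=0$, the composite $f\circ s$ vanishes, so $s$ factors through a nonzero map $s_C$ into $F_C:=\ker(f)$. As $F_C$ is pure of dimension one with $\ch_2(F_C)=[C]$, scheme theoretically supported on the reduced irreducible curve $C$, it is rank-one torsion free on $C$; hence $s_C$ factors through $\CO_X\twoheadrightarrow\CO_C$, is injective on $C$ with zero-dimensional cokernel, and $P_C:=(\CO_X\xrightarrow{s_C}F_C)$ is a stable pair supported on $C$, in particular an object of $\CA^C$ with $\CH^{-1}(P_C)=\CI_C$. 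The inclusion $F_C\hookrightarrow F$ then gives a morphism of complexes $P_C\to E:=(\CO_X\xrightarrow{s}F)$ whose cone is the sheaf $V\otimes\CO_{C_0}(-1)$, so $E$ sits in a distinguished triangle $P_C\to E\to V\otimes\CO_{C_0}(-1)$; since both outer terms lie in $\CA^C$, Lemma \ref{Cframedclosure}$(i)$ places $E$ in $\CA^C$, and Proposition \ref{smallBstab} together with $(d)$ shows $E$ is $\mu_{(\omega,b)}$-stable. One then checks directly that the two assignments are mutually inverse on objects.

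Finally, to see that $f(\IC)$ is full and faithful, let $E,E'$ be stable objects of the above shape and $g\colon E\to E'$ an isomorphism in $D^b(X)$. Applying ${\rm Hom}_X(-,\CO_X[1])$ to $F\to E\to\CO_X[1]$ and using ${\rm Ext}^1_X(F,\CO_X)\simeq H^2(X,F)^\vee=0$ and ${\rm Ext}^{-1}_X(\CO_X,\CO_X)=0$ shows the natural map ${\rm Hom}(\CO_X[1],\CO_X[1])\to{\rm Hom}(E,\CO_X[1])$ is an isomorphism, so $g$ carries the triangle class of $E$ to a nonzero scalar multiple of that of $E'$; after rescaling $g$ on the degree $-1$ term I may assume $g$ underlies a morphism of triangles that is the identity on $\CO_X[1]$, whence the five lemma yields an isomorphism of pairs $(F,s)\simeq(F',s')$. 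This restricts to $F_C\simeq F'_C$ and descends to an isomorphism $V\otimes\CO_{C_0}(-1)\simeq V'\otimes\CO_{C_0}(-1)$, i.e.\ to a linear isomorphism $V\simeq V'$, so $g$ comes from an isomorphism of the defining data; conversely isomorphic data give isomorphic complexes, and the $\IC^\times$-automorphisms on the two sides are matched by $f$. The step I expect to require the most care is exactly this last one: rigidifying an abstract derived-category isomorphism $E\simeq E'$ to an honest isomorphism of the data $(V,L,F,s,f)$ (together with the parallel lift of the $\CA$-extension to a sheaf extension in the first half), and both obstructions are cleared precisely by the $\Ext$-vanishing statements recorded above; once Proposition \ref{smallBstab} is in hand, the remainder is formal.
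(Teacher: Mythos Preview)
Your proposal is correct and follows essentially the same approach as the paper: both use Proposition \ref{smallBstab} and Remark \ref{pairsextremark} for essential surjectivity, and both recover the data $(V,L,F,s,f)$ from $E$ by computing ${\rm Hom}(E,\CO_X[1])\simeq\IC$ via the triangle $\CO_X\to F\to E$ and the vanishings ${\rm Hom}(F,\CO_X)=0$, ${\rm Ext}^1_X(F,\CO_X)\simeq H^2(F)^\vee=0$, then reading off $V$ from the induced sheaf extension (the paper invokes the snake lemma where you invoke the five lemma). Your write-up is in fact more explicit than the paper's in verifying conditions $(a)$--$(d)$ and in spelling out the quasi-inverse, but the substance is identical.
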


{\it Proof.} 
Proposition \ref{smallBstab} and Remark \ref{pairsextremark}
imply that any 
object of ${\mathcal P}_{0+}(X,C,r,n)(\IC)$ 
is quasi-isomorphic to an object in the image of ${f}(\IC)$. 
One has to prove that if two data 
$(V,L,F,s,f)$ and $(V',L',F',s',f')$ are mapped to 
quasi-isomorphic complexes $E,E'$ then they must be 
isomorphic. 
This can be proven by analogy with 
\cite[Prop. 1.21]{stabpairs-I}. 
Given an object $E$ of $\CA^C$ satisfying the conditions of 
Proposition \ref{smallBstab} there is an exact triangle 
\[
\CO_X {\buildrel s\over \longto} F \to E 
\] 
in $D^b(X)$ 
where $F$ is a nondegenerate extension 
\be\label{eq:Fext}
0\to G \to F \to V\otimes \CO_{C_0}(-1)\to 0.
\ee
This
yields a long exact sequence 
\[
\cdots \to 
\mathrm{Hom}(F,\CO_X) \to \mathrm{Hom}(\CO_X,\CO_X) 
\to \mathrm{Hom}(E, \CO_X[1]) \to \mathrm{Hom}(F,\CO_X[1])\to \cdots 
\]
The first term is obviously trivial since $F$ is torsion and Serre 
duality implies that 
\[ 
\mathrm{Hom}(F,\CO_X[1])\simeq H^2(F)^\vee=0
\]
since $F$ is supported in dimension at most one. Therefore 
\[ 
\mathrm{Hom}(E, \CO_X[1])\simeq 
\mathrm{Hom}(\CO_X,\CO_X)\simeq \IC,
\]
which implies that there is a unique morphism $E\to \CO_X[1]$ up 
to multiplication by nonzero complex numbers. 
Then 
$F$ is quasi-isomorphic to the mapping cone of the morphism 
$E\to \CO_X[1]$, and the section $s$ is recovered from the 
induced map $\CO_X\to F$ as in \cite[Prop. 1.21]{stabpairs-I}. 
In order to finish the proof, note that given two extensions 
$F,F'$ of the form \eqref{eq:Fext}, an isomorphism of sheaves 
$F{\buildrel \sim \over \longto} F'$ 
induces isomorphisms $G{\buildrel \sim \over \longto} G'$, 
respectively $V {\buildrel \sim \over \longto} V'$ using the snake 
lemma. 
Therefore the data 
$(V,L,F,s,f)$ can be recovered up to isomorphism 
from the complex $E$.

\hfill $\Box$

In conclusion,  note that Propositions \ref{projhilbert} and \ref{geombij}
imply Theorem \ref{smallBidentity} as follows. 

{\it Proof of Theorem \ref{smallBidentity}.} 
Summarizing the results of Propositions \ref{projhilbert} and \ref{geombij}, 
the moduli stack $\calP_{0+}(X,C,r,n)$ of $C$-framed perverse coherent 
sheaves in the small $b>0$ chamber is geometrically bijective with the moduli stack of decorated 
sheaves ${\CQ}(X,C,r,n)$. The latter is in turn an $\IC^\times$-gerbe 
over the relative {\it Quot} scheme $Q^{[l,r]}(C)$, $l=n-\chi(\CO_C)$, 
 introduced 
above Proposition \ref{projhilbert}. 
Then, using \cite[Sect. 3.5, Eqn. (43)]{genDTI}, and \cite[Thm.  3.16]{genDTI},
the topological Euler character 
invariants $P^{top}_{0+}(X,C,r,n)$ are given by 
\[ 
P^{top}_{0+}(X,C,r,n)= \chi(Q^{[l,r]}(C)).
\] 
Next let $\pi:Q^{[l,r]}(C)\to H^l(C)$ be the canonical projection 
to the Hilbert scheme. Let $\CJ$ denote the universal ideal sheaf 
on $H^{l}(C)\times C$. 
By construction, there is a universal quotient 
$$(\pi\times 1_C)^*\CJ\bigg|_{Q^{[l,r]}(C)\times \{p\}}
 \twoheadrightarrow \CV,$$ 
where $\CV$ is a rank $r$ locally free sheaf on $Q^{[l,r]}\times \{p\}$. 
Therefore there is a surjective morphism
\be\label{eq:univsurjmap}
(\pi\times 1_C)^*\CJ\twoheadrightarrow \iota_{p*}\CV
\ee 
 of sheaves on $Q^{[l,r]}(C)\times C$, where $\iota_{p}: 
 Q^{[l,r]}\times \{p\}\hookrightarrow Q^{[l,r]}(C)\times C$
 denotes the canonical closed embedding. Moreover, both 
 sheaves in \eqref{eq:univsurjmap} are flat over 
 $Q^{[l,r]}(C)$.  Therefore the kernel $\CI$ of the morphism 
 \eqref{eq:univsurjmap} is also flat over $Q^{[l,r]}(C)$, and 
 the long exact sequence 
 \[
 0\to \CI \to (\pi\times 1_C)^*\CJ\to\iota_{p*}\CV\to 0
 \]
 restricts to an exact sequence on each curve 
 $\{q\}\times C$, with $q$ a closed point of $Q^{[l,r]}(C)$. 
 This implies that $\CI$ is a flat family of length 
 $l+r$ ideal sheaves on $C$ parameterized by $Q^{[l,r]}(C)$.
 Hence it determines a morphism $\tau:Q^{[l,r]}(C) \to 
 H^{l+r}(C)$. Moreover, by construction, the 
 image of the morphism $\pi\times \tau: Q^{[l,r]}(C)\to 
 H^l(C) \times H^{l+r}(C)$, equipped with the reduced induced 
 scheme structure is the nested Hilbert scheme $H^{[l,r]}(C)$. 
 
In conclusion, the generating function 
\[
Z_{0+}^{top}(X,C,u,T)= \sum_{n\in \IZ}\sum_{r\geq 0} 
T^ru^n P_{0+}^{top}(X,C,r,n)
\]
is equal to 
\[ 
u^{\chi(\CO_C)}\sum_{l\geq 0}\sum{r\geq 0} 
T^r u^l \chi(H^{[l,r]}(C)).
\] 
Now note that the Hilbert scheme $H^l(C)$ 
admits a stratification  
\[
\cdots \subset H^l_{\geq s}(C,p) \subset \cdots \subset 
H^l_{\geq 1}(C,p) =H_p^l(C) 
\]
where $H^l_{\geq s}(C,p)$, $s\geq 1$ denotes the closed subscheme parameterizing ideal 
sheaves $I\subset \CO_{C,p}$ with at least $s$ generators at $p$.
Let $\CS^l_s(C,p)= H^l_{\geq s}(C,p)\setminus 
H^l_{\geq s+1}(C,p)$ denote the locally closed strata. 
Then the natural projection morphism 
$H^{[l,r]}(C) \to H^l(C)$ is a smooth 
$Gr(s,r)$-fibration over the locally closed stratum 
$\CS^l_s(C,p)$, where 
$Gr(s,r)$ is the Grassmannian of $r$-dimensional quotients 
of $\IC^m$. In particular the fibers are empty over strata 
with $s<r$. This implies that 
\[
\bal
Z_{0+}^{top}(X,C,u,T) & = u^{\chi(\CO_C)}
\sum_{l\geq 0}\sum_{r\geq 0} 
T^r u^l \sum_{s\geq r} \chi(Gr(s,r)) \chi(\CS^l_s(C,p)) \\
& =  u^{\chi(\CO_C)}
\sum_{l\geq 0}\sum_{r\geq 0} 
T^r u^l \sum_{s\geq r} \binom{s}{r} \chi(\CS^l_s(C,p)) \\
& = u^{\chi(\CO_C)}
\sum_{l\geq 0} u^l\int_{H^l(C)} (1+T)^{m}d\chi
\eal
\]
where $m:H^l(C)\to \IZ$ is the constructible function which takes 
value $s$ on the stratum $\CS^l_s(C,p)$. 
This implies equation \eqref{eq:smallBinvariants} in Theorem \ref{smallBidentity}
making the substitutions $T=-a^2$, $u=q^2$. 

\hfill $\Box$

 \section{Motivic invariants at small $b$}\label{motivicsect}

Composing the 
morphism ${f}:{\mathcal Q}(X,C,r,n)\to 
{\mathcal P}_{0+}(X,C,r,n)$ with the natural morphism 
${p}:{\mathcal P}_{0+}(X,C,r,n)\to 
{Ob}(\CA)$ one obtains 
a stack function 
\[
{q} : {\mathcal Q}(X,C,r,n)\to 
{ Ob}(\CA)
\]
which determines an element of the motivic Hall algebra $H({\CA})$. 
The construction of $H(\CA)$ is briefly reviewed in Appendix 
\ref{motivicHallidentities}, as background material for the proof 
of Theorem \ref{topCframedinv}. 
The motivic Donaldson-Thomas theory of \cite{wallcrossing}
assigns to any stack function an invariant with values in a certain ring of motives, as reviewed below. Note that the formalism of \cite{genDTI} 
does not apply to abelian category of perverse coherent sheaves 
since no rigorous construction of holomorphic Chern-Simons 
functionals for such objects has been carried out yet in the literature. 
Therefore this section will rely on the conjectural construction 
of \cite{wallcrossing} employing motivic vanishing cycles 
for formal functions. 

The goal of this section is to compare the resulting motivic 
invariants with 
the motivic Hilbert scheme series defined in equation 
\eqref{eq:compactHilbmotivic}, Section \ref{onethree}. 
The two generating functions will be shown to agree 
subject to a conjectural comparison formula between 
the motivic weights of moduli stacks of stable pairs and sheaves. 
This is a natural motivic counterpart of previous 
results for numerical invariants \cite{stabpairs-III}, which 
will be proven here only for sheaves of sufficiently high 
degree. The general case is an open conjecture. 

The required elements 
in the construction of motivic Donaldson-Thomas invariants after \cite{wallcrossing} are the integral identity 
conjectured in \cite[Conj. 4, Sect. 4.4]{wallcrossing} 
and the orientation data \cite[Sect. 5.2]{wallcrossing}. 
The integral identity has been recently proven in 
\cite{integral_identity}, therefore  \cite[Thm. 8, Sect 6.3]{wallcrossing} yields a motivic integration map as soon 
as the derived category is equipped with orientation data. 
This will be assumed without proof in this paper. 
Moreover, 
explicit computations of motivic weights for sheaves 
will be carried out in sections \ref{fivethree}, 
\ref{fivefour} by reduction to a triangulated subcategory of 
quiver representations. In that context it will be further assumed that the 
orientation data on the ambient category $D^b(X)$ agrees with 
orientation data on the derived category of quiver representations 
constructed in \cite{COHA,orientation}.

\subsection{Review of motivic Donaldson-Thomas invariants}
Recall that if $X$ is a compact complex Calabi-Yau $3$-fold then the derived category of coherent sheaves $D^b(X)$ carries a structure of $3$-dimensional Calabi-Yau category ($3CY$ category for short), see \cite{wallcrossing} for  details. In particular  we endow it with  a  cyclic $A_\infty$-structure,
for example by fixing a Calabi-Yau metric on $X$.
Then according to \cite[Sect 3]{wallcrossing} there is a 
formal potential function $W_E$ on the vector space 
${\rm Hom}^1(E,E)$ for any object $E$ of $D^b(X)$. Replacing the category by its minimal model we can treat $W_E$ as a formal function on ${\rm Ext}^1(E,E)$.
Moreover, \cite[Prop. 7, Sect 3.3]{wallcrossing} shows that there is a 
direct sum decomposition of formal functions 
\[ 
W_E = W_E^{\min} \oplus Q_E\oplus N_E
\] 
where 
\begin{itemize} 
\item $W_E^{min}$ is the potential of the minimal $A_\infty$-model. 
\item 
$Q_E$ is a quadratic function on the quotient 
\[ 
{\rm Ext}^1(E,E)/{\mathrm Ker}(m_1:{\rm Ext}^1(E,E)\to 
{\rm Ext}^2(E,E)).
\]
\item $N_E$ is the zero function on the image of $m_1:{\rm Ext}^0(E,E)\to 
{\rm Ext}^1(E,E)$.
\end{itemize}
The argument sketched in \cite{wallcrossing} shows that $D^b(X)$ is ind-constructible locally regular category. This means that $W_E$ can be treated as a regular function along the stack of objects of $D^b(X)$ and formal one in the ``transversal" direction  ${\rm Ext}^1(E,E)$. 
According to \cite[Sect 4.3]{wallcrossing}
$W_E$ determines a motivic Milnor fiber 
$MF_0(W_E)$ at $0$, with values in an appropriate ring of motives. 
Note that the motivic Thom-Sebastiani theorem implies that 
\[ 
(1-MF_0(W_E)) = (1-MF_0(W_E^{min}))(1-MF_0(Q_E)).
\] 
Furthermore, suppose that the category $D^b(X)$ is endowed with
 orientation data and a polarization such that the construction of 
 motivic Donaldson-Thomas series in \cite[Sect 6]{wallcrossing}
 applies. In particular, to each object $E$ of $\CA$ one assigns a 
 motivic weight 
 \be\label{eq:motivicweightA} 
 w_E = \IL^{(E,E)_{\leq 1}/2} (1-MF_0(W_E)) \IL^{-\rk(Q_E)/2}.
  \ee 
 Following the conventions of \cite{wallcrossing}, given any 
 two objects $E_1,E_2$, set
 \[
 (E_1,E_2)_j = {\rm dim}({\rm Ext}^j(E_1,E_2)),\qquad 
 (E_1,E_2)_{\leq j} = \sum_{i\leq j} (-1)^i {\rm dim}({\rm Ext}^i(E_1,E_2))
 \]
  for any $j\in \IZ$. 
  
Since ${Ob}(\CA)\subset Ob(D^b(X))$ we can treat constructible families over ${Ob}(\CA)$ as constructible families over $Ob(D^b(X))$. This gives a homomorphism at the level of stack functions and motivic Hall algebras (since $\CA$ is a heart of $t$-structure there are no negative 
${\rm Ext}^i$ between its objects). As a result, we can apply the formalism of \cite{wallcrossing} to the category of perverse coherent sheaves.
The motivic invariant for a stack function $[{\mathcal \CX}\to 
  {Ob}(\CA)]$ is defined by integration of motivic weights, 
  which is defined in \cite[Sect 4.4]{wallcrossing}.
The  result is encoded in the morphism $\Phi$ constructed 
in \cite[Thm. 8, Sect 6.3]{wallcrossing} from the motivic Hall 
algebra $H(\CA)$ to the quantum torus. Let $\Gamma$ 
denotes the intersection of the image of the Chern character $ch: K_0(D^b(X))\to H^{ev}(X,{\mathbb Q})$  with $H^{ev}(X,{\bf Z})$ (instead of $\Gamma$ one can take the quotient of $K_0(\CA)$ by the subgroup generated by the numerical equivalence). In particular the lattice $\Gamma$ is equipped 
with a 
natural nondegenerate 
 antisymmetric pairing $\langle\ ,\ \rangle$.
The quantum torus is the associative algebra $\CR$ over an appropriate motivic ring described in \cite{wallcrossing} spanned by the symbols 
${\hat e}_\gamma$, $\gamma \in \Gamma$ over the ring of motivic 
weights, where 
\[
{\hat e}_{\gamma_1} {\hat e}_{\gamma_2} = \IL^{\langle\gamma_1,\gamma_2\rangle/2} {\hat e}_{\gamma_1+\gamma_2}. 
\]
Here $\IL$ denotes the motive of the affine line.

Then the integration map $\Phi:H(\CA)\to \CR$ assigns to a 
stack function $[{\mathcal Y}{\buildrel\pi\over \longto} {Ob}(\CA)]$ which factors through the substack 
${Ob}_\gamma(\CA)$, the element 
\[ 
\int_{{\mathcal Y}} w_{\pi(y)} {\hat e}_{\gamma}.
\]

\subsection{Motivic weights  at small $b$}\label{fivetwo} 
The next goal is to evaluate the integration map $\Phi$ on 
the stack function 
\[
q={p}\circ {f} :
{\mathcal Q}(X,C,r,n) \hookrightarrow 
{Ob}(\CA)
\]
determined by Proposition \ref{geombij}.
 As observed in Remark \ref{forgetsection}, 
 there is a natural forgetful morphism 
 $\pi:  {\mathcal Q}(X,C,r,n) \to {\mathcal M}(X,C,r,n)$ 
 to the stack of nondegenerate extensions $(V,F,f)$. 
 The fiber of $\pi$ over a closed point $(V,F,f)$ is isomorphic 
 to the projective space $\IP H^0(F)$.  
 Note that there is also a natural obvious  morphism
 ${m}: {\mathcal M}(X,C,r,n)\to 
 {   Ob}(\CA)$ sending the sheaf $F$ to itself. 
 Then the integration of motivic weights 
 may be carried out in two stages, first along 
 the fibers of $\pi$, and then then on 
 ${\mathcal M}(X,C,r,n)$. The first step will be  
 considered below, while the second one will be postponed 
 for Section \ref{fivethree}. 

Note that there is a one-to-one correspondence between 
nonzero sections in $H^0(F)$ and 
nontrivial extensions
\[
0\to F \to E \to \CO_X[1]\to 0
\]
in $\CA$.
Set $E_1=F$, 
$E_2=\CO_X[1]$ and $E_0=E_1\oplus E_2$. 
According to \cite[Thm. 8, Sect 6.3]{wallcrossing}, there is a 
relation 
\[
\Phi(E_1\cdot E_2) = \Phi(E_1)\Phi(E_2)
\]
in the motivic quantum torus. As shown in {\it Step 1} 
in the proof of loc. cit., this identity is equivalent to 
\[
\bal 
\int_{\alpha \in \mathrm{Ext}^1(E_2,E_1)} 
w_{E_\alpha} = \IL^{[(E_0,E_0)_{\leq 1} -(E_1,E_1)_{\leq 1} 
-(E_2,E_2)_{\leq 1}]/2} \IL^{(E_2,E_1)_1} 
w_{E_1}w_{E_2},
\eal
\]
where $E_\alpha = {\sf Cone}(\alpha)$ for any extension 
class $\alpha \in {\rm Hom}(E_2[-1],E_1)= {\rm Hom}(\CO_X,F)$. 
In particular for $\alpha=0$ one obtains the trivial extension, 
$E_1\oplus E_2$, which has been denoted by $E_0$ in the above 
equation. 

Note that $(E_2,E_2)_{\leq 1}=1$ and 
$w_{E_2}=\IL^{1/2}$ since $\CO_X[1]$ is a 
spherical object. 
Since the fibers of $\pi$ parametrize nonzero extensions, integration along the fiber yields 
\be\label{eq:fiberint}
\bal 
\int_{\alpha \in \mathrm{Ext}^1(E_2,E_1)} 
w_{E_\alpha} - w_{E_0} = 
\IL^{[(E_0,E_0)_{\leq 1} -(E_1,E_1)_{\leq 1} 
]/2}\IL^{(E_2,E_1)_1} 
w_{E_1} - w_{E_0}.
\eal 
\ee

Now recall that the sheaves $F$ are nondegenerate 
extensions of the form 
\[
0\to G \to F \to V \otimes \CO_{C_0}(-1)\to 0.
\]
where $G$ is a rank one torsion free sheaf on a reduced 
irreducible divisor $C\subset D$. 
The motivic weight $w_{E_0}$ will be computed 
below in those cases where $H^1(G)=0$.   
Let $p:{\rm Ext}^1_X(E_0,E_0) \to {\rm Ext}_X^1(F,F)$
be the canonical projection. 
Let $W_{E_0}$, $W_{F}$ be the formal potential functions 
on ${\rm Ext}^1(E_0,E_0)$, ${\rm Ext}^1(F,F)$, respectively. 
\begin{lemm}\label{stepone} 
If $H^1(G)=0$, 
$W_{E_0}=p^*W_{F}$.
\end{lemm}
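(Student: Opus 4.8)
The plan is to work throughout with the decomposition $E_0=E_1\oplus E_2$, $E_1=F$, $E_2=\CO_X[1]$, and to show that under the hypothesis $H^1(G)=0$ the graded algebra ${\rm Ext}^\bullet(E_0,E_0)=\bigoplus_{i,j}{\rm Ext}^\bullet(E_i,E_j)$ contains, in degree $1$, only a ``loop'' at $E_1$ together with a single arrow $E_2\to E_1$. Since the minimal-model potential $W_{E_0}$ is built from \emph{cyclic} words in the degree-one classes, an arrow $E_2\to E_1$ can never occur in such a word, so $W_{E_0}$ only sees the loop at $E_1$, which carries the cyclic $A_\infty$-structure of $F$.

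First I would compute ${\rm Ext}^1(E_0,E_0)$ block by block. Since $\CO_X[1]$ is spherical on the Calabi--Yau threefold, ${\rm Ext}^\bullet(E_2,E_2)=H^\bullet(\CO_X)=\IC\oplus\IC[-3]$, so ${\rm Ext}^1(E_2,E_2)=0$; also ${\rm Ext}^1(E_2,E_1)={\rm Hom}(\CO_X,F)=H^0(X,F)$, and by Serre duality ${\rm Ext}^1(E_1,E_2)={\rm Ext}^2(F,\CO_X)\simeq H^1(X,F)^\vee$. Because $H^1(\CO_{C_0}(-1))=H^1(\IP^1,\CO(-1))=0$, the defining extension $0\to G\to F\to V\otimes\CO_{C_0}(-1)\to 0$ gives $H^1(X,F)=0$ as soon as $H^1(G)=0$; hence ${\rm Ext}^1(E_1,E_2)=0$ and
\[
{\rm Ext}^1(E_0,E_0)={\rm Ext}^1(E_1,E_1)\oplus{\rm Ext}^1(E_2,E_1)={\rm Ext}^1(F,F)\oplus H^0(X,F),
\]
with $p$ the canonical projection onto the first summand. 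Under the same hypothesis ${\rm Ext}^2(E_2,E_1)=H^1(X,F)=0$ as well, which streamlines the next step.

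Next, writing $W_{E_0}(a)$ as its defining sum of cyclic terms $\langle m_n(a,\dots,a),a\rangle$, $n\ge 2$, in the minimal model, I would run a block-composability count. The products $m_n$ respect the matrix structure of $\bigoplus_{i,j}{\rm Ext}^\bullet(E_i,E_j)$, and the cyclic (Serre) pairing is nonzero only between ${\rm Ext}^k(E_a,E_b)$ and ${\rm Ext}^{3-k}(E_b,E_a)$. Split $a=\alpha+\sigma$ with $\alpha\in{\rm Ext}^1(F,F)$ and $\sigma\in{\rm Ext}^1(E_2,E_1)$. Every degree-one component has target $E_1$, so in any nonzero $m_n(a,\dots,a)$ composability forces all inputs but one to equal $\alpha$, and the remaining input, if it is $\sigma$, makes $m_n$ land in ${\rm Ext}^2(E_2,E_1)=H^1(X,F)=0$. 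Hence $m_n(a,\dots,a)=m_n(\alpha,\dots,\alpha)\in{\rm Ext}^2(F,F)$, and since ${\rm Ext}^2(F,F)$ can pair nontrivially only against an ${\rm Ext}^1(E_1,E_2)$-component of $a$, which is absent, $\langle m_n(a,\dots,a),a\rangle=\langle m_n(\alpha,\dots,\alpha),\alpha\rangle$. Therefore $W_{E_0}(a)$ depends only on $\alpha=p(a)$.

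It then remains to identify $W_{E_0}|_{{\rm Ext}^1(F,F)}$ with $W_F$. The idempotent ${\rm id}_{F}$ cuts out the sub-dg-algebra ${\rm End}^\bullet(F)\subset{\rm End}^\bullet(E_0)$, and choosing the homotopy-transfer data for ${\rm End}^\bullet(E_0)$ in block-diagonal form, the transferred products and the cyclic pairing restricted to the $(1,1)$-block are exactly those of the minimal cyclic model of ${\rm RHom}(F,F)$; the sum of cyclic terms in $\alpha$ is then literally $W_F(\alpha)$, so $W_{E_0}=p^*W_F$. The only genuinely delicate point is this last compatibility — arranging the minimal cyclic $A_\infty$-model of the ambient category so that it restricts to the one defining $W_F$ on the nose, rather than merely up to a formal change of coordinates fixing the origin; everything else is the block-composability bookkeeping and the Serre-duality computation above.
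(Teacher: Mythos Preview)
Your approach is essentially the same as the paper's: both compute the ${\rm Ext}^1$-quiver of $\{E_1,E_2\}=\{F,\CO_X[1]\}$, observe that under the hypothesis $H^1(G)=0$ there are only loops at $E_1$ and arrows $E_2\to E_1$ (no arrows back), and conclude that cyclic words in degree-one classes cannot involve the $E_2\to E_1$ arrow. The paper packages the last step by invoking \cite[Thm.~9, Sect.~8]{wallcrossing} after checking its hypotheses (simplicity of the $E_i$, vanishing of ${\rm Hom}$'s and negative ${\rm Ext}$'s between them); you instead unwind the cyclic $A_\infty$-structure by hand. Both are the same argument at different levels of abstraction.

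Two small points. First, your sentence ``${\rm Ext}^2(F,F)$ can pair nontrivially only against an ${\rm Ext}^1(E_1,E_2)$-component of $a$, which is absent'' is misstated: ${\rm Ext}^2(E_1,E_1)$ pairs with ${\rm Ext}^1(E_1,E_1)$, so the correct reason $\langle m_n(\alpha,\ldots,\alpha),a\rangle=\langle m_n(\alpha,\ldots,\alpha),\alpha\rangle$ is that the $\sigma$-component of $a$ lies in ${\rm Ext}^1(E_2,E_1)$ and hence pairs only with ${\rm Ext}^2(E_1,E_2)$, not with ${\rm Ext}^2(E_1,E_1)$. Your conclusion is right, the stated justification is not. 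Second, the ``delicate point'' you flag at the end --- arranging the minimal cyclic model of ${\rm End}^\bullet(E_0)$ to restrict on the nose to that of ${\rm End}^\bullet(F)$ --- is exactly what \cite[Thm.~9, Sect.~8]{wallcrossing} is there to guarantee; it is not a gap in your outline, but it is the place where invoking the reference is the cleanest way to proceed.
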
 

{\it Proof.} The proof will be based on 
\cite[Thm. 9, Sect 8]{wallcrossing}. Set $E_1=F$, 
$E_2=\CO_X[1]$. One then  has to check that 
the following conditions are satisfied
\begin{itemize}
\item[$(a)$] ${\rm Ext}^0(E_i,E_i)=\IC\, 1_E$, $i=1,2$,
\item[$(b)$] ${\rm Ext}^0(E_i,E_j)=0$, $i,j=1,2$, $i\neq j$, 
\item[$(c)$] ${\rm Ext}^{<0}(E_i,E_j) =0$, $i,j=1,2$.
\end{itemize}
Condition $(a)$ is satisfied because the automorphism group of a nondegenerate extension $F$ is $\IC^\times 1_F$.
Condition $(b)$ is satisfied since 
\[
{\rm Ext}^0(F,\CO_X[1])\simeq {\rm Ext}^2(\CO_X,F)^\vee
\simeq H^2(F)^\vee=0.
\]
Condition $(c)$ is also satisfied since 
\[
{\rm Ext}^{-1}(F,\CO_X[1]) = {\rm Ext}^0(F,\CO_X)=0.
\]
Moreover, note that
\[
{\rm Ext}^k(\CO_X, \CO_{C_0}(-1))\simeq H^k(\CO_{C_0}(-1))=0
\]
for all $k\in\IZ$, hence
\be\label{eq:zeroextB}
{\rm Ext}^k(\CO_X,F) \simeq {\rm Ext}^k(\CO_X,G), 
\ee
for all $k\in \IZ$. Using Serre duality,
\[
{\rm Ext}^1(G,\CO_X[1]) \simeq {\rm Ext}^1(\CO_X,G)^\vee \simeq H^1(G)^\vee=0.
\]
Therefore the ${\rm Ext}^1$ quiver of the collection of objects 
$\{E_1,E_2\}$ is of the form  
\[
\xymatrix{ 
\bullet \ar@/_1pc/[r]|{a_1}^-{\substack{\vdots\\ \\}}
\ar@/^1.2pc/[r]|{a_n} & \bullet \ar@(ur,ul)[]|{b_1}
\ar@(dr,dl)[]|{b_d} 
}\, 
\vdots
\]
where 
$n={\rm dim}({\rm Ext}^1(E_2,E_1))\simeq H^0(G)$, 
and  
$d={\rm dim}({\rm Ext}^1(E_1,E_1))$.
Note that there are no left directed arrows, hence all polynomial invariants of any quiver representation are determined by paths of the form $b_{i_1}b_{i_2}\cdots 
b_{i_j}$.
Then \cite[Thm. 9, Sect 8]{wallcrossing} implies that 
$W_{E_0}= p^*W_{F}$. 

\hfill $\Box$

Since $E_2=\CO_X[1]$, it follows that Lemma \ref{stepone} yields 
\[
w_{E_0} = \IL^{[(E_0,E_0)_{\leq 1} -(E_1,E_1)_{\leq 1} ]/2} w_F
\]
when the conditions of  Lemma \ref{stepone} are satisfied. 
Note that 
\[ 
\bal (E_0,E_0)_{\leq 1} -(E_1,E_1)_{\leq 1} 
& = (\CO_X[1],\CO_X[1])_{\leq 1}
+ (\CO_X[1], F)_{\leq 1} + (F,\CO_{X}[1])_{\leq 1} \\
& = (\CO_X[1],\CO_X[1])_{\leq 1}
- (\CO_X, F)_{\leq 0} - (F,\CO_{X})_{\leq 2} \\
& = {\rm dim}H^0(\CO_X) - {\rm dim}H^0(F) 
-\sum_{i=0}^2 (-1)^i {\rm dim}{\rm Ext}^i(F,\CO_X)\\
& = 1- {\rm dim}H^0(F) + {\rm dim}{\rm Ext}^1(F,\CO_X) 
- {\rm dim}{\rm Ext}^2(F,\CO_X) \\
& = 1- {\rm dim}H^0(F) + {\rm dim}H^2(F) - {\rm dim}H^1(F)\\
\eal 
\]
where Serre duality has been used at the last step. 
Under the conditions of Lemma \ref{stepone}, 
\[
{\rm dim}H^0(F)=n, \qquad {\rm dim}H^1(F)=0.
\] 
Therefore 
\[
(E_0,E_0)_{\leq 1} -(E_1,E_1)_{\leq 1}=1-n.
\]

If this is the case, equation \eqref{eq:fiberint} yields 
\be\label{eq:fiberintB} 
\bal 
\int_{\alpha \in \mathrm{Ext}^1(E_2,E_1)} 
w_{E_\alpha} - w_{E_0} = 
\IL^{(1-n)/2}(\IL^{n} -1) 
w_{F}.
\eal 
\ee
In particular this holds 
  for all sheaves $F$ for sufficiently large $n$. By analogy with 
  \cite[Thm. 4]{stabpairs-III} it is natural to conjecture that the following holds 
  for general $n\in \IZ$
  \be\label{eq:fiberintC} 
\bal 
\int_{\alpha \in \mathrm{Ext}^1(E_2,E_1)} 
w_{E_\alpha} - w_{E_0} = 
\IL^{(1-n)/2}(\IL^{h^0(F)} -1) 
w_{F}.
\eal 
\ee

  Using local toric models, the motivic weights $w_F$ will
  be represented below 
  as of motivic Milnor fibers of polynomial Chern-Simons 
  functions.

 \subsection{Local toric models}\label{fivethree}
A straightforward local computation shows that the formal 
neighborhood of the union 
$D\cup C_0$ equipped with the reduced scheme 
structure is isomorphic to the formal neighborhood of an identical 
configuration in a toric Calabi-Yau threefold. This is in fact easier to see starting with with the small crepant resolution $X^-\to X_0$, 
related to $X\to X_0$ by a flop of the exceptional curve. 
For the elliptic fibration example given in Section 
\ref{onethree},  $X^-$ is a smooth elliptic fibration 
with canonical section over the Hirzebruch surface $\IF_1$. 
The exceptional curve $C_0^-$ is contained in the section $D^-$, 
which is identified with $\IF_1$. Then the formal neighborhood 
of $D^-$ in $X^-$ is isomorphic to the formal neighborhood 
of the zero section in the total space $Z^-$ of the 
canonical bundle $K_{\IF_1}$. Moreover $D^-$ is identified with the zero 
section and $C_0^-$ is identified with the unique $(-1)$-curve 
on $Z^-$. Then one can construct a second smooth toric Calabi-Yau 
threefold $Z^+$ related to $Z^-$ by a toric flop along the curve 
$C_0^-$ as shown in detail below. This threefold contains a compact 
divisor $D^+\simeq \IP^2$ and an exceptional $(-1,-1)$ curve 
$C_0^+$
intersecting $D^+$ transversely at a point $p$. 

The toric presentation of both $Z^\pm$  is of the form 
  \[
  \begin{matrix}
  x_1 & x_2 & x_3 & x_4 & x_5 \\
  1 & 0 & 1 & 1 & -3 \\
  0 & 1 & 0 & 1 & -2. \\
  \end{matrix}
  \]
  The disallowed locus is $\{x_1=x_3=0\}\cup\{x_2=x_4=0\}$ 
  for $Z^-$ and $\{x_1=x_3=x_4\}\cup\{x_2=x_4=0\}$ 
  for $Z^-$. The toric fans $\nabla^\pm$ 
  of $Z^\pm$  are generated by the vectors 
  \[ 
  v_1=(1,0,1), \quad v_2=(1,1,1),\quad v_3=(0,1,1),\quad 
  v_4=(-1,-1,1), \quad v_5=(0,0,1)
  \]
  in $\IR^3$. In  each case the fan is a cone over a 
  two dimensional polytope embedded in the plane $z=1$ 
  in $\IR^3$. The toric flop relating $Z^-$ and $Z^+$ 
  corresponds to a change of  triangulation of the two dimensional 
  polytopes, as shown in Fig. \ref{torictrans}. 
    
  \begin{figure}[h]
  \setlength{\unitlength}{1mm}
\hspace{-60pt}
\begin{picture}(80,40)
\put(20,20){\line(-1,-1){10}}\put(7.5,7.7){$_{D_4^-}$}
\put(20,20){\line(1,1){10}}\put(31,31.5){$_{D_2^-}$}
\put(20,20){\line(1,0){10}}\put(31,19){$_{D_1^-}$}
\put(20,20){\line(0,1){10}}\put(15,31.5){$_{D_3^-}$}
\put(30,20){\line(0,1){10}}
\put(20,30){\line(1,0){10}}
\put(20,30){\line(-1,-2){10}}
\put(30,20){\line(-2,-1){20}}
\put(9.5,9.5){$\bullet$}
\put(29,19){$\bullet$} 
\put(19,29){$\bullet$}
\put(29,29){$\bullet$}
\put(19.5,19.5){$\bullet$}
\put(19,18.5){$_{D_5^-}$}
\put(80,20){\line(-1,-1){10}}\put(67.5,7.7){$_{D_4^+}$}
\put(80,30){\line(1,-1){10}}\put(91,31.5){$_{D_2^+}$}
\put(80,20){\line(1,0){10}}\put(91,19){$_{D_1^+}$}
\put(80,20){\line(0,1){10}}\put(75,31.5){$_{D_3^+}$}
\put(90,20){\line(0,1){10}}
\put(80,30){\line(1,0){10}}
\put(80,30){\line(-1,-2){10}}
\put(90,20){\line(-2,-1){20}}
\put(69.5,9.5){$\bullet$}
\put(89,19){$\bullet$} 
\put(79,29){$\bullet$}
\put(89,29){$\bullet$}
\put(79.5,19.5){$\bullet$}
\put(79,18.5){$_{D_5^+}$}
\end{picture}
\caption{Local toric models related by a flop. The polytope on the 
left is the $z=1$ section of the toric fan of the local $\IF_1$ model. The polytope on the right is a similar section of the toric fan of the 
local $\IP^2\cup \IP^1$ model. The two models are related by a toric flop corresponding to the obvious change of triangulation.}
\label{torictrans}
\end{figure}
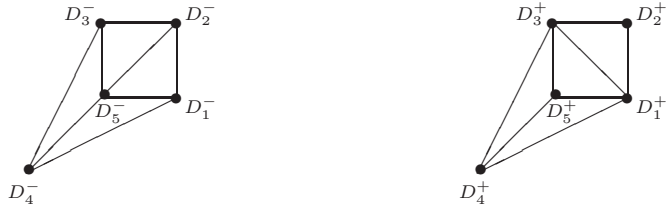

\noindent
The canonical toric divisors $x_i=0$ are denoted by $D_i^\pm$, $i=1,\ldots, 5$. They are in one-to-one correspondence with the rays of the toric fans as shown in Fig. \ref{torictrans}.  Note that 
$D^\pm =D^\pm_5$ are the only compact divisors on $Z^\pm$.
  
The derived categories of $Z^\pm$ are equivalent and are 
generated by line bundles. A collection of line bundles 
generating $D^b(Z^-)$ is obtained by pulling back 
an exceptional collection on the Hirzebruch surface 
$\IF^1$ of the form 
\[ 
\CO_{\IF_1}, \quad\CO_{\IF_1}(C_0^-), \quad 
\CO_{\IF_1}(H), \quad \CO_{\IF_1}(2H).
\] 
Here $C_0^-$ denotes the exceptional curve on $\IF_1$ 
and $H$ the hyperplane class. Note that the resulting 
line bundles on $Z^-$ are isomorphic to the toric 
line bundles 
\[
\CO_{Z^-}, \quad \CO_{Z^-}(D_2^-), \quad \CO_{Z^-}(D_4^-), 
\quad  \CO_{Z^-}(2D_4^-).
\]
The direct sum $\CT^-$ of all above line bundles 
is a tilting object, and the derived category of $Z^-$ is 
equivalent to the derived category of modules over the 
algebra $R{\mathrm{End}}_{Z^-}(\CT^-)^{op}$. 
The equivalence is given by the derived functor 
$R{\rm Hom}_{Z^-}(\CT^-, \bullet)$. 
As a result the derived category of $Z^-$ is equivalent to the 
$3CY$ category which is a Calabi-Yau category associated with the abelian category $(Q,W)-mod$ of finite-dimensional representations of the following quiver  $Q$ 
\bigskip

\be\label{eq:Fonequiver}
\xymatrix{ 
\bullet \ar[r]|{a_2} \ar@/_/[r]|{a_3} \ar@/^/[r]|{a_1} 
& \bullet \ar@/_/[r]|{b_2}
\ar@/^2pc/[rr]|{b_3}
\ar@/^/[r]|{b_1} & \bullet \ar[r]|{c}
\ar@/^1.5pc/[ll]|{r} &\bullet \ar@/^3.5pc/[lll]|{s_2}
\ar@/^2.5pc/[lll]|{s_1}
}
\ee
\bigskip
\noindent
with potential
\be\label{eq:quivpotentialA}
W = r(b_1a_2-b_2a_1) + s_1(cb_1a_3-b_3a_1) + 
s_2(cb_2a_3-b_3a_2).
\ee
Recall that this category can be described as the category of finite-dimensional representations of the Jacobi algebra ${\mathbb C}Q/(\partial W)$, the quotient of the path algebra of $Q$ by the ideal generated by cyclic derivatives of $W$.

For future reference note that the 
line bundles 
\[
\CO_{D^-}, \qquad \CO_{D^-}(D_2^-), \qquad 
\CO_{D^-}(D_4^-), \qquad \CO_{D^-}(2D_4^-) 
\]
form an exceptional collection $\CT_{D^-}$ 
on the Hirzebruch 
surface $D^-\simeq\IF_1$. 
The functor $R{\rm Hom}(\CT_{D^-},\bullet)$ 
yields an equivalence of the derived category $D^b(D^-)$ 
to the derived category of the abelian category $(Q_0,S)-mod$  of the finite-dimensional representations of following quiver $Q_0$ 
 \bigskip

\be\label{eq:FonequiverB}
\xymatrix{ 
\bullet \ar[r]|{a_2} \ar@/_/[r]|{a_3} \ar@/^/[r]|{a_1} 
& \bullet \ar@/_/[r]|{b_2}
\ar@/^2pc/[rr]|{b_3}
\ar@/^/[r]|{b_1} & \bullet \ar[r]|{c}&\bullet 
}
\ee
\bigskip
\noindent
with relations 
\[S:\quad 
b_1a_2-b_2a_1,\qquad cb_1a_3-b_3a_1,\qquad cb_2a_3-b_3a_2.
\]
The abelian category $(Q_0,S)-mod$  has homological dimension $2$, and there is an obvious injective fully faithful exact functor 
of abelian categories 
\[
\iota: (Q_0,S)-mod\ \longto\ (Q,W)-mod.
\] 
For simplicity, extension groups in the two categories will be denoted by ${\rm Ext}^\bullet_{(Q_0,S)}, {\rm Ext}^\bullet_{(Q,W)}$ respectively. 
It will be useful to note that the following relations 
hold:
\be\label{eq:extrelations} 
\bal 
{\rm Ext}^0_{(Q,W)}(\iota \rho_1, \iota \rho_2)
\simeq & \ {\rm Ext}^{0}_{(Q_0,S)}(\rho_1,\rho_2) \\
{\rm Ext}^k_{(Q,W)}(\iota \rho_1, \iota \rho_2)
\simeq & \ {\rm Ext}^{k}_{(Q_0,S)}(\rho_1,\rho_2) 
\oplus 
{\rm Ext}^{3-k}_{(Q_0,S)}(\rho_2,\rho_1)^\vee, \quad 
k=1,2.\\
\eal
\ee

Using the results of \cite{tiltingII}, the direct sum $\CT^+$ of the 
following collection of line bundles 
\[
\CL_1=\CO_{Z^+}(2D_4^+),\qquad 
 \CL_2=\CO_{Z^+}(D_4^+),\qquad 
 \CL_3=\CO_{Z^+}(D_2^+), \qquad 
 \CL_4=\CO_{Z^+}.
\]
is a tilting object in the derived category of $Z^+$. 
Therefore it yields a similar equivalence of $D^b(Z^+)$ to the 
derived category of the same quiver with potential. 

The next step is to compute the image of dimension one sheaves on $Z^+$
via the tilting functor. First note the following 
 result which follows from   \cite[Lemm 9.1]{algCS}.

\begin{lemm}\label{localptwo}
Let $G$ be a rank one torsion free sheaf on a degree $k\in \IZ_{>0}$
reduced irreducible divisor on
$D^+\simeq \IP^2$ with $H^0(G)=0$. 
 Then the complex 
$R{\rm Hom}(\CT^+,G)[1]$ is quasi-isomorphic to a quiver representation 
$\rho_G$ of dimension vector 
\[ 
v_G = \left(2k-\chi(G), k-\chi(G),
-\chi(G), -\chi(G)\right).
\]
which belongs to the subcategory of $(Q_0,S)$-modules. 
Moreover 
\be\label{eq:soothnessA}
{\rm Ext}^2_{(Q_0,S)}(G,G) =0,
\ee 
and $\rho_G(c)$ is an isomorphism if $\chi(G)\neq 0$. 
\end{lemm}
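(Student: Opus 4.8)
\textbf{Proof plan for Lemma \ref{localptwo}.}

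The plan is to exploit the tilting equivalence $R\mathrm{Hom}(\CT^+,\bullet)$ and the fact that $G$ is a sheaf supported on the compact divisor $D^+\simeq\IP^2$, so that all the relevant cohomology can be computed on $\IP^2$ itself. First I would record the cohomology of $G$ against the summands of $\CT^+$. Since $G$ is a pure dimension one sheaf on a reduced irreducible degree $k$ divisor $C'\subset\IP^2$, it is a Gieseker-stable (in fact $p$-stable) $\CO_{\IP^2}$-module, and $H^2(G\otimes L)=0$ for any line bundle $L$ with nonnegative twist by stability and Serre duality (one checks $\chi(G^\vee\otimes L^{-1}\otimes\omega_{\IP^2})<0$ using $\ch_1(G)=kH$, $k\geq 1$). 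The hypothesis $H^0(G)=0$ together with a similar vanishing for the negative twists $\CL_i$ appearing in $\CT^+$ gives $H^0(G\otimes\CL_i^\vee)=0$ for the relevant $i$; hence $R\mathrm{Hom}(\CL_i,G)$ is concentrated in degree $1$ for each $i$. This is where \cite[Lemm.\ 9.1]{algCS} enters: the quoted lemma packages exactly this vanishing and identifies $R\mathrm{Hom}(\CT^+,G)[1]$ with a genuine representation $\rho_G$ of $(Q_0,S)$, sitting in the abelian subcategory $(Q_0,S)-mod\subset(Q,W)-mod$ via $\iota$. The dimension vector is then read off from $\dim\rho_G(i)=h^1(G\otimes\CL_i^\vee)=-\chi(G\otimes\CL_i^\vee)$, and plugging in the twists $2D_4^+, D_4^+, D_2^+, \CO$ (which restrict on $D^+\simeq\IP^2$ to $\CO(2),\CO(1),\CO(1),\CO$ appropriately, keeping track that $D_2^+$ and $D_4^+$ both restrict to a line class) and Riemann--Roch on $C'$ with $\chi(\CO_{C'})=\binom{k-1}{2}$ yields $v_G=(2k-\chi(G),\,k-\chi(G),\,-\chi(G),\,-\chi(G))$ after simplification.

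Next I would prove the vanishing ${\rm Ext}^2_{(Q_0,S)}(\rho_G,\rho_G)=0$. Under the equivalence $D^b(D^+)\simeq D^b((Q_0,S)-mod)$ this is ${\rm Ext}^2_{D^+}(G,G)$, which vanishes by Serre duality on $D^+\simeq\IP^2$: ${\rm Ext}^2_{\IP^2}(G,G)\simeq{\rm Hom}_{\IP^2}(G,G\otimes\omega_{\IP^2})^\vee={\rm Hom}_{\IP^2}(G,G(-3))^\vee$, and this is zero because $G$ is stable with the same reduced Hilbert polynomial as $G(-3)$ but strictly larger leading coefficient in $\chi$, so any nonzero map would contradict stability (equivalently, $\mu$-stable sheaves admit no nonzero morphisms to a strictly smaller slope). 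This is the same argument used in the proof of Lemma \ref{smoothmoduli}.

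Finally I would show $\rho_G(c)$ is an isomorphism when $\chi(G)\neq 0$. The arrow $c$ corresponds under tilting to the natural map coming from the pair of line bundles $\CL_3=\CO(D_2^+)$ and $\CL_4=\CO_{Z^+}$, i.e.\ to a component of the multiplication $H^0(\CO_{\IP^2}(1))\otimes{\rm Hom}(\CL_3,G[1])\to{\rm Hom}(\CL_4,G[1])$; concretely $\rho_G(c):H^1(G\otimes\CL_3^\vee)\to H^1(G)$ induced by a section of $\CO(1)$. Since $\dim\rho_G(3)=\dim\rho_G(4)=-\chi(G)$, it suffices to prove $\rho_G(c)$ is injective (or surjective). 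I would argue via the long exact sequence of $0\to G\otimes\CL_3^\vee\to G\otimes\CL_3^\vee\otimes\CO(1)\to \mathcal{Q}\to 0$ where the generic section of $\CO(1)$ is chosen transverse to the (one-dimensional) support $C'$, so that $\mathcal{Q}$ is a torsion sheaf of length $k$ supported at points; then $H^1$ of the restriction term vanishes because it is zero-dimensional, and $H^0(G\otimes\CL_3^\vee)=0$ forces $H^0(\mathcal{Q})\to H^1(G\otimes\CL_3^\vee)$ to be injective with the cokernel mapping injectively to $H^1(G\otimes\CL_3^\vee\otimes\CO(1))=H^1(G)$, giving injectivity of $\rho_G(c)$; equality of dimensions then upgrades this to an isomorphism. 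The main obstacle I anticipate is the careful bookkeeping of which twists the four line bundles $\CL_i$ restrict to on the compact divisor and the correct identification of the arrow $c$ with a multiplication map — once that dictionary is pinned down (this is precisely what \cite[Lemm.\ 9.1]{algCS} and \cite{tiltingII} provide), the cohomological vanishings are routine consequences of stability on $\IP^2$.
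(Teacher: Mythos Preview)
Your overall strategy --- compute $R\mathrm{Hom}(\CL_i,G)$ via cohomology on $\IP^2$, then use Serre duality and stability for the ${\rm Ext}^2$ vanishing --- is sound and close to what the paper does. But the bookkeeping you flagged as ``the main obstacle'' is in fact where a concrete error occurs, and it propagates.

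The toric flop from $Z^-$ to $Z^+$ removes the edge between $v_2$ and $v_5$ in the fan, so $D_2^+ \cap D_5^+ = \emptyset$ in $Z^+$. Hence $\CO_{Z^+}(D_2^+)|_{D^+} \simeq \CO_{\IP^2}$, not $\CO_{\IP^2}(1)$ as you assert. Your own dimension vector already contradicts your stated restrictions: if $\CL_3$ restricted to $\CO(1)$ the third entry would be $k-\chi(G)$, not $-\chi(G)$. More seriously, this misidentification undermines your argument for $\rho_G(c)$: you propose a generic-hyperplane long exact sequence to show a map $H^1(G(-1))\to H^1(G)$ is an isomorphism, but the actual map is $H^1(G)\to H^1(G)$ induced by multiplication by the section $x_2$, which is \emph{nowhere vanishing on $D^+$} and hence is an isomorphism for trivial reasons. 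A related slip: $D^b(D^+)$ is not equivalent to $D^b((Q_0,S)\text{-mod})$ --- the latter is $D^b(\IF_1)$. What is true is that $D^b(\IP^2)$ sits inside as the full subcategory of representations with $\rho(c)$ invertible.

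The paper's proof handles all of this in one stroke: it works on the open chart $U=\{x_2\neq 0\}\subset Z^+$, which contains $D^+$ and is isomorphic to the total space of $K_{\IP^2}$. On $U$ both $\CL_3=\CO(D_2^+)$ and $\CL_4=\CO$ trivialize, so $j^*\CT^+$ is the pullback of $\CO^{\oplus 2}\oplus\CO(1)\oplus\CO(2)$ from $\IP^2$, and the arrow $c$ becomes the identity between the two trivial summands. This immediately gives the dimension vector, the invertibility of $\rho_G(c)$, and identifies the relevant ${\rm Ext}$ groups with those on $\IP^2$, where your Serre-duality vanishing argument then finishes the job.
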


\begin{proof} 
Note that the open subset $U=\{x_2\neq 0\}\subset Z^+$ 
is isomorphic to the total space 
of the normal bundle $N_{D^+/Z^+}\simeq \omega_{\IP^2}$. 
This follows observing that $U$ is isomorphic to a toric variety 
determined by the toric data 
\[
\begin{array}{ccccc}
x_1' & x_3' & x_4' & x_5' \\
1 & 1 & 1 & -3 \\
\end{array}
\]
where 
\[
x_1'=x_1, \qquad x_3'=x_3, \qquad x_4'=x_2^{-1}x_4\qquad 
x_5'=x_2^{-2}x_5 
\]
and the disallowed locus is $\{x_1'=x_3'=x_4'=0\}$. 

Denote the open immersion $U\hookrightarrow Z^+$ by 
$j$ and the close immersions of $D^+$ into $Z^+$ and $U$ by $i$ and $i'$ respectively. Clearly, $i=j\circ i'$. Denote the tilting bundle on $Z^+$ by $\CT^+$. Given a sheaf on $Z^+$ of the form $i_*G$, there is an isomorphism 
\[
\RHom_{Z^+}(T,i_*G)\simeq \RHom_U(j^*T,i'_*G).
\]
By adjunction, this is further isomorphic to $\RHom_{\IP^2}(\cO^{\oplus 2}\oplus \cO(1)\oplus \cO(2),G)$. By the  derived Morita equivalence, this induces an equivalence between $D^b(\IP^2)$ and the derived category of the abelian category $\cA$ consisting of representations $\rho$ of the directed quiver $Q_0$ with dimension vectors $(v_1,v_2,v_3,v_3)$ and 
$\rho(c)$ an invertible linear map.

Since $\cA$ is a fully faithful subcategory of $(Q_0,S)-{\rm mod}$, we have
$${\rm Ext}^2_{(Q_0,S)}(\rho_G,\rho_G)={\rm Ext}^2_{\cA}(\rho_G,\rho_G)={\rm Ext}^2_{\IP^2}(G,G)=0$$ when $G$ is stable. 
\end{proof}

The next goal is to compute the image of nondegenerate extensions 
\[ 
0\to G\to F \to V\otimes \CO_{C^+_0}(-1)\to 0
\]
via the tilting functor. In order to obtain a single quiver representation
as a opposed to a complex thereof, $F$ must 
be twisted by a suitable line 
bundle $L$ prior to tilting. 
There are several possible results depending 
on the choice of $L$. The one recorded below turns out to be most effective 
for the computation of motivic weights. 

As shown in the proof of Lemma \ref{dualstacks}, taking derived 
duals on $X$ sends a  nondegenerate extension  as above to 
an extension of the form 
\[
0\to V^\vee \otimes \CO_{C_0^+}(-1)\to  
{\mathcal Ext}^2_{Z^+}(F,\CO_{Z^+})
\to \CJ\otimes_C\omega_C\to 0
\] 
where $\CJ = RHom_{C}(G,\CO_C)$ is an ideal sheaf on $C$.
The dualizing sheaf  of $C$ 
is $\omega_C 
\simeq \CO_{Z^+}((k-3)D_4^+)|_{C}$. 
Let $W=V^\vee$.
The dual extension is also subject to a nondegeneracy condition. 
Namely the corresponding extension class 
$e\in {\rm Ext}^1(\CJ\otimes_C \omega_C, W\otimes 
\CO_{C_0^+}(-1))$ is not in the kernel of the map 
\[ 
{\rm Ext}^1(\CJ\otimes_C \omega_C, W\otimes 
\CO_{C_0^+}(-1))\longto 
{\rm Ext}^1(\CJ\otimes_C \omega_C, W'\otimes 
\CO_{C_0^+}(-1))
\]
for any nontrivial quotient $W\twoheadrightarrow W'$. The tilting functor 
will be applied to the twist 
$F'={\mathcal Ext}^2_{Z^+}(F,\CO_{Z^+})\otimes_{Z^+}((2-k)D_4^+))$ 
which fits in an extension 
\[
0\to W \otimes \CO_{C_0^+}(-1)\to  F'
\to \CJ(-D_4^+)\to 0.
\] 
Then the following holds:
\begin{lemm}\label{dualtiltinglemma}
Consider a  nondegenerate extension 
\be\label{eq:sheafext}
0\to W\otimes \CO_{C_0^+}(-1) \to F' \to \CJ' \to 0
\ee
where $\CJ'=\CJ(-D_4^+)$ for an ideal sheaf $\CJ$ 
on a degree $k\in \IZ_{>0}$
reduced irreducible divisor $C^+$ on $D^+\simeq \IP^2$. 
Then  
$R{\rm Hom}_{Z^+}(\CT^+, F')[1]$ 
is quasi-isomorphic to a quiver representation $\rho_{F}$ which 
fits in an extension 
\be\label{eq:repextB} 
0\to W\otimes \rho_3 \to \rho_{F} \to \rho_{\CJ'}\to 0.
\ee
In addition, $\rho_{F}$ belongs to the subcategory of $(Q_0,S)$-modules, and 
\be\label{eq:smoothnessB} 
{\rm Ext}^2_{(Q_0,S)}(\rho_F,\rho_F)=0.
\ee
\end{lemm}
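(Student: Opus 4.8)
The plan is to apply the derived equivalence $R{\rm Hom}_{Z^+}(\CT^+,-)[1]\colon D^b(Z^+)\to D^b((Q,W)\text{-mod})$ to the short exact sequence \eqref{eq:sheafext} and to control its three terms separately. For the subsheaf $W\otimes\CO_{C_0^+}(-1)$ I would first compute ${\rm Ext}^\bullet_{Z^+}(\CL_i,\CO_{C_0^+}(-1))$ for $i=1,\dots,4$: since each $\CL_i$ is a line bundle, $R{\mathcal Hom}_{Z^+}(\CL_i,\CO_{C_0^+}(-1))\simeq\CL_i^\vee|_{C_0^+}(-1)$ is a sheaf on $C_0^+\simeq\IP^1$, and reading off the intersection numbers $D_1^+\cdot C_0^+=D_3^+\cdot C_0^+=-1$, $D_2^+\cdot C_0^+=D_5^+\cdot C_0^+=1$, $D_4^+\cdot C_0^+=0$ from the toric fan of $Z^+$, one finds that this is $H^\bullet(\IP^1,\CO(-1))=0$ for $i=1,2,4$ and $H^\bullet(\IP^1,\CO(-2))$ for $i=3$. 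Hence $R{\rm Hom}_{Z^+}(\CT^+,\CO_{C_0^+}(-1))[1]$ is the simple $(Q_0,S)$-module $\rho_3=S_3$ concentrated at vertex $3$; note that its projective dimension in $(Q_0,S)\text{-mod}$ is $1$ (vertex $3$ has a single outgoing arrow $c$ and no relation of $S$ originates there), so ${\rm Ext}^{\ge2}_{(Q_0,S)}(\rho_3,-)=0$. For the quotient $\CJ'=\CJ(-D_4^+)$ I would invoke Lemma \ref{localptwo} with $G=\CJ'$; this applies because $H^0(\CJ')=0$, as $\CJ\subseteq\CO_{C^+}$ forces $H^0(\CJ')\subseteq H^0(\CO_{C^+}(-1))=0$ on the reduced irreducible degree $k\ge1$ curve $C^+$. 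This gives $R{\rm Hom}_{Z^+}(\CT^+,\CJ')[1]\simeq\rho_{\CJ'}$ with $\rho_{\CJ'}\in(Q_0,S)\text{-mod}$, $\rho_{\CJ'}(c)$ an isomorphism, and ${\rm Ext}^2_{(Q_0,S)}(\rho_{\CJ'},\rho_{\CJ'})=0$.

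With the outer terms pinned down, applying $R{\rm Hom}_{Z^+}(\CT^+,-)[1]$ to \eqref{eq:sheafext} produces an exact triangle whose outer vertices $W\otimes\rho_3$ and $\rho_{\CJ'}$ sit in cohomological degree $0$; the long exact cohomology sequence then forces $R{\rm Hom}_{Z^+}(\CT^+,F')[1]$ into degree $0$ as well, defining $\rho_F$ and the extension \eqref{eq:repextB}. To see $\rho_F\in(Q_0,S)\text{-mod}$ one only has to check that the arrows $r,s_1,s_2$ of $Q$ act trivially on $\rho_F$: the arrows $s_1,s_2$ act by zero for dimension reasons, since $\rho_3$ vanishes at their source vertex and hence $(\rho_F)$ is canonically identified with $(\rho_{\CJ'})$ at both the source and the target of $s_1,s_2$; while the remaining block of $r$ is controlled by the extension class $e\in{\rm Ext}^1_{(Q,W)}(\rho_{\CJ'},W\otimes\rho_3)$, which by \eqref{eq:extrelations} equals ${\rm Ext}^1_{(Q_0,S)}(\rho_{\CJ'},W\otimes\rho_3)\oplus{\rm Ext}^2_{(Q_0,S)}(W\otimes\rho_3,\rho_{\CJ'})^\vee$; the second summand vanishes because ${\rm Ext}^{\ge2}_{(Q_0,S)}(\rho_3,-)=0$, so $e$ is of $(Q_0,S)$-type and, $\iota$ being exact and fully faithful, $\rho_F$ lies in $(Q_0,S)\text{-mod}$.

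Finally, for \eqref{eq:smoothnessB} I would apply ${\rm Hom}_{(Q_0,S)}(-,\rho_F)$ and ${\rm Hom}_{(Q_0,S)}(\rho_{\CJ'},-)$ to \eqref{eq:repextB}. Homological dimension $2$ of $(Q_0,S)\text{-mod}$, together with ${\rm Ext}^{\ge2}_{(Q_0,S)}(\rho_3,-)=0$ and ${\rm Ext}^2_{(Q_0,S)}(\rho_{\CJ'},\rho_{\CJ'})=0$, reduces \eqref{eq:smoothnessB} to the vanishing of ${\rm Ext}^2_{(Q_0,S)}(\rho_{\CJ'},\rho_F)$, which by the same sequences is the cokernel of the connecting map ${\rm Ext}^1_{(Q_0,S)}(\rho_{\CJ'},\rho_{\CJ'})\to{\rm Ext}^2_{(Q_0,S)}(\rho_{\CJ'},W\otimes\rho_3)$. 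Here the target is \emph{not} zero on its own: combining \eqref{eq:extrelations}, the fact that $\rho_{\CJ'}(c)$ is an isomorphism (which kills ${\rm Ext}^1_{(Q_0,S)}(\rho_3,\rho_{\CJ'})$), and Lemma \ref{extlemma}, it is identified with $W\otimes{\rm Ext}^1_{D^+}(\CJ',\CO_p)$, where $\CO_p=\CO_{D^+}\otimes_{Z^+}\CO_{C_0^+}(-1)$; likewise ${\rm Ext}^1_{(Q_0,S)}(\rho_{\CJ'},\rho_{\CJ'})\cong{\rm Ext}^1_{D^+}(\CJ',\CJ')$, and under these identifications the connecting map becomes $\psi_*\colon{\rm Ext}^1_{D^+}(\CJ',\CJ')\to{\rm Ext}^1_{D^+}(\CJ',W\otimes\CO_p)$, with $\psi\colon\CJ'\to W\otimes\CO_p$ the morphism attached to $e$ by Corollary \ref{nondegext}. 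By that same corollary, nondegeneracy of \eqref{eq:sheafext} is exactly surjectivity of $\psi$, so ${\rm coker}(\psi_*)\cong{\rm Ext}^2_{D^+}(\CJ',{\rm Ker}\psi)$ (using ${\rm Ext}^2_{D^+}(\CJ',\CJ')=0$ on the Fano $D^+\simeq\IP^2$), and this vanishes by the computation in the proof of Lemma \ref{smoothmoduli} — surjectivity of $\psi_*$, deduced there from $r<3k$, which always holds by Remark \ref{smoothbound}. Thus \eqref{eq:smoothnessB} follows. I expect the main obstacle to be precisely carrying out this chain of identifications carefully: matching the $(Q_0,S)$-Ext groups with Ext groups on $D^+\simeq\IP^2$ through Lemma \ref{localptwo} and Lemma \ref{extlemma}, and matching the connecting homomorphism with $\psi_*$, so that the vanishing reduces cleanly to the smoothness argument already established for $\CN(D,k,r,n)$; no new geometric input should be needed.
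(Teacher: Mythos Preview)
Your proposal is correct and follows essentially the same route as the paper: tilt the two outer terms of \eqref{eq:sheafext} to $W\otimes\rho_3$ and $\rho_{\CJ'}$, use \eqref{eq:extrelations} together with the vanishing of ${\rm Ext}^{\ge 2}_{(Q_0,S)}(\rho_3,-)$ to place $\rho_F$ in $(Q_0,S)\text{-mod}$, reduce ${\rm Ext}^2_{(Q_0,S)}(\rho_F,\rho_F)$ to the cokernel of the connecting map ${\rm Ext}^1(\rho_{\CJ'},\rho_{\CJ'})\to{\rm Ext}^2(\rho_{\CJ'},W\otimes\rho_3)$, and then transport this map through the derived equivalence and Lemma~\ref{extlemma} to $\psi_*$ on $D^+$, where surjectivity comes from Lemma~\ref{smoothmoduli} and Remark~\ref{smoothbound}. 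The only cosmetic difference is that the paper obtains the vanishing ${\rm Ext}^k_{(Q_0,S)}(\rho_3,\rho_{\CJ'})=0$ via an explicit $L_\infty$/Maurer--Cartan complex, whereas you read it off directly from the length-one projective resolution $0\to P_4\to P_3\to\rho_3\to 0$ and the invertibility of $\rho_{\CJ'}(c)$; both arguments compute the same two-term complex $\rho_{\CJ'}(c)\colon (\rho_{\CJ'})_3\to(\rho_{\CJ'})_4$.
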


{\it Proof.} 
Observe that $\CO_{C_0^+}(-1)$ is mapped to the simple module 
$\rho_3[-1]$ corresponding to the third vertex of the quiver $Q$. According to Lemma \ref{localptwo} 
the twisted derived dual $J'$ of $G$ will be mapped to a representation $\rho_{J'}[-1]$ of $Q_0$ since $H^0(J')=0$. 
Moreover the linear map $\rho_{J'}(c)$ is invertible. 
Then we claim 
\be\label{eq:vanextA}
{\rm Ext}_{(Q_0,S)}^k(\rho_3, \rho_{\CJ'})=0 
\ee
for all $k\in \IZ$.
Suppose $\rho_{J'}$ has dimension vector $d_1,\ldots,d_4$, recall that $\rho_{J'}$ corresponds to a Maurer-Cartan element $x$ of the $L_\infty$ algebra $\Ext^*_{(Q_0,S)}(\oplus \rho_i\otimes V_i,\oplus \rho_i\otimes V_i)$, where the dimension of 
$V_i$ equals $d_i$. The extension space $\Ext^*_{(Q_0,S)}(\rho_3,\oplus \rho_i\otimes V_i)$ is an $L_\infty$ module over 
$\Ext^*_{(Q_0,S)}(\oplus \rho_i\otimes V_i,\oplus \rho_i\otimes V_i)$. The Maurer-Cartan element $x$ defines a differential $\delta^x$ on  $\Ext^*_{(Q_0,S)}(\rho_3,\oplus \rho_i\otimes V_i)$ such that the cohomology groups compute $\Ext^*(\rho_3,\rho_{J'})$. The complex  $\Ext^*_{(Q_0,S)}(\rho_3,\oplus \rho_i\otimes V_i)$ has the form
\[
\xymatrix{
0\ar[r] &{\rm Hom}(\IC,V_3)\ar[r]^{\delta^x}&
{\rm Hom}(\IC, V_4)\ar[r] &0
}
\]
Since the linear map $\rho_{J'}(c)$ is invertible, this complex is acyclic. For future reference, note that a similar argument proves that 
\be\label{eq:vanextB}
{\rm Ext}_{(Q_0,S)}^k(\rho_3, \rho_{3})=0 
\ee
for all $k\in \IZ\setminus \{0\}$. 

According to relations \eqref{eq:extrelations}
 the extension group $\Ext^1_{(Q,W)}(\rho_{J'},\rho_3)$ decomposes into $\Ext^1_{(Q_0,S)}(\rho_{J'},\rho_3)\oplus \Ext^2_{(Q_0,S)}(\rho_3,\rho_{J'})^\vee$. Since we have just proved the second summand vanishes, it follows 
any extension of the form \eqref{eq:sheafext}
 must be mapped by tilting to a representation $\rho_F$ 
  of $(Q_0,S)$.

Since $\rho_F$ is an extension of $\rho_{J'}$ by $W\otimes \rho_3$, the extension group $\Ext^2_{(Q_0,S)}(\rho_F,\rho_F)$ is computed by the complex $\Ext^*(\rho_J\oplus \rho_3,\rho_J\oplus W\otimes\rho_3)$ with the differential $\delta^x$ where $x$ is the Maurer-Cartan element corresponding to the extension class in $\Ext^1_{(Q_0,S)}(\rho_{J'},W\otimes\rho_3)$. The vanishing results 
\eqref{eq:vanextA}, \eqref{eq:vanextB} imply that 
$\Ext^2_{(Q_0,S)}(\rho_F,\rho_F)$ is isomorphic to the cokernel of 
the map 
\[
\xymatrix{ \Ext^1_{(Q_0,S)}(\rho_{J'},\rho_{J'})\oplus \Ext^1_{(Q_0,S)}(\rho_{J'},W\otimes\rho_3)\ar[r]^-{\delta^x} & \Ext_{(Q_0,S)}^2(\rho_{J'},W\otimes\rho_3)
}
\]
Because $x\in \Ext^1_{(Q_0,S)}(\rho_{J'},W\otimes\rho_3)$, the above morphism simplifies to
\be\label{eq:quiverdelta}
\xymatrix{\Ext^1_{(Q_0,S)}(\rho_{J'},\rho_{J'})\ar[r]^-{\delta^x} & {\rm Ext}_{(Q_0,S)}^2(\rho_{J'},W\otimes\rho_3).}\ee
Vanishing of ${\rm Ext}^2_{(Q_0,S)}(\rho_F,\rho_F)$
is equivalent with the above morphism being surjective. 
Furthermore, relations \eqref{eq:extrelations} and the vanishing results \eqref{eq:vanextA} imply that 
$$
\bal \Ext^1_{(Q_0,S)}(\rho_{J'},\rho_{J'}) & =\Ext^1_{(Q,W)}(\rho_{J'},\rho_{J'}), \\
  \Ext_{(Q_0,S)}^2(\rho_{J'},W\otimes\rho_3) & =\Ext_{(Q,W)}^2(\rho_{J'},W\otimes\rho_3).\eal 
$$ Then
derived equivalence with $D^b(Z^+)$ maps the 
morphism \eqref{eq:quiverdelta} to the connecting 
morphism 
\[
\xymatrix{
\Ext_{Z^+}^1(J',J')\ar[r]^-{\delta} &\Ext_{Z^+}^2(J',
W\otimes \CO_{C^+_0(-1)})}.
\]

In order to show that  $\delta$ is a surjection recall 
that according to 
Lemma \ref{extlemma}
there are isomorphisms 
$$\varphi_k:
\Ext^k_{Z^+}(J' ,W\otimes\CO_{C^+_0}(-1)){\buildrel \sim
\over \longto}
\Ext_{D^+}^{k-1}(J,W\otimes\CO_p).$$ 
Moreover Corollary \ref{nondegext} shows that an extension 
$e\in \Ext^1_{Z^+}(J' ,W\otimes\CO_{C^+_0}(-1))$ 
is nondegenerate if and only if the corresponding morphism 
$\varphi_1(e)$ is surjective. In particular this 
holds for the extension 
class $e^x$ corresponding to the Maurer-Cartan element $x$. 
Let $\psi =\varphi_1(e_x)$ and 
$\psi_*: \Ext^1_{D^+}(J',J')\to \Ext^1_{D^+}(J', W\otimes\CO_p)$ the natural induced morphism of extensions.  Clearly 
the following diagram commutes. 
\[
\xymatrix{
\Ext^1_{Z^+}(J',J')\ar[r]^-{\delta}\ar[d]^{\simeq} & 
\Ext_{Z^+}^2(J',W\otimes\CO_{C^+_0(-1)})\ar[d]^{\simeq}\\
\Ext^1_{D^+}(J',J')\ar[r]^-{\psi_*} &\Ext^1_{D^+}
(J', W\otimes\CO_p)}
\]
Since $\psi$ is surjective, surjectivity of $\psi_*$ follows from 
the vanishing result ${\rm Ext}^2_D(J',{\rm Ker}(\psi))=0$ obtained
 in the proof of 
Lemma \ref{smoothmoduli}, and Remark \ref{smoothbound}. 

\hfill $\Box$

\subsection{Motivic weights in local model}\label{fivefour}
Next it will be shown that 
Lemma \ref{dualtiltinglemma} yields a presentation of 
the motivic  weights $w_F$ as motivic Milnor fibers 
of polynomial functions. Note that the quiver $Q$ in 
\eqref{eq:Fonequiver} is the ${\rm Ext}^1$ quiver associated 
to four spherical objects $S_i$, $i=1,\ldots, 4$ in the 
derived category $D^b(Z^+)$. Moreover the objects $S_i$, 
$i=1,\ldots, 4$ generate the subcategory consisting of 
complexes with topological support on $D^+\cup C_0^+$. 
The images of these objects via the tilting functor generate 
the subcategory of complexes of quiver representations 
with nilpotent cohomology.  In particular the representation 
$\rho_F$ corresponding to a sheaf $F$ as in Lemma \ref{dualtiltinglemma} is obtained by successive extensions 
of the $S_i$, $i=1,\ldots, 4$.

For a dimension vector $v=(v_i)_{1\leq i\leq 4}$, let $\IA(v)$ denote the affine space parameterizing 
all representations of the quiver $Q$ without relations.
Note that there is an obvious direct sum decomposition 
\[ 
\IA(v) = \IA^r(v) \oplus \IA^l(v)
\]
where $\IA^r(v)$, $\IA^l(v)$ denote the linear subspaces 
associated to the right directed, and left directed arrows 
respectively in diagram \eqref{eq:Fonequiver}. 
There is also a natural $\IG(v)=\prod_{i=1}^4GL(v_i)$ 
action on $\IA(v)$. 
 
 The potential \eqref{eq:quivpotentialA} determines a 
 $\IG(v)$-invariant quartic polynomial 
 function $\CW$ on ${\mathbb A}(v)$ such that quiver representations of dimension vector 
$v=(v_i)_{1\leq i\leq 4}$ 
are in one-to-one correspondence with 
closed points in the critical locus ${\mathrm{Crit}}(\CW)$. 

Let $\rho_F\in {\mathbb A}(v)$ be a closed point corresponding to a sheaf $F$ satisfying the conditions 
of Lemma \ref{dualtiltinglemma}. Let $\CW_{\rho_F}$ be the Taylor series expansion of $\CW$ at $\rho_F$. Since 
$\rho_F$ 
is an iterated extension of the spherical objects $S_i$, $i=1,\ldots, 4$, the computation of $w_{\rho_F}=w_F$ will be carried out in close 
analogy with  the proof of 
\cite[Thm. 8, Sect. 6.3]{wallcrossing}.

Suppose $E_1,E_2$ are any two objects in derived category of quiver representations with nilpotent cohomology. Let 
$E_0=E_1\oplus E_2$.
Suppose moreover that the potential function $W_{E_0}$ on
\[
{\rm Ext}^1(E_0,E_0) = 
{\rm Ext}^1(E_1,E_1) \oplus 
{\rm Ext}^1(E_2,E_1) \oplus 
{\rm Ext}^1(E_1,E_2) \oplus 
{\rm Ext}^1(E_2,E_2)
\]
is minimal i.e. has no quadratic part. 
Let 
$\alpha \in {\rm Hom}(E_2[-1],E_1)$
be a nontrivial element, and let $E_\alpha ={\sf Cone}(\alpha)$.
 As in {\it Step 3} in the 
proof of \cite[Thm. 8, Sect. 6.3]{wallcrossing}, 
let $W_{(0,\alpha, 0,0)}$ denote the Taylor expansion 
of $W_{E_0}$ at the point $(0,\alpha,0,0)$ in ${\rm Ext}^1(E_0,E_0)$.
Then $W_{(0,\alpha,0,0)}$ is related by a formal change 
of variables to a direct sum of the form 
\[ 
W_{E_\alpha}^{min}\oplus 
{\widetilde Q}_{E_\alpha} \oplus {\widetilde N}_{E_\alpha}
\] 
where ${\widetilde Q}_{E_\alpha}$ is a nondegenerate 
quadratic form and ${\widetilde N}_{E_\alpha}$ the zero function on a linear subspace. 
This implies that there is an identity 
\be\label{eq:milnorfibers}
(1-MF_0(W_{(0,\alpha,0,0)})) = (1-MF_0(W_{E_\alpha}^{min}) 
(1-MF_0({\widetilde Q}_{E_\alpha})).
\ee
Note that ${\widetilde Q}_{E_\alpha}$ is not the same as the 
intrinsic quadratic form $Q_{E_\alpha}$. In fact the discrepancy  between 
these two forms leads to the need to introduce orientation data 
in order to obtain a well defined integration map. 

 Two identities for the quadratic form 
 ${\widetilde Q}_{E_\alpha}$ follow from the proof of \cite[Thm. 8, Sect 6.3]{wallcrossing}.
   First, the rank of 
 ${\widetilde Q}_{E_\alpha}$ is expressed in terms of 
 dimensions of 
 Ext groups as follows 
 \be\label{eq:rankquadrform}
 \rk({\widetilde Q}_{E_\alpha}) = 
 (E_\alpha,E_\alpha)_{\leq 1} - 
 (E_0, E_0)_{\leq 1}.
 \ee
 Next, there is a cocycle identity for motivic Milnor 
 fibers above  \cite[Def. 18, Sect 6.3]{wallcrossing}
 which reads 
\be\label{eq:cocycle}
\bal
& \IL^{-\rk({Q}_{E_\alpha})/2} 
(1-MF_0({Q}_{E_\alpha})) = \\
& \IL^{-\rk({\widetilde Q}_{E_\alpha})/2} 
(1-MF_0({\widetilde Q}_{E_\alpha})) 
\prod_{i=1}^2 \IL^{-\rk({Q}_{E_i})/2} 
(1-MF_0({Q}_{E_i})). \\
\eal
\ee

In the present case $E_0$ is a direct sum of simple objects 
\[
E_0= \bigoplus_{i=1}^4 S_i^{\oplus v_F(i)}
\]
where $v_F=(v_F(i))_{1\leq i\leq 4}$ is the dimension vector 
of the extension $\rho_F$ of Lemma \ref{localptwo},
\[
{v}_H = \left((N+2)k - n, (N+1)k-n,
Nk-n+r, Nk-n\right).
\]
Then
equation \eqref{eq:milnorfibers} yields 
\be\label{eq:milnorfibersB}
1-MF_0(\CW_{\rho_F}) = (1-MF_0(W_F^{min})) 
(1-MF_0({\widetilde Q}_{\rho_F})). 
\ee
where ${\widetilde Q}_{\rho_F}$ is a quadratic form
which satisfies two identities analogous to 
\eqref{eq:rankquadrform}, \eqref{eq:cocycle}. 
Therefore the rank of ${\widetilde Q}_{\rho_F}$ is given by 
\be\label{eq:rankquadrformB} 
\bal
\rk({\widetilde Q}_{\rho_F}) &  = 
(F,F)_{\leq 1} - (E_0,E_0)_{\leq 1} \\
& = (F,F)_{\leq 1} + {\mathrm {dim}}(\IA(v_F)) -
{\mathrm {dim}}(\IG(v_F)).\\
\eal 
\ee
Moreover there is a cocycle identity 
\be\label{eq:cocycleB}
\bal 
\IL^{-\rk({\widetilde Q}_{\rho_F})/2} (1-MF_0({\widetilde Q}_{\rho_F})) = 
\IL^{-\rk({ Q}_{F})/2} (1-MF_0({Q}_{F})) 
\eal 
\ee
since $Q_{S_i}=0$, $i=1,\ldots, 4$ for the spherical objects. 
Equations \eqref{eq:milnorfibersB}, \eqref{eq:rankquadrformB}, \eqref{eq:cocycleB} then yield the 
following expression 
\be\label{eq:motivicweightD} 
w_F = \IL^{(\mathrm{dim}(\IG(v_F)) - \mathrm{dim}(\IA(v_F))/2}
(1-MF_0(\CW_{\rho_F})), 
\ee
where $\CW_{\rho_F}$ is the polynomial function
\[
\CW_{\rho_F}(\rho) = \CW(\rho+\rho_F)
\]
for any $\rho\in \IA(v_F)$. Note that $MF_0(\CW_{\rho_F})= MF_{\rho_F}(\CW)$ by functoriality of motivic Milnor fibers. 

In general explicit computations of pointwise Milnor fibers are difficult. 
The following Lemma shows that the computation is 
tractable on a certain subset of the critical locus
of $\CW$. 
Let  $MC_0= {\rm Crit}(\CW)\cap \IA^r(v)$ be the 
subscheme of critical points with trivial left directed arrows.
The potential $\CW:\IA(v)\to 
\IC$ is of the form 
\[
\CW = \sum_{\kappa=1}^K y_\kappa P_\kappa
\]
where $(y_\kappa)_{1\leq \kappa\leq K}$ are natural linear coordinates on 
$\IA^l(v)$ 
and $P_\kappa:\IA^r(v)\to \IC$ are polynomial functions. 
Then $MC_0$ is determined by 
\[
y_\kappa=0, \qquad 
P_\kappa=0, \qquad 
\kappa=1,\ldots, K.
\]
Let $\CX_0=\CW^{-1}(0)$ denote the central fiber. 
Note that there is a commutative diagram 
\[
\xymatrix{ 
{\rm Crit}(\CW) \ar@{^{(}->}[r] \ar[d]^-{p^{cr}}& {\CX}_0 \ar[d]^-{p}\\
 MC_0 \ar@{^{(}->}[r] \ar@/^1.5pc/[u]_-{\iota}& \IA^r(v)\\}
\]
where $p:{\CX}_0\to \IA^r(v)$ is the restriction of 
the canonical projection $\IA(v)\twoheadrightarrow \IA^r(v)$
and $\iota$ is the zero section 
$y_\kappa=0$, $\kappa=1,\ldots,K$. 
Note that the fibers of $p,p^{cr}$ are linear subspaces of $\IA^l(v)$. 
Let ${{MC}^{}}_0^{sm}$ denote the 
smooth open locus of $MC_0$. 

\begin{lemm}\label{smoothpoint}
Let $\rho\in {{MC}^{}}_0^{sm}$. Then the motivic 
weight at $\iota(\rho)$ is 
\[
1-MF_{\iota(\rho)}(\CW) = \IL^{\mathrm{dim}\, \IA^l(v)}.
\]
\end{lemm}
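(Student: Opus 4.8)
The plan is to compute the pointwise motivic Milnor fiber of $\CW$ at the point $\iota(\rho)$ by exploiting the very special bilinear structure of the potential in the "left-directed" variables. Recall $\CW = \sum_{\kappa=1}^{K} y_\kappa P_\kappa$, where $y_\kappa$ are linear coordinates on $\IA^l(v)$ and $P_\kappa$ are functions of the right-directed coordinates alone. First I would Taylor-expand $\CW$ at $\iota(\rho)$. Write a general point near $\iota(\rho)$ as $(\rho + \xi, \eta)$ with $\xi \in \IA^r(v)$ near $0$ and $\eta = (\eta_\kappa)$ the displacement in the $y$-coordinates. Since $\rho \in MC_0$ we have $P_\kappa(\rho) = 0$ for all $\kappa$, and since $\iota(\rho)$ has $y_\kappa = 0$ the expansion of $\CW$ becomes
\[
\CW_{\iota(\rho)}(\xi,\eta) \;=\; \sum_{\kappa=1}^{K} \eta_\kappa \, P_\kappa(\rho + \xi) \;=\; \sum_{\kappa=1}^{K} \eta_\kappa \, \big( dP_\kappa|_\rho(\xi) + O(\xi^2)\big),
\]
because $P_\kappa(\rho)=0$. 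So $\CW_{\iota(\rho)}$ is linear in $\eta$ with coefficients $P_\kappa(\rho+\xi)$ that vanish at $\xi = 0$.

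Next I would use the hypothesis that $\rho$ is a \emph{smooth} point of $MC_0$. The scheme $MC_0$ is cut out in $\IA^r(v)$ by the equations $P_1 = \cdots = P_K = 0$; smoothness at $\rho$ means the differentials $dP_1|_\rho,\dots,dP_K|_\rho$ have rank equal to the codimension of $MC_0$ at $\rho$, and in particular (after a linear change of the $\xi$-coordinates) there is a subset of the $P_\kappa$, say $P_1,\dots,P_c$ with $c = \mathrm{codim}_\rho MC_0$, whose differentials are linearly independent, while the remaining $P_{c+1},\dots,P_K$ lie in the ideal generated by $P_1,\dots,P_c$ locally at $\rho$, i.e. $P_j = \sum_{i\le c} a_{ji}(\xi) P_i$ for regular functions $a_{ji}$ near $\xi = 0$. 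Substituting into the formula above and replacing $\eta_i \mapsto \eta_i + \sum_{j>c} a_{ji}(\xi)\eta_j$ for $i\le c$ — an invertible formal (indeed algebraic, near the point) change of the $\eta$-coordinates — one reduces $\CW_{\iota(\rho)}$ to
\[
\textstyle\sum_{i=1}^{c} \tilde\eta_i \, P_i(\rho+\xi),
\]
now with only $c$ of the $y$-coordinates appearing. Since $dP_1|_\rho,\dots,dP_c|_\rho$ are independent, by the formal (or analytic) submersion/implicit-function theorem one may take $u_i := P_i(\rho+\xi)$, $i = 1,\dots,c$, as part of a formal coordinate system on $\IA^r(v)$, the rest being free coordinates on which $\CW_{\iota(\rho)}$ does not depend. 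Thus, after an invertible formal change of variables, $\CW_{\iota(\rho)}$ becomes the nondegenerate "pairing" form $\sum_{i=1}^{c} u_i \tilde\eta_i$ plus the zero function in all remaining variables (of which there are $\dim\IA^r(v) - c$ among the $\xi$'s and $K - c$ among the $\eta$'s).

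Finally I would invoke the motivic Thom–Sebastiani theorem together with the standard evaluation of the motivic Milnor fiber at $0$ of a nondegenerate quadratic form (here $\sum u_i\tilde\eta_i$, a split quadratic form of rank $2c$) and of the zero function. The motivic Milnor fiber of the zero function on an affine space $\IA^N$ contributes a factor of $\IL^{N}$ to $1 - MF_0$, and for the split rank-$2c$ quadratic form one has $1 - MF_0(\sum_{i=1}^c u_i\tilde\eta_i) = \IL^{c}$ (the Milnor fiber of $xy$ has motive $\IL - 1$, so $1 - MF_0 = \IL$, and Thom–Sebastiani multiplies these over the $c$ hyperbolic planes). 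Multiplying: the zero-function part lives on an affine space of dimension $(\dim\IA^r(v) - c) + (K - c) = \dim\IA^r(v) + \dim\IA^l(v) - 2c$, contributing $\IL^{\dim\IA^r(v) + \dim\IA^l(v) - 2c}$, and the quadratic part contributes $\IL^{c}$, for a total of $\IL^{\dim\IA^r(v) + \dim\IA^l(v) - c}$. It remains to identify the exponent with $\dim\IA^l(v)$. This forces $c = \dim\IA^r(v)$, i.e. the smooth locus $MC_0^{sm}$ consists of points where $MC_0$ is cut out transversally by exactly $\dim\IA^r(v)$ independent equations — equivalently $MC_0^{sm}$ is zero-dimensional, or more precisely the differentials $dP_\kappa|_\rho$ span the full cotangent space $\IA^r(v)^\vee$. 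I would check this last point from the geometry: the representations $\rho_F$ in question are iterated extensions of the spherical objects $S_i$ with $Q_{S_i} = 0$, and by the cocycle/rank identities \eqref{eq:rankquadrformB}, \eqref{eq:cocycleB} the intrinsic quadratic form $Q_F$ has rank $(F,F)_{\le 1} + \dim\IA(v_F) - \dim\IG(v_F)$; combined with the vanishing ${\rm Ext}^2_{(Q_0,S)}(\rho_F,\rho_F) = 0$ from Lemma \ref{dualtiltinglemma}, this pins down the corank of the Hessian and hence the codimension $c$. \textbf{The main obstacle} I anticipate is precisely this last bookkeeping step: making the identification $c = \dim\IA^r(v)$ rigorous, which amounts to showing that on the smooth locus $MC_0^{sm}$ the "left" equations $P_\kappa = 0$ impose the maximal possible number of independent conditions on $\IA^r(v)$ — this is where the ${\rm Ext}^2$-vanishing of Lemma \ref{dualtiltinglemma} must be translated carefully into a statement about the rank of $\{dP_\kappa|_\rho\}$, using the dictionary between the $L_\infty$/Maurer–Cartan description and the critical-locus description of quiver representations with potential.
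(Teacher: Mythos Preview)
Your approach via a formal coordinate change and motivic Thom--Sebastiani is a legitimate alternative to the paper's explicit blow-up computation, but the execution contains a concrete error that sends you down the wrong path at the end.

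The mistake is the claim that ``the motivic Milnor fiber of the zero function on an affine space $\IA^N$ contributes a factor of $\IL^{N}$ to $1 - MF_0$.'' In the Kontsevich--Soibelman convention used throughout this paper, the zero function contributes $1$, not $\IL^N$. You can see this directly in the paper's own Thom--Sebastiani identity just above \eqref{eq:motivicweightA}: the decomposition $W_E = W_E^{\min}\oplus Q_E\oplus N_E$ produces $(1-MF_0(W_E)) = (1-MF_0(W_E^{\min}))(1-MF_0(Q_E))$ with \emph{no} factor coming from $N_E$. Equivalently, pulling a function back along a projection $\IA^n\times\IA^N\to\IA^n$ does not change $1-MF_0$ at the corresponding point.

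Once you correct this, your computation gives $(1-MF_0)(\CW_{\iota(\rho)}) = \IL^{c}$, and the identity to be proved becomes $c = K = \dim\IA^l(v)$, \emph{not} $c = \dim\IA^r(v)$. That is, you need the differentials $dP_1|_\rho,\dots,dP_K|_\rho$ to be linearly independent, i.e.\ the Jacobian of $(P_\kappa)$ to have full rank $K$. This is exactly how the paper characterizes $MC_0^{sm}$ in its proof: it sets $\CU$ to be the open locus where the Jacobian has maximal rank and identifies $\CU\cap MC_0 = MC_0^{sm}$. So the ``main obstacle'' you anticipate dissolves --- there is no need to invoke the ${\rm Ext}^2$-vanishing of Lemma \ref{dualtiltinglemma} or any delicate $L_\infty$ bookkeeping, and your attempt to force $c=\dim\IA^r(v)$ (which would make $MC_0$ zero-dimensional) is simply chasing the wrong target.

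For comparison, the paper avoids Thom--Sebastiani entirely: it performs a single blow-up of $\IA(v)$ along the linear subspace $\{y_\kappa=0\}$, checks that the total transform of $\CW^{-1}(0)$ is simple normal crossing over $\CU$, and reads off $1-MF_{\iota(\rho)}(\CW) = 1-(1-\IL)[\IP(\IA^l(v))] = \IL^{\dim\IA^l(v)}$ directly from the Denef--Loeser formula. Your corrected argument is shorter and more conceptual; the paper's is more hands-on but requires no convention-chasing.
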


{\it Proof.} 
Let $\CU\subset \IA(v)$ be the open subset where the Jacobian 
matrix of the polynomial functions $(P_\kappa)$, $\kappa=1,\ldots, K$ has maximal rank. Then $\CU\cap MC_0=
{{MC}^{}}_0^{sm}$. Let $\CY_0$ be the restriction of the central fiber 
$\CX_0$ to $\CU$ and $q:\CY_0\to \IA^r(v)$ 
the restriction of $p$.  Note that  the singular locus $\CY_0^{sing}\subset \CY_0$ 
is determined by the equations 
\[ 
y_\kappa=0, \qquad P_\kappa=0, \qquad \kappa=1,\ldots,K. 
\]
This follows from the fact that there is a 
factorization 
\[
{\CU} {\buildrel P\over \longto} \IA^l(v)\times \IA^l(v) 
{\buildrel Q\over \longto} \IC 
\]
of $\CW|_{\CU}:\CU\to 
\IC$, where 
\[
P(y_\kappa, x) = (y_\kappa, P_\kappa(x)) 
\]
for any $x\in \IA^r(v)$, $(y_\kappa)\in \IA^l(v)$ 
and 
\[
Q(y_\kappa,z_\kappa) = \sum_{\kappa=1}^K y_\kappa z_\kappa.
\]
Since the Jacobian matrix of $(P_\kappa)$ has maximal rank on 
$\CU$, the map $P$ is smooth. Moreover, the singular locus 
of the central fiber of $Q$ is obviously $y_\kappa=z_\kappa=0$
for all $\kappa=1,\ldots,K$. This implies the claim. 

In conclusion, $\CY_0^{sing}$ 
coincides with the image $\iota({{MC}^{}}_0^{sm})\subset \CY_0$.
Note also that the fibers of $p$ over closed points 
$\rho\in {{MC}^{}}_0^{sm}$ 
are isomorphic to $\IA^l(v)$. 
Then a normal crossing resolution of $\CY_0$ can be obtained by a single embedded blow-up. Let $\sigma:{\CU}'\to \CU$ be the blow-up of $\CU$ along the 
linear subspace 
\[
y_\kappa=0, \qquad \kappa=1,\ldots, K.
\]
The total transform $\sigma^{-1}(\CY_0)$ consists of the 
strict transform $\CY_0'$ and an exceptional divisor $D$ 
isomorphic to a $\IP(\IA^l(v))$-bundle over $\IA^r(v)$. 
The strict transform $\CY_0'$ is smooth and intersects $D$ 
transversely along a divisor $D'\subset \CY_0'$, which is 
isomorphic to a $\IP(\IA^l(v))$-bundle over ${{MC}^{}}_0^{sm}$. 
Moreover both $\CY_0'$ and $D$ 
multiplicity $1$ in $\sigma^{-1}(\CY_0)$.  

For any point $\rho\in {{MC}^{}}_0^{sm}$, 
$\sigma^{-1}(\iota(\rho))$ intersects both $\CY_0'$ 
and $D$ along the fiber $D_\rho\subset D$, which is 
isomorphic to $\IP(\IA^l(v))$.   
Therefore, from the definition \cite[Sect. 4, pp. 67]{wallcrossing}
\[ 
1-MF_{\iota(\rho)}(\CW) = 1- (1-\IL) [\IP(\IA^l(v))]= 
\IL^{\mathrm{dim}(\IA^l(v))}.
\]

\hfill $\Box$ 

\noindent

\subsection{Comparison with refined Hilbert scheme 
invariants}\label{fivefive}
The compact motivic version of Hilbert scheme invariants 
has been defined in equation \eqref{eq:compactHilbmotivic}, 
which is reproduced below for convenience 
\[
Z^{mot}_{C}(q,a) = \sum_{l,r\geq 0} q^{2l} a^{2r}\IL^{r^2/2} 
[{H}^{[l,r]}(C)].
 \]
Note that the Chow motive of the nested Hilbert 
$[{H}^{[l,r]}(C)]$ is equal to the Chow motive $[{Q}^{[l,r]}(C)]$ 
of the relative {\it Quot} scheme 
defined above Proposition \ref{projhilbert}.  
Moreover, the stack ${\mathcal Q}(X,C,r,n)$ is a $\IC^\times$ 
gerbe over the relative 
{\it Quot} scheme $Q^{[l,r]}(C)$, $l=n-\chi(\CO_C)$,
according to Proposition \ref{projhilbert}. 
As observed in Remark \ref{forgetsectionB},
the moduli stack ${\mathcal M}(X,C,r,n)$ is also a $\IC^\times$ 
gerbe over a coarse moduli scheme ${M}^{[l,r]}(C)$,
and there is a 
natural forgetful morphism $\pi:Q^{[l,r]}(C)\to M^{[l,r]}(C)$. 
Note  also that there is a natural stratification of $M^{[l,r]}(C)$ such that the 
restriction of $\pi$ to each stratum is a smooth projective bundle
with fiber $\IP^{h^0(F)-1}$. 
Since the motivic weights $w_F$ are invariant under isomorphisms, 
$F\simeq F'$, they descend to motivic weights $w_{[F]}$ on the coarse 
moduli space $M^{[l,r]}(C)$. 

Then using the conjectural identity \eqref{eq:fiberintC} a stratification 
argument implies that the virtual
motive of the stack function ${f}:{\mathcal Q}(X,C,r,n)\hookrightarrow 
{Ob}(\CA)$ is given by 
\[
{1\over \IL-1}
\Phi([{f}:{\mathcal Q}(X,C,r,n) \to {Ob}(\CA) 
]) = \IL^{(1-n)/2} \int_{M^{[l,r]}(C)} [\IP^{h^0(F)-1}] w_{[F]}.
\]
Applying Lemmas \ref{smoothpoint}, \ref{dualtiltinglemma}, 
one then obtains 
\[ 
\bal 
& {1\over \IL-1}
\Phi([{f}:{\mathcal Q}(X,C,r,n) \to {Ob}(\CA) 
]) = \\
& \IL^{(1-n)/2} \IL^{(\mathrm{dim}(\IG(v_F)) - \mathrm{dim}(\IA(v_F))/2+ 
\mathrm{dim}(\IA^l(v_F))/2} \int_{M^{[l,r]}(C)} [\IP^{h^0(F)-1}] \\
\eal
\]
Note that 
\[
(\mathrm{dim}(\IG(v_F)) - \mathrm{dim}(\IA(v_F))/2+ 
\mathrm{dim}(\IA^l(v_F))/2=(r^2-k^2)/2
\]
by a straightforward computation. 
Therefore the final formula is  
\[
{1\over \IL-1}
\Phi([{f}:{\mathcal Q}(X,C,r,n) \to {Ob}(\CA) 
]) 
= \IL^{(r^2-k^2+1-n)/2} [{Q}^{[l,r]}(C)].
\]
Then the resulting generating function of $C$-framed virtual 
motivic invariants in the small $b>0$ chamber is 
\[
\bal 
Z^{mot}_{0+}(X,C; u,T) & =  \IL^{(1-k^2)/2} \sum_{r\geq 0} \sum_{l\geq 0} 
u^{n} T^{r} \IL^{(r^2-n)/2} [{Q}^{[l,r]}(C)]\\
& = \IL^{(1-k^2-\chi(\CO_C))/2} u^{\chi(\CO_C)} 
\sum_{r\geq 0} \sum_{l\geq 0} 
u^{l} T^{r} \IL^{(r^2-l)/2} [{ Q}^{[l,r]}(C)]\\
\eal
\]
In conclusion, note that identity \eqref{eq:motivicidentity} holds i.e. 
\[
Z^{mot}_{0+}(X,C;q^2\IL^{1/2},a^2) = 
\IL^{(1-k^2)/2}q^{2\chi(\CO_C)}Z^{mot}_C(q,a).
\]

\appendix
\section{Wallcrossing formula}\label{proofwallcrossing} 

For completeness, a proof of Proposition 
\ref{factorization}  is presented here in detail.
 In the view of Theorem \ref{smallBidentity}, this 
proves Theorem \ref{topCframedinv}. Although the motivic 
Donaldson-Thomas theory of \cite{wallcrossing} 
is consistently used throughout 
this paper, the proof of equation \eqref{eq:factformulaD}
will be based on the alternative 
wallcrossing formalism 
developed in \cite{J-I,J-II,J-III,J-IV,genDTI}. 
The first is more general, but requires more work on the foundations, 
as explained in detail in Section \ref{motivicsect}. 
As stated in the main text, several similar computations have already been carried out in the literature, for example in 
 \cite[Sect 4.3]{generating} and \cite[Thm 3.15]{NH}, and 
 also \cite[Sect 2]{chamberII}, \cite[Sect 3]{ranktwo}.
 The approach explained below follows closely 
 \cite[Sect 2]{chamberII}, \cite[Sect 3]{ranktwo}.  
For clarity the proof will be structured in several steps, and a 
brief review of motivic Hall algebras will be provided in the process.

\subsection{Critical stability parameters}\label{criticalvalues}
In the framework of Section \ref{twotwo} fix a polarization 
$\omega$ of $X$ such that $\int_{C_0}\omega=1$. 
Recall that $b_c\in \IR$ is called critical of type $(r,n)\in 
\IZ_{\geq 0}\times \IZ$ if there 
exist strictly $\mu_{(\omega,B)}$-semistable $C$-framed coherent sheaves $E$ with numerical invariants 
$\ch(E)=(-1,0,[C]+r[C_0],n)$. 

First note the following consequence of 
 the defining conditions $(C.1)$, $(C.2)$ for the subcategory  
 $\CA^C\subset \CA$
in Section \ref{twotwo} 
\begin{lemm}\label{subquot} 
Let $E$ be a $C$-framed perverse coherent sheaf with $\ch(E)=(-1,0,[C]+r[C_0],n)$, $r\in \IZ_{>0}$. 
Let $F\subset E$ and $E\twoheadrightarrow G$ be a nontrivial subobject, respectively quotient of $E$ in $\CA^C$, where 
$F,G$ are pure dimension one sheaves. Then $F,G$ are topologically supported on $C_0$ and $\ch_2(F)=r_F[C_0]$, 
$\ch_2(G)=r_G[C_0]$ for some integers $0< r_F, r_G \leq r$. 
\end{lemm}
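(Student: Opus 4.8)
The plan is to reduce both assertions to the closure of $\CA^C$ under subobjects and quotients established in Lemma \ref{Cframedclosure}, combined with the defining conditions $(C.1)$, $(C.2)$ and additivity of the Chern character. There is no deep input needed; the content is essentially a bookkeeping argument.

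The first step is an elementary remark: any nonzero pure dimension one sheaf $N$ lying in $\CA^C$ is topologically supported on $C_0$, and hence $\ch_2(N)=r_N[C_0]$ with $r_N\in\IZ_{>0}$. Indeed, regarding $N$ as an object of $\CA$ concentrated in degree $0$, condition $(C.2)$ requires $N/Q$ to be topologically supported on $C_0$, where $Q\subset N$ is the maximal zero-dimensional subsheaf; since $N$ is pure of dimension one, $Q=0$ and $N$ itself is topologically supported on $C_0$. As $C_0$ is irreducible of dimension one, $\ch_2(N)$ is a non-negative multiple of $[C_0]$, and the multiple is positive because $N$ has one-dimensional support. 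Applying this with $N=F$ and with $N=G$ settles the support statement and yields $r_F,r_G\geq 1$.

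For the upper bounds I would pass to the complementary term of the defining short exact sequence. If $F\subset E$ is a subobject in $\CA^C$ with $F$ a pure dimension one sheaf, then $\ch_0(F)=\ch_1(F)=0$, so $E':=E/F$ lies in $\CA^C$ by Lemma \ref{Cframedclosure}(ii), carries the full rank $\ch_0(E')=-1$, and has $\ch_2(E')=[C]+(r-r_F)[C_0]$. I would then isolate the following general claim: for any object $E'$ of $\CA^C$ with $\ch_0(E')=-1$ and $\ch_2(E')=[C]+m[C_0]$ one necessarily has $m\geq 0$. Granting it, $r-r_F\geq 0$, i.e.\ $r_F\leq r$; the case of a quotient $E\twoheadrightarrow G$ with $G$ pure of dimension one is symmetric, using $F':=\ker(E\to G)\in\CA^C$ from Lemma \ref{Cframedclosure}(iii) and $\ch_2(F')=[C]+(r-r_G)[C_0]$. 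To prove the claim I would run a short Chern-class computation analogous to the one in the proof of Lemma \ref{structlemmaA}: by $(C.2)$ the one-dimensional part of $\CH^0(E')$ is supported on $C_0$, so $\ch_2(\CH^0(E'))=r_0[C_0]$ with $r_0\geq 0$; by $(C.1)$ the sheaf $\CH^{-1}(E')$ is a subsheaf of $\CI_C$, so $\CI_C/\CH^{-1}(E')$ is a coherent sheaf of dimension $\leq 1$ topologically supported on $C\cup C_0$, whence $\ch_2(\CI_C/\CH^{-1}(E'))=a[C]+b[C_0]$ with $a,b\geq 0$. Combining, $\ch_2(E')=\ch_2(\CH^0(E'))-\ch_2(\CH^{-1}(E'))=(1+a)[C]+(r_0+b)[C_0]$; comparing with $[C]+m[C_0]$ forces $a=0$ and $m=r_0+b\geq 0$, provided $[C]$ and $[C_0]$ are linearly independent in $H_2(X,\IQ)$. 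This independence holds because pairing with the fixed K\"ahler class $\omega$ and with the divisor class of $D$ gives $\int_{C_0}\omega=1$, $\int_C\omega>0$, $C_0\cdot D=1$ and $C\cdot D=-3k$ (with $k=\deg C$), which already forbids $[C]$ from being proportional to $[C_0]$.

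The only genuine subtlety is this last step, where one must exclude higher multiplicity along $C$ in $\CH^{-1}(E')$; everything else is a direct unwinding of the definitions together with Lemma \ref{Cframedclosure}. I would also take care to note explicitly that in both cases the complementary object really does have rank $-1$, which is automatic since $\ch_0(E)=-1$ while $F$, resp.\ $G$, is a sheaf.
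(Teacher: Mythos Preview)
Your proof is correct and follows essentially the same route as the paper's: use $(C.2)$ to force the pure dimension one sheaf onto $C_0$, then pass to the complementary rank~$-1$ object via Lemma~\ref{Cframedclosure} and read off the non-negativity of the $[C_0]$-coefficient from the support constraints on $\CH^{-1}$ and $\CH^0$. The paper's version is terser---it simply asserts ``it follows that $r-r_G\geq 0$'' from the support conditions---whereas you spell out the Chern-class bookkeeping and, in particular, verify the linear independence of $[C]$ and $[C_0]$ in $H_2(X,\IQ)$, a point the paper leaves implicit.
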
 

{\it Proof.} It suffices to prove one case, since the other is analogous. Suppose $E\twoheadrightarrow G$ is a pure 
dimension
one quotient in $\CA^C$ and let $E'={\rm Ker}(
E\twoheadrightarrow G)$. Since  
 $\ch(E)=(-1,0,[C]+r[C_0])$ and $G$ is pure dimension one, 
$\ch_0(E')=-1$, $\ch_1(E')=0$. The second defining condition 
$(C.2)$ of $\CA^C$  in Section \ref{twotwo} 
implies that $G$ must be topologically supported on $C_0$. 
Therefore $\ch_2(G)=r_G[C_0]$, 
$r_G\in \IZ_{>0}$ and $\ch_0(E')=(-1,0,[C]+(r-r_G)[C_0], 
n')$. Moreover the first defining condition $(C.1)$ implies 
that $\CH^{-1}(E')$ must be the ideal sheaf of 
a closed subscheme $Z_{E'}\subset X$, 
which according to $(C.2)$ must be topologically supported on the union
$C\cup C_0$. Since $(C.2)$ also requires $\CH^0(E)$ 
to be topologically  supported on $C\cup C_0$, 
it follows that $r-r_G\geq 0$. 

\hfill $\Box$

\begin{lemm}\label{HNlemmA} 
Let $b_c\in \IR$ be a critical stability parameter of type $(r,n)$
and $E$ a strictly 
$\mu_{(\omega,b_c)}$-semistable object of $\CA^C$ 
with $\ch(E)=(-1,0,[C]+r[C_0],n)$. 
Then one of the following two cases holds.

$(i)$ There is an exact sequence 
\be\label{eq:HNseqQI} 
0\to E'\to E \to G \to 0 
\ee
in $\CA^C_{1/2}$, where  $G$ is an $\omega$-slope 
semistable pure dimension one sheaf 
set theoretically supported on $C_0$ with 
$\mu_\omega(G) = -2b_c$. Moreover, $\ch_2(G) = r_G[C_0]$ with $0< r_G\leq r$.

$(ii)$ There is an exact sequence 
\be\label{eq:HNseqQII} 
0\to F\to E \to E'' \to 0 
\ee
in $\CA^C_{1/2}$, where  $F$ is an $\omega$-slope 
semistable pure dimension one sheaf 
set theoretically supported on $C_0$ with 
$\mu_\omega(F) = -2b_c$. Moreover, 
$\ch_2(F) =r_F[C_0]$ with $0<r_F\leq r$.
\end{lemm}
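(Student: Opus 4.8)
The plan is to read off the destabilizing subobject or quotient directly from the stability criterion of Proposition~\ref{limitstabcrtA}, and then to improve it to an $\omega$-slope semistable sheaf using the semistability of $E$ itself. The inputs I would use, all available from the excerpt, are: that any $\mu_{(\omega,b)}$-semistable object of $\CA^C$ of rank $-1$ lies in $\CA^C_{1/2}$; the criterion of Proposition~\ref{limitstabcrtA}, which for a pure dimension one strict sub/quotient (necessarily supported on $C_0$) amounts to comparing $\mu_\omega$ with $-2b_c$; Lemma~\ref{subquot}; and the fact, from Lemma~\ref{Cframedtorsion}, that $(\CA^C_1,\CA^C_{1/2})$ is a torsion pair, so that $\CA^C_{1/2}$ is closed under extensions and subobjects.

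First I would fix a strictly $\mu_{(\omega,b_c)}$-semistable $E$ with $\ch(E)=(-1,0,[C]+r[C_0],n)$; the case $r=0$ is vacuous since by Lemma~\ref{Cpairs} such an $E$ is a stable pair, stable for all $b$, so assume $r\ge 1$. Then $E\in\CA^C_{1/2}$, so it satisfies the criterion of Proposition~\ref{limitstabcrtA} with respect to strict monomorphisms and epimorphisms in $\CA^C_{1/2}$. Since $E$ is semistable but not stable, the strict form of that criterion must fail at $(i)$ or at $(ii)$: either there is a strict epimorphism $E\twoheadrightarrow G$ in $\CA^C_{1/2}$ with $G$ pure of dimension one and $\mu_\omega(G)\le -2b_c$, or there is a strict monomorphism $F\hookrightarrow E$ in $\CA^C_{1/2}$ with $F$ pure of dimension one and $\mu_\omega(F)\ge -2b_c$. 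Combining each with the (weak) inequality coming from semistability of $E$ forces $\mu_\omega(G)=-2b_c$, respectively $\mu_\omega(F)=-2b_c$. Lemma~\ref{subquot} then gives that $G$ (resp.\ $F$) is set theoretically supported on $C_0$ with $\ch_2=r_G[C_0]$ (resp.\ $r_F[C_0]$), $0<r_G\le r$ (resp.\ $0<r_F\le r$).

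It remains to see that $G$ (resp.\ $F$) is $\omega$-slope semistable, and I would argue that this is automatic. If $G$ were not $\omega$-slope semistable it would admit a pure dimension one quotient $G\twoheadrightarrow G/G_1$ with $\mu_\omega(G/G_1)<-2b_c$, where $G_1$ is the maximal destabilizing subsheaf (which is saturated in $G$, hence $G/G_1$ is pure). The composite $E\twoheadrightarrow G\twoheadrightarrow G/G_1$ would be a strict epimorphism in $\CA^C_{1/2}$: its kernel is an extension of the kernel of $E\twoheadrightarrow G$ (which lies in $\CA^C_{1/2}$) by the pure dimension one sheaf $G_1$ (which lies in $\CA^C_{1/2}$), and $\CA^C_{1/2}$ is closed under extensions. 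This contradicts Proposition~\ref{limitstabcrtA}$(i)$ for the semistable $E$. Symmetrically, if $F$ were not $\omega$-slope semistable, let $F_1\subset F$ be its maximal destabilizing subsheaf; it is saturated in $F$, so $F/F_1$ is pure of dimension one, and $E/F_1$ is an extension of $E/F\in\CA^C_{1/2}$ by $F/F_1\in\CA^C_{1/2}$, hence lies in $\CA^C_{1/2}$, making $F_1\hookrightarrow E$ a strict monomorphism in $\CA^C_{1/2}$ with $\mu_\omega(F_1)>-2b_c$, contradicting Proposition~\ref{limitstabcrtA}$(ii)$. Thus in the first case, taking $E'=\ker(E\twoheadrightarrow G)$ gives an exact sequence as in part $(i)$, and in the second case taking $E''=E/F$ gives one as in part $(ii)$.

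The delicate point — and the step I expect to require the most care — is the persistence of strictness in $\CA^C_{1/2}$ under these operations. Concretely, one must know that $\CA^C_{1/2}$ is closed under extensions, which is exactly the torsion-pair statement of Lemma~\ref{Cframedtorsion}, and that the maximal destabilizing subsheaf of a pure dimension one sheaf is saturated, so that passing to it introduces no zero-dimensional torsion in the quotient (this is what prevents the kernel/cokernel from leaving $\CA^C_{1/2}$). A minor additional subtlety is extracting the exact equality $\mu_\omega=-2b_c$ rather than a mere inequality, which relies on combining the failure of strict stability with the semistability of $E$. Everything else is routine diagram chasing with the long exact sequences already used throughout Section~\ref{twotwo}.
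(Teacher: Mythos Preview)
Your proposal is correct and follows essentially the same line as the paper's proof: use the failure of strict stability together with semistability to force $\mu_\omega=-2b_c$, invoke Lemma~\ref{subquot} for the support and rank bounds, and then upgrade $G$ (resp.\ $F$) to an $\omega$-slope semistable sheaf by showing any further destabilizing quotient (resp.\ subobject) would produce a strict epi/mono contradicting Proposition~\ref{limitstabcrtA}. The only cosmetic difference is that the paper verifies strictness by noting that $\CA^C_{1/2}$, being the torsion-free part of the torsion pair, is closed under subobjects (so any kernel of $E\twoheadrightarrow G'$ automatically lies in $\CA^C_{1/2}$), whereas you use extension closure; both are standard properties of torsion-free classes and yield the same conclusion.
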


{\it Proof.} This lemma follows 
from the stability criterion \ref{limitstabcrtA} 
applied to $C$-framed perverse coherent sheaves. 
By definition, if $E$ is strictly $\mu_{(\omega,b_c)}$-semistable,
one of the following two cases must hold. 

$(a)$ There is 
a strict 
epimorphism $E \twoheadrightarrow G$ in $\CA^C_{1/2}$ 
with $G$ a nontrivial pure dimension one sheaf on $X$ such that 
\[
\mu_{(\omega,b_c)}(G) = -3b_c.
\]

$(b)$ There is an $\mu_{(\omega,B)}$-semistable $C$-framed 
perverse coherent sheaf $E$ of type $(r,n)$ and a strict 
monomorphism $F\hookrightarrow $ in $\CA^C_{1/2}$ 
with $F$ a pure dimension one sheaf on $X$ such that 
\[
\mu_{(\omega,b_c)}(F) = -3b_c.
\]

Suppose $(a)$ holds. 
According to Lemma \ref{subquot}, $G$ must be topologically supported on $C_0$ and $\ch_2(G)=r_G[C_0]$ with 
$0<r_G \leq r$. 
Suppose $G\twoheadrightarrow 
G'$ is a nontrivial pure dimension one quotient and let
$K\subset E$ be the kernel of the resulting surjective 
morphism $E\twoheadrightarrow G''$ in $\CA^C$. Then $K$ 
must belong to $\CA^C_{1/2}$ since $E$ does, hence 
$E\twoheadrightarrow G''$ is a strict epimorphism. 
If $\mu_{(\omega,b_c)}(G'') < -3b_c$ this quotient destabilizes $E$, leading to 
a contradiction. Therefore $\mu_{(\omega,b_c)}(G'')\geq -3b_c = \mu_{(\omega,b_c)}(G)$, which proves that $G$ is 
$\omega$-slope semistable. This leads to case $(i)$ in Lemma 
\ref{HNlemmA}. 

Case $(b)$ leads analogously to case $(ii)$. 

\hfill $\Box$  

\begin{coro}\label{bplusminus} 
Under the conditions of Lemma \ref{HNlemmA}, there exist 
$b_-,b_+\in \IR$ with $b_-<b_c<b_+$ such that $b_c$ is the 
only critical stability parameter of type $(r,n)$ in the 
interval $[b_-,\ b_+]$. 
\end{coro}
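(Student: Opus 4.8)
The plan is to deduce Corollary \ref{bplusminus} from the finiteness of critical stability parameters, which has already been recorded (see the discussion following Proposition \ref{limitstabcrtA}, item \textbf{2} of Section \ref{twoone}, as well as the analogous statement for the $C$-framed category in Section \ref{onethree}). Fix the K\"ahler class $\omega$ with $\int_{C_0}\omega=1$ and fix the numerical type $(r,n)\in \IZ_{\geq 0}\times \IZ$. The key input is that for this fixed data there are only finitely many $b\in\IR$ for which strictly $\mu_{(\omega,b)}$-semistable objects of $\CA^C$ with $\ch(E)=(-1,0,[C]+r[C_0],n)$ exist. Granting this, the corollary is essentially immediate: a finite subset of $\IR$ containing $b_c$ is discrete, so there is a largest critical value strictly below $b_c$ (or none, in which case take $b_-$ to be any real number less than $b_c$) and a smallest critical value strictly above $b_c$ (or none, in which case take $b_+$ arbitrary larger). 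Setting $b_-$ strictly between $b_c$ and that lower neighbour, and $b_+$ strictly between $b_c$ and that upper neighbour, we get $b_-<b_c<b_+$ with no critical stability parameter of type $(r,n)$ in $[b_-,b_c)\cup(b_c,b_+]$, so $b_c$ is the unique one in $[b_-,b_+]$.

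First I would recall precisely why the set of critical parameters of type $(r,n)$ is finite. By Lemma \ref{HNlemmA}, any strictly $\mu_{(\omega,b_c)}$-semistable $E$ of type $(r,n)$ admits a pure dimension one sheaf $G$ (or $F$) set theoretically supported on $C_0$, $\omega$-slope semistable, appearing as a subquotient with $\ch_2(G)=r_G[C_0]$, $0<r_G\leq r$, and $\mu_\omega(G)=-2b_c$. Since $\mu_\omega(G)=\chi(G)/r_G-b_c$ in the normalization $\int_{C_0}\omega=1$ wait---more precisely the slope-semistability wall condition forces $\chi(G)/r_G = -2b_c + b_c$, i.e. $b_c = -\chi(G)/r_G$ up to the conventions fixed in Section \ref{twothree}; in any case $b_c$ is determined by the pair of integers $(\chi(G),r_G)$. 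The constraints $0<r_G\leq r$ together with the boundedness of $\chi(G)$ (which follows because $G$ is a subquotient of a sheaf with fixed numerical invariants, hence by Lemma \ref{sstablesheaves} its Harder--Narasimhan pieces are of the form $\CO_{C_0}(a_j)^{\oplus s_j}$ with bounded $a_j$ and $s_j$) restrict $(\chi(G),r_G)$ to a finite set. Hence only finitely many values $b_c$ can occur.

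The main obstacle, such as it is, is simply verifying the boundedness claim cleanly: one needs that a destabilizing subquotient $G$ (resp.\ $F$) supported on $C_0$ with $\ch_2(G)=r_G[C_0]$, $r_G\leq r$, has $\chi(G)$ lying in a bounded interval depending only on $(r,n)$. This is exactly the kind of boundedness used in item \textbf{2} of Section \ref{twoone} and its $C$-framed analogue, so strictly speaking I may simply invoke those results: the moduli stacks $\CM^{ss}_{(\omega,b)}(\CA,\beta,n)$ and their $C$-framed counterparts $\calP_{(\omega,b)}(X,C,r,n)$ are of finite type and strictly semistable objects exist for only finitely many critical $b$. Therefore the corollary reduces to the purely elementary observation about discreteness of finite subsets of $\IR$ stated in the first paragraph, and there is no serious difficulty. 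I would write the proof to consist of: (1) cite finiteness of the critical set of type $(r,n)$; (2) choose $b_\pm$ as nearest strict separators; (3) conclude.

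\begin{proof}[Proof of Corollary \ref{bplusminus}]
By item \textbf{2} of Section \ref{twoone} applied to the $C$-framed category (see Section \ref{onethree}), for the fixed K\"ahler class $\omega$ and fixed numerical type $(r,n)$ there are only finitely many stability parameters $b\in\IR$ which are critical of type $(r,n)$, i.e.\ for which a strictly $\mu_{(\omega,b)}$-semistable object $E$ of $\CA^C$ with $\ch(E)=(-1,0,[C]+r[C_0],n)$ exists. Indeed, by Lemma \ref{HNlemmA} any such $E$ admits an $\omega$-slope semistable pure dimension one subquotient supported on $C_0$ whose numerical invariants $(\chi,r')$, with $0<r'\leq r$, determine $b$ and are constrained to a finite set by boundedness of the relevant moduli; hence the critical set is finite.

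Let $\Sigma\subset\IR$ denote the (finite) set of critical stability parameters of type $(r,n)$, and let $b_c\in\Sigma$. If $\Sigma\cap(-\infty,b_c)=\emptyset$, set $b_-=b_c-1$; otherwise let $b_0=\max(\Sigma\cap(-\infty,b_c))$ and set $b_-=(b_0+b_c)/2$, so that $b_0<b_-<b_c$. Similarly, if $\Sigma\cap(b_c,+\infty)=\emptyset$, set $b_+=b_c+1$; otherwise let $b_1=\min(\Sigma\cap(b_c,+\infty))$ and set $b_+=(b_c+b_1)/2$, so that $b_c<b_+<b_1$. Then $b_-<b_c<b_+$, and by construction $\Sigma\cap[b_-,b_+]=\{b_c\}$, i.e.\ $b_c$ is the only critical stability parameter of type $(r,n)$ in the interval $[b_-,b_+]$.
\end{proof}
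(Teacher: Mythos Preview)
Your proof is correct, and the overall strategy—finiteness of the critical set, then pick separating neighbours—matches the paper. The one noteworthy difference is in how finiteness is established. You invoke the global finiteness of critical parameters from Section~\ref{twoone} (item \textbf{2}) and its $C$-framed analogue, together with boundedness of moduli to constrain $\chi(G)$. The paper instead extracts directly from Lemma~\ref{HNlemmA} that every critical parameter has the form $b_c=-n'/(2r')$ with $1\le r'\le r$; since rationals with bounded denominator are discrete, any bounded interval contains only finitely many of them, and one can shrink to $[b'_-,b'_+]$ containing no other critical value. This is slightly more elementary: it needs neither the global finiteness statement nor any bound on $\chi(G)$, only the bounded-denominator form coming from Lemma~\ref{HNlemmA}. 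Your approach is perfectly valid but uses a stronger input than necessary.
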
 

{\it Proof.} Choose some $b_-<b_c<b_+$. Lemma 
\ref{HNlemmA} implies that any critical 
stability parameter $b'_c$ must be of the form 
\[
b'_c = -{n'\over 2r'}
\] 
with $n',r'\in \IZ$, $1\leq r'\leq r$. Therefore the set of stability 
parameters in the interval $[b_-,\ b_+]$ is a subset of the set 
of integers $n'$ satisfying 
\[ 
-2r|b_+|\leq n' \leq 2r|b_-|.
\]
The latter is a finite set for fixed $b_-,b_+, r$. Therefore there exist 
$b'_-<b_c<b'_+$ sufficiently close to $b_c$ such that 
there are no critical stability parameters of type $(r,n)$ in the interval 
$[b'_-,\ b'_+]$. 

\hfill $\Box$

\begin{lemm}\label{HNlemmB} 
Suppose $b_c$ is a critical stability parameter of type $(r,n)\in \IZ_{\geq 1}\times \IZ$. Then there exist two constants $\epsilon_+,\epsilon_-$, 
such that the following holds for any stability parameters 
\[
b_c-\epsilon_-< b_-<b_c<b_+< b_c+\epsilon_+
\] 
Suppose $E$ is a $\mu_{(\omega, b_c)}$-semistable 
$C$-framed perverse coherent sheaf with $\ch(E)=(-1,0,
[C]+r[C_0],n)$. Then 

 $(i)$ $E$ is either $\mu_{(\omega, b_+)}$-semistable 
 or has a Harder-Narasimhan filtration 
\[ 
0\subset E'\subset E 
\]
with respect to $\mu_{(\omega, b_+)}$-stability, where 
$E'$ is an $\omega$-slope semistable pure dimension one 
sheaf with topological support on $C_0$ and $\mu_\omega(E') 
=-2b_c$. The quotient  
$E''=E/E'$ is an $\mu_{(\omega, b_+)}$-semistable $C$-framed 
perverse coherent sheaf.

$(ii)$ $E$ is either $\mu_{(\omega, b_-)}$-semistable 
or has a Harder-Narasimhan filtration 
\[ 
0\subset E'\subset E 
\]
with respect to $\mu_{(\omega, b_-)}$-stability, where 
$E'$ is a $\mu_{(\omega, b_-)}$-semistable $C$-framed 
perverse coherent sheaf. The quotient $E''=E'/E$ is 
an $\omega$-slope semistable pure dimension one 
sheaf with topological support on $C_0$ and $\mu_\omega(E'')=
-2b_c$.

\end{lemm}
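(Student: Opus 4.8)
The plan is a single–wall analysis. The content of Lemma~\ref{HNlemmA} is that, at the critical value $b_c$, a strictly semistable $C$-framed sheaf of type $(r,n)$ is destabilized either by a sub- or by a quotient sheaf which is a pure dimension one sheaf on $C_0$ of $\omega$-slope $-2b_c$; the task is to show that an infinitesimal deformation of $b_c$ turns this into a two–step Harder--Narasimhan filtration of the asserted shape. First I would fix the constants. By Lemma~\ref{subquot} every pure dimension one sub- or quotient object of a $C$-framed sheaf of type $(r,n)$ is supported on $C_0$ with $\ch_2$ equal to $r'[C_0]$ for some $1\le r'\le r$, so its $\omega$-slope lies in the discrete set $\tfrac1{\mathrm{lcm}(1,\dots,r)}\IZ$; since $b_c$ is itself of the form $-n'/2r'$ with $1\le r'\le r$ (as in the proof of Corollary~\ref{bplusminus}), I would choose $\epsilon_\pm>0$ small enough that $b_c$ is the only critical parameter of type $(r,n)$ in $[b_c-\epsilon_-,\,b_c+\epsilon_+]$ and so small that the half–open intervals $(b_c,\,b_c+\epsilon_+]$ and $[b_c-\epsilon_-,\,b_c)$ contain no number of the form $-\mu/2$ with $\mu\in\tfrac1{\mathrm{lcm}(1,\dots,r)}\IZ$. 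These choices depend only on $\omega$, $r$ and $n$.

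For part $(i)$ I would take $E$ with $\ch(E)=(-1,0,[C]+r[C_0],n)$ which is $\mu_{(\omega,b_c)}$-semistable but not $\mu_{(\omega,b_+)}$-semistable, let $0=E_0\subset\cdots\subset E_m=E$ be its $\mu_{(\omega,b_+)}$-Harder--Narasimhan filtration, and analyse the quotients $G_i=E_i/E_{i-1}$, which have strictly decreasing slopes. Since $\CH^{-1}$ of any object of $\CA^C$ is a subsheaf of $\CI_C$, each $G_i$ has $\ch_0\in\{0,-1\}$ and exactly one, $G_j$, has $\ch_0=-1$; by the discussion in Section~\ref{twothree} the others are $\omega$-slope semistable pure dimension one sheaves supported on $C_0$. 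The two facts I would use repeatedly are: for a pure dimension one sheaf $F$ on $C_0$ the comparison of $\mu_{(\omega,b)}(F)$ with $\mu_{(\omega,b)}(E)$ is dictated by the comparison of $\mu_\omega(F)$ with $-2b$ (Proposition~\ref{limitstabcrtA}); and any two objects of $\CA^C_{1/2}$ with $\ch_0=-1$, $\ch_1=0$ carry the same limit slope $\mu_{(\omega,b)}$, depending on $b$ only (as is implicit in Proposition~\ref{limitstabcrtA}, the reference slope there being independent of the Euler characteristic). Using that $E_1$ is the maximal destabilizing subobject, so $\mu_{(\omega,b_+)}(E_1)>\mu_{(\omega,b_+)}(E)$, I would rule out $\ch_0(G_1)=-1$ — which would force equality of slopes — so $G_1$ is one–dimensional on $C_0$; if in addition $j\le m-1$ then $G_m$ is a one–dimensional quotient of $E$, and $\mu_{(\omega,b_c)}$-semistability gives $\mu_\omega(G_1)\le -2b_c\le\mu_\omega(G_m)$, contradicting the Harder--Narasimhan inequality $\mu_\omega(G_1)>\mu_\omega(G_m)$; hence $j=m$. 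Next, from $\mu_\omega(G_1)\le -2b_c$ and $\mu_{(\omega,b_+)}(G_1)>\mu_{(\omega,b_+)}(E)$, i.e.\ $\mu_\omega(G_1)>-2b_+$, the choice of $\epsilon_+$ forces $\mu_\omega(G_1)=-2b_c$. Finally I would apply the see-saw property of the weak stability condition \cite{limit,generating} to $0\to E_{m-1}\to E\to G_m\to 0$ at the parameter $b_c$: since $\ch_0(G_m)=\ch_0(E)=-1$ one has $\mu_{(\omega,b_c)}(G_m)=\mu_{(\omega,b_c)}(E)$, hence $\mu_{(\omega,b_c)}(E_{m-1})=\mu_{(\omega,b_c)}(E)$ and so $\mu_\omega(E_{m-1})=-2b_c$; but $E_{m-1}$ is an iterated extension of $G_1,\dots,G_{m-1}$, whose $\omega$-slopes are strictly decreasing and start at $-2b_c$, which forces $m-1=1$. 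Thus $m=2$, $E'=G_1$ has the stated properties, and $E''=E/E'=G_2$ lies in $\CA^C$ by Lemma~\ref{Cframedclosure}$(ii)$ and is $\mu_{(\omega,b_+)}$-semistable as a Harder--Narasimhan factor, which is exactly $(i)$; if $E$ is already $\mu_{(\omega,b_+)}$-semistable there is nothing to prove.

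For part $(ii)$ I would run the mirror argument at $b_-<b_c$: there the one–dimensional sheaves on $C_0$ of $\omega$-slope $-2b_c$ destabilize as quotients rather than subobjects, the Harder--Narasimhan analysis instead yields $j=1$ and again $m=2$, with $E'=E_1$ the $\mu_{(\omega,b_-)}$-semistable $C$-framed factor and $E''=E/E'$ the $\omega$-slope semistable one–dimensional sheaf on $C_0$ with $\mu_\omega(E'')=-2b_c$ (the statement's "$E''=E'/E$" being a typo for $E/E'$). The hard part is the pair of length–two claims, namely $j=m$ and $m=2$ (and their mirrors): they rest on the structural fact that all $\ch_0=-1$ objects of $\CA^C_{1/2}$ share one limit slope — so that an extension of a $\ch_0=-1$ object by a one–dimensional sheaf is forced onto the wall at $b_c$ — combined with the see-saw axiom and with the discreteness of the admissible $\omega$-slopes, which is what pins that sheaf's slope to exactly $-2b_c$; arranging the constants $\epsilon_\pm$ to work uniformly over all $E$ of type $(r,n)$ is what welds these inputs together.
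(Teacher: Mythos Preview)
Your approach is essentially the paper's: take the $\mu_{(\omega,b_+)}$-Harder--Narasimhan filtration, locate the unique $\ch_0=-1$ factor at one end, and use discreteness of the admissible $\omega$-slopes to force length two. Your argument that the $\ch_0=-1$ factor must be $G_m$ --- comparing $\mu_\omega(G_1)\le -2b_c\le\mu_\omega(G_m)$ via $\mu_{(\omega,b_c)}$-semistability against the HN inequality --- is a clean variant of the paper's direct argument on the last quotient.

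There is, however, a genuine gap in your step ``$m=2$''. You invoke the see-saw to conclude $\mu_{(\omega,b_c)}(E_{m-1})=\mu_{(\omega,b_c)}(E)$ from $\mu_{(\omega,b_c)}(G_m)=\mu_{(\omega,b_c)}(E)$. For a \emph{weak} stability condition the see-saw is only the disjunction $\mu(A)\le\mu(B)\le\mu(C)$ or $\mu(A)\ge\mu(B)\ge\mu(C)$; from $\mu(B)=\mu(C)$ one cannot deduce $\mu(A)=\mu(B)$. Concretely, the limit slope of a $\ch_0=-1$ object is fixed by the top-degree coefficients of the central charge, while that of the sheaf $E_{m-1}$ is determined by lower-degree terms, so the cancellation $\CZ(E_{m-1})=\CZ(E)-\CZ(G_m)$ imposes no constraint pinning $\mu_\omega(E_{m-1})$ to $-2b_c$. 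Your subsequent averaging argument therefore has no input.

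The paper fills this differently. For each $1\le j\le m-1$ the induced filtration on $E/E_{j-1}$ is again a Harder--Narasimhan filtration, so $G_j$ is the maximal destabilizing subobject of the $\ch_0=-1$ object $E/E_{j-1}$; Proposition~\ref{limitstabcrtA} then gives $\mu_\omega(G_j)>-2b_+$ for \emph{every} such $j$, not just $j=1$. Together with $\mu_\omega(G_1)\le -2b_c$ and the strict chain $\mu_\omega(G_1)>\cdots>\mu_\omega(G_{m-1})$, all these slopes lie in $(-2b_+,-2b_c]$; your discreteness choice of $\epsilon_+$ then forces $m-1=1$ directly. Replace your see-saw/average step by this lower bound on each $G_j$ and the proof goes through.
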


{\it Proof.} 
It suffices to prove $(i)$, the second statement being entirely analogous. 
The existence of a Harder-Narasimhan filtration 
follows from the fact that $\mu_{(\omega,B)}$-stability 
is a weak stability condition \cite[Lemma 3.6]{generating}. 
Moreover, by construction all successive quotients of the 
Harder-Narasimhan filtration of an object of $\CA^C_{1/2}$ 
also belong to $\CA^C_{1/2}$. 

If $E$ is $\mu_{(\omega, b_+)}$-stable 
there is nothing to prove, hence it will be assumed this is not the case. 
Then let 
\be\label{eq:bplusHN}
0=E_0\subset E_1\subset \cdots \subset E_h=E, \qquad h\geq 2,
\ee
be its Harder-Narasimhan filtration with respect 
to $\mu_{(\omega,b_+)}$-stability.
Let 
$E\twoheadrightarrow E''$ 
be the last quotient  of the 
Harder-Narasimhan filtration. Hence $E''$ is a
$\mu_{(\omega,b_+)}$-semistable 
object of $\CA^C_{1/2}$. 
Since $\ch_0(E)=-1$, one has 
$\ch_0(E'') \in \{0,1\}$. 

Suppose $\ch_0(E'')=0$. Then 
$E''$ must be an $\omega$-slope semistable pure dimension one sheaf with topological support 
on $C_0$.
Moreover the kernel $E'= {\rm Ker}(E\twoheadrightarrow E'')$ 
is an object of $\CA_C^{1/2}$ because it admits a filtration
such that all successive quotients belong to $\CA^C_{1/2}$. 
Therefore the morphism 
$E\twoheadrightarrow E''$ is a strict epimorphism. 
 By the properties of the Harder-Narasimhan filtration, 
\[ 
\mu_{(\omega, b_+)}(E'') < -3b_+,
\]
which implies 
\[
\mu_{(\omega, b_c)}(E'') < -3b_c,
\]
since for any $b\in \IR$ 
\[
\mu_{(\omega,b)}(E'')= \mu_\omega(E'') - b. 
\]
According to Proposition \ref{limitstabcrtA}, this leads to a contradiction 
since $E$ is assumed $\mu_{(\omega, b_c)}$-semistable. 

In conclusion 
$\ch_0(E'')=-1$. This implies that all other successive 
quotients, $E_j/E_{j-1}$, $1\leq j\leq h-1$, are $\omega$-slope 
semistable pure dimension one sheaves with topological support on $C_0$. 

Next note that by construction 
the induced filtration 
\[
0\subset E_{j}/E_{j-1} \subset \cdots \subset  E/E_{j-1}
\]
on each quotient $E/E_j$, $j=1,\ldots, h-1$ is again a 
Harder-Narasimhan filtration for $\mu_{(\omega,b_+)}$-stability.
Therefore 
\[
\mu_{(\omega, b_+)}(E_{j+1}/E_j)> - 3b_+
\]
for all $1\leq j\leq h-1$. At the same time 
\[
\mu_{(\omega,b_c)}(E_1) \leq  -3b_c 
\]
since $E$ is $(\omega,b_c)$-semistable. 
Then, using the standard properties of Harder-Narasimhan 
filtrations, one obtains 
\be\label{eq:slopeineqA}
-2b_+ < \mu_\omega(E_{h-1}/E_{h-2}) < \cdots < 
\mu_\omega(E_1) \leq -2b_c.
\ee
However, since all $E_{j}/E_{j-1}$, $1\leq j\leq h-1$ are nontrivial 
pure dimension sheaves with topological support on $C_0$, 
\[ 
\ch_2(E_{j}/E_{j-1}) = r_j[C_0]
\]
for some integers $0< r_j \leq r$, and  
\[
\mu_\omega(E_{j}/E_{j-1})= {\chi(E_j/E_{j-1})\over r_j}.
\]
Now note that there exists $\epsilon_+>0$ sufficiently small 
such that there are no rational numbers $n'/r'$, $1\leq r'\leq r$, 
$n'\in \IZ$ in the interval $(-2b_c-2\epsilon_+, -2b_c)$. 
Therefore, if $b_c< b_+< b_c+\epsilon_+$ inequalities 
\eqref{eq:slopeineqA} imply that $h=2$ and $\mu_\omega(E_1)=
-2b_c$.  

\hfill $\Box$

\subsection{Motivic Hall algebra identities}\label{motivicHallidentities}
For completeness, recall the construction of 
the motivic Hall algebra \cite{wallcrossing,genDTI} of the perverse 
coherent sheaf category $\CA$. Let ${\mathcal Ob}(\CA)$ be the stack of 
all objects of $\CA$, which is algebraic, locally of finite type over $\IC$. 
A stack function is a pair $(\CX,f)$ where $\CX$ is an algebraic stack 
of finite type over $\IC$, and $f:\CX\to {\mathcal Ob}(\CA)$ a morphism 
of algebraic stacks. The underlying vector 
space of the motivic Hall algebra $H(\CA)$ is the $\IQ$-vector 
space generated by isomorphism classes of stack functions subject 
to the relations 
\[ 
[(\CX,f)] \sim [(\CY, f|_{\CY})] + [(\CX\setminus \CY, f|_{\CX\setminus \CY})]
\]
for any closed algebraic substack $\CY\subset \CX$. The algebra structure is defined by a $\IQ$-linear  convolution product 
\[
[(\CX_1,f_1)]\ast [(\CX_2,f_2)] = [(\CX,f)]
\] 
where $(\CX,f)$ is the stack function determined by a diagram
of the form 

\[
\xymatrix{ 
\CX \ar[d] \ar[rr]^{g} \ar@/^2pc/[rrr]^-{f=p_2\circ g} & & {\mathcal{E}x}(\CA)
\ar[d]_-{p_1\times p_3} 
\ar[r]^-{p_2}& 
\obj(\CC)\\
{\mathcal X}_1\times 
{\mathcal X}_2\ar[rr]^-{f_1\times f_2} && 
{\mathcal Ob}(\CA)\times {\mathcal Ob}(\CA)& \\}
\]
Here ${\mathcal E}x(\CA)$ denotes the moduli stack of three term exact 
sequences $0\to E_1\to E_2\to E_3\to 0$ in $\CA$ and 
$p_i:{\mathcal E}x(\CA)\to {\mathcal O}b(\CA)$ the three natural 
forgetful morphisms mapping such a sequence to $E_i$, $1\leq i\leq 3$, 
respectively. The convolution product is associative and has a unit, 
the stack function
$[({\rm Spec}(\IC) \to {\mathcal O}b(\CA)]$ determined by 
the zero object of $\CA$. 
Finally, note that the natural commutator $[\ ,\ ]$ of the 
associative product $\ast$ determines a 
Poisson algebra structure on $H(\CA)$. 

In the present case, any pair $(r,n)\in \IZ_{\geq 0}\times 
\IZ$ and any $b\in \IR$, the moduli stack of 
$\mu_{(\omega,b)}$-semistable $C$-framed perverse coherent sheaves 
determines an element 
\[ 
{p}_{(\omega, b)}(r,n) = 
\big[ {\mathcal P}_{(\omega,b)}(X,C,r,n) \hookrightarrow 
Ob(\CA)\big]
\]
of the Hall algebra $H(\CA)$. Similarly the moduli 
stack of $\mu_{(\omega,b)}$-stable coherent pure 
dimension one sheaves $F$ 
on $X$ with $\ch_2(F)=r[C_0]$ and $\chi(F)=n$ 
determines an element 
\[ 
{s}_{(\omega,b)}(r,n) = 
\big[ {\mathcal M}_{(\omega,b)}(X,r,n) \hookrightarrow 
Ob(\CA)\big]
\]
of the same Hall algebra. Since the polarization $\omega$ is fixed 
throughout this section, while $b$ is varied, 
the simpler notation $p_b(r,n)$ will be used in the 
following. Similarly, for fixed polarization, $s_{(\omega,b)}(r,n)$ is in fact 
independent on $b$, hence it will be denoted by $s(r,n)$. 

Wallcrossing formulas \cite{wallcrossing, genDTI} are obtained
by converting Lemma \ref{HNlemmB} into stack function identities, 
then applying a suitable integration map.
In the construction of \cite{genDTI}, the integration map is defined 
on a certain Poisson subalgebra of 
$H^{ind}_{alg}(\CA)\subset 
H(\CA)$ which has a complicated technical definition 
\cite[Sect. 5.2]{J-II}. Omitting the technical details,  
it suffices to note that the stack function determined by any 
moduli space of $(\omega,b)$-limit slope semistable 
stable objects of $\CA^C$ belongs 
to $H^{ind}_{alg}(\CA)\subset 
H(\CA)$ as long as there are no strictly semistable objects. 
In particular this is the case with the stack functions 
$p_b(r,n)$ for 
$b\in \IR$ non-critical of type $(r,n)$. 
This fails when strictly semistable objects are present, as is the case 
with the stack functions $s(r,n)$. In such cases it is proven in 
\cite[Thm. 8.7]{J-III} that the associated log stack functions 
\be\label{eq:logstackfcts}
{t}(r,n) = -\sum_{l\geq 1} {(-1)^l\over l} 
\sum_{\substack{(r_i,n_i)\in \IZ^2,\ r_i\geq 1,\ 1\leq i\leq l\\
r_1+\cdots+r_l=r, \ n_1+\cdots +n_l =n,\\
n_i/r_i = n/r, \ 1\leq i\leq l}}
{s}(r_1,n_1) \ast \cdots\ast {s}(r_l,n_l)
\ee
belong to $H^{ ind}_{alg}(\CA)$.
The sum in the right hand side is finite for fixed $(r,n)$
since there is a finite set of 
decompositions $r=r_1+\cdots+r_l$ with $r_i\geq 1$, $1\leq i\leq l$.

The integration map 
\[
I_\nu :H^{ind}_{alg}(\CA) \to \Lambda_\nu(\CA)
\] 
is a morphism of Poisson algebras 
determined by a choice of constructible 
function $\nu$ on the stack of all objects $Ob(\CA)$. 
It takes values in a Poisson algebra $\Lambda_\nu(\CA)$ 
spanned over $\IQ$ by  
$\{{\sf e}_{\alpha}\}$, $\alpha\in K(\CA)$, 
where $K(\CA)$ is the quotient of the Grothendieck group of $\CA$ by 
numerical equivalence, and $\chi(\alpha, \alpha')$. 
The Poisson bracket 
is given by 
\[
[{\sf e}_\alpha, {\sf e}_{\alpha'}] = (-1)^{\epsilon(\nu)
\chi(\alpha,\alpha')} 
\chi(\alpha, \alpha').
\]
where $\chi(\alpha, \alpha')$ is the 
natural antisymmetric bilinear pairing on $K(\CA)$, and 
$\epsilon(\nu)\in \{0,1\}$. In principle $\nu$ can be either 
the constant function $\nu=1$, in which case 
$\epsilon(\nu)=0$, or Behrend's  constructible function,
in which case $\epsilon(\nu)=1$. 
In the present context only the integration map with respect to the constant constructible function $\nu=1$ is rigorously 
constructed \cite[Thm. 6.11]{J-IV}. 
This yields topological Euler character invariants of objects in $\CA^C$ 
defined by 
\[
I(p_{b}(r,n))= -P^{top}_b(r,n) {\sf e}_{(-1,0,[C]+r[C_0],n)},
\qquad 
I(t(r,n)) = -N^{top}(r,n) {\sf e}_{(0,0,r[C_0],n)}.
\]

Employing the formalism reviewed above,
 Lemma \ref{HNlemmB} translates into 
the following Hall algebra identities 
\be\label{eq:hallidA} 
\bal 
{p}_{b_c}(r,n) - 
{p}_{b_+}(r,n) 
& = \sum_{\substack{r_1,r_2, n_1,n_2\in \IZ, \ r_1\geq 1, \
r_2\geq 0 \\ r_1+r_2=r,\ n_1+n_2=n
\\ n_1/r_1 = -2b_c}} { s}(r_1,n_1) \ast
{ p}_{b_+}(r_2,n_2)
\\
{p}_{b_c}(r,n) - 
{p}_{ b_-}(r,n) 
& = \sum_{\substack{r_1,r_2, n_1,n_2\in \IZ, \ r_1\geq 0, \
r_2\geq 1 \\ r_1+r_2=r,\ n_1+n_2=n
\\ n_2/r_2 = -2b_c}} 
{p}_{b_-}(r_1,n_1)\ast {s}(r_2,n_2).
\\
\eal 
\ee 
for $b_-<b_c<b_+$ sufficiently close to $b_c$. 
The sum in the right hand side is finite for fixed $(r,n)$
since there is a finite set of 
decompositions $r=r_1+\cdots+r_l$ with $r_i\geq 1$, $1\leq i\leq l$. 
Then repeating the steps in \cite[Lemmas 2.1-2.4 ]{chamberII}, identities 
\eqref{eq:hallidA} imply 
\be\label{eq:hallidB} 
\bal 
& {p}_{b_-}(r,n) - {p}_{b_+}(r,n) 
=\\
&\sum_{l\geq 2} {(-1)^{l-1}\over (l-1)!} 
\sum_{\substack{ (r_i,n_i)\in \IZ^2, \ 1\leq i\leq l,\\
r_i \geq 1,\ 1\leq i\leq l-1, \ r_l\geq 0\\
r_1+\cdots+r_l=r, \ n_1+\cdots +n_l =n,\\
n_i/r_i = -2b_c, \ 1\leq i\leq l-1\\ }}
[{t}(r_1,n_1), \cdots,  [{t}(r_{l-1}, n_{l-1}), 
{p}_{b_+}(r_l,n_l)]\cdots].\\
\eal
\ee
Again, the sum in the right hand side of equation \eqref{eq:hallidB} is finite
because the set of 
decompositions $r=r_1+\cdots+r_l$ with $r_i\geq 1$, $1\leq i\leq l$ is finite. This identity is in fact a wallcrossing formula 
for stack functions. 
Applying the integration map to both sides of equation 
\eqref{eq:hallidB} 
yields the following wallcrossing formula for topological Euler character 
invariants
\be\label{eq:wallcrossingformA} 
\bal 
&P^{top}_{b_-}(r,n) - P^{top}_{b_+}(r,n) 
=\\
&\sum_{l\geq 2} {1\over (l-1)!} 
\sum_{\substack{ (r_i,n_i)\in \IZ^2, \ 1\leq i\leq l,\\
r_i \geq 1,\ 1\leq i\leq l-1, \ r_l\geq 0\\
r_1+\cdots+r_l=r, \ n_1+\cdots +n_l =n,\\
n_i/r_i = -2b_c, \ 1\leq i\leq l-1\\ }} P^{top}_{b_+}(r_l,n_l) 
\prod_{i=1}^{l-1} n_i N^{top}(r_i,n_i)
\\
\eal
\ee

\subsection{Summing over critical values}\label{summcrtval}
Recall that $b>0$ is called a small stability parameter of type 
$(r,n)$ if 
there are no critical parameters of type $(r,n)$ 
in the interval $[0,b)$. All moduli stacks 
$\calP_{(\omega,b)}(X,C,r,n)$  of $\mu_{(\omega,b)}$-semistable objects 
of $\CA^C$ with numerical invariants $(-1,0,[C]+r[C_0],n)$ 
for small $b$ are canonically isomorphic, and will be denoted 
by $\calP_{0+}(X,C,r,n)$. 
Combining the results of Proposition \ref{projhilbert}, 
Lemma \ref{dualstacks}, and Proposition \ref{geombij}, 
it follows that for any $(r,n)\in \IZ_{\geq 0}\times \IZ$, 
$\calP_{0+}(X,C,r,n)$ is geometrically bijective to an 
$\IC^\times$-gerbe 
over the relative {\it Quot} scheme 
$Q^{[l,r]}(C)$, where $l=n-\chi(\CO_C)$. 
Then the topological Euler character invariants in this 
chamber are simply given by 
\[ 
P^{top}_{0+}(r,n) = \chi(Q^{[l,r]}(C)).
\]
If $n<\chi(\CO_C)$ the stack $\calP_{0+}(X,C,r,n)$ is empty, and 
$P^{top}_{0+}(r,n)=0$. Therefore in this chamber the generating 
function of topological invariants is 
\be\label{eq:zeroplusgenfct} 
Z^{top}(X,C,T,u) = u^{\chi(\CO_C)} \sum_{l\geq 0} \sum_{r\geq 0} 
T^ru^l \chi(Q^{[l,r]}(C)).
\ee
At the same time, 
by analogy with 
\cite[Thm. 3.21]{generating}, there exists a constant $\mu_{r,n}
\in \IR$ depending only on $(r,n)$ such that for 
$b< -\mu_{r,n}/2$, the moduli 
stack of $(\omega, b)$-semistable $C$-framed perverse 
coherent sheaves ${\mathcal P}_{(\omega,b)}(X,C,r,n)$ is isomorphic 
to the moduli stack of $C$-framed stable pairs. 
Hence for $b<-\mu_{r,n}/2$, the invariants 
$P_b(r,n)$ are equal to the topological Euler character invariants 
of stable pairs, denoted by $P_{-\infty}^{top}(r,n)$. 
Recall that equation \eqref{eq:factformulaD} 
is a factorization formula of the form 
\be\label{eq:factformulaG} 
Z^{top}_{-\infty}(X,C,T,u) = 
Z^{top}(X,C_0,T,u) Z^{top}_{0+}(X,C,T,u).
\ee
where 
\[
Z^{top}_{-\infty}(X,C,T,u)=\sum_{r\geq 0} \sum_{n\in \IZ} 
T^r u^n P^{top}_{-\infty}(r,n)
\]
and 
\[
Z^{top}(X,C_0,T,u)  = \sum_{r\geq 0}\sum_{n\in \IZ} 
T^r u^n N^{top}(r,n).
\]
This formula will be proven by successive 
applications of the wallcrossing formula \eqref{eq:wallcrossingformA}.

First note that the set of critical parameters $-\mu_{r,n}/2 \leq b_c 
< 1/(2r)$ of type $(r,n)$ is finite since all such parameters must be of 
the form $b_c = -{1\over 2r'}$ with $1\leq r'\leq r$. 
For any $(r,n)\in \IZ_{\geq 0}\times \IZ$, let $P_{0-}^{top}(r,n)$ 
denote the value of $P^{top}_b(r,n)$ for any $b<0$ such that 
there are no critical parameters of type $(r,n)$ in the interval 
$[b,\ 0)$. Then note that
the wallcrossing formula \eqref{eq:wallcrossingformA} at $b_c=0$ 
yields $P^{top}_{0+}(r,n)= P^{top}_{0-}(r,n)$. Therefore it suffices 
to relate $P^{top}_{-\infty}(r,n)$ to $P_{0-}^{top}(r,n)$.

Let $b_{-\infty}< \min\{0,-\mu_{r,n}/2\}$ be an arbitrary stability parameter.
$\Delta(r,n;b_{-\infty})$ be the set of all  decompositions 
\[
r= r'+\sum_{i=1}^l \sum_{j=1}^{k_i} r_{i,j}, \qquad 
n= n'+\sum_{i=1}^l \sum_{j=1}^{k_i} n_{i,j},
\]
with $l\geq 1$, $k_i\geq 1$ for all $1\leq i\leq l$, $r'\geq 0$, 
$r_{i,j}\geq 1$,
satisfying 
\[
-b_{-\infty}>{n_{1,1}\over r_{1,1}} =\cdots = {n_{1,k_i}\over r_{i,k_i}} > {n_{2,1}\over r_{2,1}} =\cdots = {n_{2,k_i}\over r_{2,k_i}}> \cdots 
> {n_{l,1}\over r_{l,1}} = \cdots = {n_{l,k_l}\over r_{l,k_l}}>0
\] 
Note that this is a finite set for fixed $(r,n)$ and $b_\infty$. 
Then successive applications of equation \eqref{eq:wallcrossingformA}
yield
\[ 
\bal 
P^{top}_{-\infty}(r,n) - P^{top}_{0-}(r,n) 
=& \sum_{l\geq 1} \sum_{(r',r_{i,j},n', n_{i,j})\in \Delta(r,n; b_{-\infty})} 
P^{top}_{0-}(r',n')\\
& \prod_{i=1}^l \prod_{j=1}^{k_i} {1\over k_i!} 
n_{i,j}N^{top}(r_{i,j},n_{i,j}).
\eal
\]
By simple combinatorics, the above equation may be rewritten as 
\be\label{eq:wallcrossingformE}
\bal
& P^{top}_{-\infty}(r,n) - P^{top}_{0-}(r,n) =\\
& \sum_{l\geq 1} {1\over l!} 
\sum_{\substack{(r',n'),(r_i,n_i)\in \IZ^2,\ 1\leq i\leq l\\
r'\geq 0,\ r_i\geq 1,\ 1\leq i\leq l,\\
r'+r_1+\cdots+r_l=r,\ n'+n_1+\cdots +n_l=n,\\
0 < n_i/r_i< -b_{-\infty},\ 1\leq i\leq l \\ }} P^{top}_{0-}(r',n') 
\prod_{i=1}^l n_i N^{top}(r_i,n_i). \\
\eal 
\ee
This formula holds for any $b_{-\infty}< \min\{0,-\mu_{r,n}/2\}$. 
Moreover, the invariants $P^{top}_{0-}(r',n')$ are zero if $n'<\chi(\CO_C)$. 
Therefore, for $|b_{-\infty}|$ sufficiently large, the upper bound 
$n_i/r_i<-b_{-\infty}$ will be automatically satisfied. Hence 
equation \eqref{eq:wallcrossingformE} becomes 
\be\label{eq:wallcrossingformF}
\bal
& P^{top}_{-\infty}(r,n) - P^{top}_{0-}(r,n) =\\
& \sum_{l\geq 1} {1\over l!} 
\sum_{\substack{(r',n'),(r_i,n_i)\in \IZ^2,\ 1\leq i\leq l\\
r'\geq 0,\ r_i\geq 1,\ 1\leq i\leq l,\\
r'+r_1+\cdots+r_l=r,\ n'+n_1+\cdots +n_l=n,\\
n_i/r_i>0,\ 1\leq i\leq l \\ }} P_{0-}^{top}(r',n') 
\prod_{i=1}^l n_i N^{top}(r_i,n_i), \\
\eal 
\ee
where the sum in the right hand side is finite. 

The last step is to convert equation \eqref{eq:wallcrossingformF} 
into a relation between the generating functions.
Multiplying \eqref{eq:wallcrossingformF} by $T^{r}u^{n}$ 
 and summing over $r\geq 0$, $n\geq 1$ yields
 \[
 Z^{top}_{-\infty}(X,C,T,u)  = 
 {\rm exp}\bigg[\sum_{r>0}
 \sum_{n>0} (-1)^n n N^{top}(r,n) T^{r} u^{n}\bigg] 
 Z^{top}_{0-}(X,C,T,u).
\]
Now Lemma \ref{sstablesheaves} implies that the moduli stack
of $\omega$-slope semistable sheaves $F$ with topological support 
on $C_0$ and $\ch_2(F)=r[C_0]$, 
$\chi(F)=n$ is isomorphic to the moduli stack of semistable 
rank $r$ bundles $E$ on $\IP^1$ with $\chi(E)=n$. 
If $n$ is not a multiple of $r$ there are no such bundles. If $n=kr$, $k\in \
\IZ$, there is only one such bundle up to isomorphism, 
$\CO_{\IP^1}(k-1)^{\oplus r}$.
Therefore the moduli stack is empty unless $n=kr$, $k\in \IZ$, in which 
case it is isomorphic to 
the quotient stack 
$[{\rm Spec (\IC)}/GL(r,\IC)]$.
Then \cite[Ex. 6.2]{genDTI} shows that 
\[
N^{top}(r,n) = \left\{ \begin{array}{ll} {(-1)^{r-1}\over r^2}, & {\rm if}\ n \equiv 0 \ {\rm mod}\ r,\\
& \\
0, & {\rm otherwise}.
\end{array} \right. 
\]
By direct substitution, 
\[ 
\bal 
\sum_{r>0}\sum_{n>0}  nN^{top}(r,n) T^{r} u^{n} & = 
\sum_{k\geq 1} k \sum_{r\geq 1} {(-1)^{r-1}\over r} 
(Tu^{k})^r\\
& = \sum_{k\geq 1} {\rm ln}\big(1+Tu^{k})\big)\\
& = {\rm ln} \, \prod_{k\geq 1} \big(1+Tu^{k})\big)^k.\\
\eal 
\] 
Hence 
\[
{\rm exp}\bigg[\sum_{r>0}\sum_{n>0} (-1)^n nN^{top}(r,n) T^{r} u^{n}
\bigg] = \prod_{k\geq 1} \big(1+Tu^{k})\big)^k.
\]
In order to conclude the proof, it remains to show that 
\[ 
Z^{top}_0(X,T,u) = \prod_{k\geq 1}\big(1+Tu^k)\big)^k.
\]
Since the formal neighborhood of $C_0$ in $X$ is isomorphic 
to the formal neighborhood of the zero section in the total 
space $Y$ of $\CO_{\IP^1}(-1)^{\oplus 2}$, it suffices to prove the 
corresponding result for stable pairs on $Y$. 
This follows from \cite[Thm. 3.15]{NH}, 
which proves analogous formulas for counting invariants defined 
by integration with respect to Behrend's constructible function. 
For concreteness note that the variables $q_0,q_1$ used in 
\cite[Thm. 3.15]{NH} are related to $T,u$ by 
\[
q_1^{-1} = T, \qquad q_0q_1= u.
\]
Moreover,  equation (3.4) in \cite[Thm. 3.15]{NH}
and the last formula in \cite[Sect. 3.2]{NH} yield 
\[
Z^{top}(Y,T,u) = \prod_{k\geq 1}\big(1+Tu^{k})\big)^k
\]
as claimed above. 

\bibliography{newrefB.bib}
 \bibliographystyle{abbrv}
\end{document}